\documentclass[11pt,reqno,a4paper]{amsart}

\usepackage{amsmath, amsthm, amssymb}
\usepackage[T1]{fontenc}
\usepackage{setspace}
\usepackage[english]{babel}
\usepackage{mathrsfs}
\usepackage[all]{xy}
\usepackage{paralist}
\usepackage{graphicx}
\usepackage{accents}
\usepackage{enumerate}
\usepackage{mathtools}

\usepackage[dvipsnames]{xcolor}
\usepackage[colorlinks=true,linkcolor=NavyBlue,urlcolor=MidnightBlue,citecolor=PineGreen]{hyperref}

\makeatletter
\def\l@subsection{\@tocline{2}{0pt}{1.8pc}{3pc}{}}
\makeatother

\usepackage[margin=2.5cm,top=3.0cm,bottom=3.0cm]{geometry}

\makeatletter
\newtheorem*{rep@theorem}{\rep@title}
\newcommand{\newreptheorem}[2]{%
\newenvironment{rep#1}[1]{%
 \def\rep@title{#2~\ref{##1}}%
 \begin{rep@theorem}}%
 {\end{rep@theorem}}}
 
\makeatother

\theoremstyle{plain}

\newtheorem{theorem}{\normalfont \scshape Theorem}

\newtheorem{lemma}{\normalfont \scshape Lemma}[section]
\newtheorem{proposition}[lemma]{\normalfont \scshape Proposition}
\newtheorem{corollary}[lemma]{\normalfont \scshape Corollary}

\newreptheorem{theorem}{Theorem}
\newreptheorem{proposition}{Proposition}

\theoremstyle{definition}

\theoremstyle{remark}
\newtheorem{remark}[lemma]{\normalfont \scshape Remark}

\renewenvironment{proof}[1][\proofname]{\noindent{\scshape #1}.\quad}{\qed}

\numberwithin{equation}{section}

\newcommand{\bfm}{{\bf m}}

\newcommand{\bfr}{{\bf r}}
\newcommand{\bfs}{{\bf s}}

\newcommand{\bfw}{{\bf w}}
\newcommand{\bfx}{{\bf x}}

\newcommand{\bfrho}{{\boldsymbol{\rho}}}

\newcommand{\bfA}{{\bf A}}
\newcommand{\bfB}{{\bf B}}

\newcommand{\bbN}{\mathbb N}

\newcommand{\calC}{\mathcal C}
\newcommand{\calD}{\mathcal D}

\newcommand{\calH}{\mathcal H}

\newcommand{\calO}{\mathcal O}

\newcommand{\calR}{\mathcal R}

\newcommand{\calU}{\mathcal U}
\newcommand{\calV}{\mathcal V}
\newcommand{\calW}{\mathcal W}
\newcommand{\calX}{\mathcal X}
\newcommand{\calY}{\mathcal Y}

\newcommand{\frakB}{\mathfrak B}

\newcommand{\frakK}{\mathfrak K}

\newcommand{\frakb}{\mathfrak b}

\newcommand{\frakd}{\mathfrak d}

\newcommand{\frakg}{\mathfrak g}
\newcommand{\frakh}{\mathfrak h}

\newcommand{\frakm}{\mathfrak m}

\newcommand{\frakp}{\mathfrak p}
\newcommand{\frakq}{\mathfrak q}

\newcommand{\scrB}{\mathscr B}
\newcommand{\scrC}{\mathscr C}

\newcommand{\scrF}{\mathscr F}

\newcommand{\scrL}{\mathscr L}

\newcommand{\scrP}{\mathscr P}

\newcommand{\scrV}{\mathscr V}

\newcommand{\sfC}{\mathsf C}

\newcommand{\sfF}{\mathsf F}
\newcommand{\sfG}{\mathsf G}
\newcommand{\sfH}{\mathsf H}
\newcommand{\sfI}{\mathsf I}

\newcommand{\sfL}{\mathsf L}

\newcommand{\sfP}{\mathsf P}

\newcommand{\sfU}{\mathsf U}

\newcommand{\bp}{\begin{pmatrix}}
\newcommand{\ep}{\end{pmatrix}}
\newcommand{\p}{\partial}

\newcommand{\R}{\mathbb{R}}

\newcommand{\T}{\mathbb{T}}

\newcommand{\ve}{\varepsilon}
\newcommand{\va}{\varsigma}

\newcommand{\dx}{\textnormal{d}x}
\newcommand{\dy}{\textnormal{d}y}
\newcommand{\dv}{\textnormal{d}v}
\newcommand{\dw}{\textnormal{d}w}
\newcommand{\dt}{\textnormal{d}t}
\newcommand{\ds}{\textnormal{d}s}

\newcommand{\ddt}{\frac{\textnormal{d}}{\textnormal{d}t}}
\newcommand{\tnd}{\textnormal{d}}

\newcommand{\weakto}{\rightharpoonup}

\newcommand{\lt}{\left}
\newcommand{\rt}{\right}
\newcommand{\la}{\langle}
\newcommand{\ra}{\rangle}

\newcommand{\ov}{\overline}
\newcommand{\wt}{\widetilde}

\newcommand{\intr}{\int_{\R^d}}

\def\XXint#1#2#3{{\setbox0=\hbox{$#1{#2#3}{\int}$ }
\vcenter{\hbox{$#2#3$ }}\kern-.6\wd0}}

\newcommand{\doubleparantheses}[2]{(\!(#1, #2)\!)}
\newcommand{\doublebrackets}[1]{\langle\!\langle #1 \rangle\!\rangle}
\newcommand{\NORM}[1]{\lVert #1 \rVert}
\newcommand{\NORMMM}[1]{{\left\vert\kern-0.25ex\left\vert\kern-0.25ex\left\vert #1 
    \right\vert\kern-0.25ex\right\vert\kern-0.25ex\right\vert}}

\newcommand{\leftperp}[1]{{}^\perp\!\!\!\:#1}

\allowdisplaybreaks

\date{\today}

\begin{document}

\title[Weakly inhomogeneous solutions to nonlinear VFP in a domain]{Weakly inhomogeneous solutions to a nonlinear Vlasov--Fokker--Planck equation in a domain}

\author{Sihyun Song}
\address{Department of Mathematics, Yonsei University, Seoul 03722, Republic of Korea}
\email{ssong@yonsei.ac.kr}

\date{\today}
\keywords{Vlasov--Fokker--Planck, weakly inhomogeneous, hypocoercivity, ultracontractivity, evolution system, specular reflection}

\makeatletter
\def\blfootnote{\gdef\@thefnmark{}\@footnotetext}
\makeatother

\blfootnote{MSC No. 35B40, 35Q70, 35Q84}

\begin{abstract}
A nonlinear Vlasov--Fokker--Planck equation is considered in a bounded domain, with specular reflection imposed on the boundary. It is proven that if the initial datum is sufficiently close, in a weighted $L^\infty$-space, to a radial and spatially homogeneous function that is regular enough, then a corresponding global-in-time weak solution exists, and this one decays with exponential rate to the global Maxwellian. The proof is split into two steps. In the first, the stability of the manifold of spatially homogeneous solutions to this equation is established. In the second part, the close-to-Maxwellian regime is considered. A careful regularization scheme is designed so that this mollified equation can be well-approximated by a linear system. Inspired by the recent preprint [Carrapatoso and Mischler, arxiv:2407.09031], for this linear system, $L^2$-hypocoercivity along with ultracontractivity properties are derived in order to change the space of functional decay to the $L^\infty$-framework. The properties of the original model are then recovered by means of fixed point arguments and carefully passing to the limit of the regularization parameter.
\end{abstract}

\maketitle
\setcounter{tocdepth}{2}
\tableofcontents

\singlespacing

\section{Introduction}

\subsection{The model and problem}

We consider a nonlinear Vlasov--Fokker--Planck equation with specular reflection imposed at the boundary:
\begin{equation}\label{eq: main}
\begin{cases}
    (\p_t + v\cdot \nabla_x) f = \scrF[f] &\text{in }\R_+\times\Omega\times\R^d,\\
    \scrF[f] := \rho[f] \nabla_v\cdot \Big( \theta[f] \nabla_v f + (v-u[f])f) \Big), \\
    f|_{t=0} = f_0 &\text{on }\Omega\times\R^d, \\
    \gamma_- f(t,x,v) = \gamma_+ (t,x, \scrV_x v) &\text{on } [0,\infty)\times \Sigma_-.
\end{cases}
\end{equation}
In the above, the coefficients of the equation are the macroscopic quantities associated to the solution, defined by
\begin{equation*}
    u[f] = \begin{cases} j[f]/\rho[f] & \rho[f] > 0, \\ 0 & \rho[f] = 0, \end{cases} \quad \rho[f] := \intr f \,\dv, \quad j[f] := \intr vf \,\dv,
\end{equation*}
along with
\begin{equation*}
    \theta[f] := \frac{1}{d\rho[f]}\intr |v-u[f]|^2 f \,\dv = \frac{1}{d\rho[f]} \lt( E[f] - \frac{|j[f]|^2}{\rho[f]} \rt) , \quad E[f] := \intr |v|^2 f \,\dv .
\end{equation*}
The quantities $\rho[f]$, $u[f]$, and $\theta[f]$ are often respectively referred to as the local density, bulk velocity, and temperature of $f$.

The Vlasov--Fokker--Planck equation is a fundamental equation which arises in the study of plasma physics or granular media \cite{Gre52,Kir46}. It can be formally derived from the Landau equation
\begin{equation*}
\begin{split}
    &\p_t f + v\cdot \nabla_x f = Q(f,f), \\
    &Q(f,f) := \p_{v_i} \int_{\R^d} a_{ij}(v-v')(f(v')\p_{v_j} f - f \p_{v_j'} f(v')) \,\dv' 
\end{split}
\end{equation*}
by replacing $f(v')$ with the corresponding local Maxwellian which corresponds to $f$, and by imposing that the matrix $a_{ij}$ corresponds to the Maxwellian potential. This simplification results in that the relaxation towards equilibrium of solutions to \eqref{eq: main} are expressed through quadratic terms which give a good approximation for binary collisions. However, as pointed out by Saint-Raymond in \cite{SR02}, there has been no ``satisfactory'' existence theory for the model \eqref{eq: main}, especially in the framework of weak solutions.

In some simpler cases where the collision frequency is modified, for instance if one considers
\begin{equation*}
    \p_t f + v\cdot \nabla_x f = \nu[f] \nabla_v  \cdot \Big(\theta[f]\nabla_v f + (v - u[f] ) f \Big)
\end{equation*}
with $\nu[f]$ obeying some better bounds compared to $\rho[f]$, then some satisfactory results have appeared in the recent years. For the simplest case of $\nu[f]\equiv 1$ and for $x\in \T^d$, a gradient flow-type approach was implemented in \cite{CarGan04} to prove the existence of (constructive) weak solutions under the assumption of $f_0\in L^1_{\la v \ra^6}\cap L^2 \cap L\log L(\T^d\times\R^d)$. Under the different set of assumptions $f_0\in L^1_{\la v \ra^3}\cap L^\infty\cap L\log L(\T^d\times\R^d)$, the existence of weak solutions was proven also in \cite{ChoiHwaYoo25}, under the same setting of $\nu\equiv 1$ and $x\in \T^d$. These results were subsequently generalized, in various directions, by the author in a joint work with Choi \cite{ChoiSong25}. There, we considered initial datum satisfying only the physically natural assumptions of $L^1_{\la v \ra^2}\cap L\log L(\Omega\times\R^d)$, more general (but bounded) collision frequencies $\nu$, and a general bounded domain $\Omega$. However, in that work, only the cases of fixed inflow or partial absorption-reflection boundary conditions were considered; the cases of pure specular reflection or Maxwell boundary conditions were not encompassed.

At this moment, the existence theory for $\eqref{eq: main}_1$ yet appears to be out of reach for just physically admissible initial datum. Nevertheless, this work aims to provide a step forward in the existence theory by considering the equation in the weakly inhomogeneous regime. Namely, in this work, we prove that if the initial datum is sufficiently close to some spatially homogeneous and radial function $g_0$, then there exists a global-in-time weak solution to \eqref{eq: main} which relaxes exponentially fast to the equilibrium state. The explicit main results will be stated in the next section.

\subsection{Notations and main results}

Throughout this paper, we always impose the following assumptions and notations. The domain $\Omega\subset\R^d$ is assumed to be bounded, and we assume in particular that we can write $\Omega$ as
\begin{equation*}
    \Omega:= \{ x\in \R^d : \mathfrak{d}(x) > 0 \},
\end{equation*}
where $\mathfrak{d}(x) := \textnormal{dist}(x,\p\Omega)$ is the signed distance function. We then define
\begin{equation*}
    n_x := -\frac{\nabla \frakd(x)}{|\nabla \frakd(x)|},
\end{equation*}
which coincides with the unit outwards normal vector to $\Omega$ whenever $x\in \p\Omega$. We assume throughout that $\frakd \in W^{2,\infty}(\ov{\Omega})$ and that there exists some $c_{\Omega}>0$ for which $|\nabla_x \frakd|\ge c_{\Omega}$ on all of $\ov{\Omega}$. The Hausdorff measure on $\p\Omega$ will be denoted $\tnd\sigma_x$.

We remind the reader that the specular reflection boundary condition means that once particles touch the wall, they bounce away with an angle to the normal that is the same as with the angle of entry. Namely, the specular reflection boundary condition says that
\begin{equation}\label{eq: specular}
    \gamma_- f(t,x,v) = \gamma_+ f(t,x, v - 2(n_x\cdot v) n_x) \quad \forall (t,x,v)\in (0,\infty)\times \Sigma_-,
\end{equation}
and we used the shorthand notation $\scrV_x v := v - 2(n_x\cdot v) n_x$ in $\eqref{eq: main}_2$. The following notation is always adopted: for each $x\in \p\Omega$,
\begin{align*}
    \Sigma_{\pm}^x := \{v\in \R^d : \pm v\cdot n_x > 0\},
\end{align*}
along with
\begin{align*}
    &\Sigma:= \p\Omega\times\R^d, \quad \Sigma_{\pm} := \{(x,v) \in \Sigma: v\in \Sigma_{\pm}^x\}, \\
    &\Sigma^T := (0,T)\times \Sigma, \quad \Sigma^T_{\pm} := (0,T)\times \Sigma_{\pm}, \quad T\in (0,\infty).
\end{align*}
We will also write $\calO := \Omega\times\R^d$ and $\calO^T := (0,T)\times\calO$, $\Omega^T:= (0,T)\times\Omega$ for given $T\in (0,\infty)$. Finally we remark that we often abuse notation and write \eqref{eq: specular} as
\begin{equation*}
    \gamma_- f = \gamma_+ f \circ \scrV_x \quad \text{on } (0,\infty)\times \Sigma_-.
\end{equation*}

Regarding functional spaces, for a weight function $\omega$ and $p\in [1,\infty]$ we will use the notation
\begin{equation*}
    \|f\|_{L^p_\omega} := \|f \omega\|_{L^p}.
\end{equation*}
For Sobolev spaces, when $p\in [1,\infty)$ we define $W^{1,p}_{\omega}$ as those (equivalence class of) measurable functions for which the norm
\begin{equation*}
    \|f\|_{W^{1,p}_{\omega}} := \Big(\|f\|_{L^p_\omega}^p + \|\nabla f\|_{L^p_{\omega}}^p \Big)^{1/p}
\end{equation*}
is finite. Generalizations to $W^{k,p}_{\omega}$, where $k \in \bbN$ or $p=\infty$, are given in the conventional way. The domain or variable of integration is typically clear from the later context and we will not specify it at this point. In the case $p=2$ we use the standard notation $H^k_{\omega} := W^{k,2}_{\omega}$. Also, in integrals we will sometimes omit the differential form notation in order to save space if the context is clear, for instance
\begin{equation*}
    \iint_{\calO} f \,\dx\dv = \iint_{\calO} f.
\end{equation*}

For a Banach space $X$ and a sequence $(f_k)\subset X$, we will write $f_k\weakto f$ weakly, or $f_k \weakto f$ in $X-w$, if $f_k\to f$ in $\sigma(X,X^*)$. Similarly, for a sequence of functionals $(f_k)\subset X^*$, we will write $f_k \weakto f$ weakly$^*$, or $f_k\weakto f$ in $X^*-*$, if $f_k \to f$ in $\sigma_*(X^*,X)$.

In regard of weak solutions, we will say that a function $f$ is a weak solution to \eqref{eq: main} if it satisfies $\eqref{eq: main}_1$ in duality with $\calD'([0,t]\times \ov{\calO})$ for every $t \ge 0$ with trace functions $f_t$ and $\gamma f$, the latter of which satisfies the specular reflection boundary condition $\eqref{eq: main}_2$ pointwisely (at least a.e.).

We also recall that the equation \eqref{eq: main} formally satisfies certain conservation laws. The beginning observation is that
\begin{equation}\label{Eq/ F Conv}
    \int_{\R^d} (1,v,|v|^2) \scrF[f] \,\dv = 0
\end{equation}
holds for any $f$ which has sufficient regularity and decay. Consequently, if $f$ is a smooth and rapidly decaying solution to \eqref{eq: main} we immediately have the mass and energy conservation
\begin{equation}\label{Eq/ Mass Egy}
    \ddt \iint_{\calO} (1,|v|^2) f \,\dx\dv = 0,
\end{equation}
thanks to the Gauss--Ostrogradsky theorem and the specular reflection boundary condition \eqref{eq: specular}. In the case where $\Omega$ has rotational symmetries, then another conservation law arises; namely, one can directly check that for $R(x) := Ux + u$, with $U\in \R^{d\times d}$ and $u\in \R^d$, the solution to \eqref{eq: main} satisfies
\begin{equation}\label{Eq/ Mom}
    \ddt \iint f R(x)\cdot v \,\dx\dv = 0
\end{equation}
if and only if
\begin{equation}\label{M/Eq/ calR}
    \text{$U$ is skew symmetric and } R(x) \cdot n_x = 0 \quad \forall x\in \p\Omega.
\end{equation}
The proof of the above makes crucial use of the fact that $\int v \scrF[f] \,\dv = 0$. We denote by $\calR_\Omega$ the collection of all such $R(x) = Ux + u$ which satisfy \eqref{M/Eq/ calR}, and we point out that $\calR_{\Omega}$ does not reduce to $\{0\}$ in the case where rotational symmetries are present. In conclusion, the conservation laws of \eqref{eq: main} can be summarized as
\begin{equation*}
    \ddt \iint_{\calO}(1, R(x)\cdot v, |v|^2) f \,\dx\dv = 0 \quad \forall R\in \calR_{\Omega}.
\end{equation*}
In view of the above, we impose the following condition on the initial datum. Denoting $\doublebrackets{f}:= \iint_{\calO} f$, we will say that $f$ satisfies the compatibility condition (C) if the following holds:
\begin{equation*}
    \textnormal{(C)} \qquad \doublebrackets{f - \mu} = \doublebrackets{(R(x)\cdot v) (f - \mu)} = \doublebrackets{|v|^2 (f - \mu)} = 0 \quad \forall R\in \calR_\Omega,
\end{equation*}
where $\mu$ is the usual normalized equilibrium $\mu(v) := e^{-|v|^2/2}/(2\pi)^{d/2}$.

Our main result reads as follows.

\begin{theorem}\label{Thm: Main}
    Let some radial function $g_0:\R^d_v\to \R_+$ be given with $\|g_0\|_{L^1_{\la v \ra^2}(\R^d)} < +\infty$ and $\rho[g_0] = 1$. Then there exists a class of admissible weights $\calW$ such that the following holds: for $\omega\in \calW$, if
    \begin{equation*}
        \|g_0\|_{W^{2,\infty}_{\la v \ra \omega }} < +\infty,
    \end{equation*}
    then there exists $\delta>0$ small enough such that if an initial datum $f_0$ obeying the compatibility condition (C) satisfies
    \begin{equation*}
        \|f_0 - g_0\|_{L^\infty_\omega(\calO)} \le \delta,
    \end{equation*}
    there exists a global-in-time weak solution $f \in L^\infty_{\omega}(\R_+\times \calO)$ to \eqref{eq: main} which satisfies (C) for all times $t\ge 0$. Furthermore, the following dissipative estimate holds
    \begin{equation*}
        \forall t\ge 0, \quad \|f_t - \mu\|_{L^\infty_\omega(\calO)} \lesssim e^{- \iota' t} 
    \end{equation*}
    for some constant $\iota' > 0$ independent of time.
\end{theorem}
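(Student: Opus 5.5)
The plan follows the two-step strategy announced in the abstract: first propagate closeness to the spatially homogeneous manifold up to a large time, then run a close-to-Maxwellian argument from that time on.

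\emph{Step 1: homogeneous flow and stability of the homogeneous manifold.} Let $g_t$ be the spatially homogeneous solution issued from the radial datum $g_0$. Since $g_0$ is radial, $u[g_t]=0$ for all $t$. Because $\rho[g_t]=1$ is conserved, the equation for $g_t$ reduces to a linear Fokker--Planck equation $\p_t g = \nabla_v\cdot(\theta(t)\nabla_v g + v g)$, where $\theta(t)$ is determined by energy. This equation can be solved explicitly through its Gaussian kernel, which gives:
\begin{itemize}
\item propagation of the $W^{2,\infty}_{\la v\ra\omega}$ bounds on $g_t$;
\item exponential convergence $\|g_t-\mu\|_{L^\infty_\omega}\lesssim e^{-\lambda t}$, using that (C) for $f_0$ together with $\delta$ small pins down mass and energy.
\end{itemize}
Next, write $f=g+h$ and derive the equation for the perturbation $h$. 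The coefficients $\rho[f]$, $u[f]$, $\theta[f]$ differ from $1$, $0$, $\theta(t)$ by quantities controlled by $\|h\|_{L^\infty_\omega}$; this is where the admissible weights $\omega\in\calW$ must have enough moments. The term $\scrF[f]-\scrF[g]$ then splits into a linear Fokker--Planck part acting on $h$ and source terms involving $\nabla_v g,\nabla_v^2 g$ times $O(\|h\|)$. These source terms are bounded using the $W^{2,\infty}_{\la v\ra\omega}$ regularity of $g$, which explains that hypothesis. A Gronwall-type $L^\infty_\omega$ estimate follows, using a maximum principle along characteristics, which are compatible with specular reflection. It yields $\|h_t\|_{L^\infty_\omega}\le C\delta e^{Ct}$ on $[0,T_*]$. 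Choosing $T_*$ large and then $\delta$ small makes $\|f_{T_*}-\mu\|_{L^\infty_\omega}$ as small as desired.

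\emph{Step 2: close-to-Maxwellian regime from time $T_*$.*} Mollify the coefficients (regularizing $\rho,u,\theta$ in $x$ and truncating), so that existence for the regularized problem is classical. Write $f=\mu+h$ and linearize. The resulting linear operator is $\calL h = -v\cdot\nabla_x h + \nabla_v\cdot(\nabla_v h + v h) + \text{(macroscopic corrections)}$, with specular reflection. The linear analysis has three parts:
\begin{itemize}
\item prove $L^2_{\mu^{-1/2}}$ hypocoercivity on the orthogonal complement of the conserved quantities $(1,R\cdot v,|v|^2)$, $R\in\calR_\Omega$, via a micro-macro modified entropy with Poisson-type correctors for $\rho,j,E$ under Neumann-type conditions;
\item derive ultracontractivity $L^2\to L^\infty_\omega$ over a fixed time interval, in the spirit of Carrapatoso--Mischler, via De Giorgi/Nash iteration for kinetic Fokker--Planck with reflection;
\item extend to weighted $L^\infty_\omega$ by a shrinkage/enlargement argument, giving a semigroup (or evolution system, for time-dependent coefficients) decaying like $e^{-\iota t}$ in $L^\infty_\omega$.
\end{itemize}
The nonlinear remainder is quadratic, of the form $O(\|h\|)\,\nabla_v\cdot(\ldots)$. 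A fixed point in the space $\sup_t e^{\iota' t}\|h_t\|_{L^\infty_\omega}$ closes for small data, with constants uniform in the regularization parameter. Compactness (averaging lemmas, weak-$*$ limits in $L^\infty_\omega$, and convergence of the coefficients) then lets us pass to the limit, producing a weak solution with traces satisfying the specular condition and (C). Gluing with Step 1 gives the global solution and the exponential decay.

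The main obstacle is Step 2. The nonlinear term contains $\nabla_v h$ and second-order terms multiplied by coefficient perturbations, so the fixed point cannot close in pure $L^\infty$. I would try first to exploit the regularization gained by the evolution system, estimating $\nabla_v$ applied to the Duhamel kernel with a $t^{-1/2}$ singularity. The other difficulty is the uniformity of the ultracontractive bounds near the specular boundary; there I would use the reflection extension across $\p\Omega$ allowed by $\frakd\in W^{2,\infty}$.
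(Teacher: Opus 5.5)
Your two-step architecture is exactly the paper's: stability near $g_t$ up to a time $T_*$ at which $g_{T_*}$ is close to $\mu$, then a close-to-Maxwellian result from $T_*$ on, glued by the triangle inequality. However, the step that actually produces a solution with uniform bounds is missing. The obstacle you flag at the end is real, and your suggested fix does not resolve it.

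In Step~2 you linearize at $\mu$ with fixed coefficients and treat $\Upsilon(h)$ as a source. But $\Upsilon(h)$ contains $\frac1d(E[h]-\bfA[h])\Delta_v h$, which perturbs the principal part. Write it as $\nabla_v\cdot(c(t,x)\nabla_v h)$:
\begin{itemize}
\item moving one derivative onto the Duhamel kernel leaves $\nabla_v h$, which $\|h\|_{L^\infty_\omega}$ does not control;
\item moving both derivatives onto the kernel gives a non-integrable $(t-s)^{-1}$.
\end{itemize}
Closing a contraction would therefore need a space controlling $\nabla_v h$. That requires regularity of $f_0$ which the theorem does not assume, and gradient bounds for the linear evolution up to a specular boundary, which are delicate because regularity is lost near the grazing set.

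Step~1 has the same defect in another form. Your maximum-principle/Gronwall bound is only an a priori estimate, and nothing in the proposal constructs $f$ on $[0,T_*]$. Note also that a diffusive equation has no characteristics; on weak solutions the maximum principle must be justified by renormalization or duality.

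The missing idea, used in both parts of the paper, is to \emph{freeze} the quasilinear coefficients rather than perturb them. For a given $k$ in a small ball $\calX_{\omega,c}$ of $L^\infty_\omega$, one studies the linear operator $\sfF_\va[k]$. Its diffusion, drift and zero-order coefficients $\sigma_\va[k],\zeta[k],\eta[k]$ already contain the perturbation. Its nonlocal part is the projected remainder $\leftperp{\Upsilon_\va[k]}$, so the linear flow preserves $\calC$, which is the only space on which hypocoercivity holds. All linear estimates are then proved uniformly in $k$ and $\va$:
\begin{itemize}
\item hypocoercivity in $L^2_{\mu^{-1/2}}$ (Proposition~\ref{prop: L_g hypo}), which requires keeping $\|\nabla_v(\mu^{-1}\leftperp{f})\|^2$ and $\|v\leftperp{f}\|^2$ in the coercivity bound (Lemmas~\ref{lem: L coercive} and \ref{lem: non sma11}) to absorb the $O(\|k\|_{L^\infty_\omega})$ second-order terms, since \eqref{eq: hypo of L} alone yields only $\|f\|^2$;
\item dissipativity and ultracontractivity of $\sfH_\va[k]$;
\item the extension to $L^\infty_\omega$ (Proposition~\ref{Prop/ decay}).
\end{itemize}

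The nonlinear problem is then solved by Schauder's theorem for $k\mapsto\calU_{\sfF_\va[k]}(t,0)h_0$ on $\calX_{\omega,c}$ with the weak-$*$ topology. No Lipschitz estimate, and no $L^\infty$ control of $\nabla_v h$, is ever needed. Continuity of that map is why the quadratic coefficient is mollified in both $t$ and $x$: $\bfA_\va[k^n]\to\bfA_\va[k]$ pointwise whenever $k^n\weakto k$ weakly-$*$. Together with Bouchut's hypoelliptic compactness, this lets one pass to the limit. Your regularization of $\rho,u,\theta$ in $x$ only, aimed at classical existence, does not give this stability.

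Likewise, in Step~1 the paper obtains the $L^\infty_\omega$ bound for frozen $k$ by duality from an $L^1_{\omega^{-1}}$ estimate on the backward adjoint. That is also where the constraint $\gamma<d/(2E[g_0])$ on Gaussian weights enters, beyond a mere moment condition.
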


\begin{remark}
    The admissible class of weights $\calW$ in Theorem \ref{Thm: Main} is defined as
    \begin{equation*}
        \calW := \calW_{g_0} \cap \calW_{\mu^{-1/2}},
    \end{equation*}
    where for each $\frakg :\R^d_v\to \R$ with $\|\frakg \|_{L^1_{\la v \ra^2}(\R^d)} < \infty$, we define $\calW_{\frakg}$ as
\begin{equation*}
    \calW_{\frakg} := \lt\{
        \begin{aligned}
        \la v \ra^m, \quad & m > d + 2, \\
        \la v \ra^m e^{\gamma |v|^\tau}, \quad &\text{either } m \ge 0, \; \gamma > 0, \text{ and } \tau \in (0,2) \\
        &\text{or } m \ge 0, \; \gamma\in \lt(0,\frac{d}{2E[\frakg]}\rt), \text{ and } \tau = 2
        \end{aligned}
        \rt\} .
\end{equation*}
\end{remark}

\begin{remark}
    The uniqueness of the solution as constructed in Theorem \ref{Thm: Main} is unknown. If we impose some Sobolev regularity on the initial datum, then we believe that uniqueness can be established, but we do not pursue this direction in the current work.
\end{remark}

The proof of Theorem \ref{Thm: Main} is split into two parts. In the first part, we consider the weakly-inhomogeneous regime in a finite time horizon setting. Namely, the main theorem of the first section establishes the stability of the manifold of spatially homogeneous solutions up to an arbitrary time:

\begin{theorem}\label{Thm1}
    We fix an admissible weight $\omega\in \calW_{g_0}$. Let any $a \ll 1$, $T>0$ be given, and assume that $g_0\in W^{2,\infty}_{\la v \ra \omega}(\R^d)$. Then there exists $\delta=\delta(a,T)$ such that if $\|f_0 - g_0\|_{L^\infty_\omega}\le \delta(a,T)$, then there exists a weak and renormalized solution $f\in L^\infty_{\omega}((0,T)\times \calO)$ to the Vlasov--Fokker--Planck system \eqref{eq: main}, satisfying 
    \begin{equation}\label{Eq/ Stab}
    \sup_{t\in [0,T]} \|f_t - g_t\|_{L^\infty_{\omega}(\calO)} \le a.
    \end{equation}
    If, furthermore $f_0$ satisfies the compatibility assumption (C), then $f_t$ satisfies (C) also at every $t\in [0,T]$.
\end{theorem}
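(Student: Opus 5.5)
We propose to prove Theorem \ref{Thm1} by writing $f = g + h$, where $g_t$ is the spatially homogeneous solution issued from $g_0$. For radial homogeneous data the bulk velocity vanishes identically, and $\rho[g]\equiv 1$ and $\theta[g]\equiv E[g_0]/d$ are conserved. Hence $g$ solves the \emph{linear} Fokker--Planck equation $\p_t g = \nabla_v\cdot(\theta_0\nabla_v g + v g)$ with $\theta_0 = E[g_0]/d$. This equation can be solved explicitly by a Gaussian-convolution formula, and it propagates the bound $g_t\in W^{2,\infty}_{\la v \ra \omega}$ uniformly on $[0,T]$. That propagation is where the hypothesis $g_0\in W^{2,\infty}_{\la v\ra\omega}$ enters. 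It is also where the condition $\gamma<d/(2E[g_0])$ for Gaussian weights enters: the weight $e^{\gamma|v|^2}$ must be compatible with the equilibrium temperature $\theta_0$. Since $g$ does not depend on $x$ and is radial, it satisfies the specular condition trivially.

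The perturbation then solves
\begin{equation*}
(\p_t + v\cdot\nabla_x) h = \rho[f]\nabla_v\cdot\big(\theta[f]\nabla_v h + (v-u[f])h\big) + \scrF[f]-\rho[f]\nabla_v\cdot\big(\theta[f]\nabla_v g+(v-u[f])g\big),
\end{equation*}
with specular reflection. The source term equals
\begin{equation*}
(\rho[f]-1)\,\mathcal L_{\theta_0} g + \rho[f]\nabla_v\cdot\big((\theta[f]-\theta_0)\nabla_v g - u[f] g\big),
\end{equation*}
where $\mathcal L_{\theta_0}$ is the homogeneous Fokker--Planck operator. Since it involves up to two $v$-derivatives of $g$, it is bounded in $L^\infty_\omega$ by
\begin{equation*}
C\,\|g\|_{W^{2,\infty}_{\la v\ra\omega}}\big(|\rho[f]-1|+|\theta[f]-\theta_0|+|u[f]|\big).
\end{equation*}
The moments are controlled by $\|h\|_{L^\infty_\omega}$, because $\omega\in\calW_{g_0}$ gives $\omega^{-1}\la v\ra^2\in L^1$; for polynomial weights this is exactly the requirement $m>d+2$. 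In particular, as long as $\|h\|_{L^\infty_\omega}\le a\ll1$, we have $\rho[f]\in[1/2,2]$ and $\theta[f]\in[\theta_0/2,2\theta_0]$, so the diffusion coefficient stays uniformly elliptic.

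Given frozen coefficients $(\bar\rho,\bar u,\bar\theta)$ in this bounded, elliptic regime, we next prove an $L^\infty_\omega$ estimate for the linear kinetic Fokker--Planck equation with specular reflection. We would do this by a maximum principle, or an Alexandrov-type comparison, applied to $\omega h$, using a barrier function in $v$. The weight computation produces the Lyapunov-type bound
\begin{equation*}
\omega^{-1}\,\bar\rho\big(\bar\theta\Delta_v\omega - (v-\bar u)\cdot\nabla_v\omega\big)\le C.
\end{equation*}
Specular reflection preserves $|v|$, and the weight is radial, so the boundary contributes no growth: a maximum cannot be created at the boundary beyond its reflected value. This yields
\begin{equation*}
\|h_t\|_{L^\infty_\omega}\le e^{Ct}\Big(\|h_0\|_{L^\infty_\omega}+C\int_0^t\|h_s\|_{L^\infty_\omega}\,\ds\Big),
\end{equation*}
and Gronwall gives $\sup_{[0,T]}\|h\|\le e^{C'T}\delta$. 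Choosing $\delta(a,T)=a e^{-C'T}$ closes the bootstrap.

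Existence then follows from a regularization/fixed point scheme. We would mollify the coefficients in $x$ (and truncate them), solve the linear problem by Picard iteration or Schauder's fixed point on the ball $\{\sup_t\|h\|_{L^\infty_\omega}\le a\}$, and pass to the limit of the regularization parameter. The limit passage uses the uniform $L^\infty_\omega$ bounds, averaging lemmas (velocity averaging gives strong compactness of $\rho,j,E$, which is needed for the nonlinear coefficients), and a renormalized formulation that handles traces; this makes the limit a weak and renormalized solution. Conservation of (C) follows by testing with $1$, $R(x)\cdot v$ and $|v|^2$, using \eqref{Eq/ F Conv} and the boundary condition, with enough decay from $\omega$. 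We expect the main obstacle to be this limit passage, specifically obtaining compactness of traces under specular reflection, which does not come from an inflow datum. There we would rely on the uniform weighted bounds and known trace theorems for renormalized kinetic solutions.
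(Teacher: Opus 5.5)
Your overall architecture matches the paper's: perturb around $g$, freeze coefficients, prove a weighted $L^\infty_\omega$ bound that keeps a small ball invariant, solve a regularized problem by a fixed point, pass to the limit by hypoelliptic compactness, renormalize, and test with $1,R(x)\cdot v,|v|^2$. The existence step, however, has a genuine gap, and it sits exactly where the paper locates the main difficulty. The diffusion coefficient $\rho[f]\theta[f]=\frac1d\big(E[f]-|j[f]|^2/\rho[f]\big)$ contains the quadratic term $\bfA=|j|^2/\rho$, and neither of your two fixed-point options handles it.

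\emph{Picard iteration.} It gives no contraction in $L^\infty_\omega$. The difference of two iterates is driven by $(\sigma[k_1]-\sigma[k_2])\Delta_v h_2$, and $L^\infty_\omega$ data give no control of $\Delta_v h_2$.

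\emph{Schauder.} On the ball $\{\|k\|_{L^\infty_\omega}\le a\}$ with the weak-$*$ topology that makes it compact, you need $k\mapsto\Psi(k)$ to be weak-$*$ continuous. But $k^n\weakto k$ only yields $j[k^n]\weakto j[k]$ weakly-$*$ in $L^\infty(\Omega^T)$. Mollifying in $x$ alone does not make $|j|^2$ continuous, because the oscillation can be in time. Take $k^n=\sin(nt)\psi(v)$ with $\psi$ compactly supported, odd, and $j[\psi]=c\neq0$. Then $k^n\weakto0$, but $|j[k^n]|^2=|c|^2\sin^2(nt)\weakto|c|^2/2\neq0$, and $x$-mollification does not change this. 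Velocity averaging gives compactness for solutions, not for arbitrary elements $k$ of the ball.

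The paper's remedy is to mollify only the quadratic part, in both time and space: $\bfA_\va[k]=|j[k]*_t\bfs_\va\star_\va\bfr_\va|^2/(1+\rho[k]*_t\bfs_\va\star_\va\bfr_\va)$. This turns weak-$*$ convergence of $k^n$ into pointwise, hence $L^p$, convergence of $\bfA_\va[k^n]$. The remaining coefficients are linear in $k$ and pass to the limit against the strong $L^2_{\rm loc}$ convergence of $h^n$ given by Bouchut's estimates. Alternatively, you could apply Schauder in a strong $L^1$ topology, with relative compactness of $\Psi(\calX_{T,\omega,a})$ coming from hypoelliptic estimates plus the $L^\infty_\omega$ bound. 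Either way, some such mechanism must be specified.

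Three smaller remarks.

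\emph{Invariance of the ball.} In the frozen problem, the terms linear in $h$ ($dg\rho[h]$, $\frac1d\Delta_v g\,E[h]$, $\bfB[h]\cdot\nabla_v g$) should act on the unknown as a bounded nonlocal operator, as with $\scrP$ in Lemma~\ref{l: sp: for lin WP}. Only the quadratic piece should be frozen at $k$. If the whole source is evaluated at $k$, your bound becomes $e^{CT}(\delta+CTa)$ and the ball is not invariant for large $T$.

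\emph{The $L^\infty_\omega$ bound.} Your comparison-principle route is a legitimate alternative to the paper's duality argument. The paper proves an $L^1_{\omega^{-1}}$ bound for the backward equation in Lemma~\ref{s: l: b L1} and transfers it via Lemma~\ref{p: l: dual} in Proposition~\ref{S: P: infty}. For weak solutions your route must be run through the renormalized formulation, e.g.\ with $\beta(z)=(z-K(t))_+$ applied to $\omega h$ against a radial cutoff in $v$. There the boundary term does vanish by specular reflection, as you say.

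\emph{Traces.} These are less of an obstacle than you expect. The specular condition is linear, so weak convergence of traces suffices. The paper bounds $\gamma h$ uniformly in $L^2_{\rm loc}(\Sigma^T;(n_x\cdot v)^2\,\tnd\sigma_x\dv\dt)$ by testing with $(n_x\cdot v)\chi_R$, then passes the boundary condition to the weak limit. Strong trace convergence is needed only for the renormalization, where it follows from a Cauchy argument on the mollifications $h_\ve$.
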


In the second part, we consider the close-to-Maxwellian regime. That is, we consider the setting where the initial datum is sufficiently close to $\mu$. We prove that a global-in-time weak solution exists and that it decays exponentially to the Maxwellian.

\begin{theorem}\label{Thm2}
    We fix an admissible weight $\omega\in \calW_{\mu^{-1/2}}$. There exists some $a_0>0$ small enough such that if $f_0$ satisfies (C) and $\|f_0 - \mu\|_{L^\infty_{\omega}}\le a_0$, then there exists a weak and renormalized solution $f\in L^\infty_{\omega}(\R_+\times \calO)$ to the Vlasov--Fokker--Planck system \eqref{eq: main}. This one satisfies all of the conservation laws (C) and also the following hypodissipative estimate for some $\iota'>0$:
    \begin{equation}\label{Eq/ hypodissipative}
        \|f_t - \mu\|_{L^\infty_{\omega}(\calO)} \lesssim e^{-\iota' t}.
    \end{equation}
\end{theorem}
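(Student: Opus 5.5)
We plan to prove Theorem \ref{Thm2} in four stages: regularize the equation, build solutions of the regularized problem by a fixed point around $\mu$, prove uniform exponential decay for them, and pass to the limit in the regularization parameter.

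\emph{Regularization and perturbative form.} Write $f = \mu + h$ and regularize the nonlinearity by a parameter $\ve>0$. Concretely, we replace the macroscopic coefficients $\rho[f],u[f],\theta[f]$ by mollified versions $\rho_\ve,u_\ve,\theta_\ve$. These are built from $x$-mollified moments, truncated so that $\rho_\ve$ and $\theta_\ve$ stay in $[1/2,2]$ and $|u_\ve|\le 1$ whenever $h$ is small in $L^\infty_\omega$; we may also add a small $\ve\Delta_v$-type term. The mollification has to be designed so that it preserves the identities $\int (1,v,|v|^2)\scrF_\ve[f]\,\dv = 0$, because these give the conservation laws (C). The regularized equation then reads
\begin{equation*}
\p_t h + v\cdot\nabla_x h = \sfL h + \scrF_\ve[\mu+h] - \sfL h =: \sfL h + \sfG_\ve[h].
\end{equation*}
Here $\sfL h = \nabla_v\cdot(\nabla_v h + vh) + \nabla_v\cdot\big((\rho[h]-d^{-1}\text{-type corrections})\ldots\big)$ is the linearization at $\mu$. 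It is the Fokker--Planck operator plus the rank-finite terms coming from linearizing $\rho,u,\theta$, namely $\mu$ times $j[h]\cdot v$ and $\mu$ times the temperature perturbation times $(|v|^2-d)$. The remainder $\sfG_\ve[h]$ is at least quadratic, and with the mollification $\|\sfG_\ve[h]\|\lesssim_\ve \|h\|_{L^\infty_\omega}^2$ in a suitable negative-order space.

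\emph{Linear theory.} For the linear problem $\p_t h + v\cdot\nabla_x h = \sfL h$ with specular reflection and the conservation laws (C), we first prove $L^2_{\mu^{-1/2}}$ hypocoercivity. This uses a micro--macro / $H^{-1}$-type modified entropy in the spirit of Dolbeault--Mouhot--Schmeiser, adapted to specular boundaries. The macroscopic part is controlled through elliptic problems for $\rho$, $j$ and $\theta$ with Neumann-type conditions. For the momentum we need a Korn inequality modulo $\calR_\Omega$, and this is exactly where (C) with $R\in\calR_\Omega$ enters. Next, following Carrapatoso--Mischler, we enlarge the functional space from $L^2_{\mu^{-1/2}}$ to $L^2_\omega$ and $L^\infty_\omega$. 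We split $\sfL=\sfA+\sfB$, with $\sfA$ a bounded truncation and $\sfB$ dissipative in $L^p_\omega$ for every admissible $\omega\in\calW_{\mu^{-1/2}}$; the constraints on $\gamma,\tau,m$ are exactly what this dissipativity requires. We then prove an ultracontractivity estimate $\|S_\sfB(t)\|_{L^2_\omega\to L^\infty_\omega}\lesssim t^{-N}$ for small $t$. This is done by a De Giorgi/Moser iteration or Nash-type argument that uses hypoelliptic gain from the kinetic transport and the $v$-diffusion, and the specular reflection is handled by even extension across the boundary. A Duhamel iteration $S_\sfL = S_\sfB + S_\sfB*\sfA S_\sfL$ then yields
\begin{equation*}
\|S_\sfL(t)h_0\|_{L^\infty_\omega}\lesssim e^{-\iota t}\|h_0\|_{L^\infty_\omega}
\end{equation*}
on data satisfying (C). It also yields a matching estimate for the Duhamel term with a source written in divergence form $\nabla_v\cdot G$, which costs an integrable singularity $t^{-1/2}$.

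\emph{Nonlinear problem and limit.} For fixed $\ve$, we set up a Banach fixed point in $X:=\{h: \sup_t e^{\iota' t}\|h_t\|_{L^\infty_\omega}\le 2Ca_0\}$, with $\iota'<\iota$, for the map $h\mapsto S_\sfL(t)h_0 + \int_0^t S_\sfL(t-s)\sfG_\ve[h_s]\,\ds$. Since $\sfG_\ve$ is quadratic and has divergence structure, it is a contraction for $a_0$ small. The hard part is making $a_0$ independent of $\ve$. For that, the estimates on $\sfG_\ve$ must be uniform in $\ve$ in the divergence form $\nabla_v\cdot\big((\rho_\ve-1)\nabla_v h + \ldots\big)$, and this in turn needs a uniform a priori bound on $\nabla_v h$ in a weighted $L^\infty$ or $L^2$ norm. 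We will try to obtain it from the parabolic smoothing in $v$ in the same Duhamel scheme, working in a norm such as $\sup_t (\|h_t\|_{L^\infty_\omega} + \min(1,t)^{1/2}\|\nabla_v h_t\|_{L^\infty_\omega})$. We expect this step to be the main obstacle. Once the bounds are uniform, we extract $h^\ve\weakto h$ weakly$^*$ in $L^\infty_\omega$. To pass to the limit in the nonlinear coefficients we use velocity averaging lemmas, which give strong convergence of the moments, while traces are handled by the renormalized formulation and Green's formula for kinetic equations. The limit is then a weak, renormalized solution; it satisfies (C) because weak limits preserve linear moments, and the decay estimate \eqref{Eq/ hypodissipative} passes to the limit by lower semicontinuity.
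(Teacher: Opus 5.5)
Your linear programme is close in spirit to the paper: $L^2_{\mu^{-1/2}}$ hypocoercivity via elliptic problems and Korn modulo $\calR_\Omega$, a bounded-plus-dissipative splitting, ultracontractivity, and Duhamel iteration. The nonlinear step, however, has a genuine gap, and the problem is not only uniformity in $\ve$.

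\textbf{Where your scheme breaks.} Your remainder $\sfG_\ve[h]$ contains the quasilinear term $(\rho_\ve\theta_\ve-1)\Delta_v h$. Mollifying the moments does nothing to the factor $\Delta_v h$.
\begin{itemize}
\item Even at fixed $\ve$, for $h\in L^\infty_\omega$ this term is only of order $-2$ in $v$. The smoothing of $S_\sfL(t-s)$ then costs $(t-s)^{-1}$, which is not integrable, so the Duhamel term $\int_0^t S_\sfL(t-s)\sfG_\ve[h_s]\,\ds$ cannot be bounded in $L^\infty_\omega$.
\item A Banach contraction fails for the same reason, since the difference of two iterates contains $(a_1-a_2)\Delta_v h$.
\item Your proposed fix does not close. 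Write the term as $\nabla_v\cdot(a\nabla_v h)$ with $a=a(t,x)=O(\|h\|_{L^\infty_\omega})$. Controlling $\nabla_v h_t$ through Duhamel costs $\|\nabla_v S_\sfL(t-s)\nabla_v\cdot\|\sim(t-s)^{-1}$. In the norm $\sup_t\big(\|h_t\|_{L^\infty_\omega}+\min(1,t)^{1/2}\|\nabla_v h_t\|_{L^\infty_\omega}\big)$ you would then need to bound $\int_0^t(t-s)^{-1}s^{-1/2}\,\ds$, which diverges logarithmically at $s=t$.
\end{itemize}
This is the usual failure of maximal regularity in $L^\infty$. Repairing it would require a kinetic Schauder/H\"older theory up to a specular boundary, which you do not have. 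It is exactly why the paper says the quadratic diffusion term cannot be treated as a perturbation without Sobolev regularity.

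\textbf{The missing idea: keep the nonlinearity in the principal part.}
\begin{itemize}
\item The paper freezes coefficients at a given $k$ with $\|k\|_{L^\infty_\omega}\le c$. It then studies the linear, non-autonomous operator $\sfF_\va[k]=-v\cdot\nabla_x+\sfC_{\rm lin}+\leftperp{\Upsilon_\va}[k]$.
\item The diffusion coefficient $\sigma_\va[k]=1+\frac1d(E[k]-\bfA_\va[k])$ of this operator is merely bounded and uniformly elliptic. Every linear estimate is proved uniformly over such rough coefficients:
  \begin{itemize}
  \item $L^2_{\mu^{-1/2}}$ hypocoercivity, sharpened by keeping $\|\nabla_v(\mu^{-1}\leftperp{h})\|_{L^2_{\mu^{+1/2}}}$ and $\|v\leftperp{h}\|_{L^2_{\mu^{-1/2}}}$ in the dissipation, so the $k$-dependent terms are absorbed for $c$ small;
  \item $L^p_\omega$-dissipativity of $\sfH_\va[k]$, obtained by duality;
  \item De Giorgi--Nash-type ultracontractivity, obtained with a boundary-penalizing twisted weight rather than an even extension, which is not directly available across a curved boundary;
  \item the Duhamel iteration.
  \end{itemize}
\item The nonlinear problem is solved with Schauder's fixed point theorem, not Banach's, for $k\mapsto\calU_{\sfF_\va[k]}(t,0)h_0$ on the weak-$*$ compact ball $\calX_{\omega,c}$.
\item The $(t,x)$-mollification enters only through $\bfA_\va[k]$. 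Its role is that the genuinely nonlinear coefficient $|j[k]|^2/(1+\rho[k])$ converges pointwise when $k$ converges only weak-$*$, which gives continuity of the map.
\item All constants depend only on $c$, so uniformity in $\va$ is automatic and no bound on $\nabla_v h$ is ever needed.
\end{itemize}

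\textbf{A further point on conservation.} Mollifying $u$ and $\theta$ as you propose destroys the local identities $\int v\,\scrF[f]\,\dv=\int|v|^2\scrF[f]\,\dv=0$. The paper restores them by projecting the remainder off the collision invariants, not by designing the mollifier.
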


The proof of Theorem \ref{Thm: Main} is then a simple combination of Theorems \ref{Thm1}--\ref{Thm2}.

\begin{proof}[Proof of Theorem \ref{Thm: Main}]
Since $\omega\in \calW_{g_0} \cap \calW_{\mu^{-1/2}}$, we observe that both the results of Theorems \ref{Thm1}--\ref{Thm2} are at our disposal. Let $a_0>0$ be fixed as required in Theorem \ref{Thm2}. Thanks to the already known results on the spatially homogeneous Fokker--Planck equation (see Lemma \ref{Lem/ Trend to Equi}), we may also fix appropriate $T>0$ so that
\begin{align*}
    \|g_T - \mu\|_{L^\infty_\omega} \le a_0 /2.
\end{align*}
Next, we apply Theorem \ref{Thm1} to find $\delta=\delta(a_0,T)>0$ such that if
\begin{align*}
    \|f_0 - g_0\|_{L^\infty_\omega} \le \delta,
\end{align*}
then there exists a weak solution $f_t\in L^\infty_{\omega}([0,T]\times \calO)$ to \eqref{eq: main} satisfying
\begin{equation*}
    \|f_T - g_T\|_{L^\infty_\omega} \le a_0 /2.
\end{equation*}
Then the triangle inequality shows
\begin{align*}
    \|f_T - \mu\|_{L^\infty_\omega} \le a_0,
\end{align*}
and thus we may now apply Theorem \ref{Thm2} (applied with initial datum $f_T$) to obtain a solution $f$ which extends to $t\ge T$ and exponentially converges to $\mu$ as $t\to\infty$.
\end{proof}

\subsection{Strategy of proof and review of the literature}

\subsubsection{A brief review}

The Cauchy problem for kinetic equations and the long-time behavior of solutions is an important topic of research, and it has been investigated thoroughly in the past few decades. In regard of the model \eqref{eq: main}, results are relatively scarce but we may first mention \cite{Cho16} where a similar (but simpler) equation is considered, in the framework of high-order Sobolev spaces. Also, there is the more recent work \cite{LiaoYang21} on a similar nonlinear Fokker--Planck equation to \eqref{eq: main} in $\R^3$, and its trend to equilibrium is established. As aforementioned, \cite{CarGan04,ChoiHwaYoo25,ChoiSong25} discussed the existence of weak solutions to a model similar to that of \eqref{eq: main}, but no rigorous results on the convergence $f\to \mu$ were obtained. 

Regarding more general models, the convergence (and its rate) of a solution to kinetic PDE to the equilibrium state has been extensively discussed, and this topic is typically called \textit{hypocoercivity}; we refer to \cite{DolMouSch09,DolMouSch15,GuaMisMou17,MisMou16} and in particular the memoir \cite{Vil09}. In special regard of the Landau and Boltzmann equations, we may mention \cite{CarMis17,DesVil05,Guo02,KimGuoHwang20,StrGuo08}; in regard of existence and trend to equilibrium problems posed on general domains, we can mention \cite{BCMT23,BriGuo16,CarMis24KFP,CarMis24,Guo10,GuoHwangJangOu20,KimLee18A,KimLee18B}.

There are various well-known methods which allow one to establish the hypocoercivity property of solutions. For instance we may mention the celebrated relative entropy method, in the spirit of Bakry--Emery. We refer to \cite{Jun16} for an introduction to entropy methods and \cite{DolLi18} as another reference where entropy and $L^2$-estimates are appropriately interpolated. In the relative entropy framework, the goal is to find a suitable Lyapunov functional, equivalent to the usual entropy, which decays along the flow. This is actually not so simple as in the spatially homogeneous case because typically the dissipation of the entropy only provides relaxation towards a local equilibrium.

We also mention the $L^2$-hypocoercivity method, which is reminiscent of the procedure taken when investigating diffusion limits; there is a huge amount of references, and we just mention \cite{AddDolbLiTay21,BCMT23,DolMouSch15} at this point. It is worth pointing out that the recent work \cite{BCMT23} was able to establish $L^2$-hypocoercivity in a general bounded domain, with Maxwell reflection condition imposed at the boundary, and this has been a source of enlightenment for \cite{CarMis24KFP,CarMis24}, and this work as well. There is also the $H^1$-hypocoercivity method, which incorporates a ``twisted Fisher information,'' popularized by Villani \cite{Vil09} and that of which is in the spirit of H\"ormander's work on hypoellipticity \cite{Hor67}.

Another interesting task in the theory of kinetic equations has been to obtain this hypocoercivity property in larger spaces, for instance spaces that are not Hilbert spaces. There are several motivations as to why such a topic is required. For one, while there have been groundbreaking works by Guo in high-order Sobolev spaces (see for instance \cite{Guo02}), these Sobolev spaces are not very well suited to boundary problems where we expect regularity loss to occur near grazing sets. Also, it is typically the case that the PDE in question is well-posed in weighted Lebesgue spaces (without having to extend to Sobolev regularity), and therefore these appear to be the natural spaces in which to derive decay of the solution. Consequently, many techniques have been developed in order to change the functional space in which decay holds, in particular \cite{GuaMisMou17,KimGuoHwang20,MisMou16}. It is worth emphasizing that a crucial ingredient in the proof of for instance \cite{MisMou16} is the ultracontractivity property of kinetic equations. We remark that such an ultracontractivity property is well-known in the parabolic setting, thanks to the classical works of De Giorgi--Nash--Moser. Recently, the applications of their methods to the kinetic framework has seen light \cite{GIMV19}. We refer in particular to the recent works \cite{ImbMou21,Zhu24} where regularity of some kinetic Fokker--Planck type equations is investigated.

The next two sections briefly describe the story behind the main results (Theorems \ref{Thm1}--\ref{Thm2}, respectively) and their proofs.

\subsubsection{The weakly inhomogeneous regime}

The material of this section is inspired by \cite{ArkEspPul87}, where the Boltzmann equation was considered in such a weakly inhomogeneous regime; this was next generalized in \cite{GuoLiu17} to bounded domains with specular reflection imposed on the boundary. In this regime, the main problem becomes a question of whether we can ``keep small'' perturbations about a general spatially inhomogeneous solution $g$ to \eqref{eq: main}. In the case of the Fokker--Planck equation, a major difficulty is due to the fact that if one investigates the equation satisfied by the perturbation $h:=f-g$, then the principal part of the operator also becomes perturbed. In particular, the diffusion coefficient ends up with a quadratic term in $h$, which we denote $\bfA[h]$. If we are to work in a high-order Sobolev space as was done for instance in \cite{Guo02}, then we could just merely treat this term as a small nonlinear perturbation. However, our goal is to work only with weighted Lebesgue spaces, and so without any Sobolev space assumptions on the initial datum; thus, such an approach is not possible.

This difficulty is overcome through several steps. In \cite{CarMis24} (where the Landau equation was discussed), the coefficients of the PDE were suitably modified so as the analysis reduces to that of a linear one. It turns out that trying to directly apply that approach does not work so well for the model \eqref{eq: main}. Indeed, as aforementioned, there is a quadratic term in the diffusion coefficient which prevents the application of any known fixed point theorems. Towards this end, instead of immediately linearizing this equation, we first mollify the quadratic diffusion coefficient $\bfA[h]$ carefully, so that it now reads $\bfA_\va[h]$ with a regularization parameter $\va>0$. The method of mollification is specifically designed so that $\bfA_\va[\cdot]$ enjoys a strong stability property in suitable Lebesgue spaces, even if its argument only converges for instance weakly$-*$. This specific choice of mollification ensures that a fixed point argument becomes feasible, and we can then turn our attention solely to the linear counterpart of this mollified equation. That is, in the quadratic terms of the nonlinear PDE, we replace one in each term so that it depends on a fixed function $k=k(t,x,v)$ instead of the solution $h$, and consequently to ensure that the resulting PDE becomes linear.

Thanks to preliminary results on linear nonlocal Kolmogorov-type equations, which we write out in Section \ref{Sec/ Pre}, we readily obtain the well-posedness for the linear equation in a weighted $L^2_{\wt{\omega}}$ framework, where $\wt{\omega}$ is extracted from a set $\wt{\calW}$ which consists of polynomial weights. By manipulating the weights in $\wt{\calW}$, in particular choosing two weights $\omega, \wt{\omega}$ suitably so that $\wt{\omega}\omega^{-1}\in L^2(\R^d)$, we extend the $L^2_{\wt{\omega}}$ solution to the $L^\infty_{\omega}$ framework and rigorously establish $L^\infty_{\omega}$-estimates for it by resorting to its backwards dual equation and using a duality formula (Lemma \ref{p: l: dual}). This estimate can be made independent of $\va$, and we prove that if the initial datum $h_0:= f_0 - g_0$ is small enough, then the solution operator leaves a small ball of $L^\infty_{\omega}([0,T]\times\calO)$ invariant. Consequently, resorting to a fixed point theorem of Schauder, we obtain a solution to the nonlinear equation, which yet has mollified coefficient $\bfA_\va[h]$. We then pass to the limit $\va\to 0$ to show that a limit point exists which solves the original equation and satisfies the required stability estimate in \eqref{Eq/ Stab}.

\subsubsection{The close-to-Maxwellian regime}

The second part of this work is inspired by the strategy that was taken in \cite{CarMis24} regarding the Landau equation, see also \cite{CarMis24KFP}. In Section \ref{Sec/ Sketch2} we will provide a more explicit sketch of proof of Theorem \ref{Thm2}, and collect some of the key ingredients that are to be established later in the paper. Here, let us provide the reader with a rough draft of the scheme we will be using.

Similarly as in the weakly inhomogeneous regime, we write $h:= f - \mu$ and consider the (nonlinear) PDE satisfied by $h$. Then, again similarly as before, we mollify the equation suitably so that some strong compactness can be expected in the coefficients of the PDE; as a result, a fixed point argument becomes feasible, and we can therefore turn our attention to the linear counterpart of this equation. Replacing the nonlinear coefficients appropriately, the analysis then reduces to that of a linear evolution equation
\begin{equation*}
    \p_t h = \sfF_\va[k] h,
\end{equation*}
where $k=k(t,x,v)$ is assumed to be given and $\sfF_\va[k]$ is a suitable (time-dependent) operator, see Section \ref{Sec/ Max Setup}.

Then motivated by the so-called factorization of semigroups approach in \cite{GuaMisMou17}, we consider the decomposition $\sfF_\va[k] := \sfG_\va[k] + \sfH_\va[k]$ where $\sfG_\va[k]$ is the regularizing part and $\sfH_\va[k]$ is the dissipative part of the operator. In Section \ref{Sec/ WP sfH}, we prove that indeed the evolution equation corresponding to $\sfH_\va[k]$ is well-posed in suitable weighted $L^2\cap L^\infty$ spaces and that it is dissipative in all of those spaces, provided that $k$ is small in the $L^\infty_{\omega}$-norm and the operator $\sfG_\va[k]$ is chosen appropriately. See Proposition \ref{M:Prop: h diss}.

Now regarding the full operator $\sfF_\va[k]$, we show in Sections \ref{S/ F WP}--\ref{S/ F Hypo} that it is well-posed in the reference space $L^2_{\mu^{-1/2}}$ and that it enjoys the property of hypocoercivity in this space. However, in order to show that the dissipative property of $\sfF_\va[k]$ in $L^2_{\mu^{-1/2}}$ extends to that of $L^\infty_{\omega}$, we aim to argue in the spirit of \cite{MisMou16} and we therefore need to show that $\sfH_\va[k]$ enjoys an ultracontractive property. This is done in Section \ref{S/ ultra}, where we incorporate largely the arguments of those in \cite{CarMis24KFP} in order to make possible estimates in the spirit of De Giorgi--Nash--Moser. Thanks to the fact that we only consider the case of specular reflection boundary here, we are able to simplify some of the arguments there and prove an $L^2_{\mu^{-1/2}}\to L^\infty_{\omega}$-type estimate for the evolution system corresponding to $\sfH_\va[k]$, which is the cornerstone step in extending the functional space of decay. We mention that such an argument is also reminiscent of the framework which has been developed in \cite{GuoHwangJangOu20,KimGuoHwang20}. 

As aforementioned, in Section \ref{S/ F Hypo} we establish the hypocoercivity property of $\sfF_\va[k]$ in $L^2_{\mu^{-1/2}}$, but this is not merely possible by putting together already known results. Indeed, although the ``linearized operator'' corresponding to $\scrF$ is well-behaved and falls into the analysis of \cite{BCMT23}, it turns out that $\sfF_\va[k]$ does not, and we need some more refined analysis in order to obtain a satisfactory coercivity estimate. For this reason, instead of resorting to known results, we go back to some elementary estimates related to the linear Fokker--Planck operator to recover some better energy estimates, see Lemma \ref{lem: L coercive}. At last, using these refined estimates, we incorporate the framework developed in \cite{BCMT23} in order to prove the existence of a twisted scalar product, equivalent to that of the original one on $L^2_{\mu^{-1/2}}$, for which the Dirichlet form associated to $\sfF_\va[k]$ is strictly coercive, see Proposition \ref{prop: L_g hypo}.

Finally, in Section \ref{Sec/ Ext} we combine the results of Sections \ref{S/ ultra}--\ref{S/ F WP}--\ref{S/ F Hypo}, and use an extension trick in the spirit of \cite{GuaMisMou17} in order to prove that the evolution system associated to $\sfF_\va[k]$ extends to the $L^\infty_{\omega}$ framework and that it is dissipative in this space. The remaining steps are then the same as with the weakly inhomogeneous regime: we perform a fixed point argument to recover a solution to the nonlinear (but mollified) problem, and then we pass the regularization parameter $\va\to 0$ in order to obtain a solution to the original problem. The properties of the solution $f$ then follow by carefully passing everything that we established for the linear system to the nonlinear one.

\section{Preliminaries} \label{Sec/ Pre}

\subsection{Well-posedness of linear nonlocal Kolmogorov equations}

We provide a general well-posedness result for Kolmogorov-type equations, in which the right-hand side depends also on the macroscopic moments of the solution. Similar results have already been mentioned in the recent preprint \cite{CarMis24}, see also \cite{Car98}. We present the material here in order to keep the presentation rather self-contained.

We shall consider a general linear operator of the form below, where differentiation acts only in the velocity variable:
\begin{equation*}
    \scrL f := \sigma_{ij}\p^2_{v_iv_j} f + \zeta_i \p_{v_i} f + \eta f,
\end{equation*}
along with a general nonlocal-in-$v$ linear operator $\scrP$. For a typical example one can think of $\scrP f := \rho[f] \mu$. Also, for convenience, we make use of the notation (for $p\in [1,\infty]$)
\begin{equation}\label{p: e: Gamma}
\begin{split}
    \Gamma_{\scrL,\omega,p}(v) &:= 2\lt(1-\frac{1}{p}\rt) \sigma_{ij} \frac{\p_{v_i}\omega}{\omega} \frac{\p_{v_j}\omega}{\omega} + \lt(\frac{2}{p}-1\rt) \sigma_{ij} \frac{\p^2_{v_iv_j}\omega}{\omega} + \frac{2}{p} \p_{v_j} \sigma_{ij} \frac{\p_{v_i}\omega}{\omega} \\
        &\quad - \zeta_i \frac{\p_{v_i}\omega}{\omega} + \frac{1}{p}\p^2_{v_iv_j}\sigma_{ij} - \frac{1}{p}\p_{v_i} \zeta_i + \eta.
\end{split}
\end{equation}
The motivation behind this defect quantity lies in that it holds, at least formally, that [Lemma 2.5, CM24]
\begin{equation}\label{p: e: formal}
        \int_{\R^d} (\scrL f)|f|^{p-2} f \, \omega^p \,\dv = - \frac{4(p-1)}{p^2}\int_{\R^d} \sigma_{ij} \p_{v_i} \wt{f} \p_{v_j} \wt{f} \,\dv + \int_{\R^d} \Gamma_{\scrL,\omega,p} |f|^p\, \omega^p \,\dv,
    \end{equation}
    where $\wt{f} := f|f|^{\frac{p}{2}-1} \omega^{\frac{p}{2}}$. 

We then assume the following conditions: for the weight, we impose
\begin{enumerate}
    \item[(W1)] $\omega \in C^2(\R^d;(0,\infty))$. Moreover, $|\la v \ra^{-1} \nabla_v \omega| \lesssim \omega$ and $|\la v \ra^{-2} \nabla_v^2 \omega| \lesssim \omega$.
    \item[(W2)] $\omega$ is radial, \textit{i.e.} there exists $\omega_r:\R_+ \to (0,\infty)$ such that $\omega(v) = \omega_r(|v|)$.
\end{enumerate}

Regarding the coefficients of $\scrL$, we assume that
\begin{enumerate}
    \item[(L1)] The matrix $\sigma(t,x,v)$ is symmetric and there exists $\underline{\sigma}>0$ such that $\sigma_{ij} \bfx_i \bfx_j \ge \underline{\sigma}|\bfx|^2$ for all $\bfx = (\bfx_1,\ldots,\bfx_d)\in \R^d$.

    \item[(L2)] There exist $\lambda_{\scrL} > 0$ and $\Lambda : \R^d\to [0,\infty)$ such that the weight $\omega$ satisfies
    \begin{equation*}
        \forall v\in \R^d, \quad \Gamma_{\scrL,\omega,2}(v) \le \lambda_{\scrL} - \Lambda(v).
    \end{equation*}

    \item[(L3)] There also holds
    \begin{equation*}
        \la v \ra^{-1} \Big( |\p^2_{v_iv_j} \sigma_{ij}| + |\p_{v_i} \sigma_{ij}| + |\sigma_{ij}| + |\p_{v_i} \zeta_i| + |\zeta_i|  + |\eta|  \Big) \in L^\infty(\calO^T).
    \end{equation*}
\end{enumerate}

The system in concern is then a nonlocal Kolmogorov equation complemented with the specular reflection boundary condition \eqref{eq: specular}:
    \begin{equation}\label{p: e: kolmogorov}
    \begin{cases}
        \p_t f + v\cdot \nabla_x f = \scrL f + \scrP f + S, \\
        \gamma_- f(t,x,v) = \gamma_+ f (t,x,\scrV_x v), 
    \end{cases}
    \end{equation}
The solution class is denoted as $\calH_{\omega,T}$, the Hilbert space consisting of the (equivalence class of) functions for which the following norm
    \begin{equation*}
        \|h\|_{\calH_{\omega,T}}^2 := \|h\|_{L^2_{\omega}(\calO^T)}^2 + \|h\|_{ H^{1,\dagger}_{\omega,T} }^2, \quad \|h\|_{H^{1,\dagger}_{\omega,T}}^2 := \iiint_{\calO^T} \Big( \sigma_{ij}\p_{v_i} (\omega h) \p_{v_j} (\omega h) + \Lambda \omega^2 h^2 \Big) \,\dx\dv\dt 
    \end{equation*}
    is finite.
    
For later convenience, we set the notation
\begin{align*}
    \tnd\nu_1 := \omega^2 |v\cdot n_x| \,\tnd\sigma_x \dv.
\end{align*}
The nonlocal operator $\scrP$ is assumed to satisfy

\begin{enumerate}
    \item[(P1)] $\|\scrP\|_{ \scrB(L^2_\omega(\calO^T))} =: \lambda_{\scrP} < \infty$.
\end{enumerate}

Finally, the source term will be assumed to satisfy
\begin{enumerate}
    \item[(S1)] $S\in L^2_{t,x} H^{-1}_{v,{\rm loc}}$. Namely, $S = S_1 + \nabla_v\cdot S_2$ where $S_1,S_2\in L^2_{\rm loc}([0,T]\times\ov{\calO})$. Also, there exists $\lambda_S > 0$ such that
    \begin{equation*}
        \|S_1 \omega\|_{L^2(\calO^T)}^2 + \|S_2 \nabla_v \omega\|_{L^2(\calO^T)}^2 + \frac{1}{2 \underline{\sigma}} \|S_2 \omega\|_{L^2(\calO^T)}^2 \le \lambda_S.
    \end{equation*}
\end{enumerate}

\begin{proposition}\label{prop: WP}
    Let $\scrL$, $\scrP$, and $S$ satisfy the conditions above. Then for each prescribed initial datum $f_0 \in L^2_\omega(\calO)$, there exists a corresponding unique weak solution $f\in C([0,T];L^2_\omega(\calO)) \cap \calH_{\omega,T}$ to \eqref{p: e: kolmogorov}. The solution satisfies the equation $\eqref{p: e: kolmogorov}_1$ in duality with $\calD(\calO^T)$, and furthermore it is a renormalized solution.
\end{proposition}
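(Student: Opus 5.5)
The plan is to treat this as a Lions-type (J.-L. Lions) existence result for the weighted problem, add an a priori energy estimate, obtain uniqueness by duality or renormalization, and handle the specular boundary through a trace theory adapted to the measure $\tnd\nu_1$. The argument follows the Kolmogorov well-posedness results of \cite{CarMis24} and \cite{Car98}.

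Step 1: the a priori estimate. For smooth solutions I multiply $\eqref{p: e: kolmogorov}_1$ by $f\omega^2$ and integrate over $\calO$. The transport term gives $\frac12\int_{\Sigma}\gamma f^2\,\omega^2\,(v\cdot n_x)$. Since $\omega$ is radial by (W2), we have $\omega(\scrV_x v)=\omega(v)$, and $\scrV_x$ is a measure-preserving bijection $\Sigma_+^x\to\Sigma_-^x$ with $|\scrV_x v\cdot n_x|=|v\cdot n_x|$. So the specular condition makes the boundary flux vanish exactly.

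The term $\scrL$ is handled by \eqref{p: e: formal} with $p=2$ together with (L1)--(L2). This gives
\begin{equation*}
\int (\scrL f) f\omega^2 \le -\int \sigma_{ij}\p_{v_i}(\omega f)\p_{v_j}(\omega f) + \lambda_\scrL\|f\|^2_{L^2_\omega} - \int \Lambda \omega^2 f^2 .
\end{equation*}
The term $\scrP$ is bounded through (P1). For the source I write $S=S_1+\nabla_v\cdot S_2$ and integrate by parts, so that
\begin{equation*}
\int (\nabla_v\cdot S_2)f\omega^2 = -\int S_2\cdot\big(\omega\nabla_v(\omega f) + (\omega f)\nabla_v\omega\big).
\end{equation*}
I then use Young's inequality and absorb the gradient part into half of the $\underline\sigma$-coercivity. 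This is exactly where the combination appearing in (S1) comes from. Grönwall then gives
\begin{equation*}
\sup_t\|f_t\|^2_{L^2_\omega}+\|f\|^2_{H^{1,\dagger}_{\omega,T}} \lesssim e^{CT}\big(\|f_0\|^2_{L^2_\omega}+\lambda_S\big).
\end{equation*}

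Step 2: existence. I apply Lions' theorem on the Hilbert space $\calH_{\omega,T}$ with the bilinear form $\calE(f,\varphi)$ obtained by testing against $\varphi\omega^2$, where $\varphi$ ranges over smooth test functions satisfying the specular condition and vanishing at $t=T$. I apply it to the exponentially shifted unknown $e^{-\lambda t}f$ so that the form becomes coercive: $\calE(\varphi,\varphi)\ge \frac12\|\varphi\|^2_{\calH}+\frac12\|\varphi_0\|^2_{L^2_\omega}$, with boundary terms cancelling as in Step 1. The nonlocal term is only $L^2$-bounded, which is harmless because the exponential shift absorbs $\lambda_\scrP$. The unbounded coefficients allowed by (L3), of size $\la v\ra$, make the weak formulation meaningful for test functions with compact support in $v$. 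This yields a weak solution in $\calH_{\omega,T}$ in duality with $\calD(\calO^T)$.

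Step 3: traces, continuity in time, renormalization, and uniqueness. This is the main obstacle. Because $f\in L^2_{t,x}H^1_v$ and $(\p_t+v\cdot\nabla_x)f\in L^2_{t,x}H^{-1}_v$ locally, I invoke the kinetic trace theory of Mischler type, as in \cite{CarMis24}. It provides traces $\gamma f\in L^2_{\rm loc}(\tnd\nu_1\dt)$ and $f\in C([0,T];L^2_{\rm loc})$, together with the renormalization (Green) formula $(\p_t+v\cdot\nabla_x)\beta(f)=\beta'(f)(\ldots)$ for suitable $\beta$, with cut-offs in $v$. Green's formula with $\beta(s)=s^2$ and weight $\omega^2$ is then justified by cut-off arguments, and the specular cancellation shows that the boundary term vanishes. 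This upgrades to $f\in C([0,T];L^2_\omega)$ and gives the energy identity, and it also confirms that $\gamma_- f=\gamma_+f\circ\scrV_x$ holds in the trace sense.

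Uniqueness then follows by linearity. The difference of two solutions solves the problem with $f_0=0$ and $S=0$, and the now-rigorous energy estimate of Step 1 forces it to be zero.

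The delicate points are the weighted cut-offs in $v$ needed to control the coefficients growing like $\la v\ra$, and the identification of the weak boundary condition with the trace condition.
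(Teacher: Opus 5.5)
Your argument is sound, but the existence step follows a genuinely different route from the paper.

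\emph{The paper's route.} The paper never places the specular condition in the test space. It first solves the inflow problem $\gamma_- f=k$ by Lions' theorem, with test functions in $C^1_c(\calO^T\cup\Sigma_-^T)$, where the boundary term has a good sign. It then solves the partial reflection problem $\gamma_- f=\alpha\,\gamma_+ f\circ\scrV_x$, $\alpha<1$, by a contraction in a norm containing $\int_0^t\|\gamma f\|^2_{L^2(\tnd\nu_1)}$. Finally it lets $\alpha_k\nearrow 1$. For this limit it uses the multiplier $(n_x\cdot v)\omega^2\la v\ra^{-4}$, which is odd under $\scrV_x$, to get trace bounds uniform in $k$ in $L^2((n_x\cdot v)^2\omega^2\la v\ra^{-4})$. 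It extracts weak limits $\xi_\pm$ with $\xi_-=\xi_+\circ\scrV_x$ and identifies $\xi=\gamma f$ via Mischler's trace theorem.

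\emph{Your route.} You use that $\scrV_x$ is an involution preserving $|n_x\cdot v|$, $\tnd\sigma_x\dv$ and the radial weight. Hence for $\scrV_x$-invariant test functions the boundary term in $\mathcal{E}(\varphi,\varphi)$ vanishes identically, and Lions' theorem applies directly to the specular problem. This removes the inflow step, the contraction and the $\alpha\to 1$ limit.

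The cost is that everything rests on the a posteriori trace analysis. Before any energy identity, you must recover $\gamma_+ f=\gamma_- f\circ\scrV_x$ a.e.\ from the vanishing of $\iiint_{\Sigma^T}\gamma f\,\varphi\,\omega^2(n_x\cdot v)$ for all invariant $\varphi$. This needs an invariant class rich enough on $\Sigma_+$, for instance smooth functions of $(t,x,v-(n_x\cdot v)n_x,(n_x\cdot v)^2)$. These are only Lipschitz in $x$, so Green's formula has to be extended to them by density. Your Step~3 states this identification after the energy identity, but it must come first.

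\emph{Comparison.} The paper's scheme is the robust one: it carries over to Maxwell or diffuse reflection, where no invariant test class kills the boundary term. Yours is shorter but specific to specular reflection.

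\emph{One correction.} The trace theory gives $\gamma f\in L^2_{\rm loc}(\Sigma^T;(n_x\cdot v)^2\,\tnd\sigma_x\dv\dt)$ (cf.\ Step~1 of Lemma~\ref{p: l: dual}). It does not give $L^2_{\rm loc}(\tnd\nu_1\dt)$ up to the grazing set. At $\alpha=1$ the energy estimate gives no control of $\iiint(\gamma f)^2|n_x\cdot v|$, since that term carries the factor $1-\alpha$. So you cannot cancel $\frac12\iiint_{\Sigma^T}(\gamma f)^2\omega^2(n_x\cdot v)$ directly, because each half may be infinite.

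Instead, perform the specular cancellation for bounded renormalizations $\beta_M(\gamma f)$ against radial cut-offs $\chi_R(v)\omega^2$, where it is exact. Only then let $\beta_M\nearrow s^2$ and $R\to\infty$ in the interior terms, as in Step~4 of Lemma~\ref{p: l: dual}. With this adjustment, uniqueness and the energy bound follow as you describe.
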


\begin{remark}
    By a renormalized solution to $\eqref{p: e: kolmogorov}_1$, we mean that for any $t\in (0,T]$, $\beta\in W^{2,\infty}(\R)$, and $\chi \in \calD([0,t]\times \ov{\calO})$, the following Green's formula holds
    \begin{equation}\label{P: E: Renorm Green Formula}
    \begin{split}
        &\iint_{\calO} \beta(f_t) \chi(t) \,\dx\dv - \iint_{\calO} \beta(f_0) \chi(0)\,\dx\dv + \iiint_{\Sigma^t} \beta(\gamma f) \, \chi \, (n_x \cdot v) \,\ds\tnd\sigma_x\dv \\
        &\quad + \iiint_{\calO^t} \lt[ \beta(f) \Big( -\p_t \chi - v\cdot \nabla_x \chi - \p^2_{v_i v_j}(\sigma_{ij}\chi) - \p_{v_i}(\zeta_i \chi) \Big) + \beta''(f) \sigma_{ij} \p_{v_i}f \p_{v_j}f \,\chi \rt] \,\ds\dx\dv\\
        &= \iiint_{\calO^t} \beta'(f) (\eta f + \scrP f) \, \chi \,\ds\dx\dv + \iiint_{\calO^t} (S_1 - S_2 \cdot \nabla_v) (\beta'(f) \chi) \,\ds\dx\dv.
    \end{split}
    \end{equation}
\end{remark}

\begin{proof}[Proof of Proposition \ref{prop: WP}]
    As aforementioned, similar arguments are available in \cite{CarMis24}; we provide all details for the sake of completeness. 

    (Step 1: The inflow problem.) First, we consider the problem with fixed inflow
    \begin{equation}\label{p: e: inflow}
        \begin{cases}
            \p_t f + v\cdot \nabla_x f = \scrL f + \scrP f + S, \\
            \gamma_- f = k \quad \text{on }\Sigma_-^T, \\
            f|_{t=0} = f_0,
        \end{cases}
    \end{equation}
    where $k \in L^2$. Then we dampen the equation enough to obtain coercivity: we set $\lambda > \lambda_{\scrL} + \lambda_{\scrP}$ and $f_\lambda:= f e^{-\lambda t}$, in order to instead solve for
    \begin{equation}\label{p: e: dmp inflow}
        \begin{cases}
            \p_t f_\lambda + v\cdot \nabla_x f_\lambda = \scrL f_\lambda + \scrP f_\lambda - \lambda f_\lambda + S_\lambda, \\
            S_\lambda := S e^{-\lambda t}, \\
            \gamma_- f_\lambda = k_\lambda := k e^{-\lambda t} \quad \text{on }\Sigma_-^T, \\
            f_\lambda|_{t=0} = f_{\lambda 0} := f_0 e^{-\lambda t}.
        \end{cases}
    \end{equation}
    Next, we define on $\calH_{\omega,T} \times C_c^1(\calO^T \cup \Sigma_-^T)$ the bilinear and real-valued form
    \begin{equation*}
        \bfB(h,\chi) := \iiint_{\calO^T} \Big( \lambda h \chi \omega^2 - h \scrL^* (\chi \omega^2) - \chi\omega^2 \scrP h - h (\p_t\chi + v\cdot \nabla_x \chi) \omega^2 \Big).
    \end{equation*}
    In this way, to find a distributional solution to \eqref{p: e: dmp inflow}, it is enough to search for $f_\lambda \in \calH_{\omega,T}$ which satisfies the variational problem
    \begin{equation*}
    \begin{split}
        &\forall \chi \in C_c^1(\calO^T \cup \Sigma_-^T),\\
        &\bfB(f_\lambda, \chi) = \iiint_{\calO^T} S_\lambda \chi \omega^2 \,\dx\dv\dt + \iiint_{\Sigma_-^T} k_\lambda \chi \omega^2 (n(x)\cdot v)_- \,\tnd\sigma_x \dv \dt + \iint_{\calO} f_{\lambda 0} \chi(0) \omega^2 \,\dx\dv.
    \end{split}
    \end{equation*}
    Thanks to the conditions (L2)--(P1)--(S1), we note that $\bfB$ is coercive, since indeed
    \begin{align*}
        \bfB(\chi, \chi) &= \iiint_{\calO^T} \Big( \lambda \chi^2 \omega^2 - \chi \scrL^* (\chi \omega^2) - \chi \omega^2 \scrP \chi \Big) \,\dx\dv\dt  \\
        &\ge \iiint_{\calO^T} (\lambda - \Gamma_{\scrL,\omega,p} -  \lambda_{\scrP} ) \chi^2 \omega^2 \,\dx\dv\dt \\
        &\ge \iiint_{\calO^T} (\lambda - \lambda_{\scrL} -  \lambda_{\scrP} ) \chi^2 \omega^2 \,\dx\dv\dt.
    \end{align*}
    Then owing to Lions' version of the Lax--Milgram theorem \cite[Chap III, \S 1]{Lions61}, we deduce the existence of a solution $f_\lambda$ to \eqref{p: e: dmp inflow}. Consequently $f := f_\lambda e^{\lambda t}$ is a distributional solution to \eqref{p: e: inflow}. By virtue of (straightforward adaptations of) trace results in \cite{Mi10}, we then obtain that $f \in C([0,T];L^2_\omega(\calO))$ and that $f$ satisfies the renormalized Green's formula as in \eqref{P: E: Renorm Green Formula}.
    
    (Step 2: Energy estimates.) Taking $\beta(z)\nearrow |z|^2$ and $\chi(v) \nearrow 1$, we obtain the energy estimate
    \begin{equation}\label{p: e: egy start}
    \begin{split}
        &\iint_{\calO} f_t^2 \omega^2 \,\dx\dv - \iint_{\calO} f_0^2 \omega^2 \,\dx\dv + 2\|f\|_{H^{1,\dagger}_{\omega,t}}^2+ \|\gamma f\|_{L^2(\Gamma^t ,\tnd\nu_1)}^2 \\
        &\le 2(\lambda_\scrL + \lambda_\scrP ) \iiint_{\calO^t} f^2 \omega^2 \,\dx\dv\ds + 2 \iiint_{\calO^t} S f\omega^2 \,\dx\dv\ds. 
    \end{split}
    \end{equation}
    For the integral containing the source term, we can estimate
    \begin{align*}
        &\iiint_{\calO^t} S f\omega^2 \,\dx\dv\ds \\
        &= \iiint_{\calO^t} S_1 f \omega^2 \,\dx\dv\ds - \iiint_{\calO^t} S_2 \cdot \nabla_v (f\omega^2) \,\dx\dv\ds \\
        &= \iiint_{\calO^t} S_1 f \omega^2 \,\dx\dv\ds - \iiint_{\calO^t} \Big( S_2 \omega \cdot \nabla_v (f \omega) + S_2 f \omega \cdot \nabla_v \omega \Big) \,\dx\dv\ds \\
        &\le \|S_1\|_{L^2_{\omega}(\calO^t)} \|f\|_{L^2_{\omega}(\calO^t)} + \|S_2 \omega\|_{L^2(\calO^t)} \|\nabla_v(f\omega)\|_{L^2(\calO^t)}  + \|S_2 \nabla_v\omega\|_{L^2(\calO^t)} \|f\|_{L^2_{\omega}(\calO^t)} \\
        &\le \|f\|_{L^2_\omega(\calO^t)}^2 + \frac{1}{2}\underline{\sigma}\|\nabla_v (\omega h)\|_{L^2}^2 + \lambda_S \\
        &\le \|f\|_{L^2_{\omega}(\calO^t)}^2 + \frac{1}{2} \|f\|_{H^{1,\dagger}_{\omega,t}}^2 + \lambda_S .
    \end{align*}
    We point out that the second term concerning $\|f\|_{H_{\omega,t}^{1,\dagger}}^2$ can be absorbed into the left-hand side of \eqref{p: e: egy start}. Consequently, we obtain with $\wt{\lambda} := 2(\lambda_\scrL + \lambda_\scrP + 1)$ that
    \begin{equation}\label{p: e: egy est}
    \begin{split}
        &\|f_t\|_{L^2_{\omega}(\calO)}^2 + \int_0^t \lt( \|\gamma_- f \|_{L^2(\Gamma_-, \tnd\nu_1) }^2 + \|f\|_{H^{1,\dagger}_{\omega,s}}^2 \rt) e^{\wt{\lambda}(t-s)} \,\ds \\
        &\le \|f_0\|^2_{L^2_{\omega}(\calO)} e^{\wt{\lambda}t} + \frac{2\lambda_S}{\wt{\lambda}} (e^{\wt{\lambda}t} - 1 ) + \int_0^t \|k(s)\|^2_{L^2(\Gamma_-, \tnd\nu_1)} e^{\wt{\lambda}(t-s)}\,\ds,
    \end{split}
    \end{equation}
    which also implies uniqueness of the solution to \eqref{p: e: inflow}. Indeed, if $f_1, f_2$ are two solutions to \eqref{p: e: inflow}, then $f:= f_2 - f_1$ would solve \eqref{p: e: inflow} with inflow datum $k \equiv 0$, initial datum $f_0 \equiv 0$, and source $S\equiv 0$. Thus we could apply \eqref{p: e: egy est} to conclude (noting that we can set $\lambda_S = 0$).

    (Step 3: Approximating the specular reflection boundary condition.) We let $\alpha\in (0,1)$ and $k\in \calH_{\omega,T} \cap C([0,T];L^2_\omega(\calO))$ be given, with trace $\gamma k\in L^2(\Gamma,\tnd\nu_1)$. Then we consider the partial absorption-reflection problem 
    \begin{equation}\label{p: e: partial}
        \begin{cases}
            \p_t f + v\cdot \nabla_x f = \scrL f + \scrP f + S, \\
            \gamma_- f(t,x,v) = \alpha \gamma_+ k(t,x,\scrV_x v), \\
            f|_{t=0} = f_{0}.
        \end{cases}
    \end{equation}
    Thanks to the results in Steps 1--2, we deduce the existence and uniqueness of $f$ the solution to the above problem. Similarly as with the end of Step 2, let $k = k_1, k_2$ in $\eqref{p: e: partial}_2$, and denote by $f_1,f_2$ the respective solutions to \eqref{p: e: partial}. Then $\mathfrak{f} := f_2 - f_1$ is a solution to \eqref{p: e: partial} with source term $S\equiv 0$, initial datum $\mathfrak{f}_0 \equiv 0$, and boundary condition
    \begin{equation*}
        \gamma_- \mathfrak{f}(t,x,v) = \alpha \gamma_+ (k_2 - k_1)(t,x,\scrV_x v).
    \end{equation*}
    By using the energy estimate in \eqref{p: e: egy est} (with zero initial datum and source term), we get
    \begin{equation*}
        \|\mathfrak{f}_t\|_{L^2_\omega}^2 + \int_0^t \|\gamma_- \mathfrak{f}\|_{L^2(\Gamma_-, \tnd\nu_1)}^2 \,\ds \le \alpha \int_0^t \|k_2 - k_1\|_{L^2(\Gamma_-, \tnd\nu_1)}^2 e^{\wt{\lambda}(t-s)} \,\ds,
    \end{equation*}
    and therefore the map $k \mapsto f$ is Lipschitz, with Lipschitz constant $\alpha$, with respect to the norm
    \begin{equation*}
        \NORMMM{f}^2 := \sup_{t\in [0,T]} \lt( \|f_t\|_{L^2_{\omega}(\calO)}^2 + \int_0^t \|\gamma f_s\|_{L^2(\Gamma_-,\tnd\nu_1)}^2 e^{\wt{\lambda}(t-s)} \,\ds \rt).
    \end{equation*}
    Since $\alpha<1$, we apply the contraction mapping theorem to deduce the existence and uniqueness of a fixed point of this map. This one is the unique distributional and renormalized solution to
    \begin{equation}\label{p: e: alpha}
        \begin{cases}
            \p_t f + v\cdot \nabla_x f = \scrL f + \scrP f + S, \\
            \gamma_- f(t,x,v) = \alpha \gamma_+ f(t,x,\scrV_x v), \\
            f|_{t=0} = f_0.
        \end{cases}
    \end{equation}
    For this solution, the same energy estimate as in \eqref{p: e: egy est} provides
    \begin{equation}\label{eq: energy again} 
    \begin{split}
        &\|f_t\|_{L_{\wt{\omega}(\calO)}^2}^2 + \int_0^t \Big( (1-\alpha) \|\gamma_- f_t\|_{L^2(\Gamma_-, \tnd\nu_1)}^2 + \|f\|_{H^{1,\dagger}_{\omega,t}}^2 \Big) e^{\wt{\lambda}(t-s)}\,\ds \\
        &\le \|f_0\|_{L^2_{\omega}(\calO)}^2 e^{\wt{\lambda} t} + \frac{2\lambda_S}{\wt{\lambda}}(e^{\wt{\lambda}t} - 1).
    \end{split}
    \end{equation}

    We now set $\alpha = \alpha_k\in (0,1)$ in $\eqref{p: e: alpha}_2$, with $\alpha_k\nearrow 1$. Denote by $f^k$ the solution to \eqref{p: e: alpha} corresponding to $\alpha_k$. Thanks to the energy estimate in \eqref{eq: energy again}, we observe that we may admit $\beta(z)=z^2$ and $\chi(x,v) = (n_x \cdot v)\omega^2 \lt<v\rt>^{-4}$ as an admissible pair into the renormalized formulation satisfied by the $f^k$. Owing to (W2), we have $\chi(x,\scrV_x v) = - \chi(x,v)$, thus 
    \begin{equation}\label{p: e: alphak}
    \begin{split}
        &\iint_{\calO} |f^k_t \omega|^2 (n_x\cdot v) \la v \ra^{-4} \,\dx\dv - \iint_{\calO} |f^k_0 \omega|^2 (n_x\cdot v) \la v \ra^{-4} \,\dx\dv \\
        &\quad + (1 + \alpha_k) \iiint_{\Sigma_+^t} |\gamma_+ f^k|^2 (n_x\cdot v)^2 \omega^2 \la v \ra^{-4} \,\tnd\sigma_x \dv \ds \\
        &= \iiint_{\calO^t} |f^k \omega|^2 \Big( v \cdot D_x n_x \, v \, \la v \ra^{-4} + \omega^{-2} \p^2_{v_i v_j} (\sigma_{ij} \chi ) + \omega^{-2} \p_{v_i} (\zeta_i \chi)  \Big) \,\dx\dv\ds \\
        &\quad + \iiint_{\calO^t} 2 \sigma_{ij} \p_{v_i} f^k \p_{v_j} f^k \,\chi \,\dx\dv\ds + \iiint_{\calO^t} 2 f^k (\eta f^k + \scrP f^k) \chi \,\dx\dv\ds \\
        &\quad + \iiint_{\calO^t} (S_1 - S_2 \cdot \nabla_v ) (2f^k \chi) \,\dx\dv\ds.
    \end{split}
    \end{equation}
    Using the energy bounds in \eqref{eq: energy again}, along with (W1)--(L3)--(P1)--(S1), we can readily check that every term involving $f^k$ is bounded uniformly in $k$. For instance,
    \begin{align*}
        \iiint_{\calO^T} 2\sigma_{ij} \p_{v_i} f^k \p_{v_j} f^k |n(x)\cdot v|\omega^2 \lt<v\rt>^{-4}  &\lesssim \iiint_{\calO^T} |\nabla_v f^k|^2 \omega^2 \la v \ra^{-2} \,\dx\dv\ds \\
        &= \iiint_{\calO^T} |\omega \nabla_v f^k|^2 \la v \ra^{-2} \,\dx\dv\ds \\
        &= \int_{\calO^T} \lt|\nabla_v (f^k\omega) - f^k \nabla_v \omega \rt|^2 \la v \ra^{-2} \,\dx\dv\ds \\
        &\lesssim \int_{\calO^T} \Big(|\nabla_v (f^k\omega)|^2 + (f^k)^2 \lt|\la v \ra^{-1} \nabla_v \omega \rt|^2 \Big) \,\dx\dv\ds \\
        &\lesssim \int_{\calO^T} \Big(|\nabla_v (f^k \omega)|^2 + |f^k|^2 \omega^2 \Big) \,\dx\dv\ds \\
        &\le C .
    \end{align*}
    From \eqref{p: e: alphak} we can therefore deduce
    \begin{equation*}
        \int_{\Sigma_+^T} |\gamma_+ f^k|^2 (n(x)\cdot v)^2 \omega^2 \lt<v\rt>^{-4} \,\dt\tnd\sigma_x \dv \le C.
    \end{equation*}
    Since $\gamma_- f^k = \alpha_k (\gamma_+ f^k \circ \scrV_x)$, this also implies
    \begin{equation*}
        \int_{\Sigma_-^T} |\gamma_- f^k|^2 (n_x\cdot v)^2 \omega^2 \la v \ra^{-4} \,\dt\tnd\sigma_x \dv \le C \alpha_k^2 \le C.
    \end{equation*}
    Therefore, there are limits $\xi_\pm$ such that modulo a subsequence
    \begin{equation}\label{p: e: traces}
        \gamma_\pm f^k \weakto \xi_\pm \quad \text{in}\quad L^2(\Sigma^T, (n(x)\cdot v)^2 \omega^2 \lt<v\rt>^{-4}\dt\tnd\sigma_x\dv) - w.
    \end{equation}
    To see that $\xi_- = \xi_+ \circ \scrV_x$, we write
    \begin{align*}
        &\xi_- - \xi_+ \circ \scrV_x \\
        &= (\xi_- - \gamma_- f^k) + (\gamma_- f^k - \gamma_+ f^k \circ \scrV_x ) + (\gamma_+ f^k \circ \scrV_x  - \xi_+ \circ \scrV_x) \\
        &= (\xi_- - \gamma_- f^k) + (\alpha_k - 1) \gamma_+ f^k \circ \scrV_x + (\gamma_+ f^k \circ \scrV_x  - \xi_+ \circ \scrV_x).
    \end{align*}
    Then, using the weak convergences in \eqref{p: e: traces} and the fact that $\alpha_k \nearrow 1$, we can prove that $\xi_- = \xi_+ \circ \scrV_x$ in the sense of distributions and thus a.e.

    Now passing to the limit $\alpha_k\nearrow 1$ in all equations of \eqref{p: e: alpha} (with the help of (P1)), we obtain that the limit $f$ satisfies the following Green's formula in duality with $\chi\in \calD((0,T)\times \ov{\calO})$:
    \begin{equation} \label{P: E: xi weak}
        \begin{split}
            &\iiint_{\Sigma^T} \xi \chi (n_x\cdot v) \,\tnd\sigma_x \dv \dt \\  
            &= \iiint_{\calO^T} \Big( f (\p_t + v\cdot \nabla_x + \scrL^*) \chi + (\scrP f  + S) \chi  \Big) \,\dx\dv\dt.
        \end{split}
    \end{equation}
    In particular $f$ is a solution to $\eqref{p: e: kolmogorov}_1$ in $\calD'((0,T)\times \ov{\calO})$. Owing to the facts that $f \in L^2_{t,x}H^1_{v, {\rm loc}}$ and $S\in L^2_{t,x}H^{-1}_{v,{\rm loc}}$, we may straightforwardly adapt \cite[Theorem 4.2]{Mi10} once again, to deduce that $f$ is in fact a renormalized solution with unique traces $f_t$ and $\gamma f$ which satisfy \eqref{P: E: Renorm Green Formula} for every $\chi\in \calD([0,T]\times \ov{\calO})$. Choosing $\chi\in \calD((0,T)\times \ov{\calO})$, we in particular obtain from \eqref{P: E: Renorm Green Formula} that
    \begin{align*}
        &\iiint_{\Sigma^T} \beta(\gamma f) \chi (n_x\cdot v) \,\tnd\sigma_x \dv\dt \\
        &= \iiint_{\calO^T} \lt[ \beta(f) \Big(\p_t \chi + v\cdot \nabla_x \chi + \p^2_{v_i v_j}(\sigma_{ij}\chi) + \p_{v_i}(\zeta_i \chi) \Big) - \beta''(f) \sigma_{ij} \p_{v_i}f \p_{v_j}f \,\chi \rt] \,\dt\dx\dv\\
        &\quad + \iiint_{\calO^T} \beta'(f) (\eta f + \scrP f) \, \chi \,\dt\dx\dv + \iiint_{\calO^t} (S_1 - S_2 \cdot \nabla_v) (\beta'(f) \chi) \,\dt\dx\dv ,
    \end{align*}
    for every $\beta\in C^2\cap W^{2,\infty}$. Since $f$ is already a weak solution, and since $\gamma f\in L^1_{\rm loc}(\Sigma^T,|n_x\cdot v|\tnd\sigma_x\dv\dt)$, we can choose an approximating sequence $\beta(z)\nearrow z$ in order to deduce that
    \begin{equation}\label{P: E: xi compare}
    \begin{split}
        &\iiint_{\Sigma^T} \gamma f \, \chi \, (n_x\cdot v) \,\tnd\sigma_x \dv \dt \\
        &\quad + \iiint_{\calO^T} \Big( f (\p_t + v\cdot \nabla_x + \scrL^*) \chi + (\scrP f + S) \chi \Big) \,\dx\dv\dt.
    \end{split}
    \end{equation}
    Comparing \eqref{P: E: xi compare} with \eqref{P: E: xi weak}, we deduce that $\xi (n_x\cdot v) = \gamma f (n_x\cdot v)$ in the sense of $\calD'((0,T)\times \ov{\calO})$, and consequently a.e. in $\Sigma^T_{\pm}$. Recalling that $\xi_- = \xi_+\circ \scrV_x$ we immediately deduce that $\gamma_- f = \gamma_+ f \circ \scrV_x$ also. In particular, we have proven that $f$ is a weak and renormalized solution to
    \begin{equation}\label{p: e: original reflection}
    \begin{cases}
        \p_t f + v\cdot \nabla_x f = \scrL f + \scrP f + S, \\
        \gamma_- f = \gamma_+ f \circ \scrV_x , \\
        f|_{t=0} = f_{in}.
    \end{cases}
    \end{equation}
    As before we obtain also that $f\in C([0,T];L^2_\omega(\calO))$ by the arguments in \cite{Mi10}.

    (Step 4: Uniqueness.)
    We consider two solutions to the system \eqref{p: e: original reflection} with the same initial datum $f_0$; then their difference, which we denote $\mathfrak{f}$, is again a solution to \eqref{p: e: original reflection}, now with both initial datum and source term zero. Then, we can obtain an energy estimate for $\mathfrak{f}$ that is similar to that of \eqref{eq: energy again}, but with $\alpha = 1$, $\mathfrak{f}_0 \equiv 0$, and $\lambda_S \equiv 0$. We deduce uniqueness in the class $C([0,T];L^2_\omega(\calO))$, as desired.
\end{proof}


For later use, we also mention a useful fact on the following backwards adjoint equation, complemented with the dual specular reflection boundary condition:
    \begin{equation}\label{p: e: dual kolmogorov}
        \begin{cases}
            -\p_t b - v\cdot \nabla_x b = \scrL^* b + \scrP^* b \quad \text{in }\calO^T,\\
            \gamma_+ b = \gamma_- \circ \scrV_x \quad \text{on }\Sigma_+^T.
        \end{cases}
    \end{equation}
In the above, we denote by $\scrP^*$ the formal adjoint operator of $\scrP$. We assume $\scrP^*\in \scrB(L^2_{\omega^{-1}}(\calO^T))$ and that
\begin{equation*}
    \iint_{\calO} (\scrP f) b \,\dx\dv = \iint_{\calO} f \,\scrP^* b \,\dx\dv  
\end{equation*}
holds for any $f\in L^2_{\omega}(\calO)$ and $b\in L^2_{\omega^{-1}}(\calO)$.

The result we mention is a duality formula.
    
\begin{lemma}\label{p: l: dual}
    We assume that $f\in C([0,T];L^2_{\omega}(\calO)) \cap L^2((0,T)\times\Omega; H^1_{\omega}(\R^d))$ is a solution to \eqref{p: e: kolmogorov}, corresponding to initial datum $f_0\in L^2_\omega(\calO)$. Analogously, we suppose that $b\in C([0,T];L^2_{\omega^{-1}}(\calO)) \cap  L^2((0,T)\times\Omega; H^1_{\omega^{-1}}(\R^d))$ is a solution to the backwards equation \eqref{p: e: dual kolmogorov} corresponding to terminal datum $b_T\in L^2_{\omega^{-1}}(\calO)$. On the weight, let us assume
    \begin{enumerate}
        \item[(W1')] $\omega\in C^2(\R^d;(0,\infty))$, $\omega\gtrsim 1$, and $|\nabla_v \omega| \lesssim \omega$.  
    \end{enumerate}
    Then under the assumptions (W1')--(W2), (L1)--(L2)--(L3), and (P1), it holds that
    \begin{equation*}
        \iint_{\calO} f_T b_T \,\dx\dv = \iint_{\calO} f_0 b_0 \,\dx\dv + \iiint_{\calO^T} S b \,\dx\dv\dt,
    \end{equation*}
    whenever the last integral of the right-hand side is finite.
\end{lemma}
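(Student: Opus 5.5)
The plan is to compute $\ddt \iint_{\calO} f_t b_t$ formally and then justify each step by regularization. Formally,
\begin{equation*}
    \ddt \iint_{\calO} f b = \iint_{\calO} \Big( (-v\cdot\nabla_x f + \scrL f + \scrP f + S) b + f(v\cdot \nabla_x b - \scrL^* b - \scrP^* b) \Big).
\end{equation*}
The $\scrL$ and $\scrP$ terms cancel by duality. The transport terms combine into $-\iint_{\calO} v\cdot\nabla_x(fb)$, and by the Gauss--Ostrogradsky theorem this equals $-\iint_{\Sigma} \gamma f\,\gamma b\,(n_x\cdot v)\,\tnd\sigma_x\dv$. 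We split the boundary integral over $\Sigma_+$ and $\Sigma_-$. On $\Sigma_-$ we write $\gamma_- f = \gamma_+ f\circ\scrV_x$. We then change variables $v\mapsto \scrV_x v$, which is an involutive isometry with unit Jacobian, maps $\Sigma_-^x$ onto $\Sigma_+^x$, and flips the sign of $n_x\cdot v$. On $\Sigma_+$ we use $\gamma_+ b = \gamma_- b\circ\scrV_x$. Together these make the two boundary contributions cancel. Integrating in time from $0$ to $T$ then yields the formula, with the remaining term $\iiint S b$.

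To make this rigorous, I would test the weak formulation of \eqref{p: e: kolmogorov} against $b$. Since $b$ is not smooth, I would mollify $b$ in $(t,x)$, or more conveniently use the renormalized/Green formulation for products. One route is to note that $f\pm b$-type combinations are not directly available because $f$ and $b$ live in dual weighted spaces. Instead I would regularize: take $b^\epsilon := b * \varphi_\epsilon$ in $(t,x)$ (with a cutoff away from $\p\Omega$, plus a velocity truncation $\chi_R(v)$), use it as test function in the Green formula for $f$, and symmetrically test the equation for $b$ with $f^\epsilon$. A cleaner alternative is to invoke the trace theory of \cite{Mi10} for the pair: it is a Green formula for $fb$ where $f$ solves a kinetic equation with $L^2_{t,x}H^{-1}_v$ source, and $b$ likewise, in $L^2_{t,x}H^1_v$ with dual weights. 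That gives
\begin{equation*}
    \iint f_T b_T - \iint f_0 b_0 + \iiint_{\Sigma^T}\gamma f\,\gamma b\,(n_x\cdot v) = \iiint_{\calO^T}\Big( b(\scrL f + \scrP f + S) - f(\scrL^* b + \scrP^* b)\Big).
\end{equation*}
Here the $\scrL$-terms are interpreted after one integration by parts in $v$, as $-\sigma_{ij}\p_{v_j} f\,\p_{v_i} b$ plus lower-order terms. These are integrable because $\omega\nabla_v f\in L^2$ and $\omega^{-1}\nabla_v b\in L^2$, which uses (W1') ($|\nabla\omega|\lesssim\omega$) to pass between $\nabla_v(\omega f)$ and $\omega\nabla_v f$. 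The cancellation of the $\scrP$ terms follows from the assumed duality identity.

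The main obstacle is twofold. First, the coefficients $\sigma,\zeta,\eta$ grow like $\la v\ra$ by (L3), so the bulk terms must be controlled; this needs the velocity truncation $\chi_R$, and the commutator terms $\sigma_{ij}\p_{v_i}\chi_R\,\ldots$ are of order $R^{-1}\la v\ra\mathbf 1_{|v|\sim R}$, which must vanish as $R\to\infty$ using the $L^2_\omega\times L^2_{\omega^{-1}}$ bounds. Second, the boundary term requires that $\gamma f\,\gamma b\,(n_x\cdot v)$ make sense, which is where trace theory is delicate. I would handle this by the renormalization argument of \cite{Mi10}: work first with the truncated product $\beta(f)\,b^\epsilon\chi_R$, show that the boundary integral cancels exactly at each level using the reflection symmetry (W2) of the test functions, so that no integrability of traces is needed in the limit, and finally let $\epsilon\to0$ and $R\to\infty$. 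The last step uses dominated convergence, and this is where finiteness of $\iiint S b$ enters.
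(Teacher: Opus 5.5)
There is a genuine gap in the boundary step, which is the only delicate point of this lemma. Regularizing $b$ by convolution in $x$ destroys the specular reflection law. The trace $\gamma b^\epsilon$ does not satisfy $\gamma_+ b^\epsilon=\gamma_- b^\epsilon\circ\scrV_x$, because the convolution averages interior values $b(y,v)$ and $b(y,\scrV_x v)$, which are unrelated; the reflection law only links boundary values. Hence the boundary integral $\iiint_{\Sigma^T}\beta(\gamma f)\,\gamma b^\epsilon\,\chi_R\,(n_x\cdot v)$ does not cancel ``at each level''. Radial symmetry of the test function gives cancellation only when \emph{both} traces obey the reflection law.

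If you cut off away from $\p\Omega$ instead, the boundary integral disappears but comes back as the bulk term $\iiint\beta(f)\,b^\epsilon\,(v\cdot\nabla_x\chi_\delta)$, whose limit as $\delta\to0$ again needs trace information. So the limit $\epsilon\to0$ must be taken \emph{before} the boundary conditions are used. For that you need the traces of the regularized functions to converge to $\gamma f,\gamma b$ strongly enough to pass to the limit in the boundary integral, and your proposal never provides this. Appealing to \cite{Mi10} ``for the pair'' does not help. That theory gives traces and renormalization for a single kinetic equation, whereas the identity $\gamma(fb)=\gamma f\,\gamma b$ and the meaning of $\gamma f\,\gamma b\,(n_x\cdot v)$ are exactly what must be proved.

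This is what Step 1 of the paper's proof supplies. Both $f$ and $b$ are mollified \emph{up to the boundary}: in $x$ via the translated kernel $\bfr_\ve(x-2\ve n_x-y)$, and also in $v$. Thus $f_\ve,b_\ve$ are continuous on $[0,T]\times\ov{\calO}$ and solve the equations up to commutators tending to $0$ in $L^1_{\rm loc}+L^2_{t,x}H^{-1}_{v,{\rm loc}}$. Testing with $(n_x\cdot v)\omega^2\chi_R$ gives a local trace estimate. Applied to $f_{\ve_2}-f_{\ve_1}$, it shows that $(\gamma f_\ve)$ is Cauchy in $L^2_{\rm loc}(\Sigma^T;(n_x\cdot v)^2\,\tnd\sigma_x\dv\dt)$, hence $\gamma f_\ve\to\gamma f$ a.e., and likewise for $b$.

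Both factors are then truncated by bounded $\beta_M$. With only $\beta(f)$ bounded, the term $\beta''(f)\sigma_{ij}\p_{v_i}f\,\p_{v_j}f\,b^\epsilon$ is not known to be integrable when $b^\epsilon$ is not mollified in $v$. Vitali's theorem then gives $\beta_M(\gamma f_\ve)\beta_M(\gamma b_\ve)\to\beta_M(\gamma f)\beta_M(\gamma b)$ in $L^p_{\rm loc}$. Only after this limit is the boundary term with radial $\chi_R(v)$ cancelled, using $\gamma_-f=\gamma_+f\circ\scrV_x$ and $\gamma_+b=\gamma_-b\circ\scrV_x$; the paper then lets $M\to\infty$ and $R\to\infty$. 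Your treatment of the $\chi_R$ commutators and of the $\scrP$ cancellation agrees with the paper.

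Finally, in your formal identity the transport term coming from the backward equation is $-f\,v\cdot\nabla_x b$, not $+f\,v\cdot\nabla_x b$. Otherwise the two transport terms would not combine into $-\iint v\cdot\nabla_x(fb)$.
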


\begin{remark}
    In practice, we will usually consider a polynomial weight $\omega = \la v \ra^m$ so that (W1') is satisfied. Also, typically $m$ will be taken large enough so that $L^2_{\la v \ra^m} \hookrightarrow L^1_{\la v \ra^2}$. This inclusion is important later on for the weight to be compatible with the nonlocal operator $\scrP$, in particular to ensure that the condition (P1) is satisfied.
\end{remark}

\begin{proof}[Proof of Lemma \ref{p: l: dual}]
    (Step 1: Regularizing the problem.)
    Consider mollified versions $f_\ve$ and $b_\ve$ of $f$ and $b$, respectively. Namely, let
    \begin{equation*}
        (f)_\ve = f_\ve := f \star_\ve \bfr_\ve *_v \bfr_\ve,
    \end{equation*}
    where $(\bfr_\ve)$ is the usual family of mollifiers supported in the ball $B_\ve$, and the convolution-translation operation $\star_\ve$ is defined in the spatial variable by (see for instance \cite{BloLeD01})
    \begin{align*}
        \forall x\in \ov{\Omega}, \quad f \star_\ve \bfr_\ve (x) := \int_\Omega f(y) \bfr_\ve(x - 2\ve n_x - y) \,\dy.
    \end{align*}
    For each fixed $(x,v)\in \calO$, we observe that
    \begin{equation*}
        \bfr_\ve(x-2\ve n_x - y) \bfr_\ve(v-w) \in \calD(\calO_{y,w}),
    \end{equation*}
    and thus, using this as test function in the distributional formulation of $f$, we obtain that $f_\ve$ is a solution to
    \begin{equation}\label{P/Eq/ reg rough}
    \begin{split}
        \p_t f_\ve + (v\cdot \nabla_x f)_\ve &= (f \p^2_{v_iv_j}\sigma_{ij})_\ve - 2 (f \p_{v_j}\sigma_{ij}) \star_\ve \bfr_\ve *_v \p_{v_i} \bfr_\ve + (f \sigma_{ij}) \star_\ve \bfr_\ve *_v \p_{v_i v_j}^2 \bfr_\ve \\
        &\quad + (f \nabla_v \cdot \zeta) \star_\ve \bfr_\ve *_v \bfr_\ve - (f \zeta) \star_\ve \bfr_\ve *_v \nabla_v \cdot \bfr_\ve \\
        &\quad + (\eta f)_\ve + (\scrP f)_\ve + (S)_\ve \quad \text{in} \quad \calD'((0,T)\times \ov{\calO}).
    \end{split}
    \end{equation}
    From this very equation it follows that $f_\ve \in W^{1,1}((0,T);W^{1,\infty}_{\rm loc}(\ov{\calO}))$. We may therefore assume without loss of generality that $f_\ve$ is continuous on $[0,T]\times \ov{\calO}$, and thus $f_\ve$ admits traces $\gamma f_\ve$ and $f_{\ve t}$. Next, we observe that $f_\ve \to f$ in $L^2_{t,x} H^1_{v, \rm loc}$ by standard property of the mollifier. We claim that this implies $\gamma f_\ve \to \gamma f$ strongly in a suitable norm. Indeed, observe that the PDE for $f_\ve$ may be rewritten as
    \begin{align*}
        \p_t f_\ve + v\cdot \nabla_x f_\ve = \scrL f_\ve + (\scrP f)_\ve + (S)_\ve + R_\ve^1,
    \end{align*}
    where $R_\ve^1$ is the remainder term (commutator)
    \begin{align*}
        R_\ve^1 &:= v\cdot \nabla_x f_\ve - (v\cdot \nabla_x f)_\ve \\
        &\quad - 2 \p_{v_i} \Big(  (f \p_{v_j}\sigma_{ij})_\ve - f_\ve \p_{v_j} \sigma_{ij} \Big) + \p^2_{v_i v_j} \Big( (f\sigma_{ij})_\ve - f_\ve \sigma_{ij} \Big) \\
        &\quad + \Big( (f \nabla_v\cdot \zeta)_\ve - f_\ve \nabla_v\cdot \zeta \Big) + \nabla_v\cdot \Big( (f\zeta)_\ve - f_\ve \zeta \Big) + \Big( (\eta f)_\ve - \eta f_\ve \Big) .
    \end{align*}
    
    Thanks to the bounds of (L3), the arguments of \cite{DiPLion89,Mi10} show that $R_\ve^1 \to 0$ at least in $L^1_{\rm loc}(\calO^T) + L^2_{t,x} H^{-1}_{v, {\rm loc}}$. Now using this PDE formulation, and noting that the chain rule applies to $f_\ve$ (since $f_\ve\in W^{1,1}_t W^{1,\infty}_{x,v,{\rm loc}}$), we may obtain estimates in the manner of \eqref{p: e: alphak}. Namely, using the test function $\varphi_R(x,v) := (n_x\cdot v) \omega^2 \chi_R(v)$ where $\chi_R\in C_c^\infty(B_{R+1})$ and $\mathbf{1}_{|v|\le R} \le \chi_R \le \mathbf{1}_{|v|\le R+1}$, we obtain (using the notation $\calO^T_R := (0,T)\times \Omega \times B_R$)
    \begin{equation}\label{P: E: Trace Egy}
    \begin{split}
        &\iiint_{\calO^T_R} |\gamma f_\ve|^2 (n_x\cdot v)^2 \omega^2 \,\dx\dv\dt \\
        &\lesssim_{R,\omega} \iint_{\Omega\times B_{R+1}} (|f_{\ve T}|^2 + |f_{\ve 0}|^2) \,\dx\dv  \\
        &\quad + \iiint_{ \calO^T_{R+1} } |f_\ve \omega|^2 \Big| v \cdot D_x n_x \, v + \omega^{-2} \p^2_{v_i v_j} (\sigma_{ij} \varphi_R ) + \omega^{-2} \p_{v_i} (\zeta_i \varphi_R )  \Big|  \,\dx\dv\dt \\
        &\quad + 2 \lt| \iiint_{ \calO^T_{R+1} }  \Big( \sigma_{ij} \p_{v_i} f_\ve \p_{v_j} f_\ve + \eta f_\ve^2  + 2f_\ve (\scrP f)_\ve \Big) \omega^2 \,\dx\dv\dt \rt|  \\
        &\quad + \lt| \iiint_{ \calO^T_{R+1} } ( (S)_\ve + R_\ve^1 ) (2f_\ve \varphi_R ) \,\dx\dv\dt \rt| .
    \end{split}
    \end{equation}
    Using the bounds in (W1)--(L3)--(P1), and noting that the domain of integration is bounded, we can crudely bound the terms in the right-hand side of \eqref{P: E: Trace Egy} in the following manner. First,
    \begin{align*}
        &\iiint_{\calO^T_{R+1}} |f_\ve|^2 \Big| v \cdot D_x n_x \, v +  \p^2_{v_i v_j} (\sigma_{ij} \varphi_R ) +  \p_{v_i} (\zeta_i \varphi_R)  \Big| \,\dx\dv\dt  \le C_{R,\omega} \iiint_{\calO^T_{R+1}} |f_\ve|^2 \,\dx\dv\dt, \\
        &\lt|\iiint_{\calO^T_{R+1}} \sigma_{ij} \p_{v_i} f_\ve \p_{v_j} f_\ve \omega^2 \,\dx\dv\dt \rt|  \le C_{R,\omega} \iiint_{\calO^T_{R+1}} |\nabla_v f_\ve|^2 \,\dx\dv\dt .
    \end{align*}
    Next, 
    \begin{align*}
        \iiint_{\calO^T_{R+1}} |\eta f_\ve^2  + f_\ve (\scrP f)_\ve| \omega^2 &\le C_R \|f_\ve\|_{L^2(\calO^T_{R+1})}^2 + \lt(\iiint_{\calO^T_{R+1}} |f_\ve|^2 \omega^2 \rt)^{\frac{1}{2}} \lt( \iiint_{\calO^T_{R+1}} |(\scrP f)_\ve|^2 \omega^2  \rt)^{\frac{1}{2}} \\
        &\le C_R \|f_\ve\|_{L^2(\calO^T_{R+1})}^2 + C_R \|f_\ve\|_{L^2(\calO^T_{R+1})} \lt(\iiint_{\calO_{R+1}^T} |(\scrP f)_\ve|^2 \rt)^{\frac{1}{2}} \\
        &\le C_R \|f_\ve\|_{L^2(\calO^T_{R+1})}^2 + C_R \|f_\ve\|_{L^2(\calO^T_{R+1})} \lt(\iiint_{\calO^T_{R+1+\ve}} |\scrP f|^2 \rt)^{\frac{1}{2}} \\
        &\le C_R \|f_\ve\|_{L^2(\calO^T_{R+1})}^2 + C_{R,\omega} \|f_\ve\|_{L^2(\calO^T_{R+1})} \lt(\iiint_{\calO^T_{R+1+\ve}} |\scrP f|^2 \omega^2 \rt)^{\frac{1}{2}}  \\
        &\le C_R \|f_\ve\|_{L^2(\calO^T_{R+1})}^2 + \lambda_{\scrP} \, C_{R,\omega} \|f_\ve\|_{L^2(\calO^T_{R+1})} \|f\|_{L^2_\omega(\calO^T)}.
    \end{align*}
    In the above, the first inequality is just Cauchy--Schwarz; the second is because $1\lesssim \omega \lesssim 1$ on $B_{R+1}$; the third inequality is a standard computation which follows from the definition of the mollifier; the fourth inequality uses again that $1\lesssim \omega$, and the last inequality owes to (P1). For the final term in \eqref{P: E: Trace Egy}, we have:
    \begin{align*}
        \lt| \iiint_{\calO^T_{R+1}} ( (S)_\ve + R_\ve^1 ) (2f_\ve \varphi_R)  \rt| &\le C_R  \|S\|_{L^2((0,T)\times \Omega; H^{-1}(B_{R+1}))} \|f_\ve\|_{L^2((0,T)\times \Omega; H^1(B_{R+1}))} \\
        &\quad + C_R \|R_\ve^1\|_{L^2((0,T)\times\Omega;H^{-1}(B_{R+1}))} \|f_\ve\|_{L^2((0,T)\times\Omega;H^1(B_{R+1}))}.
    \end{align*}
    by using Young's convolution inequality. Consequently,
    \begin{align*}
        \iiint_{(0,T)\times \p\Omega\times B_{R}} |\gamma f_\ve|^2 (n_x\cdot v)^2 \omega^2 &\lesssim_R \|f_{\ve T}\|_{L^2(\Omega\times B_{R+1})}^2 + \|f_{\ve 0}\|_{L^2(\Omega\times B_{R+1})}^2 \\
        &\quad + \|f_\ve\|_{L^2((0,T)\times\Omega; H^1(B_{R+1}))}^2 + \|f_\ve\|_{L^2(\calO^T_{R+1})} \|f \|_{L^2_\omega(\calO^T)} \\
        &\quad + \|S\|_{L^2((0,T)\times\Omega; H^{-1}(B_{R+1}))} \|f_\ve\|_{L^2((0,T)\times\Omega; H^1(B_{R+1}))}  \\
        &\quad + \|R_\ve^1\|_{L^2((0,T)\times\Omega;H^{-1}(B_{R+1}))} \|f_\ve\|_{L^2((0,T)\times\Omega;H^1(B_{R+1}))} \\
        &\lesssim_{R,T} \|f_{\ve T}\|_{L^2(\Omega\times B_{R+1})}^2 + \|f_{\ve 0}\|_{L^2(\Omega\times B_{R+1})}^2 \\
        &\quad + \|f_\ve\|_{L^2((0,T)\times\Omega;H^1(B_{R+1}))}^2 + \|f_\ve\|_{L^2(\calO^T_{R+1})} \\
        &\quad + \|f_\ve\|_{L^2((0,T)\times\Omega;H^1(B_{R+1}))} + o(\ve),
    \end{align*}
    the last inequality due to the facts that $f\in L^2_\omega(\calO^T)$, $S\in L^2_{t,x} H^{-1}_{v, {\rm loc}}$, and because $(f_\ve)$ is bounded in $L^2_{t,x}H^1_{v, {\rm loc}}$. Now owing to the linear nature of the problem, for any $\ve_1,\ve_2>0$ we may apply this estimate to $f_{\ve_2} - f_{\ve_1}$. Then there holds
    \begin{equation} \label{P: E: TraceCauchy}
    \begin{split}
        &\iiint_{(0,T)\times \p\Omega \times B_R} |\gamma f_{\ve_2} - \gamma f_{\ve_1}|^2 (n_x\cdot v)^2 \omega^2 \\
        &\lesssim \|f_{\ve_2 T} - f_{\ve_1 T} \|_{L^2(\Omega\times B_{R+1})}^2 + \|f_{\ve_2 0} - f_{\ve_1 0}\|_{L^2(\Omega\times B_{R+1})}^2 \\
        &\quad + \|f_{\ve_2} - f_{\ve_1}\|_{L^2((0,T)\times\Omega; H^1(B_{R+1}))}^2 + \|f_{\ve_2} - f_{\ve_1}\|_{L^2(\calO^T_{R+1})} \\
        &\quad + \|f_{\ve_2} - f_{\ve_1}\|_{L^2((0,T)\times\Omega;H^1(B_{R+1}))} + o(\ve_1,\ve_2).
    \end{split}
    \end{equation}
    Using that $(f_{\ve t})$ is Cauchy in $L^2_{\rm loc}(\ov{\calO})$ and $(f_{\ve})$ is Cauchy in $L^2((0,T)\times\Omega;H^1_{\rm loc}(\R^d))$, we deduce that $(\gamma f_\ve)$ is Cauchy in $L^2((0,T)\times \p\Omega \times B_R, (n_x\cdot v)^2 \omega^2 \tnd\sigma_x\dv\dt)$. Consequently there is a limit $\xi$ for which, up to a diagonal extraction, $\gamma f_\ve \to \xi$ in $L^2_{\rm loc}(\Sigma^T; (n_x\cdot v)^2 \,\tnd\sigma_x\dv\dt)$ and a.e. We can then consider the Green's formula satisfied by $f_\ve$ in duality with $\calD'((0,T)\times\ov{\calO})$, and pass to the limit, to deduce that $f$ satisfies $\eqref{p: e: kolmogorov}_1$ in $\calD'((0,T)\times \ov{\calO})$ with trace $\xi$. Such a trace is unique (an immediate consequence of the Green's formula), and therefore we conclude to $\xi = \gamma f$.

    Because all of the estimates above are local in nature, and since $b\in L^2((0,T)\times\Omega;H^1_{\rm loc}(\R^d))$, we can proceed with the dual backwards equation in the same manner. We first have that $b_\ve$ is a solution to
    \begin{equation*}
        - \p_t b_\ve - v\cdot \nabla_x b_\ve = \scrL^* b_\ve + (\scrP^* b)_\ve + R_\ve^2
    \end{equation*}
    in the sense of $\calD'([0,T]\times \ov{\calO})$. Here the commutator satisfies (at least) that $R_\ve^2\to 0$ in $L^1_{\rm loc} + L^2_{t,x}H^{-1}_{v,{\rm loc}}$; we can also prove, similarly as we did for $f_\ve$, that $\gamma b_\ve \to \gamma b$ strongly in $L^2_{\rm loc}(\Sigma^T;(n_x\cdot v)^2\,\tnd\sigma_x\dv\dt)$ and a.e. as $\ve \searrow 0$.
    
    (Step 2: Renormalizing into an $L^\infty$-framework.) Now we may argue by adapting the framework of \cite{DiPLion89} and \cite{Mis00}. We take an odd renormalizing function
    \begin{equation*}
        \beta_M(z) := \begin{cases}
            z & |z| \le M, \\
            M+1 & |z| \ge M+2,
        \end{cases}
    \end{equation*}
    defined suitably on $(-(M+2),-M)\cup (M,M+2)$ so that $\beta_M \in C^2 \cap W^{2,\infty}(\R)$. Then in the sense of $\calD'((0,T)\times \ov{\calO})$, we have
    \begin{align*}
        \p_t \beta_M(f_\ve) + v\cdot \nabla_x \beta_M(f_\ve) &= \scrL \beta_M(f_\ve) + \beta_M'(f_\ve) ( (\scrP f)_\ve + (S)_\ve + R_\ve^1) \\
        &\quad - \beta_M''(f_\ve) \sigma_{ij} \p_{v_i} f_\ve \p_{v_j} f_\ve + \eta (\beta_M'(f_\ve) f_\ve - \beta_M(f_\ve)) , \\
        -\p_t \beta_M(b_\ve) - v\cdot \nabla_x \beta_M(b_\ve) &= \scrL^* \beta_M(b_\ve) + \beta_M'(b_\ve) ( (\scrP^* b)_\ve + R_\ve^2) \\
        &\quad + (b_\ve \beta_M'(b_\ve) - \beta_M(b_\ve)) \p^2_{v_i v_j}\sigma_{ij}  - \beta_M''(b_\ve) \sigma_{ij} \p_{v_i} g \p_{v_j} g  \\
        &\quad + (b_\ve \beta_M'(b_\ve) - \beta_M(b_\ve)) (\nabla_v\cdot \zeta ) \\
        &\quad + \eta (b_\ve \beta_M'(b_\ve) - \beta_M(b_\ve)).
    \end{align*}
    Furthermore, going back to the formulation in \eqref{P/Eq/ reg rough}, it is clear that $\beta_M\in W^{2,\infty}$ implies
    \begin{equation*}
        \beta_M(f_\ve), \beta_M(b_\ve) \in W^{1,1}((0,T), W^{1,\infty}_{\rm loc}(\ov{\calO})) \cap L^\infty((0,T)\times\ov{\calO}),
    \end{equation*}
    and therefore the product rule applies to $\mathfrak{P}_{M,\ve} := \beta_M(f_\ve) \beta_M(b_\ve)$. We obtain namely that
    \begin{align*}
        &\p_t \mathfrak{P}_{M,\ve} + v\cdot \nabla_x \mathfrak{P}_{M,\ve} \\
        &= \p_{v_i} \Big( \beta_M(b_\ve) \sigma_{ij} \p_{v_j} \beta_M(f_\ve) - \beta_M(f_\ve) \p_{v_j} (\sigma_{ij} \beta_M(b_\ve) ) + \zeta_i \beta_M(f_\ve) \beta_M(b_\ve) \Big) \\
        &\quad + \beta_M(b_\ve) \beta_M'(f_\ve) \Big( (\scrP f)_\ve + (S)_\ve +  R_\ve^1 \Big) + \beta_M(f_\ve) \beta_M'(f_\ve) \Big( (\scrP^* b)_\ve + R_\ve^2 \Big) \\
        &\quad + \beta_M(b_\ve) \Big( - \beta_M''(f_\ve) \sigma_{ij} \p_{v_i} f_\ve \p_{v_j} f_\ve + \eta (\beta_M'(f_\ve) f_\ve - \beta_M(f_\ve)) \Big) \\
        &\quad + \beta_M(f_\ve) \Big( (b_\ve \beta_M'(b_\ve) - \beta_M(b_\ve)) \p^2_{v_i v_j}\sigma_{ij}  - \beta_M''(b_\ve) \sigma_{ij} \p_{v_i} g \p_{v_j} g \Big) \\
        &\quad + \beta_M(f_\ve) \Big( (b_\ve \beta_M'(b_\ve) - \beta_M(b_\ve)) (\nabla_v\cdot \zeta ) + \eta (b_\ve \beta_M'(b_\ve) - \beta_M(b_\ve)) \Big).
    \end{align*}
    

    (Step 3: Passing to the limit $\ve\to 0$.) Writing out explicitly the Green's formula satisfied by $\mathfrak{P}_{M,\ve}$ in duality with $\calD([0,T]\times\ov{\calO})$, let us first aim to pass to the limit $\ve\to 0$. Observe that as $\ve\to 0$,
    \begin{equation}\label{P: E: beta M ve}
    \begin{split}
        &\beta_M(f_{\ve t}) \to \beta_M(f_t) \quad \text{a.e. in $\calO$ for all $t\in [0,T]$}, \\
        &\beta_M(f_\ve) \to \beta_M(f), \quad \beta_M'(f_\ve) \to \beta_M'(f), \quad \beta_M''(f_\ve) \to \beta_M''(f) \quad \text{a.e. in } (0,T)\times \calO,\\
        &\beta_M(b_\ve) \to \beta_M(b), \quad \beta_M'(b_\ve) \to \beta_M'(b), \quad \beta_M''(b_\ve) \to \beta_M''(b) \quad \text{a.e. in } (0,T)\times \calO, \\
        &\beta_M(\gamma f_\ve) \to \beta_M(\gamma f), \quad \beta_M(\gamma b_\ve) \to \beta_M(\gamma b) \quad \text{a.e. in } \Sigma^T,
    \end{split}
    \end{equation}
    at least up to a subsequence. Now, all sequences in \eqref{P: E: beta M ve} are trivially uniformly-in-$\ve$ bounded in $L^\infty$ (by definition of $\beta_M$). Therefore, by the convergence theorem of Vitali, the convergences in \eqref{P: E: beta M ve} are true in $L^p_{\rm loc}([0,T]\times \ov{\calO})$ for any $p\in [1,\infty)$.

    Also, since $\scrP f, \scrP^* b \in L^2_{\rm loc}([0,T]\times \ov{\calO})$ and $S\in L^2_{t,x} H^{-1}_{v,{\rm loc}}$, it is clear that
    \begin{equation*}
        \begin{split}
            &(\scrP f)_\ve \to \scrP f \quad\text{in} \quad L^2_{\rm loc}([0,T]\times \ov{\calO}), \\
            &(\scrP^* b)_\ve \to \scrP^* b \quad \text{in} \quad L^2_{\rm loc}([0,T]\times \ov{\calO}),\\
            &(S)_\ve \to S \quad \text{in} \quad L^2([0,T]\times \Omega; H^{-1}_{\rm loc}(\R^d)).
        \end{split}
    \end{equation*}

    For the trace $\gamma \mathfrak{P}_{M,\ve}$ we may discuss as follows.  Note that $f_\ve$ and $b_\ve$ may be assumed to be (uniformly) continuous in each compact subset of $\ov{\calO}$ (by Sobolev embedding). Therefore $\beta_M(f_\ve)$, $\beta_M(b_\ve)$ are also continuous on $\ov{\calO}$, which means $\gamma\mathfrak{P}_{M,\ve} = \beta_M(\gamma f_\ve) \beta_M(\gamma b_\ve)$. It follows, again merely by definition of $\beta_M$, that $(\gamma \mathfrak{P}_{M,\ve})_\ve$ is uniformly bounded in $L^\infty(\Sigma^T)$. Combining this with $\eqref{P: E: beta M ve}_4$ (which yields that $\gamma \mathfrak{P}_{M,\ve} \to \beta_M(\gamma f) \beta_M(\gamma b)$ a.e.), we obtain again through Vitali's theorem that $\gamma \mathfrak{P}_{M,\ve} \to \beta_M(\gamma f) \beta_M(\gamma b)$ strongly in $L^p_{\rm loc}(\Sigma^T)$ for every $p\in [1,\infty)$.
    
    Using all of these strong convergences, we can now easily pass to the limit $\ve\to 0$ in every term of the Green's formula that is satisfied by $\mathfrak{P}_{M,\ve}$. We deduce that $\mathfrak{P}_M := \beta_M(f) \beta_M(b)$ satisfies 
    \begin{align*}
        &\p_t \mathfrak{P}_{M} + v\cdot \nabla_x \mathfrak{P}_{M} \\
        &= \p_{v_i} \Big( \beta_M(b) \sigma_{ij} \p_{v_j} \beta_M(f) - \beta_M(f) \p_{v_j} (\sigma_{ij} \beta_M(b) ) + \zeta_i \beta_M(f) \beta_M(b) \Big) \\
        &\quad + \beta_M(b) \beta_M'(f) (\scrP f  + S) - \beta_M(f) \beta_M'(b) \scrP^* b  \\
        &\quad + \beta_M(b) \Big( - \beta_M''(f) \sigma_{ij} \p_{v_i} f \p_{v_j} f + \eta (\beta_M'(f) f - \beta_M(f)) \Big) \\
        &\quad + \beta_M(f) \Big( (b_\ve \beta_M'(b) - \beta_M(b)) \p^2_{v_i v_j}\sigma_{ij}  - \beta_M''(b) \sigma_{ij} \p_{v_i} g \p_{v_j} g \Big) \\
        &\quad + \beta_M(f) \Big( (b \beta_M'(b) - \beta_M(b)) (\nabla_v\cdot \zeta ) + \eta (b \beta_M'(b) - \beta_M(b)) \Big) 
    \end{align*}
    in duality with $\calD'([0,T]\times\ov{\calO})$ with unique trace functions $\mathfrak{P}_{Mt} = \beta_M(f_t) \beta_M(b_t)$ and $\gamma \mathfrak{P}_M = \beta_M(\gamma f) \beta_M(\gamma b)$.
    
    (Step 4: Passing to the limit $M\to\infty$ and then $\varphi \nearrow 1$.) We now specify the test function $\varphi(x,v) = \chi_R(v)$, where $\chi_R$ is a radial function satisfying 
    \begin{equation}\label{P: E: Nice Cutoff}
        \mathbf{1}_{|v|\le R} \le \chi_R \le \mathbf{1}_{|v|\le 2R}, \quad |\nabla_v \chi_R| \le \frac{2}{R} \mathbf{1}_{R\le |v| \le 2R}, \quad |\nabla_v^2 \chi_R| \le \frac{10}{R^2} \mathbf{1}_{R \le |v|\le 2R},
    \end{equation}
    with $\chi_R \nearrow 1$ as $R\to\infty$. Note that in particular, the radial symmetry of $\chi_R$ and the dual reflection boundary conditions imply
    \begin{equation*}
        -\iint_{\Sigma} \beta_M(\gamma f) \beta_M(\gamma b) \chi_R (v\cdot n_x) \,\tnd\sigma_x \dv = 0.
    \end{equation*}
    Thus in the Green's formula satisfied by $\mathfrak{P}_M$ and $\chi_R$, the boundary integral may be discarded. Then using the bounds
    \begin{align*}
        |\beta_M(z)| \le |z|, \quad |\beta_M'(z)| \lesssim 1, \quad |\beta_M''(z)| \lesssim 1 ,
    \end{align*}
    and owing to the fact that $f,b\in L^2_{t,x}H^1_{v}$, we can apply the dominated convergence theorem of Lebesgue to pass to the limit $M\to\infty$ in this Green's formula, obtaining
    \begin{equation} \label{P: E: dual chiR}
    \begin{split}
        &\iint_{\calO} f_T b_T \chi_R - \iint_{\calO} f_0 b_0 \chi_R \\
        &= - \iiint_{\calO^T} \Big( b \sigma_{ij} \p_{v_j} f - f \p_{v_j} (\sigma_{ij} b) + \zeta_i f b \Big) \p_{v_i} \chi_R \\
        &\quad + \iiint_{\calO^T} (b \scrP f + b S - f \scrP^* b) \chi_R .
    \end{split}
    \end{equation}
    
    At last, to pass to the limit $R\to\infty$, we observe that the assumptions \eqref{P: E: Nice Cutoff} imply
    \begin{align*}
        \forall R>1, \quad |\nabla_v \chi_R(v)| \le \frac{6}{1+|v|}.
    \end{align*}
    Combining this estimate with what we have in the assumptions (W1)--(L3)--(L4), we observe that the integrands on the right-hand side of \eqref{P: E: dual chiR} can be bounded by
    \begin{align*}
        |b\sigma_{ij}\p_{v_j} f \p_{v_i} \chi_R| &\lesssim  |b||\p_{v_j} f| \lesssim |b \omega^{-1}|^2 +  | \omega \p_{v_j} f|^2 \lesssim |b \omega^{-1}|^2 + | \nabla_v (f \omega)|^2 + |f \omega|^2, \\
        |f \p_{v_j}(\sigma_{ij}b) \p_{v_i} \chi_R | &\lesssim |fb| + |f \sigma_{ij} \p_{v_j} b \p_{v_i} \chi_R| \lesssim |f\omega|^2 + |b \omega^{-1}|^2 + |\nabla_v (b\omega^{-1})|^2 , \\
        |\zeta_i f b \p_{v_i}\chi_R| &\lesssim |f\omega|^2 + |b\omega^{-1}|^2.
    \end{align*}
    For the integrand in the last line of \eqref{P: E: dual chiR}, we know that $b\in L^2_{\omega^{-1}}(\calO^T)$, $\scrP f\in L^2_{\omega}(\calO^T)$, $f\in L^2_\omega(\calO^T)$, and $\scrP^* b\in L^2_{\omega^{-1}}(\calO^T)$. Therefore, the Lebesgue dominated convergence theorem applies to \eqref{P: E: dual chiR} and we get
    \begin{align*}
        \iint_{\calO} f_T b_T - \iint_{\calO} f_0 b_0 &= \iiint_{\calO^T} Sb + \iiint_{\calO^T} (b \scrP f - f \scrP^* b) \\
        &= \iiint_{\calO^T} Sb.
    \end{align*}
    This completes the proof.
\end{proof}


\subsection{Basic facts on spatially homogeneous and radial solutions to \eqref{eq: main}}
In this section we recall some properties of spatially homogeneous and radial solutions to \eqref{eq: main}. It is easy to check that if the initial datum $g_0(v)$ is radial, then the solution $g_t(v)$ to \eqref{eq: main} is also radial at all times $t\ge 0$, and due to this symmetry the boundary condition is automatically satisfied. We may then view $g$ as a solution to \eqref{eq: main} in all of $\R_+\times \R^d$. Furthermore, owing to the conservation laws in \eqref{Eq/ F Conv}, we have that
\begin{equation*}
    \ddt \int_{\R^d} (1,v,|v|^2) g_t(v) \,\dv = 0,
\end{equation*}
and consequently the coefficients $\rho[g], u[g], \theta[g]$ in \eqref{eq: main} are constant. In particular, since $g$ is radial we obtain that $j[g] = u[g] = 0$. Thus, if $\rho[g_0]=1$, then we simply have
\begin{equation*}
    \theta[g] = \frac{E[g_0]}{d}.
\end{equation*}
Therefore, under the assumptions of Theorem \ref{Thm: Main}, $g$ is merely a solution to the linear Fokker--Planck equation
\begin{equation}\label{Eq/ Spatially hom}
    \p_t g = \left( \frac{E[g_0]}{d} \Delta_v g + \nabla_v\cdot (vg) \right) \quad \text{in }\R_+\times\R^d.
\end{equation}

In the following lemma we provide some elementary weighted $L^\infty$-estimates.

\begin{lemma} \label{Lem/ Elem Linfty}
    We assume that $\omega \in \wt{\calW}$. Let $g_t$ denote the solution to \eqref{Eq/ Spatially hom} corresponding to initial datum $g_0\in W^{2,\infty}_{\omega}(\R^d)$. Then there exists $\lambda > 0$ such that
    \begin{equation*}
        \|g_t \|_{W^{2,\infty}_{\omega}(\R^d)} \lesssim e^{\lambda t} \|g_0\|_{W^{2,\infty}_{\omega}(\R^d)}.
    \end{equation*}
\end{lemma}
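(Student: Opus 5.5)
The approach is to treat \eqref{Eq/ Spatially hom} as a linear Fokker--Planck equation with constant diffusion $\theta_0 := E[g_0]/d$. We differentiate it in $v$ and run a weighted $L^\infty$ maximum-principle (or $L^p$ with $p\to\infty$) argument for $g$, $\nabla_v g$ and $\nabla_v^2 g$ successively. Write $\scrL g := \theta_0\Delta_v g + v\cdot\nabla_v g + d\, g$. A key structural fact is that derivatives commute with $\scrL$ up to lower order terms. Indeed, $\partial_{v_k}$ applied to the equation gives
\begin{equation*}
    \p_t \p_{v_k} g = \scrL \p_{v_k} g + \p_{v_k} g ,
\end{equation*}
and similarly $\p_t \p^2_{v_kv_l} g = \scrL \p^2_{v_kv_l} g + 2\p^2_{v_kv_l} g$. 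So each derivative $D^\alpha g$ with $|\alpha|\le 2$ solves $\p_t D^\alpha g = \scrL D^\alpha g + |\alpha| D^\alpha g$, a closed scalar equation. No coupling between different derivative orders appears, which makes the $W^{2,\infty}$ estimate reduce to a weighted $L^\infty$ estimate for the single operator $\scrL + c$.

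For the weighted $L^\infty$ bound I would use the defect quantity $\Gamma_{\scrL,\omega,p}$ from \eqref{p: e: Gamma} with $\sigma = \theta_0 I$, $\zeta = v$, $\eta = d$. Then \eqref{p: e: formal} yields
\begin{equation*}
    \frac{1}{p}\ddt \|h\|_{L^p_\omega}^p \le \int \Gamma_{\scrL,\omega,p} |h|^p\omega^p
\end{equation*}
for any solution $h$ of $\p_t h = \scrL h$. Letting $p\to\infty$, we get
\begin{equation*}
    \Gamma_{\scrL,\omega,\infty} = 2\theta_0 \frac{|\nabla\omega|^2}{\omega^2} - \theta_0\frac{\Delta\omega}{\omega} - v\cdot\frac{\nabla \omega}{\omega} + d .
\end{equation*}
For admissible weights (polynomial, or stretched exponential, or Gaussian $e^{\gamma|v|^2}$ with $\gamma < 1/(2\theta_0) = d/(2E[g_0])$), one checks that $\Gamma_{\scrL,\omega,\infty}$ is bounded above by a constant $\lambda_0$; for large $|v|$ it is even negative. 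This is exactly where the constraint on $\gamma$ in $\calW_{g_0}$ enters. In the Gaussian case, $|\nabla\omega|^2/\omega^2 \sim 4\gamma^2|v|^2$ and $v\cdot\nabla\omega/\omega \sim 2\gamma |v|^2$, so the leading coefficient is $2\gamma(2\theta_0\gamma - 1)<0$. Since this bound is uniform in $p$ for large $p$, the uniform $L^p_\omega$ bounds pass to the limit and give
\begin{equation*}
    \|D^\alpha g_t\|_{L^\infty_\omega} \le e^{(\lambda_0+2)t}\|D^\alpha g_0\|_{L^\infty_\omega}.
\end{equation*}
Alternatively, one can argue by a comparison principle with the supersolution $C e^{\lambda t}\omega^{-1}$, checking $(\p_t - \scrL - |\alpha|)(e^{\lambda t}\omega^{-1}) \ge 0$. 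This is the same computation, and avoids integrability issues in $L^p$.

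The main obstacle is rigor: justifying the $L^p$ identity, or the maximum principle, on all of $\R^d$ for data only in $W^{2,\infty}_\omega$. There is also the question of whether $\wt{\calW}$ (as used here, presumably polynomial or the above classes) really gives $\sup_v \Gamma_{\scrL,\omega,\infty}<\infty$. I would handle the first point via the explicit Mehler-type representation of the Fokker--Planck solution. Under the rescaling $g_t(v) = e^{dt} G(t, e^t v)$ the equation becomes the heat equation with a time-dependent diffusion, so $g_t$ is an explicit Gaussian convolution of $g_0$, and derivatives and decay can be handled directly. The comparison argument then applies within the class of functions of controlled growth, where uniqueness for the heat equation holds.
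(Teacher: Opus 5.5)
Your proposal is correct and is essentially the paper's argument: after normalizing $E[g_0]=d$, the paper sets $G_t:=\omega\nabla_v g_t$ and derives a drift--diffusion equation for $G$ whose zeroth-order coefficient $c_\omega$ is, up to an additive constant, your $\Gamma_{\scrL,\omega,\infty}$. It checks that $\sup_v c_\omega<\infty$ for $\omega\in\wt{\calW}$ and concludes by the maximum principle, which is the same computation as your comparison with the supersolution $Ce^{\lambda t}\omega^{-1}$. Your comparison route is the cleaner one to make rigorous, rather than the $L^p_\omega$ one, since $D^\alpha g_0\in L^\infty_\omega$ need not lie in $L^p_\omega$; and your treatment of Gaussian weights with $\gamma<d/(2E[g_0])$ covers the paper's later use of the lemma with the weight $\la v\ra\omega$, $\omega\in\calW_{g_0}$.
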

\begin{proof}
    We only give the proof for the first derivative. The analogous estimate for the second derivative, and $g$ itself, follows in the same way. Furthermore, in order to simplify the notations, we will just assume without loss of generality $E[g_0] = d$. Then $g_t$ is a solution to
    \begin{equation*}
        \p_t g = \Delta_v g + \nabla_v\cdot (vg).
    \end{equation*}
    We denote $G_t := \omega \nabla_v g_t$, and after a direct computation, we find that $G$ is a solution to
    \begin{align*}
        &\p_t G = \Delta_v G + \lt(v - 2\frac{\nabla_v\omega}{\omega}\rt)\cdot \nabla_v G + c_\omega G, \\
        &c_\omega := \lt( 2\frac{|\nabla_v \omega|^2}{\omega} - \frac{\Delta_v \omega}{\omega} + 2d - v\cdot \frac{\nabla_v \omega}{\omega} \rt).
    \end{align*}
    For $\omega\in \wt{\calW}$, we can straightforwardly check that $\sup_{v\in \R^d} c_\omega(v) < \infty$, and therefore the maximum principle provides $\|G_t\|_{L^\infty(\R^d)}\le e^{c_{\omega} t} \|G_0\|_{L^\infty(\R^d)}$.
\end{proof}

Second, we have the following trend-to-equilibrium result, whose proof we refer to \cite[Section 3]{MisMou16}.

\begin{lemma}\label{Lem/ Trend to Equi}
    We assume that $\omega\in \calW_{g_0}$. Then given $g_0 \in L^\infty_{\omega}(\R^d)$ such that $\rho[g_0] = 1$, there exist $a>0$ and $C_a>0$ such that
    \begin{equation*}
        \|g_t(v) - \mu\|_{L^\infty_{\omega}(\R^d)} \le C_a e^{-at} \|g_0 - \mu\|_{L^\infty_{\omega}(\R^d)}.
    \end{equation*}
\end{lemma}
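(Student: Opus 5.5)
The plan is to follow the Mischler--Mouhot strategy for the linear Fokker--Planck operator. By the remark after \eqref{Eq/ Spatially hom}, $g_t$ solves a linear Fokker--Planck equation whose temperature is $\theta_0 := E[g_0]/d$. That equation has the unique normalized steady state $\mu_{\theta_0}(v) = (2\pi\theta_0)^{-d/2}e^{-|v|^2/(2\theta_0)}$. This coincides with $\mu$ exactly when $E[g_0]=d$, which is what the compatibility condition (C) enforces in Theorem \ref{Thm: Main}. So I assume this normalization (or, after rescaling velocities, reduce to $\theta_0=1$ and replace $\mu$ by $\mu_{\theta_0}$). Writing $\sfL g := \Delta_v g + \nabla_v\cdot(vg)$, the claim becomes $L^\infty_\omega$-exponential stability of $e^{t\sfL}$ on the invariant subspace $\{\rho[h]=0\}$, applied to $h_0 := g_0-\mu$. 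Mass conservation gives $\rho[h_t]=0$ for all $t$.

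First I establish decay in the small reference space $E := L^2_{\mu^{-1/2}}(\R^d)$. In $E$ the operator $\sfL$ is self-adjoint and nonpositive, and the Gaussian Poincar\'e inequality $\int |\nabla_v (h/\mu)|^2\mu \ge \int |h/\mu|^2\mu$ for $\rho[h]=0$ gives $\|e^{t\sfL}h\|_E \le e^{-t}\|h\|_E$.

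Next I split $\sfL = \sfA + \sfB$ with $\sfA h := M\chi_R h$ and $\sfB := \sfL - M\chi_R$, where $\chi_R$ is a smooth cutoff of the ball $B_R$. For $\omega \in \calW_{g_0}$ I compute the defect $\Gamma_{\sfL,\omega,p}$ of \eqref{p: e: gamma}, with $\sigma = I$, $\zeta = v$, $\eta = d$. For polynomial weights $\la v\ra^m$ one gets $\Gamma_{\sfL,\omega,\infty} \approx d - m$ at infinity, which is negative since $m > d$. For stretched exponential weights it tends to $-\infty$. For Gaussian weights $e^{\gamma|v|^2}$ it tends to $-\infty$ precisely when $\gamma < 1/2$, which is the condition $\gamma < d/(2E[g_0])$ under the normalization. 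Choosing $R$ and $M$ large then makes $\sfB$ dissipative in $L^\infty_\omega$ and in $L^2_{\omega}$, so $\|S_\sfB(t)\|_{\scrB(L^\infty_\omega)} \lesssim e^{-a_1 t}$. This is a direct maximum-principle argument, as in Lemma \ref{Lem/ Elem Linfty}. The operator $\sfA$ is bounded from $L^\infty_\omega$ into $E$ (compact support in $v$), and from any weighted space into $L^\infty_\omega$.

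I then use the factorization (Duhamel iteration) formula
\begin{equation*}
e^{t\sfL} = S_\sfB + S_\sfB * \sfA S_\sfB + \cdots + \bigl(S_\sfB \sfA\bigr)^{*(n-1)} * S_\sfB + \bigl(S_\sfB\sfA\bigr)^{*(n-1)} * e^{t\sfL}\bigl(\sfA S_\sfB\bigr),
\end{equation*}
where the final factor, from the $\sfA S_\sfB$ on the right, is taken with the $E$-decay. The terms with only $S_\sfB$ and $\sfA$ decay at rate $\min(a_1,\cdot)-\epsilon$ in $L^\infty_\omega$.

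The main obstacle is the last term, which needs $L^2\to L^\infty$ regularization of $(\sfA S_\sfB)^{*n}$. Concretely, I need $\|\sfA S_\sfB(t)\|_{\scrB(E,\,L^\infty_\omega)}\lesssim t^{-\alpha}e^{-a_1 t}$ after enough convolutions. I would prove this by Nash/De Giorgi--Moser iteration, or more simply via hypoellipticity-free parabolic smoothing: since $\sfB$ is a uniformly parabolic operator in $v$ alone, with no transport, a standard $L^p$--$L^{2p}$ Nash iteration using \eqref{p: e: formal} gives $L^2_{\omega'}\to L^\infty_{\omega}$ bounds with $t^{-d/4}$ blowup. After $n > d/4$ convolutions this blowup becomes integrable. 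Combining the pieces yields $\|e^{t\sfL}h_0\|_{L^\infty_\omega}\le C_a e^{-at}\|h_0\|_{L^\infty_\omega}$ for any $a<\min(1,a_1)$, since $\rho[h_0]=0$. Alternatively, one may simply cite \cite[Section 3]{MisMou16}, where exactly this enlargement is carried out.
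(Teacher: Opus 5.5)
Your proposal is correct and follows essentially the paper's route. The paper gives no argument beyond citing the factorization/enlargement method of \cite[Section 3]{MisMou16}, which is exactly your splitting $\sfL=\mathsf A+\mathsf B$: spectral gap in $L^2_{\mu^{-1/2}}$, $L^\infty_\omega$-dissipativity of $\mathsf B$, and regularization of the iterated $S_{\mathsf B}\mathsf A$. Your conditions $m>d$ and $\gamma<1/2$ hold for every $\omega\in\calW_{g_0}$ once $E[g_0]=d$.

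You are also right that the statement tacitly requires $E[g_0]=d$, since otherwise $g_t\to\mu_{\theta_0}\neq\mu$. However, (C) constrains $f_0$ rather than $g_0$ and only gives $|E[g_0]-d|\lesssim\delta$. So $E[g_0]=d$ is an additional hypothesis, not a consequence of (C).

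One detail needs repair: $\mathsf A S_{\mathsf B}h$ generally has nonzero mass. The middle factor $e^{t\sfL}$ in your last Duhamel term therefore decays in $L^2_{\mu^{-1/2}}$ only after composing with $Qh:=h-\rho[h]\mu$, which commutes with $e^{t\sfL}$. The iteration should be written for $e^{t\sfL}Q$, as the paper does in \eqref{Eq/ Duhamel}.
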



\section{On the weakly inhomogeneous regime}

\subsection{Setup of the problem} \label{Sec: Inhom: setup}

Let us derive the equation that is satisfied by $h:= f-g$. We recall $\eqref{eq: main}_1$ here,
\begin{align*}
    (\p_t + v\cdot \nabla_x) f&= \rho[f]\Big( \theta[f] \Delta_v f + \nabla_v\cdot ((v-u[f])f) \Big) \\
    &= \rho[f]\theta[f]\Delta_v f + d\rho[f] f + \rho[f] (v - u[f])\cdot \nabla_v f,
\end{align*}
and linearize the macroscopic coefficients with respect to $g$, by expanding everything up to the first order. First of all, we can write
\begin{equation*}
    \rho[f] = \rho[g_0] + \rho[h] = 1 + \rho[h].
\end{equation*}
Second, there holds
\begin{align*}
    \rho[f]\theta[f] &= \rho[g]\theta[g] + \frac{1}{d}E[h] - \frac{|j[h]|^2}{d(\rho[g_0] + \rho[h])} \\
    &= \frac{1}{d}E[g_0] +  \frac{1}{d}E[h] - \frac{|j[h]|^2}{d(1 + \rho[h])}.
\end{align*}
Third, we have
\begin{equation*}
    \rho[f]u[f] = j[f] = j[h],
\end{equation*}
thanks to the radial symmetry of $g$ (namely that $j[g] = 0$). The equation for $h$ can thus be written as
\begin{align*}
    &(\p_t + v\cdot \nabla_x) h  \\
    &= \lt(\frac{1}{d}E[g_0] + \frac{1}{d}E[h] - \frac{|j[h]|^2}{d( 1 + \rho[h])} \rt) \Delta_v h \\
    &\quad + \Big( ( 1  + \rho[h]) v - j[h] \Big) \cdot \nabla_v h 
    + d h + dg\rho[h] + dh\rho[h] \\
    &\quad + \lt(\frac{1}{d}E[h] - \frac{|j[h]|^2}{d( 1+\rho[h])} \rt) \Delta_v g
    + (\rho[h] v - j[h]) \cdot \nabla_v g .
\end{align*}

In order to simplify the notation, we define
\begin{align*}
    &\bfA[h](t,x) := \frac{|j[h]|^2}{ 1 + \rho[h]}, \quad \bfB[h](t,x,v) := \rho[h] v - j[h], 
\end{align*}
with which the equation for $h$ can then be simply written as
\begin{equation} \label{S: E: nonlin}
\begin{split}
    (\p_t + v\cdot \nabla_x)h &= \frac{1}{d}\lt(E[g_0] + E[h] - \bfA[h] \rt) \Delta_v h + ( v + \bfB[h]) \cdot \nabla_v h + d  h \\
    &\quad + d g \rho[h] +  d h \rho[h]  + \frac{1}{d} \Delta_v g \, E[h] + \bfB[h] \cdot \nabla_v g \\
    &\quad - \frac{1}{d} \Delta_v g \, \bfA[h].
\end{split}
\end{equation}

Towards the end of addressing the existence of a solution to \eqref{S: E: nonlin}, we consider first a regularized version of \eqref{S: E: nonlin}. The construction of a solution to \eqref{S: E: nonlin} is surprisingly delicate, owing to the presence of quadratic nonlinear terms in the principal part of the operator. Therefore, the method of approximation has to be chosen carefully. We first define for $\varsigma > 0$ the usual family of mollifiers $(\bfr_\va)$ and $(\bfs_\va)$, where
\begin{equation*}
\begin{split}
    &\textnormal{supp}(\bfr) \subset B_1, \quad \int_{\R^d} \bfr(x) \,\dx = 1, \quad \bfr_\va (x) := \frac{1}{\va^d} \bfr\lt(\frac{x}{\va}\rt), \\
    &\textnormal{supp}(\bfs) \subset \lt[-\frac{1}{2},\frac{1}{2}\rt] , \quad \int_\R \bfs(t) \,\dt = 1, \quad \bfs_\va(t) := \frac{1}{\va}\bfs\lt(\frac{t}{\va}\rt),
\end{split}
\end{equation*}
here $B_1$ denotes the unit ball in $\R^d$. For a given function $\bfrho:\Omega^T\to \R$, we then define its time-space regularized version $\bfrho *_t \bfs_\va \star_\va \bfr_\va : [0,T]\times \ov{\Omega} \to \R$ as
\begin{equation*}
    \bfrho *_t \bfs_\va \star_\varsigma \bfr_\varsigma (t,x) := \int_0^T \int_\Omega \bfrho(s,y) \bfs_\va(t-s) \bfr_\varsigma(x - 2\varsigma n_x - y ) \,\ds\dy.
\end{equation*}

Since the main difficulty comes from the presence of $\bfA[h]$ as part of the diffusion coefficient, we now set
\begin{equation*}
    \bfA_\varsigma[h] := \frac{ |j[h] *_t \bfs_\va \star_\varsigma \bfr_{\varsigma}|^2 }{ 1 + \rho[h] *_t \bfs_\va \star_\varsigma \bfr_{\varsigma} } .
\end{equation*}
The advantage of this formulation is that then, even if $\rho[h]$ and $j[h]$ are only known to converge for instance weakly$-*$, the regularized coefficient $\bfA_\varsigma[h]$ is at least pointwise convergent. Provided that we have some nice bounds on the macroscopic quantities, this pointwise convergence would then imply strong convergence in suitable Lebesgue spaces.

The regularized equation in consideration then reads
\begin{equation}\label{S: E: Reg Nonlin}
    \begin{cases}
        \begin{aligned}[c] 
            (\p_t + v\cdot \nabla_x)h &= \frac{1}{d}\lt(E[g_0] + E[h] - \bfA_\varsigma[h] \rt) \Delta_v h  \\
            &\quad + (v + \bfB[h])\cdot \nabla_v h + d (1 + \rho[h]) h \\
            &\quad + d g \rho[h] + \frac{1}{d}\Delta_v g E[h] + \bfB[h] \cdot \nabla_v g - \frac{1}{d} \bfA_\varsigma[h] \Delta_v g 
        \end{aligned}   &\text{in }\calO^T, \\
        h|_{t=0} = h_0  &\text{in } \calO, \\
        \gamma_- h = \gamma_+ h \circ \scrV_x  &\text{on }\Sigma_-^T.
    \end{cases}
\end{equation}
However, even the existence of a solution to \eqref{S: E: Reg Nonlin} is yet nontrivial, so we approach using a linearized version of \eqref{S: E: Reg Nonlin}:
\begin{equation}\label{S: E: Lin}
    \begin{cases}
        \begin{aligned}[c]
        (\p_t + v\cdot \nabla_x)h &= \frac{1}{d} \lt(E[g_0] + E[k] - \bfA_\varsigma[k] \rt) \Delta_v h \\
        &\quad + (v + \bfB[k])\cdot \nabla_v h + d (1 + \rho[k]) h \\
        &\quad + d g \rho[h] + \frac{1}{d}\Delta_v g E[h] + \bfB[h] \cdot \nabla_v g - \frac{1}{d} \bfA_\varsigma[k] \Delta_v g 
        \end{aligned} &\text{in }\calO^T, \\
        h|_{t=0} = h_0  &\text{in } \calO, \\
        \gamma_- h = \gamma_+ h \circ \scrV_x  &\text{on }\Sigma_-^T,
    \end{cases}
\end{equation}
where we assume that $k=k(t,x,v)$ is given from some set that we specify later. Let us remark that the solution to \eqref{S: E: Reg Nonlin} is formally realized as a fixed point of the map $k \mapsto h$ defined on \eqref{S: E: Lin}.

Towards the analysis of \eqref{S: E: Lin}, the following backwards equation (with zero source term and dual specular reflection) will also be utilized
\begin{equation}\label{S: E: Back Lin}
    \begin{cases}
        \begin{aligned}[c]
            (-\p_t - v\cdot \nabla_x) b &= \frac{1}{d} \lt( E[g_0] + E[k] - \bfA_\varsigma[k] \rt) \Delta_v b - ( v + \bfB[k]) \cdot \nabla_v b + d\rho[k] b \\
            &\quad + d \rho[bg] + \frac{1}{d}|v|^2 \rho[b\Delta_v g] + \rho[v\cdot b\nabla_v g] - v \rho[b\nabla_v g]
        \end{aligned} &\text{in }\calO^T, \\
        b|_{t=T} = b_T  &\text{in }\calO, \\
        \gamma_+ b = \gamma_- b \circ \scrV_x &\text{on }\Sigma_+^T.
\end{cases}
\end{equation}
In the weighted $L^2$ framework, the well-posedness of both equations \eqref{S: E: Lin} and \eqref{S: E: Back Lin} follows as an application of Proposition \ref{prop: WP}. For the forward equation \eqref{S: E: Lin}, let us define the following wide range of polynomial weights which will act as a \textit{reference class}:
\begin{equation*}
        \wt{\calW} := \lt\{
        \la v \ra^m, \quad  m > \frac{d}{2} + 2
        \rt\} .
    \end{equation*}
We remark that $L^2_{\wt{\omega}}\hookrightarrow L^1_{\la v \ra^2}$ for every $\wt{\omega}\in \wt{\calW}$ because $\la v \ra^2 \wt{\omega}^{-1} \in L^2(\R^d)$.

On the other hand, we remind the reader that the class of admissible weights is defined as
\begin{equation*}
    \calW_{g_0} := \lt\{
        \begin{aligned}
        \la v \ra^m, \quad & m > d + 2, \\
        \la v \ra^m e^{\gamma |v|^\tau}, \quad &\text{either } m>0, \; \gamma > 0, \text{ and } \tau \in (0,2) \\
        &\text{or } m>0, \; \gamma\in \lt(0,\frac{d}{2E[g_0]}\rt), \text{ and } \tau = 2
        \end{aligned}
        \rt\} .
\end{equation*}
It is worth noting that the computation of $\Gamma$ in \eqref{p: e: Gamma} is relatively simple for the class of weights above. For instance, let us consider a diffusion coefficient $\sigma_{ij} = \sigma \delta_{ij}$ with $\sigma = \sigma(t,x)$ real-valued, which is enough for our purpose; then setting $\scrL := \sigma \Delta_v + \zeta \cdot \nabla_v + \eta$, consider any weight of the form $\omega = \la v \ra^m e^{\gamma \la v \ra^\tau}$, with $m\ge 0$, $\gamma\ge 0$, $\tau \ge 0$ so that in particular all weights in $\wt{\calW}$ or $\calW_{g_0}$ are taken into consideration. Then with the notation $W := \log \omega$, \eqref{p: e: Gamma} reduces to
\begin{equation}\label{S: E: simp gamma}
\begin{split}
    \Gamma_{\scrL,\omega,p} &= \sigma |\nabla_v W|^2 + \lt(\frac{2}{p} - 1\rt) \sigma   \Delta_v W - \zeta \cdot \nabla_v W - \frac{1}{p} \nabla_v\cdot \zeta + \eta, \\
    \nabla_v W &= \lt(\frac{m}{\la v \ra^2} + \gamma \tau |v|^{\tau - 2} \rt) v , \\
    \Delta_v W &= m \frac{d + (d - 2)|v|^2}{\la v \ra^{4}} + \gamma \tau (d + \tau - 2) |v|^{\tau - 2}.
\end{split}
\end{equation}

For the remainder of this section, we now fix a specific weight
\begin{equation*}
    \omega \in \calW_{g_0},
\end{equation*}
and use the following functional space corresponding to this weight: for some $a>0$, we will assume that $k$ lies in
\begin{equation*}
     \calX_{T,\omega,a} := \lt\{ k \in L^\infty_\omega(\calO^T) \; \middle| \; \|k\|_{L^\infty_\omega(\calO^T)} \le a \rt\}.
\end{equation*}

\subsection{Analysis of the linear equation}

\begin{lemma}\label{l: sp: for lin WP}
    There exists $a_1>0$ small enough, and independent of $\varsigma$, such that for any $a\le a_1$ and $k\in \calX_{T,\omega,a}$, the following holds true: for any polynomial weight $\wt{\omega}\in \wt{\calW}$, if
    \begin{align*}
        h_0 \in L^2_{\wt{\omega}}(\calO) \quad \text{and} \quad g_0 \in W^{2,2}_{\la v \ra \wt{\omega}}(\R^d),
    \end{align*}
    then the linear problem \eqref{S: E: Lin} admits a unique solution $h\in C([0,T];L^2_{\wt{\omega}}(\calO))$. 
\end{lemma}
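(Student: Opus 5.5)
The plan is to realize \eqref{S: E: Lin} as an instance of \eqref{p: e: kolmogorov} and invoke Proposition \ref{prop: WP} with weight $\wt{\omega}=\la v\ra^m\in\wt{\calW}$. We take
\begin{equation*}
\sigma_{ij}=\sigma\delta_{ij},\quad \sigma:=\tfrac1d\big(E[g_0]+E[k]-\bfA_\va[k]\big),\quad \zeta:=v+\bfB[k],\quad \eta:=d(1+\rho[k]),
\end{equation*}
the nonlocal part
\begin{equation*}
\scrP h:=dg\rho[h]+\tfrac1d\Delta_v g\,E[h]+\bfB[h]\cdot\nabla_v g,
\end{equation*}
and source $S:=-\tfrac1d\bfA_\va[k]\Delta_v g$, so that $S_2=0$. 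All coefficients are then functions of $(t,x)$ only, apart from the linear-in-$v$ drift. Moreover $k\in\calX_{T,\omega,a}$ with $\omega\in\calW_{g_0}$ gives $|\rho[k]|,|j[k]|,|E[k]|\lesssim a$ uniformly, since $\omega^{-1}\la v\ra^2\in L^1$ because $m>d+2$ or $\omega$ is exponential.

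Next, I check the hypotheses of Proposition \ref{prop: WP}.

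\emph{(L1).} We have $\sigma\ge\frac1d(E[g_0]-Ca-\bfA_\va[k])$. Since $\rho[k]*_t\bfs_\va\star_\va\bfr_\va$ and $j[k]*_t\bfs_\va\star_\va\bfr_\va$ are bounded by $Ca$ uniformly in $\va$ (the mollifiers have unit mass), we get $\bfA_\va[k]\le Ca^2/(1-Ca)$. So for $a\le a_1$ small, independent of $\va$, $\sigma\ge\underline\sigma:=E[g_0]/(2d)>0$.

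\emph{(L3).} This is immediate: $\sigma$ and $\eta$ are bounded, $|\zeta|\lesssim\la v\ra$, and $\nabla_v\cdot\zeta=d$.

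\emph{(L2).} By \eqref{S: E: simp gamma} with $p=2$, $\gamma=0$ and $\nabla_vW=mv/\la v\ra^2$,
\begin{equation*}
\Gamma=\sigma\frac{m^2|v|^2}{\la v\ra^4}-(1+\rho[k])\frac{m|v|^2}{\la v\ra^2}+\frac{m\,j[k]\cdot v}{\la v\ra^2}-\frac d2+d(1+\rho[k]).
\end{equation*}
This is bounded above by $\lambda_\scrL:=C(1+m^2)$, so we may take $\Lambda\equiv0$, or $\Lambda=\frac m2(1-Ca)|v|^2\la v\ra^{-2}$, which is nonnegative and bounded.

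\emph{(S1).} We need $\|\Delta_v g\,\wt\omega\|_{L^2(\calO^T)}<\infty$. Since $\bfA_\va[k]$ is bounded and $\Omega$ is bounded, this follows from propagating $g_0\in W^{2,2}_{\la v\ra\wt\omega}$ along \eqref{Eq/ Spatially hom} on $[0,T]$. I would do this as in Lemma \ref{Lem/ Elem Linfty}, but with $L^2$ energy estimates for $\wt\omega\nabla_v g$ and $\wt\omega\nabla_v^2 g$, each giving an $e^{\lambda t}$ bound.

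The main obstacle is (P1) for $\scrP$ in $L^2_{\wt\omega}$. Using Cauchy--Schwarz in $v$ and $\la v\ra^2\wt\omega^{-1}\in L^2$ (as $m>d/2+2$), we get $|\rho[h]|,|j[h]|,|E[h]|\lesssim\|h(t,x,\cdot)\|_{L^2_{\wt\omega}}$. Hence
\begin{equation*}
\|\scrP h\|_{L^2_{\wt\omega}}\lesssim\big(\|g\|_{L^2_{\la v\ra\wt\omega}}+\|\Delta_v g\|_{L^2_{\wt\omega}}+\|\nabla_v g\|_{L^2_{\la v\ra\wt\omega}}\big)\|h\|_{L^2_{\wt\omega}},
\end{equation*}
where the factor $\la v\ra$ accounts for $\bfB[h]=\rho[h]v-j[h]$. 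These norms are finite uniformly on $[0,T]$ by the propagated $W^{2,2}_{\la v\ra\wt\omega}$ bound on $g_t$, which is exactly why that weight appears in the hypothesis. Integrating over $(t,x)$ yields $\lambda_\scrP<\infty$.

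Proposition \ref{prop: WP} then provides a unique weak solution $h\in C([0,T];L^2_{\wt\omega}(\calO))\cap\calH_{\wt\omega,T}$. The smallness threshold $a_1$ enters only through (L1), and that step used only the mass-one property of the mollifiers, so $a_1$ is independent of $\va$.
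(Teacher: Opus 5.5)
Your proposal is correct and follows the paper's proof. You use the same choice of $\sigma,\zeta,\eta,\scrP,S$, the same smallness-of-$a$ argument (uniform in $\varsigma$ because the mollifiers have mass at most one) for the ellipticity and the bound on $\bfA_\varsigma[k]$, boundedness of $\Gamma$ for polynomial weights, and Cauchy--Schwarz with $\la v\ra^2\wt{\omega}^{-1}\in L^2$ for (P1), before invoking Proposition \ref{prop: WP}. You additionally justify the $L^2_{\la v\ra\wt{\omega}}$ bounds on $g_t,\nabla_v g_t,\Delta_v g_t$ via weighted energy estimates along the homogeneous Fokker--Planck flow, which the paper uses implicitly since Lemma \ref{Lem/ Elem Linfty} is only an $L^\infty$ statement. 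Your one slip, that $\nabla_v\cdot\zeta=d(1+\rho[k])$ rather than $d$, is harmless.
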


\begin{proof}
    For the sake of clarity, let us set
    \begin{equation}\label{S: E: sigma}
    \begin{split}
    \sigma_\varsigma[k] &:= \frac{1}{d} \lt( E[g_0] + E[k] - \bfA_\varsigma[k] \rt), \\
    \scrL_\varsigma[k] h &:= \sigma_\varsigma[k] \Delta_v h  + (v + \bfB[k])\cdot \nabla_v h + d( 1 + \rho[k]) h.
    \end{split}
    \end{equation}
    We choose $a_1\le 1/2$ small enough (and independent of $\varsigma$) so that for any $a\le a_1$, the membership $k\in \calX_{T,\omega,a}$ implies both
\begin{equation}\label{S: E: coeff}
\begin{cases}
    |\rho[k]| \le \frac{1}{2}, \\
     \frac{1}{2d}E[g_0] \le \sigma_\varsigma[k] \le \frac{3}{2d}E[g_0]
\end{cases}
\quad \text{in }\Omega^T.
\end{equation}
Indeed, such a choice is possible since $\rho[k]$, $j[k]$, and $E[k]$ can be controlled pointwisely as
\begin{equation*}
    \lt\| \intr \la v \ra^2 k \,\dv \rt\|_{L^\infty(\Omega^T)} \le \|\la v \ra^2 \omega^{-1}\|_{L^1(\R^d)} \|k\|_{L^\infty_\omega(\calO^T)} \le  \|\la v \ra^2 \omega^{-1}\|_{L^1(\R^d)} \, a ,
\end{equation*}
whereas $\bfA_\varsigma[k]$ can be controlled pointwisely, and uniformly in $\varsigma$, by using the relations
\begin{align*}
    \|\rho[k] *_t \bfs_\va \star \bfr_\varsigma \|_{L^\infty(\Omega^T)} \le \|\rho[k]\|_{L^\infty(\Omega^T)}, \quad \|j[k] *_t \bfs_\va \star \bfr_\varsigma \|_{L^\infty(\Omega^T)} \le \|j[k]\|_{L^\infty(\Omega^T)}. 
\end{align*}

    Now, it is enough to check that the conditions of Proposition \ref{prop: WP} are satisfied by the equation \eqref{S: E: Lin} and the operator set in \eqref{S: E: sigma}. First, the diffusion coefficient $\sigma_\varsigma[k]$ is obviously symmetric and strictly positive definite, thanks to \eqref{S: E: coeff}. Therefore (L1) is satisfied.
    
    In the notation of \eqref{S: E: simp gamma}, we can set
    \begin{equation*}
    \zeta[k] := v + \bfB[k] = (1+\rho[k])v - j[k] , \quad \eta[k] := d( 1 + \rho[k])  
    \end{equation*}
    to find that
    \begin{align*}
        \Gamma_{\scrL_\varsigma[k] ,\wt{\omega},2} = \sigma_\varsigma[k] \frac{|\nabla_v \wt{\omega}|^2}{\wt{\omega}^2} - ( v + \bfB[k])\cdot \frac{\nabla_v \wt{\omega}}{\wt{\omega}} + \frac{d}{2}  + d\rho[k].
    \end{align*}
    Owing to \eqref{S: E: coeff}, 
    \begin{align*}
        \Gamma_{\scrL_\varsigma[k] ,\wt{\omega},2}(v) \le \frac{3}{2d}E[g_0] \frac{|\nabla_v \wt{\omega}|^2}{\wt{\omega}^2} - \frac{1}{2} \frac{v\cdot \nabla_v\wt{\omega}}{\wt{\omega}} + |j[k]| \lt|\frac{\nabla_v\wt{\omega}}{\wt{\omega}}\rt| + \frac{d}{2}  + d |\rho[k]|.
    \end{align*}
    Since $j[k],\rho[k] \in L^\infty(\Omega^T)$, we deduce that for any $\wt{\omega}\in \wt{\calW}$, there exists $C>0$ independent of $\varsigma$ such that
    \begin{equation*}
        \sup_{(t,x,v)\in \calO^T} \Gamma_{\scrL_\varsigma[k],\wt{\omega},2} \le C.
    \end{equation*}
    Consequently (L2) holds. It is further straightforward to check that (L3) is satisfied by $\scrL_\varsigma[k]$.

    We now set the nonlocal operator of \eqref{S: E: Lin} as 
    \begin{equation}\label{S: E: nonloc}
    \scrP h := dg\rho[h] + \frac{1}{d}\Delta_v g E[h] + \nabla_v g \cdot \bfB[h].
    \end{equation}
    The condition (P1) can then be checked through the following computations: by using the Cauchy--Schwarz inequality,
    \begin{align*}
        \iiint_{\calO^T} |g|^2 |\rho[h]|^2 \wt{\omega}^2 &\le  \|g\|_{L^\infty(0,T;L^2_{\wt{\omega}}(\R^d))}^2 \iint_{\Omega^T} |\rho[h]|^2  \\
        &\le \|g\|_{L^\infty(0,T;L^2_{\wt{\omega}}(\R^d))}^2  \|\wt{\omega}^{-1}\|_{L^2(\R^d)}^2 \|h\|_{L^2_{\wt{\omega}}(\calO^T)}^2,
    \end{align*}
    and
    \begin{align*}
        \iiint_{\calO^T} |\Delta_v g E[h]|^2 \wt{\omega}^2 &\le \|\Delta_v g\|_{L^\infty(0,T;L^2_{\wt{\omega}}(\R^d))}^2 \iint_{\calO^T} |E[h]|^2  \\
        &\le \|\Delta_v g\|_{L^\infty(0,T;L^2_{\wt{\omega}}(\R^d))}^2  \||v|^2 \wt{\omega}^{-1} \|_{L^2(\R^d)}^2  \|h\|_{L^2_{\wt{\omega}}(\calO^T)}^2, \\
        \iiint_{\calO^T} |\nabla_v g \cdot \rho[h] v|^2 \wt{\omega}^2 &\le \|\nabla_v g\|_{L^\infty(0,T;L^2_{\la v \ra \wt{\omega}}(\R^d))}^2  \|\wt{\omega}^{-1}\|_{L^2(\R^d)}^2 \|h\|_{L^2_{\wt{\omega}}(\calO^T)}^2, \\
        \iiint_{\calO^T} |h| |\nabla_v g \cdot j[h]| &\le \|\nabla_v g\|_{L^\infty(0,T;L^2_{\la v \ra \wt{\omega}}(\R^d))}^2  \| v \wt{\omega}^{-1}\|_{L^2(\R^d)}^2 \|h\|_{L^2_{\wt{\omega}}(\calO^T)}^2. 
    \end{align*}

    Finally, we set the source term in \eqref{S: E: Lin} as
    \begin{equation*}
        S = S_1 := - \frac{1}{d} \bfA_\varsigma[k] \Delta_v g ,
    \end{equation*}
    from which we immediately have that
    \begin{equation*}
        \|S_1 \wt{\omega}\|_{L^2(\calO^T)}^2 \lesssim  \|\bfA_\varsigma[k]\|_{L^\infty(\Omega^T)}^2 \|\Delta_v g\|_{L^\infty(0,T;L^2_{\wt{\omega}}(\R^d))}^2 < +\infty.
    \end{equation*}
    This shows (S1). We conclude to the well-posedness of \eqref{S: E: Lin} thanks to Proposition \ref{prop: WP}.
\end{proof}

Similarly we have the following result for the backward equation.

\begin{lemma}\label{sp: l: back lin WP L2}
    For $a_1$ as chosen in Lemma \ref{l: sp: for lin WP}, whenever $a\le a_1$, then the following holds true: for any weight $\wt{\omega} \in \wt{\calW}$, if
    \begin{equation*}
        b_T \in L^2_{\wt{\omega}^{-1}}(\calO), \quad k \in \calX_{T,\omega,a}, \quad \text{and} \quad g_0 \in W^{2,2}_{\la v \ra \wt{\omega} }(\R^d),
    \end{equation*}
    then the backwards linear problem \eqref{S: E: Back Lin} admits a unique solution $b\in C([0,T];L^2_{\wt{\omega}^{-1}}(\calO))$.
\end{lemma}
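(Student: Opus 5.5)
The plan is to reduce Lemma \ref{sp: l: back lin WP L2} to Proposition \ref{prop: WP} by reversing time and checking the hypotheses with the weight $\wt{\omega}^{-1}$ in place of $\omega$. Set $\tilde b(t,x,v) := b(T-t,x,-v)$. Reflecting $v\mapsto -v$ turns $-v\cdot\nabla_x$ into $+v\cdot\nabla_x$, and it exchanges $\Sigma_+$ and $\Sigma_-$. Since $\scrV_x(-v) = -\scrV_x v$, the dual condition $\gamma_+ b = \gamma_- b\circ\scrV_x$ becomes exactly $\gamma_-\tilde b = \gamma_+\tilde b\circ \scrV_x$. The equation for $\tilde b$ is then a forward Kolmogorov equation of the form \eqref{p: e: kolmogorov} with $S\equiv 0$ and initial datum $\tilde b_0(x,v) = b_T(x,-v)\in L^2_{\wt{\omega}^{-1}}(\calO)$; radiality of $\wt{\omega}$ keeps the norm unchanged. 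Its coefficients are the following.
\begin{itemize}
\item The diffusion coefficient is $\sigma_\va[k](T-t,x)$.
\item The drift is $\zeta = -(v - \bfB[k](T-t,x,-v))$, with the signs tracked through the reflection.
\item The zeroth-order coefficient is $\eta = d\rho[k]$.
\item The nonlocal part is $\scrP^*$, evaluated at $(T-t,x,-v)$, where $\scrP^*$ is the formal adjoint of \eqref{S: E: nonloc}.
\end{itemize}

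First I check (W1)--(W2) and (L1)--(L3). (W1) and (W2) hold for $\wt{\omega}^{-1} = \la v\ra^{-m}$ because it is a radial polynomial weight. (L1) and (L3) follow from \eqref{S: E: coeff}, exactly as in Lemma \ref{l: sp: for lin WP}, since $a\le a_1$. For (L2) I use \eqref{S: E: simp gamma} with $W = -m\log\la v\ra$, $p=2$, $\zeta = -(1+\rho[k])v + O(|j[k]|)$ and $\nabla_v\cdot\zeta = -d(1+\rho[k])$. The key term is $-\zeta\cdot\nabla_v W = -(1+\rho[k])\,m|v|^2\la v\ra^{-2} + O(|j[k]|)$, which is bounded above; moreover $\Lambda$ is allowed to be zero. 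The remaining terms $\sigma|\nabla_v W|^2$, $-\tfrac12\nabla_v\cdot\zeta$ and $\eta$ are all bounded. So $\Gamma\le C$ uniformly in $\va$, and (L2) holds.

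The main step is (P1) for $\scrP^*$ on $L^2_{\wt{\omega}^{-1}}$. I compute the terms of $\scrP^* b$ one by one.
\begin{itemize}
\item The term $d\rho[bg]$ has no $v$-dependence, so its $L^2_{\wt{\omega}^{-1}}$ norm is bounded by $\|\wt{\omega}^{-1}\|_{L^2}\,|\rho[bg]|$. By Cauchy--Schwarz, $|\rho[bg]|\le \|b\|_{L^2_{\wt{\omega}^{-1}}}\|g\|_{L^2_{\wt{\omega}}}$, and $\wt{\omega}^{-1}\in L^2$ because $m>d/2$.
\item The term $\frac1d|v|^2\rho[b\Delta_v g]$ is handled the same way. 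It needs $|v|^2\wt{\omega}^{-1}\in L^2$ (true since $m>d/2+2$) and $\Delta_v g\in L^\infty_t L^2_{\wt{\omega}}$.
\item The terms $\rho[v\cdot b\nabla_v g]$ and $v\rho[b\nabla_v g]$ need $\la v\ra\wt{\omega}^{-1}\in L^2$ and $\nabla_v g\in L^\infty_tL^2_{\la v\ra\wt{\omega}}$.
\end{itemize}
These regularity bounds on $g$ over $[0,T]$ come from $g_0\in W^{2,2}_{\la v\ra\wt{\omega}}$. I would obtain them with an $L^2$ analogue of Lemma \ref{Lem/ Elem Linfty}: differentiate \eqref{Eq/ Spatially hom} and estimate in $L^2_{\la v\ra\wt{\omega}}$, getting $e^{\lambda t}$ growth. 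Since the constant only needs to be finite on $[0,T]$, this suffices, and I expect it to be routine. I also verify the duality identity $\iint (\scrP h)b = \iint h\,\scrP^* b$ by Fubini.

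With all hypotheses verified, Proposition \ref{prop: WP} gives a unique $\tilde b\in C([0,T];L^2_{\wt{\omega}^{-1}}(\calO))$. Undoing the change of variables gives the unique $b$. The one place I expect to need care is the sign bookkeeping of the drift and of $\bfB[k]$ under $v\mapsto -v$, because $(L2)$ depends on the sign of $v\cdot\zeta$. If the reflection turns out to be inconvenient, a fallback is to skip it and directly adapt Steps 1--4 of the proof of Proposition \ref{prop: WP} with the roles of $\Sigma_\pm$ swapped.
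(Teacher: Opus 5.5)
Your proposal is correct and follows the paper's proof. Both apply Proposition~\ref{prop: WP} to the backward problem with the decaying weight $\wt{\omega}^{-1}$, verify (L2) through the bounded-above $\Gamma$ computation, and verify (P1) for $\scrP^*$ term by term via Cauchy--Schwarz, using $\wt{\omega}^{-1},\la v\ra\wt{\omega}^{-1},|v|^2\wt{\omega}^{-1}\in L^2$ together with $L^\infty_tL^2$ bounds on $g,\nabla_v g,\Delta_v g$. The differences are cosmetic: you make the reversal $b(T-t,x,-v)$ explicit where the paper leaves it implicit, and you compute $\Gamma$ directly rather than invoking $\Gamma_{\scrL_\va[k]^*,\wt{\omega}^{-1},2}=\Gamma_{\scrL_\va[k],\wt{\omega},2}$.
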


\begin{proof}
    We note that the formal adjoint of $\scrL_\varsigma[k]$ is given by
    $$\scrL_\varsigma[k]^* b := \sigma_\varsigma[k] \Delta_v b - (v + \bfB[k]) \cdot \nabla_v b + d\rho[k] b.$$
    Therefore
    \begin{equation*}
        \Gamma_{\scrL_\varsigma[k]^*, \wt{\omega}^{-1}, 2} = \Gamma_{\scrL_\varsigma[k] , \wt{\omega}, 2}.
    \end{equation*}
    Thus, the proof of Lemma \ref{l: sp: for lin WP} tells us that (L2) is satisfied by $\Gamma_{\scrL_\varsigma[k]^*,\wt{\omega}^{-1},2}$. The remaining assertions are rather straightforward, thus let us only check that (P1) is satisfied by $\scrP^*$ and the weight $\wt{\omega}^{-1}$. Recall that the dual projection operator of \eqref{S: E: nonloc}, as written in \eqref{S: E: Back Lin}, is formally given by
    \begin{equation}\label{S: E: dual proj}
        \scrP^* b := d\rho[bg] + \frac{1}{d}|v|^2 \rho[b\Delta_v g] + \rho[v\cdot b\nabla_v g] - v \rho[b\nabla_v g].
    \end{equation}
    Each term of $\scrP^*$ can be shown to satisfy (P1) in the following manner: using the Cauchy--Schwarz inequality,
    \begin{align*}
        \iiint_{\calO^T} |\rho[bg]|^2 \wt{\omega}^{-2} &= \|\wt{\omega}^{-1}\|_{L^2(\R^d)} \iint_{\Omega^T} |\rho[bg]|^2 \\
        &\le \|\wt{\omega}^{-1}\|_{L^2(\R^d)} \|b\|_{L^2_{\wt{\omega}^{-1}} (\calO^T) }^2 \|g\|_{L^\infty(0,T;L^2_{\wt{\omega}}(\R^d) ) }^2 , \\
        \iiint_{\calO^T} |v|^4 |\rho[b\Delta_v g]|^2 \wt{\omega}^{-2}
        &= \||v|^2 \wt{\omega}^{-1}\|_{L^2(\R^d)} \iint_{\Omega^T} |\rho[b\Delta_v g]|^2  \\
        &\le \||v|^2 \wt{\omega}^{-1}\|_{L^2(\R^d)} \|b\|_{L^2_{\wt{\omega}^{-1}} (\calO^T) }^2 \|\Delta_v g\|_{L^\infty(0,T; L^2_{ \wt{\omega} } (\R^d) ) }^2 .
    \end{align*}
    In the same way,
    \begin{align*}
        \iiint_{\calO^T} |\rho[v\cdot b\nabla_v g]|^2 \wt{\omega}^{-2} &= \|\wt{\omega}^{-1}\|_{L^2(\R^d)} \iint_{\Omega^T} |\rho[v\cdot b\nabla_v g]|^2  \\
        &\le \|\wt{\omega}^{-1}\|_{L^2(\R^d)} \|b\|_{L^2_{\wt{\omega}^{-1}} (\calO^T) }^2 \| \nabla_v g \|_{L^\infty(0,T;L^2_{ \la v \ra \wt{\omega}}(\R^d)) }^2, \\
        \iiint_{\calO^T} |v \rho[b\nabla_v g]|^2 \wt{\omega}^{-2} &= \| v \wt{\omega}^{-1}\|_{L^2(\R^d)} \iint_{\Omega^T} |\rho[b\nabla_v g]|^2  \\
        &\le \| v \wt{\omega}^{-1}\|_{L^2(\R^d)} \|b\|_{L^2_{\wt{\omega}^{-1}} (\calO^T) }^2 \|\nabla_v g\|_{L^\infty(0,T; L^2_{ \wt{\omega} }(\R^d) ) }^2 .
    \end{align*}
    We thus get that (P1) is satisfied by $\scrP^*$ and the weight $\wt{\omega}^{-1}$. The well-posedness of \eqref{S: E: Back Lin} then follows from Proposition \ref{prop: WP}.
\end{proof}

Next, we transition to the $L^1$ framework of the backwards equation.

\begin{lemma}\label{s: l: b L1}
    There exists $a_2>0$ (dependent on the given weight $\omega$) such that if $a\le \min\{a_1,a_2\}$ and $k\in \calX_{T,\omega,a}$, the following holds. Suppose that
    \begin{equation}\label{S: E: l1 assu}
    g_0 \in W^{2,\infty}_{\la v \ra \omega}(\R^d),
    \end{equation}
    and for any weight $\wt{\omega}\in \wt{\calW}$ growing slowly enough so that $\wt{\omega} \omega^{-1} \in L^2(\R^d)$, let $b_T\in L^2_{\wt{\omega}^{-1}}(\calO)$, and denote by $b \in C([0,T]; L^2_{\wt{\omega}^{-1}}(\calO))$ the unique solution to the backwards equation \eqref{S: E: Back Lin} as established in Lemma \ref{sp: l: back lin WP L2}. Then $b$ satisfies the estimate
    \begin{equation}\label{S: E: L1}
        \forall t\in [0,T], \quad  \|b_t\|_{L^1_{\omega^{-1}}(\calO)} \le C \exp \Big( \lambda (T-t) \exp (C T) \Big) \|b_T\|_{L^1_{\omega^{-1}}(\calO)}
    \end{equation}
    for some constants $C>0$ and $\lambda>0$ independent of both $\varsigma$ and $T$.
\end{lemma}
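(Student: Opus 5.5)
The plan is to run a weighted $L^1$ energy estimate on the backwards equation \eqref{S: E: Back Lin}, renormalizing with $\beta(z)\approx|z|$ and testing against $\omega^{-1}$. We then close with Gronwall, treating the nonlocal term $\scrP^*$ as a perturbation bounded by $\|b_t\|_{L^1_{\omega^{-1}}}$.

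\emph{Step 1: renormalized inequality.} By Lemma \ref{sp: l: back lin WP L2} and Proposition \ref{prop: WP} (applied to the time-reversed equation $\tilde b(s)=b(T-s)$), $b$ is a renormalized solution. We use $\beta_\epsilon(z)=\sqrt{z^2+\epsilon^2}$ and the test function $\chi=\omega^{-1}\chi_R(v)$, with $\chi_R$ the radial cutoff \eqref{P: E: Nice Cutoff}. Since $\omega$ and $\chi_R$ are radial, the boundary term vanishes under the dual specular condition, exactly as in Step 4 of the proof of Lemma \ref{p: l: dual}. The term with $\beta''\ge0$ has a good sign. Letting $\epsilon\to0$ and then $R\to\infty$ requires $b\in L^1_{\omega^{-1}}$ with enough integrability; this follows from $b\in L^2_{\wt\omega^{-1}}$ and $\wt\omega\omega^{-1}\in L^2$, by Cauchy--Schwarz. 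This is exactly where the hypothesis on $\wt\omega$ enters. We obtain
\begin{equation*}
-\ddt\|b_t\|_{L^1_{\omega^{-1}}} \le \iint \Gamma_{\scrL_\va[k]^*,\omega^{-1},1}|b|\omega^{-1} + \iint |\scrP^*b|\,\omega^{-1}.
\end{equation*}
Here $\Gamma$ for $p=1$ is given by \eqref{S: E: simp gamma}, with $\sigma=\sigma_\va[k]$, $\zeta=-(v+\bfB[k])$ and $\eta=d\rho[k]$.

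\emph{Step 2: bounding $\Gamma$.} From \eqref{S: E: simp gamma} with $p=1$ and weight $\omega^{-1}$ (so that $W=-\log\omega$), we get
\begin{equation*}
\Gamma=\sigma|\nabla W|^2-\sigma\Delta W+(v+\bfB[k])\cdot\nabla W+\nabla_v\cdot(v+\bfB[k])+d\rho[k].
\end{equation*}
The term $v\cdot\nabla W=-v\cdot\nabla\log\omega$ is negative and of order $|v|^\tau$ or $m$. The term $\sigma|\nabla W|^2$ is of order $\sigma\gamma^2\tau^2|v|^{2\tau-2}$. For $\tau<2$ this is dominated at large $|v|$. For $\tau=2$ we need $4\sigma\gamma^2<2\gamma$, i.e. $\gamma<1/(2\sigma)\approx d/(2E[g_0])$. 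This holds because $\sigma_\va[k]$ is close to $E[g_0]/d$ once $a\le a_2$ is small, which fixes $a_2$ depending on $\omega$ (the margin $\gamma<d/(2E[g_0])$). The $\bfB[k]$ contributions are $O(a)(1+|v|)|\nabla W|$, so they are absorbed for small $a$. Hence $\Gamma\le\lambda$, with $\lambda$ independent of $\va$ and $T$.

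\emph{Step 3: the nonlocal term.} This is the main obstacle. Using \eqref{S: E: dual proj},
\begin{equation*}
\iint|\scrP^*b|\omega^{-1}\lesssim \|\la v\ra^2\omega^{-1}\|_{L^1}\int_\Omega\intr |b|\big(|g|+|\Delta_v g|+\la v\ra|\nabla_v g|\big)\,\dv\dx.
\end{equation*}
This is bounded by $C\|g_t\|_{W^{2,\infty}_{\la v\ra\omega}}\|b_t\|_{L^1_{\omega^{-1}}}$, using $\|\la v\ra^2\omega^{-1}\|_{L^1}<\infty$, which needs $m>d+2$ in the polynomial case. By Lemma \ref{Lem/ Elem Linfty} (with the weight $\la v\ra\omega$), $\|g_t\|_{W^{2,\infty}_{\la v\ra\omega}}\le Ce^{Ct}\|g_0\|_{W^{2,\infty}_{\la v\ra\omega}}$, with $C$ independent of $T$. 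The delicate point is checking that Lemma \ref{Lem/ Elem Linfty}'s maximum-principle constant is finite for $\la v\ra\omega\in\calW_{g_0}$; if not directly covered, I would redo its $c_\omega$ computation for these weights.

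\emph{Step 4: conclusion.} Combining Steps 1--3 gives
\begin{equation*}
-\ddt\|b_t\|_{L^1_{\omega^{-1}}}\le(\lambda+Ce^{CT})\|b_t\|_{L^1_{\omega^{-1}}}.
\end{equation*}
Integrating backwards from $T$ yields $\|b_t\|\le\exp\big((\lambda+Ce^{CT})(T-t)\big)\|b_T\|$. This is of the form \eqref{S: E: L1} after enlarging the constants.
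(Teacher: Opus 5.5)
Your proposal is correct and is essentially the paper's proof: renormalize with a smoothed $|z|$, test against $\omega^{-1}$, and bound the defect from above by a case analysis on the weight. Your $\Gamma_{\scrL_\va[k]^*,\omega^{-1},1}$ coincides with the paper's $\Gamma_{\scrL_\va[k],\omega,\infty}$, and $a_2$ is fixed by the Gaussian margin $\gamma<d/(2E[g_0])$. You then control $\scrP^*b$ by $\|g_t\|_{W^{2,\infty}_{\la v \ra\omega}}\|b_t\|_{L^1_{\omega^{-1}}}$ via Lemma~\ref{Lem/ Elem Linfty} and close with Gr\"onwall, exactly as the paper does.

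Your extra care is appropriate on three points the paper glosses over: the velocity cutoff, the vanishing boundary term, and whether Lemma~\ref{Lem/ Elem Linfty}, which is stated only for $\wt{\calW}$, covers non-polynomial weights. The one slip is that for $p=1$ the Laplacian term is $+\sigma\Delta W=-\sigma\Delta\log\omega$, not $-\sigma\Delta W$; this is harmless because that term is bounded above for these weights.
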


\begin{remark}
    For instance, if $\omega(v) = \la v \ra^m$ where $m = d+2+\alpha$ with $\alpha>0$, we take $\wt{\omega}(v) = \la v \ra^{2 + (d+\alpha)/2} \in \wt{\calW}$.
\end{remark}

\begin{remark}
    Thanks to the choice of $\wt{\omega}$, the assumption \eqref{S: E: l1 assu} guarantees $g_0\in W^{2,2}_{\la v \ra \wt{\omega}}(\R^d)$, so that the results of Lemma \ref{sp: l: back lin WP L2} can be applied.
\end{remark}

\begin{proof}[Proof of Lemma \ref{s: l: b L1}]
    First we remark that since $\wt{\omega}\omega^{-1} \in L^2(\R^d)$, H\"older's inequality shows that the embedding
    \begin{equation*}
        L^2_{\wt{\omega}^{-1}} \hookrightarrow L^1_{\omega^{-1}}
    \end{equation*}
    is continuous. Therefore, $b_T \in L^1_{\omega^{-1}}$ and $b \in C([0,T];L^1_{\omega^{-1}})$, which means that the estimate \eqref{S: E: L1} makes sense.
    
    Let us define for $\delta>0$ the approximations $\beta_\delta(z) = \sqrt{z^2-\delta^2}-\delta$, so that $\beta_\delta(z) \nearrow |z|$ and $\beta_\delta(b)$ is a distributional solution to
    \begin{align*}
        (-\p_t - v\cdot \nabla_x) \beta_\delta(b) &= \sigma_\varsigma[k] \Delta_v \beta_\delta(b) - \sigma_\varsigma[k] \beta''(b) |\nabla_v b|^2 \\
        &\quad - (v + \bfB[k]) \cdot \nabla_v \beta_\delta(b) + d\rho[k] \beta_\delta'(b) b + \beta_\delta'(b) \scrP^* b.
    \end{align*}
    Here $\sigma[k]$ is the diffusion coefficient defined in \eqref{S: E: sigma}, and $\scrP^*$ is the dual projection operator given in \eqref{S: E: dual proj}. Performing some elementary algebraic manipulations, we point out that the above is equivalent to
    \begin{equation}\label{S: E: b L1 renorm}
    \begin{split}
        (-\p_t - v\cdot \nabla_x) \beta_\delta(b) &= \sigma_\varsigma[k]\Delta_v \beta_\delta(b) - (v + \bfB[k])\cdot \nabla_v \beta_\delta(b) + d\rho[k] \beta_\delta(b) \\
        &\quad - \sigma_\varsigma[k] \beta_\delta''(b) |\nabla_v b|^2 + d\rho[k] (\beta_\delta'(b) b - \beta_\delta(b) ) + \beta_\delta'(b) \scrP^* b \\
        &= \scrL_\varsigma[k]^* b - \sigma[k] \beta_\delta''(b) |\nabla_v b|^2 + d\rho[k] (\beta_\delta'(b) b - \beta_\delta(b) ) + \beta_\delta'(b) \scrP^* b,
    \end{split}
    \end{equation}
    here $\scrL_\varsigma[k]$ denoting the differential operator in \eqref{S: E: sigma}.
    
    To now perform $\omega^{-1}$-weighted estimates on $\beta_\delta(b)$, we observe that
    \begin{equation*}
        \scrL_\varsigma[k] \omega^{-1} = \omega^{-1} \Gamma_{\scrL_\varsigma[k] , \omega, \infty},
    \end{equation*}
    where $\Gamma_{\scrL_\varsigma[k],\omega,\infty}$ is defined as in \eqref{S: E: simp gamma}, with $\zeta[k] = v + \bfB[k] = (1+\rho[k])v - j[k]$ and $\eta[k] = d(1 + \rho[k])$. We claim that for our given choice of $\omega\in \calW$, there exists $a_2>0$ small enough so that $a\le \min\{a_1,a_2\}$ implies the existence of $C>0$ independent of $\varsigma$ such that
    \begin{equation}\label{S: E: infty defect}
        \omega^{-1} \Gamma_{\scrL_\varsigma[k],\omega,\infty} \le C \omega^{-1}.
    \end{equation}
    Indeed, let us write $\omega= \la v \ra^m e^{\gamma|v|^\tau}$. In the case where $\gamma=0$ (\textit{i.e.} when $\omega$ is a polynomial), we merely have
    \begin{equation*}
        \lim_{|v|\to\infty} \Gamma_{\scrL_\va[k],\omega,\infty}(v) = - m(1+\rho[k]) + d(1+\rho[k]) < \infty.
    \end{equation*}
    In the case when $\tau\in (0,2)$, we have $\Gamma_{\scrL_\va[k],\omega,\infty} \to -\infty$ unconditionally. In the case where $\tau=2$ and $\gamma\in (0,d/2E[g_0])$, the leading order of $\Gamma_{\scrL_\va[k],\omega,\infty}$ is given by (see \eqref{S: E: simp gamma})
    \begin{equation*}
    \begin{split}
        \Gamma_{\scrL_\va[k],\omega,\infty} &\sim \sigma_\va[k] |\nabla_v \log \omega |^2 - \zeta[k]\cdot \nabla_v \log \omega \\
        &\sim 2\gamma \,(2\gamma \sigma_\va[k] - (1+\rho[k])) |v|^2 . 
    \end{split}
    \end{equation*}
    Therefore $\Gamma_{\scrL_\va[k],\omega,\infty}\to -\infty$ if and only if
    \begin{equation*}
        \frac{\sigma_\va[k]}{1+\rho[k]} < \frac{1}{2\gamma}.
    \end{equation*}
    Since $\gamma\in(0,d/2E[g_0])$ we can write
    \begin{equation*}
        \frac{1}{2\gamma} = \frac{E[g_0]}{d} + \kappa_0
    \end{equation*}
    for some $\kappa_0 > 0$. Thus we just need to prove that, if $a_2$ is small enough, then $a\le a_2$ and $k\in \calX_{T,\omega,a}$ imply
    \begin{equation*}
        \frac{\sigma_\va[k]}{1+\rho[k]} \le \frac{E[g_0]}{d} + \frac{\kappa_0}{2}.
    \end{equation*}
    This follows by a standard computation because $\sigma_\va[k] \approx E[g_0]/d$ and $1+\rho[k]\approx 1$ for $k$ small in the $\|\cdot\|_{L^\infty_{\omega}}$-norm. Hence, in all cases we have $\Gamma_{\scrL_\va[k],\omega,\infty}\le C$ and therefore \eqref{S: E: infty defect} is proven.
    
    Coming back the equation \eqref{S: E: b L1 renorm}, we can first compute
    \begin{equation}\label{S: E: ddt b renorm}
    \begin{split}
        - \ddt \iint_{\calO} \beta_\delta(b_t) \omega^{-1} &= \iint_{\calO} \beta_\delta(b) \scrL_\varsigma[k] \omega^{-1} - \iint_{\calO} \sigma[k] \beta_\delta''(b_t) |\nabla_v b_t|^2 \omega^{-1} \\
        &\quad + \iint_{\calO} d\rho[k] (\beta_\delta'(b) b - \beta_\delta(b)) \omega^{-1} + \iint_{\calO} \beta_\delta'(b_t) \scrP^* b_t \, \omega^{-1}  \\
        &=: \rm{I} + \rm{II} + \rm{III} + \rm{IV}.
    \end{split}
    \end{equation}
    For $\rm{I}$ we just use \eqref{S: E: infty defect}, and the fact that $\beta_\delta\ge 0$, to estimate
    \begin{equation*}
        \rm{I} \le C \iint_{\calO} \beta_\delta(b) \omega^{-1}.
    \end{equation*}
    Next we throw away $\rm{II}$ by using the facts that $\beta_\delta$ is convex, and $\sigma_\varsigma[k] > 0$ (by \eqref{S: E: coeff}). For $\rm{III}$, we note that
    \begin{equation*}
        |\beta_\delta'(b) b - \beta_\delta(b)| = \lt|\frac{-\delta^2}{\sqrt{b^2+\delta^2}} + \delta \rt| \le 2\delta ,
    \end{equation*}
    thus using that $\rho[k] \in L^\infty$ and $\omega^{-1}\in L^1(\R^d)$, we can simply throw away $\rm{III}$ later by taking $\delta \searrow 0$. Finally, we need to estimate $\rm{IV}$. Since $|\beta_\delta'(z)| \le 1$, it is at least clear that
    \begin{equation}\label{S: E: IV}
        \textrm{IV} \le \iint_{\calO} |\scrP^* b| \, \omega^{-1}. 
    \end{equation}
    Now, there are four terms in the definition of $\scrP^*$ as written in \eqref{S: E: dual proj}. We will insert into the right-hand side of \eqref{S: E: IV} each of those terms. For the first term, we can estimate
    \begin{align*}
        \iint_{\calO} |\rho[bg]| \omega^{-1} &= \iint_{\calO} \lt| \int_{\R^d_{v'}} b(v') g(v') \,\dv' \rt| \,\omega^{-1} \,\dx\dv \\
        &\le C_\Omega \|g\|_{L^\infty_\omega((0,T)\times\R^d)} \|b_t\|_{L^1_{\omega^{-1}}} \|\omega^{-1}\|_{L^1(\R^d)},
    \end{align*}
    thanks to the Fubini--Tonelli theorem. Here, we emphasize that the right-hand side is well-defined since we already know that $b\in C([0,T];L^1_{\omega^{-1}})$. For the second term, in the same way, we obtain
    \begin{align*}
        \iint_{\calO} |v|^2 |\rho[b\Delta_v g]| \omega^{-1} \lesssim \|\Delta_v g\|_{L^\infty_\omega((0,T)\times\R^d)} \|b_t\|_{L^1_{\omega^{-1}}} \||v|^2 \omega^{-1}\|_{L^1(\R^d)} ,
    \end{align*}
    where indeed $|v|^2 \omega^{-1} \in L^1$ thanks to $\omega\in \calW$. The third and fourth terms also follow in a similar fashion:
    \begin{align*}
        \iint_{\calO} |\rho[v\cdot b\nabla_v g]| \omega^{-1} &\le \|\nabla_v g\|_{L^\infty_{\la v \ra \omega }((0,T)\times\R^d)} \|b_t\|_{L^1_{\omega^{-1}}} \|\omega^{-1}\|_{L^1(\R^d)}, \\
        \iint_{\calO} |v\rho[b\nabla_v g]| \omega^{-1} &\le \|\nabla_v g\|_{L^\infty_{\omega}((0,T)\times\R^d)} \|b_t\|_{L^1_{\omega^{-1}}} \|v\omega^{-1}\|_{L^1(\R^d)}.
    \end{align*}
    In all, we get from \eqref{S: E: IV} and the estimates for $g$ (Lemma \ref{Lem/ Elem Linfty} applied with the weight $\la v \ra \omega$) that there exists $\lambda >0$ for which
    \begin{equation*}
        \textrm{IV} \le C e^{\lambda T} \|b_t\|_{L^1_{\omega^{-1}}} \|\la v \ra^2 \omega^{-1}\|_{L^1(\R^d)}.
    \end{equation*}
    Collecting the estimates, we conclude from \eqref{S: E: ddt b renorm} that
    \begin{align*}
        \iint_{\calO} \beta_\delta(b_t) \omega^{-1} &\le \iint_{\calO} \beta_\delta(b_T) \omega^{-1} + O(\delta) + C e^{\lambda T} \int_t^T \|b_s\|_{L^1_{\omega^{-1}}} \,\ds.
    \end{align*}
    By the monotone convergence theorem we can pass to the limit $\delta\searrow 0$, obtaining
    \begin{align*}
        \iint_{\calO} |b_t| \omega^{-1} &\le \iint_{\calO} |b_T| \omega^{-1} + C e^{\lambda T} \int_t^T \|b_s\|_{L^1_{\omega^{-1}}(\calO)} \,\ds.
    \end{align*}
    We obtain the assertion of the lemma via Gr\"onwall's inequality.
\end{proof}

\begin{proposition}\label{S: P: infty}
    Let $a$, $k$, and $g_0$ satisfy the assumptions of Lemma \ref{s: l: b L1}. Then for each $h_0\in L^\infty_\omega(\calO)$, there exists a unique solution $h \in C([0,T]; L^\infty_\omega(\calO)-*)$ to the linear forwards problem \eqref{S: E: Lin}. Furthermore, the solution satisfies
    \begin{equation}\label{S: E: h Linfty}
        \forall t\in [0,T], \quad \|h_t\|_{L^\infty_\omega} \le C_1 \exp( \lambda T \exp(C T)) \|h_0\|_{L^\infty_\omega} + C_2 \exp(\lambda T \exp(CT)) a^2 
    \end{equation}
    for some generic constants $C,C_1,C_2>0$ which are independent of $\varsigma$ and $T$.
\end{proposition}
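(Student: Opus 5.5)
The plan is a duality argument. The $L^\infty_\omega$ bound for the forward problem \eqref{S: E: Lin} will be obtained from the $L^1_{\omega^{-1}}$ bound of Lemma \ref{s: l: b L1} for the backward problem \eqref{S: E: Back Lin}, via the duality formula of Lemma \ref{p: l: dual}.

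\textbf{Step 1: fix the auxiliary weight.} Choose a polynomial weight $\wt{\omega}\in\wt{\calW}$ with $\wt{\omega}\omega^{-1}\in L^2(\R^d)$, as in the remark following Lemma \ref{s: l: b L1}. Then
\begin{equation*}
L^\infty_\omega(\calO)\hookrightarrow L^2_{\wt{\omega}}(\calO),
\end{equation*}
since $\Omega$ is bounded and $\|h\wt{\omega}\|_{L^2}\le |\Omega|^{1/2}\|\wt{\omega}\omega^{-1}\|_{L^2}\|h\|_{L^\infty_\omega}$. In addition, \eqref{S: E: l1 assu} gives $g_0\in W^{2,2}_{\la v\ra\wt{\omega}}$.

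\textbf{Step 2: existence and uniqueness in $L^2$.} For $h_0\in L^\infty_\omega$, Lemma \ref{l: sp: for lin WP} yields a unique solution
\begin{equation*}
h\in C([0,T];L^2_{\wt{\omega}})\cap\calH_{\wt{\omega},T}.
\end{equation*}
Uniqueness in the class $C([0,T];L^\infty_\omega-*)$ follows directly, because that class embeds into the $L^2_{\wt{\omega}}$ class. What remains is the estimate \eqref{S: E: h Linfty}, together with weak-$*$ continuity.

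\textbf{Step 3: duality identity.} Fix $t\in(0,T]$ and a test datum
\begin{equation*}
b_t\in L^2_{\wt{\omega}^{-1}}\cap L^1_{\omega^{-1}}(\calO),
\end{equation*}
for instance $b_t\in C_c(\calO)$. Let $b$ solve \eqref{S: E: Back Lin} on $[0,t]$ with terminal datum $b_t$; this is provided by Lemma \ref{sp: l: back lin WP L2}, with $H^1_v$ regularity from $\calH_{\wt{\omega}^{-1},t}$. The weight $\wt{\omega}$ is polynomial, so (W1') holds. Lemma \ref{p: l: dual}, applied with source $S=-\frac1d\bfA_\va[k]\Delta_v g$, then gives
\begin{equation*}
\iint_\calO h_t b_t=\iint_\calO h_0 b_0-\frac1d\int_0^t\iint_\calO \bfA_\va[k]\,\Delta_v g\, b .
\end{equation*}
Note that $\scrP^*$ in \eqref{S: E: Back Lin} is by construction the adjoint of the nonlocal operator \eqref{S: E: nonloc}, so the $\scrP$ terms cancel inside the lemma.

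\textbf{Step 4: bounding the right-hand side.} The first term is at most $\|h_0\|_{L^\infty_\omega}\|b_0\|_{L^1_{\omega^{-1}}}$. For the source term, use three facts:
\begin{itemize}
\item $|\bfA_\va[k]|\lesssim \|j[k]\|_{L^\infty}^2\lesssim a^2$, uniformly in $\va$, using $1+\rho*\ldots\ge 1/2$;
\item $\|\Delta_v g_s\|_{L^\infty_\omega}\lesssim e^{\lambda s}\|g_0\|_{W^{2,\infty}_\omega}$, by Lemma \ref{Lem/ Elem Linfty};
\item \eqref{S: E: L1} applied on $[0,t]$.
\end{itemize}
Together these give
\begin{equation*}
\Big|\iint h_t b_t\Big|\le C\exp(\lambda T e^{CT})\big(\|h_0\|_{L^\infty_\omega}+a^2\big)\|b_t\|_{L^1_{\omega^{-1}}},
\end{equation*}
where the factors $Te^{\lambda T}$ are absorbed into the double exponential. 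By density of such $b_t$ in $L^1_{\omega^{-1}}$, and the duality $(L^1_{\omega^{-1}})^*=L^\infty_\omega$, this yields \eqref{S: E: h Linfty}.

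\textbf{Step 5: weak-$*$ continuity.} This follows from the same identity. For fixed $b_t$ smooth and compactly supported, $s\mapsto\iint h_s b_t$ is continuous because $h\in C([0,T];L^2_{\wt{\omega}})$. Combined with the uniform $L^\infty_\omega$ bound and density, this gives $h\in C([0,T];L^\infty_\omega-*)$.

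\textbf{Main obstacle.} The delicate point is verifying the hypotheses of Lemma \ref{p: l: dual} in the mixed weight setting. Concretely, one must check that the source integral $\iiint Sb$ is finite, which holds because $S\in L^2_{\wt{\omega}}$ and $b\in L^2_{\wt{\omega}^{-1}}$. One must also check that the $L^1_{\omega^{-1}}$ estimate of Lemma \ref{s: l: b L1} applies to $b$ on a subinterval $[0,t]$ rather than $[0,T]$; this is immediate since $T$ is arbitrary there and the constants only grow with $T$.
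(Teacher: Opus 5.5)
This is correct and is essentially the paper's proof: $L^2_{\wt{\omega}}$ well-posedness from Lemma \ref{l: sp: for lin WP}, the duality identity of Lemma \ref{p: l: dual} with source $-\frac{1}{d}\bfA_\va[k]\Delta_v g$, the backward $L^1_{\omega^{-1}}$ bound of Lemma \ref{s: l: b L1}, and density of $L^2_{\wt{\omega}^{-1}}$ in $L^1_{\omega^{-1}}$. As in the paper, uniqueness is really inherited from the $L^2_{\wt{\omega}}$ energy class, not from a literal embedding of $C([0,T];L^\infty_\omega-*)$ into $C([0,T];L^2_{\wt{\omega}})$, which fails.

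Your weak-$*$ continuity step differs slightly. You use $h\in C([0,T];L^2_{\wt{\omega}})$ together with the uniform $L^\infty_\omega$ bound and density, which is more direct than the paper's route of reapplying the duality identity on $[s,t]$ and using continuity of $b$ in $L^1_{\omega^{-1}}$.
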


\begin{proof}
    Fix any weight $\wt{\omega}\in \wt{\calW}$ so that $\wt{\omega} \omega^{-1} \in L^2(\R^d)$. Then by H\"older's inequality, the embedding $L^\infty_{\omega}(\calO) \hookrightarrow L^2_{\wt{\omega}}(\calO)$ is continuous. Therefore, $h_0\in L^2_{\wt{\omega}}(\calO)$ and we deduce via Lemma \ref{l: sp: for lin WP} that there exists a unique solution $h\in C([0,T];L^2_{\wt{\omega}})$ to \eqref{S: E: Lin}. Thus, it remains for us to prove that $h_T\in L^\infty_\omega$, namely the estimate \eqref{S: E: h Linfty}, and also that $t\mapsto h_t\in L^\infty_\omega - *$ is continuous. Toward this end, let $b_T\in L^2_{\wt{\omega}^{-1}}$ and denote by $b\in C([0,T];L^2_{\wt{\omega}^{-1}})$ the unique solution to the backwards problem \eqref{S: E: Back Lin} corresponding to $b_T$. We then apply Lemma \ref{p: l: dual} to find that 
    \begin{equation}\label{S: E: hb duality}
        \iint_{\calO} h_T b_T \,\dx\dv = \iint_{\calO} h_0 b(0) - \frac{1}{d} \iiint_{\calO^T} \bfA_\varsigma[k] \Delta_v g \, b_t \,\dx\dv\dt.
    \end{equation}
    Next, we utilize the Riesz representation theorem as follows: using that $L^2_{\wt{\omega}^{-1}}$ is dense in $L^1_{\omega^{-1}}$,
    \begin{equation*}
    \begin{split}
        \|h_T\|_{L^\infty_{\omega}} &= \sup_{ \substack{ b_T \in L^2_{\wt{\omega}^{-1}} \\ \|b_T\|_{L^1_{\omega^{-1}}} \le 1 }  } \iint_{\calO} h_T b_T \,\dx\dv \\
        &= \sup_{ \substack{ b_T \in L^2_{\wt{\omega}^{-1}} \\ \|b_T\|_{L^1_{\omega^{-1}}} \le 1 }  } \lt( \iint_{\calO} h_0 b(0) \,\dx\dv - \frac{1}{d} \iiint_{\calO^T} \bfA_\varsigma[k] \Delta_v g \, b_t \,\dx\dv\dt \rt) \\
        &\le C_1 \|h_0\|_{L^\infty_{\omega}} \exp(\lambda T \exp(CT))  \\
        &\quad + C \|\bfA_\varsigma[k]\|_{L^\infty(\Omega^T)} \exp(\lambda T) \int_0^T  \exp\Big( \lambda (T-t) \exp(C T) \Big) \,\dt  \\
        &\le C_1 \|h_0\|_{L^\infty_{\omega}} \exp( \lambda T \exp(CT)) +  C_2 \exp( \lambda T \exp(CT) )  a^2 
    \end{split}
    \end{equation*}
    for some constants $C,C_1,C_2>0$. In the above, we used that $\bfA_\varsigma[k]$ is uniformly bounded in $\varsigma$:
    \begin{equation*}
        0 \le \bfA_\varsigma[k] \le \frac{ \| j[k] *_t \bfs_\va \star_\varsigma \bfr_\varsigma \|_{L^\infty}^2 }{ 1 - \| \rho[k] *_t \bfs_\va \star_\varsigma \bfr_\varsigma \|_{L^\infty}  } \le  \frac{ \|j[k]\|_{L^\infty}^2 }{ 1 - \|\rho[k]\|_{L^\infty} } \le 2 \|k\|_{L^\infty_\omega}^2 \lesssim a^2,
    \end{equation*}
    as well as Lemma \ref{Lem/ Elem Linfty} (the $L^\infty$-estimates for $\omega\Delta_v g$). We conclude that $h_T \in L^\infty_\omega$ and that \eqref{S: E: h Linfty} holds.
    
    Finally, we can easily check that $h\in C([0,T];L^\infty_\omega -*)$. Since we already know that $h\in L^\infty_\omega(\calO^T)$, let us merely prove that $t\mapsto \iint_{\calO} (h_t \, \mathfrak{b})$ is continuous for every $\mathfrak{b} \in L^2_{\wt{\omega}^{-1}}$. This is enough to conclude the claim, because $L^2_{\wt{\omega}^{-1}}$ is dense in $L^1_{\omega^{-1}}$.
    
    Let $0\le s \le t \le T$, and denote by $b\in C([0,t];L^1_{\omega^{-1}})$ the solution to the backwards problem \eqref{S: E: Back Lin} corresponding to terminal datum $b_t = \mathfrak{b}$. The duality formula \eqref{S: E: hb duality} shows that
    \begin{align*}
        \iint_{\calO} h_t \mathfrak{b} - \iint_{\calO} h_s b_s &= - \int_s^t \iint_{\calO} \bfA[k] \Delta_v g \, b_\tau \,\dx\dv\tnd\tau.
    \end{align*}
    The above is equivalent to
    \begin{align*}
        \iint_{\calO} (h_t - h_s) \mathfrak{b}  &= \iint_{\calO} h_s (b_s - \mathfrak{b}) - \int_s^t \iint_{\calO} \bfA[k] \Delta_v g \,b_\tau \,\dx\dv\tnd\tau.
    \end{align*}
    On the one hand, since $b\in C([0,T];L^1_{\omega^{-1}})$, the first integral of the right-hand side satisfies
    \begin{align*}
        \lt| \iint_{\calO} h_s (b_s - \mathfrak{b}) \rt| \le \lt(\sup_{\tau\in [0,T]} \|h_\tau\|_{L^\infty_\omega(\calO)} \rt) \|b_s - \mathfrak{b}\|_{L^1_{\omega^{-1}}(\calO)} \xrightarrow[|s-t|\to 0]{} 0.
    \end{align*}
    On the other hand, the second integral of the right-hand side is absolutely continuous in $s$ because the integrand is integrable over $[0,T]$. Therefore,
    \begin{equation*}
        \forall \mathfrak{b}\in L^2_{\wt{\omega}^{-1}}, \quad \lt|\iint_{\calO} (h_t - h_s) \mathfrak{b} \rt| \xrightarrow[|s-t|\to 0]{} 0,
    \end{equation*}
    which proves the claim.
\end{proof}

The following result is immediate

\begin{corollary}\label{e: c: inv}
    Given $T>0$, there exists $a_3>0$ (dependent on $T$) small enough such that for each $a < \min\{a_1, a_2, a_3\}$, the following holds. For $g_0\in W^{2,\infty}_{\la v \ra \omega}(\R^d)$ and $k\in \calX_{T,\omega,a}$, let $(S_k(t,0))_{t\in [0,T]}$ denote the family of evolution operators defined so that for each $h_0\in L^\infty_\omega$, the unique solution to \eqref{S: E: Lin} in $C([0,T];L^\infty - *)\cap L^\infty_\omega(\calO^T)$ is given by $S_k(t,0)h_0$. There exists $\delta = \delta(a, T)>0$ independent of $\varsigma$ such that whenever $\|h_0\|_{L^\infty_\omega} \le \delta$, the map
    \begin{equation*}
        \Psi : k \mapsto  S_k(t,0) h_0
    \end{equation*}
    leaves $\calX_{T,\omega,a}$ invariant.
\end{corollary}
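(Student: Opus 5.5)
The corollary follows directly from estimate \eqref{S: E: h Linfty} in Proposition \ref{S: P: infty}. Fix $T>0$ and set $K_T := \exp(\lambda T \exp(CT))$, where $C,C_1,C_2,\lambda$ are the constants of Proposition \ref{S: P: infty}. These constants are independent of $\varsigma$ and of $T$; the dependence on $T$ enters only through $K_T$.

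First, for $a\le\min\{a_1,a_2\}$ and $k\in\calX_{T,\omega,a}$, Proposition \ref{S: P: infty} gives, for every $h_0\in L^\infty_\omega(\calO)$, a unique solution $h = S_k(\cdot,0)h_0\in C([0,T];L^\infty_\omega-*)$. This solution satisfies, for all $t\in[0,T]$,
\begin{equation*}
\|S_k(t,0)h_0\|_{L^\infty_\omega}\le C_1K_T\|h_0\|_{L^\infty_\omega}+C_2K_T a^2 .
\end{equation*}
Taking the supremum in $t$ bounds $\|h\|_{L^\infty_\omega(\calO^T)}$ by the same right-hand side, since a weak-$*$ continuous path has its essential supremum over $\calO^T$ controlled by $\sup_t\|h_t\|_{L^\infty_\omega}$. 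So $\Psi$ is well defined on $\calX_{T,\omega,a}$.

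Second, we choose $a_3$ so that the quadratic term absorbs at most half the radius. Define $a_3 := (2C_2K_T)^{-1}$. Then for any $a<a_3$ we get $C_2K_Ta^2\le a/2$, because $C_2K_Ta \le 1/2$. Next define $\delta(a,T) := a/(2C_1K_T)$, so that $\|h_0\|_{L^\infty_\omega}\le\delta$ gives $C_1K_T\|h_0\|_{L^\infty_\omega}\le a/2$. Adding the two bounds yields $\|\Psi(k)\|_{L^\infty_\omega(\calO^T)}\le a$, that is, $\Psi(k)\in\calX_{T,\omega,a}$. Both $a_3$ and $\delta$ depend only on $T$, $a$ and the $\varsigma$-independent constants, so they are independent of $\varsigma$.

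There is essentially no obstacle here. The one point that needs care is that the smallness of the source term must come from its quadratic dependence on $a$, which is the bound $\bfA_\varsigma[k]\lesssim a^2$ established in the proof of Proposition \ref{S: P: infty}. The linear dependence on $h_0$ must not be tied to the size of $k$. Both facts are already built into \eqref{S: E: h Linfty}, so the argument reduces to the choice of constants above.
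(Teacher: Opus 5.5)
Your proposal is correct and is essentially the paper's proof. Both arguments fix $a_3$, depending on $T$ through $\exp(\lambda T\exp(CT))$, so that the quadratic source contribution $C_2\exp(\lambda T\exp(CT))\,a^2$ in \eqref{S: E: h Linfty} is at most $a/2$, and then choose $\delta(a,T)$ so that the initial-datum contribution is at most $a/2$. Your explicit choices $a_3=(2C_2K_T)^{-1}$ and $\delta=a/(2C_1K_T)$, and your remark that the pointwise-in-time bound controls the $L^\infty_\omega(\calO^T)$ norm, simply make the paper's argument explicit.
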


\begin{proof}
    We fix $a_3$ small enough (in terms of $T$) so that for any $a \le a_3$, the last term in \eqref{S: E: h Linfty} satisfies
    \begin{align*}
        C_2 \exp(\lambda T \exp(CT)) \, a^2 < \frac{a}{2}.
    \end{align*}
    Then, for any $a < \min\{a_1,a_2,a_3\}$, we deduce from \eqref{S: E: h Linfty} that
    \begin{align*}
        \|S_k(t,0)h_0\|_{L^\infty_\omega} < \|h_0\|_{L^\infty_\omega} C_1 \exp(\lambda T \exp(CT)) + \frac{a}{2}.
    \end{align*}
    For any such given $a$, if we choose $\delta=\delta(a,T)>0$ so small that
    \begin{equation*}
        \delta \, C_1 \exp(\lambda T \exp(CT)) < \frac{a}{2},
    \end{equation*}
    then we find that $\|h_0\|_{L^\infty_\omega} \le \delta$ implies
    \begin{equation*}
        \|S_k(t,0)h_0\|_{L^\infty_\omega(\calO^T)} < a.
    \end{equation*}
\end{proof}

\subsection{Global existence of the regularized equation}

We now return to the discussion of \eqref{S: E: Reg Nonlin}.

\begin{proposition} \label{S: P: Exist Nonlin}
    Following the notations of Corollary \ref{e: c: inv}, we assume that $a < \min\{a_1,a_2,a_3\}$, $\|h_0\|_{L^\infty_\omega} \le \delta(a,T)$, and also $g_0 \in W^{2,\infty}_{\la v \ra \omega} (\R^d)$. Then for each $\varsigma>0$, there exists at least one weak and renormalized solution $h^\varsigma \in C([0,T];L^\infty - *) \cap \calX_{T,\omega,a}$ to the regularized nonlinear problem \eqref{S: E: Reg Nonlin}.
\end{proposition}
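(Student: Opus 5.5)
We obtain $h^\varsigma$ as a fixed point of the map $\Psi : k\mapsto S_k(\cdot,0)h_0$ from Corollary \ref{e: c: inv}, using Schauder's fixed point theorem (Schauder--Tychonoff version) on the set $\calX_{T,\omega,a}$ equipped with the weak$-*$ topology of $L^\infty_\omega(\calO^T)$. This set is convex and, being a closed ball of the dual of $L^1_{\omega^{-1}}(\calO^T)$ (separable), it is weak$-*$ compact and metrizable by Banach--Alaoglu. Corollary \ref{e: c: inv} shows that $\Psi(\calX_{T,\omega,a})\subset\calX_{T,\omega,a}$. It remains to prove that $\Psi$ is sequentially continuous for this topology. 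A fixed point $h=\Psi(h)$ then solves \eqref{S: E: Lin} with $k=h$, which is exactly \eqref{S: E: Reg Nonlin}. The weak and renormalized properties, and the membership $h\in C([0,T];L^\infty-*)$, are inherited from Lemma \ref{l: sp: for lin WP} (via Proposition \ref{prop: WP}) and Proposition \ref{S: P: infty}.

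For the continuity, take $k_n\weakto k$ weakly$-*$ in $\calX_{T,\omega,a}$, and set $h_n:=\Psi(k_n)$. The sequence $(h_n)$ is bounded in $L^\infty_\omega(\calO^T)$. It is also bounded in $\calH_{\wt\omega,T}$ by the energy estimate \eqref{p: e: egy est}, because the constants $\lambda_\scrL,\lambda_\scrP,\lambda_S$ from the proof of Lemma \ref{l: sp: for lin WP} depend on $k$ only through $\|k\|_{L^\infty_\omega}\le a$. Up to a subsequence, $h_n\weakto h$ weakly$-*$ in $L^\infty_\omega$ and weakly in $L^2_{t,x}H^1_{v,\wt\omega}$. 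Since $\la v\ra^2\omega^{-1}\in L^1$, testing against $\varphi(t,x)\la v\ra^{i}\omega^{-1}\omega$ gives $\rho[k_n]\weakto\rho[k]$, $j[k_n]\weakto j[k]$, $E[k_n]\weakto E[k]$ weakly$-*$ in $L^\infty(\Omega^T)$, and the same holds for the moments of $h_n$. The key point is the mollification. For each fixed $(t,x)\in[0,T]\times\ov\Omega$, the function $(s,y)\mapsto\bfs_\va(t-s)\bfr_\va(x-2\va n_x-y)$ lies in $L^1(\Omega^T)$. Hence $\rho[k_n]*_t\bfs_\va\star_\va\bfr_\va\to\rho[k]*_t\bfs_\va\star_\va\bfr_\va$ pointwise, and likewise for $j$. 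Therefore $\bfA_\va[k_n]\to\bfA_\va[k]$ pointwise and boundedly, and so strongly in every $L^p(\Omega^T)$ with $p<\infty$.

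We then pass to the limit in the weak formulation of \eqref{S: E: Lin} tested against $\chi\in\calD([0,T]\times\ov\calO)$, written with all $v$-derivatives moved onto $\chi$ (the distributional Green formula). The terms that are linear in $h_n$ with coefficients depending only on $g$ (the operator $\scrP$) converge by weak convergence. The term $\bfA_\va[k_n]\Delta_vg$ converges strongly. The genuinely delicate terms are the products of a $k_n$-coefficient with an $h_n$-quantity: $E[k_n]h_n\Delta_v\chi$, $\rho[k_n]h_n$, $\bfB[k_n]\cdot\nabla_vh_n$, and $E[k_n]\Delta_vh_n$. A product of two weakly converging sequences need not converge to the product of the limits. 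This is the main obstacle: the unmollified coefficients $E[k]$, $\rho[k]$, $j[k]$ are only weakly$-*$ convergent.

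To handle it, I would first try velocity averaging. The $h_n$ solve kinetic equations with $L^2_{t,x}H^1_v$ bounds and a uniformly bounded right-hand side in $L^2_{t,x}H^{-1}_v$. Averaging lemmas (or hypoelliptic gain) give strong $L^2_{\rm loc}$ compactness of $h_n$ in $(t,x)$, and even of $h_n$ itself once the $H^1_v$ bound is combined with the $x$-regularity from averaging. Strong convergence of $h_n$ against weak convergence of the moments of $k_n$ then passes every product to the limit, since the moments of $k_n$ act as functions of $(t,x)$ only. A cleaner alternative, if averaging causes trouble near the boundary, is to note that the paper's regularization could equally be applied to $\rho,j,E$ of $k$ in the linearization. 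I will stick with averaging, localized away from $\p\Omega$ by test functions, plus weighted tail control from the $L^\infty_\omega$ bound. The limit $h$ then solves \eqref{S: E: Lin} with coefficient $k$, including the specular condition, which is obtained as in Step 3 of Proposition \ref{prop: WP}. By the uniqueness in Lemma \ref{l: sp: for lin WP}, $h=\Psi(k)$, so the whole sequence converges. Schauder's theorem then yields the fixed point $h^\va$.
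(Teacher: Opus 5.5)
Your proposal is correct and follows essentially the same route as the paper. The paper also applies Schauder's theorem on the weak$-*$ compact convex set $\calX_{T,\omega,a}$. It also obtains pointwise, hence strong $L^p$, convergence of the mollified coefficient $\bfA_\va[k_n]$. Strong local compactness of $h_n$ comes from hypoelliptic gain, which handles the products of weakly converging moments of $k_n$ with $h_n$: the paper applies Bouchut's $H^{1/4}_{t,x}$ estimate to the cut-off functions $h^n\chi_{R,1}(x)\chi_{R,2}(v)$ and combines it with the $L^2$ bound on $\nabla_v h^n$. Finally, the specular condition on the limit trace is recovered as in Step 3 of Proposition \ref{prop: WP}, and uniqueness for the linear problem identifies the limit with $\Psi(k)$ along the whole sequence.

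Your observation that the weak$-*$ ball is metrizable (its predual is separable), so that sequential continuity suffices, is a detail the paper leaves implicit.
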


\begin{proof}
    Thanks to Corollary \ref{e: c: inv}, we know that 
    \begin{equation*}
    \Psi: k \mapsto S_k(t,0)h_0 
    \end{equation*}
    maps $\calX_{T,\omega,a}$ into itself. We endow $\calX_{T,\omega,a}$ with the $L^\infty_\omega-*$ topology, for which it is obviously compact and convex. Hence, to conclude to the proposition, it only remains prove that $\Psi$ is continuous with respect to this topology. In other words, let $k^n \in \calX_{T,\omega,a}$ denote a sequence with
    \begin{equation*}
        k^n \weakto k \quad \text{in} \quad L^\infty_\omega -*.  
    \end{equation*}
    For brevity, we write $h^n := S_{k^n}(t,0) h_0$. We need to prove that $h^n \weakto S_k(t,0)h_0$.
    
    (Step 1: Convergence of the coefficients.) 

    Let us first establish how the coefficients in $k^n$ of \eqref{S: E: Lin} (such as $E[k^n]$, $\bfA_\varsigma[k^n]$) behave with respect to weak$-*$ convergence of $k$. Using the fact that $k^n\weakto k$ in $L^\infty_\omega -*$, we can easily establish all of the following convergences
    \begin{equation} \label{S: E: Macs Lin}
        \rho[k^n] \weakto \rho[k], \quad j[k^n]\weakto j[k], \quad E[k^n] \weakto E[k] \quad \text{in} \quad L^\infty(\Omega^T) -*.
    \end{equation}
    For instance, for $\varphi\in L^1(\Omega^T)$ there holds
    \begin{align*}
        \lt| \iint_{\Omega^T} (E[k^n] - E[k])\varphi(t,x) \rt| &= \lt| \iiint_{\calO^T} |v|^2 (k^n - k) \varphi(t,x) \rt| \\
        &= \lt| \iiint_{\calO^T} \Big( (k^n - k) \omega \Big) \, \Big(|v|^2 \omega^{-1} \varphi(t,x) \Big)  \rt| \\
        &\xrightarrow[n\to\infty]{} 0.
    \end{align*}
    We can now prove the strong convergence of $\bfA_\varsigma[k^n]$. Indeed, let us observe first that the weak$-*$ convergence of $j[k^n]$ provides
    \begin{align*}
        \forall (t,x)\in \Omega^T, \quad j[k^n] *_t \bfs_\va \star_\varsigma \bfr_{\varsigma}(t,x) &:= \iint_{\Omega^T} j[k^n](s,y) \, \bfs_\va(t-s) \bfr_\varsigma(x - 2\varsigma n_x - y) \,\ds\dy \\
        &\xrightarrow[n\to\infty]{} \iint_{\Omega^T} j[k](s,y) \, \bfs_\va(t-s) \bfr_\varsigma(x-2\varsigma n_x - y) \,\ds\dy \\
        &= j[k] *_t \bfs_\va \star_\varsigma \bfr_\zeta (t,x).
    \end{align*}
    Similarly we get $\rho[k^n] *_t \bfs_\va \star_\varsigma \bfr_{\varsigma} \to \rho[k] *_t \bfs_\va \star_\varsigma \bfr_{\varsigma}$ pointwisely in $\Omega^T$. By the very definition of $\bfA_\varsigma$, we deduce
    \begin{equation*}
        \forall (t,x) \in \Omega^T, \quad \bfA_\varsigma[k^n](t,x) \to \bfA_\varsigma[k](t,x).
    \end{equation*}
    Since $(\bfA_\varsigma[k^n])_n$ is bounded in $L^\infty(\Omega^T)$, the convergence theorem of Vitali then provides
    \begin{equation}\label{S: E: bfA Lin}
        \forall p\in [1,\infty), \quad \bfA_\varsigma[k^n] \to \bfA_\varsigma[k] \quad \text{in} \quad L^p(\Omega^T).
    \end{equation}
    In similar nature, we can easily prove that
    \begin{equation}\label{S: E: bfB Lin}
        \bfB[k^n] \weakto \bfB[k] \quad \text{in} \quad \calD'(\calO^T).
    \end{equation}

    (Step 2: Passing to the limit in the PDE.) We now have enough information to pass to the limit. Indeed, consider the equation \eqref{S: E: Lin} satisfied by the $h^n$. We note that the diffusion coefficient ($=:\sigma_\varsigma[k^n]$) is strictly and uniformly lower bounded as a result of \eqref{S: E: coeff}. Consequently, noting that $L^\infty_{\omega}\hookrightarrow L^2$, we can apply the renormalized Green's formula (with the equation \eqref{S: E: Lin} satisfied by the $h^n$) to deduce that
    \begin{equation*}
        (\sigma_\va[h^n] h^n) \quad \text{is bounded in} \quad L^2_{t,x} H^1_v.
    \end{equation*}
    In particular the uniform lower bound on $\sigma_\va[k^n]$ yields that
    \begin{equation}\label{S: E: H1}
    \text{$(\nabla_v h^n)$ is bounded in $L^2_{t,x,v}$.}
    \end{equation}
    Now denote by $\chi_{R,1}(x)\in C_c^\infty(\Omega)$ a cutoff function satisfying
    \begin{equation*}
        \chi_{R,1}(x) = 1 \quad \text{in} \quad \{x\in \Omega \; | \; \textnormal{dist}(x,\Omega^c) > 1/R\}.
    \end{equation*}
    Furthermore, consider a smooth radial cutoff function $\chi_{R,2}(v) \in C_c^\infty(\R^d)$, satisfying $\mathbf{1}_{|v|\le R}(v) \le \chi_{R,2}(v) \le \mathbf{1}_{|v|\le 2R}(v)$. 
    Denoting $h^n_R(t,x,v) := h^n(t,x,v) \chi_{R,1}(x) \chi_{R,2}(v)$, we can derive from $\eqref{S: E: Lin}_1$ the PDE that is satisfied by $h^n_R$, which holds in all of $[0,T]\times\R^d_x \times\R^d_v$ because of the cutoffs. Then keeping in mind \eqref{S: E: H1}, we can apply the hypoelliptic energy estimates of \cite[Theorem 1.3]{Bou02}\footnote{Following the notation of Bouchut \cite{Bou02}, we take $p=2$, $r=0$, $\beta=1$, $\Omega=1$, and $m=2$, so that $s=1/4$.}, to deduce that $(h^n_R)_n$ is bounded in $H^{1/4}_{t,x}([0,T]\times \R^d_x \times \R^d_v)$. Since
    \begin{equation*}
        \chi_{R,1}(x) \chi_{R,2}(v) \equiv 1 \quad \text{in} \quad \calO_R := \{(x,v)\in \Omega \; | \; \textnormal{dist}(x,\Omega^c) > 1/R, \; |v| < R \},
    \end{equation*}
    we consequently obtain that $(h^n)$ is bounded in $H^{1/4}_{t,x}([0,T]\times \calO_R)$. We already mentioned that $(\nabla_v h^n)$ is bounded in $L^2_{t,x,v}$, so altogether
    \begin{equation*}
        (h^n) \quad \text{is bounded in} \quad H^{1/4}([0,T]\times \calO_R).
    \end{equation*}
    The usual diagonal argument proves that, up to some subsequence, there is a limit $\mathfrak{h}$ such that
    \begin{equation*}
        h^n \to \mathfrak{h} \quad \text{strongly in} \quad L^2_{\rm loc}([0,T]\times \calO) \quad \text{and a.e.}
    \end{equation*}
    Since $(h^n)\subset \calX_{T,\omega,a}$, an interpolation shows next that
    \begin{equation}\label{S: E: hn Lin}
        \forall p\in [1,\infty), \quad h^n \to \mathfrak{h} \quad \text{strongly in} \quad L^p_{\rm loc}([0,T]\times \calO).
    \end{equation}
    The convergences in \eqref{S: E: Macs Lin}--\eqref{S: E: bfA Lin}--\eqref{S: E: bfB Lin}--\eqref{S: E: hn Lin} allow us to pass to the limit in the equation $\eqref{S: E: Lin}_1$ satisfied by the $h^n$ in $\calD'((0,T)\times \calO)$. We deduce that the limit $\mathfrak{h}$ is a solution to $\eqref{S: E: Lin}_1$ in $\calD'((0,T)\times \calO)$ corresponding to $k$. 

    (Step 3: Recovering the traces.) We first point out that clearly, at least up to some subsequence, $h^n\weakto \mathfrak{h}$ in both $L^\infty_\omega-*$ and $L^2_{t,x}H^1_{v, {\rm loc}}$, thanks to the uniqueness of distributional limits. Consequently $\mathfrak{h}$, being a distributional solution to the linear problem $\eqref{S: E: Lin}_1$ and also a member of $L^2_{t,x}H^1_{v,{\rm loc}}$, is automatically a renormalized solution which admits traces $\gamma \mathfrak{h}$ and $\mathfrak{h}_t$, as a straightforward adaptation of \cite[Theorem 4.2]{Mi10}.

    It only remains to prove that $\gamma \mathfrak{h}$ satisfies the specular reflection boundary condition. Indeed, if so, then the well-posedness result of Proposition \ref{prop: WP} provides that $\mathfrak{h} = S_k(t,0)h_0$ in the class $L^\infty(0,T;L^2_{\wt{\omega}}(\calO))$ and therefore a.e. This would next imply, thanks to the uniqueness of $S_k(t,0)h_0$, that in fact the full sequence $(h^n)$ converges to $\mathfrak{h}=S_k(t,0)h_0$ and the continuity of $\Psi$ would be proven.
    
    To establish that the boundary condition holds for $\mathfrak{h}$, we proceed exactly as in (Step 3) of the proof of Proposition \ref{prop: WP}. Namely, recall once more that since $(h^n)$ is bounded in $L^\infty_\omega$ (being members of $\calX_{T,\omega,a}$), the family $(h^n)$ is bounded in $L^\infty(0,T;L^2_{\wt{\omega}}(\calO))$; thus, by using the test function $\chi(x,v) = (n_x\cdot v)\wt{\omega}^2 \la v \ra^{-4}$ against the equation satisfied by $|h^n|^2$, we obtain
    \begin{align*}
        \|\gamma h^n\|_{L^2(\Sigma^T, (n_x\cdot v)^2 \wt{\omega}^2 \la v \ra^{-4} \,\tnd\sigma_x \dv \dt)}^2 \le C.
    \end{align*}
    Then in the same way as in the proof of Proposition \ref{prop: WP}, we obtain that $\gamma_{\pm} h^n \weakto \gamma_{\pm} \mathfrak{h}$ weakly in $L^2_{\rm loc}(\Sigma^T_{\pm},(n_x\cdot v)^2 \tnd\sigma_x\dv\dt)$ (thanks to uniqueness of the trace). Since each $\gamma h^n$ satisfies $\gamma_- h^n = \gamma_+ h^n\circ \scrV_x$, we can write this against a test function and then pass to the limit to conclude that $\gamma_- \mathfrak{h} = \gamma_+\mathfrak{h}\circ \scrV_x$ a.e. This completes the proof.
\end{proof}

\subsection{Passing to the limit in the regularized equation}

For each $\varsigma>0$, we are armed with the existence of a solution to the regularized problem \eqref{S: E: Reg Nonlin}. In this section we will pass to the limit $\varsigma\searrow 0$ and prove that the limit point is a solution to the original nonlinear equation \eqref{S: E: nonlin}.

\begin{proposition}\label{S: P: final}
    Let any $a<\min\{a_1,a_2,a_3\}$ be given, and assume $g_0\in W^{2,\infty}_{\la v \ra \omega}(\R^d)$, $\|h_0\|_{L^\infty_\omega}\le \delta(a,T)$. Denote by $(h^\varsigma)_{\varsigma>0} \subset \calX_{T,\omega,a}$ a family of solutions to \eqref{S: E: Reg Nonlin}, as constructed in Proposition \ref{S: P: Exist Nonlin}. Then as $\varsigma\searrow 0$ there is a weak$-*$ limit point $h\in \calX_{T,\omega,a} \cap L^\infty_t L^2_{x,v} \cap L^2_{t,x}H^1_v$ which is a weak and renormalized solution to the nonlinear problem \eqref{S: E: nonlin}.
\end{proposition}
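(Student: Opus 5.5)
The plan mirrors the proof of Proposition \ref{S: P: Exist Nonlin}, with the fixed coefficient $k$ replaced by $h^\varsigma$ itself. The new difficulty is the quadratic terms $E[h]\Delta_v h$, $\bfB[h]\cdot\nabla_v h$, $\rho[h]h$ and $\bfA_\varsigma[h]$. For these we need strong convergence of the macroscopic moments, not only weak$-*$ convergence.

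\textbf{Step 1: uniform bounds and weak limits.} Since $(h^\varsigma)\subset\calX_{T,\omega,a}$, Banach--Alaoglu gives a subsequence with $h^\varsigma\weakto h$ in $L^\infty_\omega-*$, and $h\in\calX_{T,\omega,a}$ by weak$-*$ lower semicontinuity of the norm. Next we use the renormalized Green's formula with $\beta(z)=z^2$ and the weight $\wt\omega$. The diffusion coefficient $\sigma_\varsigma[h^\varsigma]$ is bounded below by $E[g_0]/(2d)$ uniformly in $\varsigma$, by \eqref{S: E: coeff}, and $L^\infty_\omega\hookrightarrow L^2_{\wt\omega}$. Together these give uniform bounds on $(h^\varsigma)$ in $L^\infty_tL^2_{x,v}\cap L^2_{t,x}H^1_v$. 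The same computation with the test function $(n_x\cdot v)\wt\omega^2\la v\ra^{-4}$ bounds $\gamma h^\varsigma$ in $L^2(\Sigma^T,(n_x\cdot v)^2\wt\omega^2\la v\ra^{-4})$, exactly as in Step 3 of Proposition \ref{prop: WP}.

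\textbf{Step 2: strong compactness.} We localize with $\chi_{R,1}(x)\chi_{R,2}(v)$ and apply Bouchut's hypoelliptic estimate \cite[Theorem 1.3]{Bou02} as in Step 2 of Proposition \ref{S: P: Exist Nonlin}. The right-hand side of \eqref{S: E: Reg Nonlin} lies in $L^2_{t,x}H^{-1}_{v,{\rm loc}}$ uniformly in $\varsigma$, because all coefficients are bounded in $L^\infty$. This yields $h^\varsigma\to h$ in $L^p_{\rm loc}([0,T]\times\calO)$ for all $p<\infty$, and a.e.

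To upgrade this to the moments, we split $\rho[h^\varsigma]=\int \chi_{R,2}h^\varsigma\,\dv+\int(1-\chi_{R,2})h^\varsigma\,\dv$. The tail is bounded by $a\|\la v\ra^2\omega^{-1}\mathbf 1_{|v|\ge R}\|_{L^1}$, which tends to $0$ as $R\to\infty$ uniformly in $\varsigma$; the same tail bound applies to $j$ and $E$ since $\omega\in\calW_{g_0}$. The bulk part converges in $L^p_{\rm loc}$, and near the boundary we use that $\Omega\setminus\{\frakd>1/R\}$ has small measure while the integrands are bounded. Hence $\rho[h^\varsigma]\to\rho[h]$, $j[h^\varsigma]\to j[h]$ and $E[h^\varsigma]\to E[h]$ strongly in $L^p(\Omega^T)$.

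For $\bfA_\varsigma[h^\varsigma]$, write $j[h^\varsigma]*_t\bfs_\varsigma\star_\varsigma\bfr_\varsigma-j[h]$ as the sum of the mollification of $j[h^\varsigma]-j[h]$, which is bounded by the strong convergence just obtained, and the mollification error of $j[h]$, which tends to $0$ in $L^p$ by standard properties of the boundary-shifted mollifier of \cite{BloLeD01}. The same applies to $\rho$. The denominator stays at least $1/2$, so $\bfA_\varsigma[h^\varsigma]\to\bfA[h]$ in $L^p(\Omega^T)$.

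\textbf{Step 3: passing to the limit and recovering the traces.} Every nonlinear term in the weak formulation is now a product of a strongly convergent factor and a weakly convergent one. Examples are $E[h^\varsigma]\,h^\varsigma\,\Delta_v\varphi$, $\bfB[h^\varsigma]h^\varsigma\cdot\nabla_v\varphi$ (after integrating by parts against the test function), and $\rho[h^\varsigma]h^\varsigma$. Hence $h$ solves \eqref{S: E: nonlin} in $\calD'$. Renormalization, the existence of traces and the continuity $h\in C([0,T];L^2)$ follow from \cite[Theorem 4.2]{Mi10}, since $h\in L^2_{t,x}H^1_v$ and the coefficients are bounded. The specular condition passes to the limit through the weak convergence of the traces, as in Proposition \ref{prop: WP}.

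The main obstacle I expect is the strong convergence of the moments in Step 2. The concern is the region near $\p\Omega$, where the localized compactness gives nothing. To handle it I will rely on uniform $L^\infty$ integrability: the $L^\infty_\omega$ bound gives uniform $L^\infty$ bounds on the moments, so the boundary layer contributes only $O(|\{\frakd<1/R\}|^{1/p})$ to the $L^p$ error. I would try this first.
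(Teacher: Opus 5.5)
Your proposal is correct and follows essentially the same route as the paper. That route is: uniform energy and trace bounds, Bouchut's hypoelliptic compactness, strong convergence of the moments and of $\bfA_\varsigma[h^\varsigma]$ by splitting off the mollification error, and passage to the limit in the Green's formula with weakly converging traces that inherit the specular condition. The only minor differences are in two places. First, the paper gets convergence of the moments from a.e. convergence of $h^\varsigma$ on all of $(0,T)\times\calO$ (by diagonal extraction), domination by $a\omega^{-1}$ and Vitali's theorem, so the boundary layer you flag never needs separate treatment. Second, the paper proves the renormalization property by an explicit mollification/commutator argument instead of citing \cite{Mi10}.
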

\begin{proof}
    Being realized as a fixed point of the linear problem, we know that each $h^\varsigma \in \calX_{T,\omega,a}$ is a weak and renormalized solution to \eqref{S: E: Reg Nonlin}, and clearly there is a limit $h\in \calX_{T,\omega,a}$ such that
    \begin{equation*}
        h^\varsigma \weakto h \quad \text{in} \quad L^\infty_\omega(\calO^T)-*.
    \end{equation*}

    (Step 1: Uniform-in-$\varsigma$ estimates.)
    Let us find some estimates that are uniform in $\varsigma$. From the fact that $(h^\varsigma)\subset \calX_{T,\omega,a}$, it is first clear that 
    \begin{equation}\label{S: E: va macs}
        |\rho[h^\varsigma]|, \; |j[h^\varsigma]|, \; |E[h^\varsigma]| \lesssim a.
    \end{equation}
    In particular, we can also obtain
    \begin{equation}\label{S: E: vasig A}
        0 \le \bfA_\varsigma[h^\varsigma] \le \frac{\|j[h^\varsigma] *_t \bfs_\va \star_\varsigma \bfr_\varsigma \|_{L^\infty}^2 }{ 1 - \|\rho[h^\varsigma] *_t \bfs_\va \star_\varsigma \bfr_{\varsigma} \|_{L^\infty} } \le \frac{\|j[h]\|_{L^\infty}^2}{ 1 - \|\rho[h^\varsigma]\|_{L^\infty} } \le C a^2,
    \end{equation}
    thus $(\bfA_\varsigma[h^\varsigma])$ is bounded in $L^\infty(\Omega^T)$. Similarly, it is clear that
    \begin{equation}\label{S: E: vasig B}
        (\bfB[h^\varsigma]) := (\rho[h^\varsigma] v - j[h^\varsigma]) \quad \text{is bounded in} \quad L^\infty((0,T)\times \Omega; L^\infty_{\rm loc}(\R^d)).
    \end{equation}

    Next, using the renormalized Green's formula, we observe that the following holds, as can be seen by computing for instance as in \eqref{p: e: formal}:
    \begin{align*}
        &\ddt \frac{1}{2} \iint_{\calO} |h^\varsigma|^2 + \iint_{\calO} \sigma_\varsigma[h^\varsigma] |\nabla_v h^\varsigma|^2 \\
        &= \iint_{\calO} \frac{d}{2} (1 + \rho[h^\varsigma]) |h^\varsigma|^2 \\
        &\quad + \iint_{\calO} \lt( dg\rho[h^\varsigma] + \frac{1}{d}\Delta_v g \, E[h^\varsigma] + \bfB[h^\varsigma]\cdot \nabla_v g - \frac{1}{d}\bfA_\varsigma[h^\varsigma] \Delta_v g \rt) h^\varsigma . 
    \end{align*}
    In the above, we noted that the boundary integral vanishes by symmetry, and we remind the reader that $\sigma_\varsigma$ is defined in \eqref{S: E: sigma}. It is also worth emphasizing that all of the integrals above make sense because $L^\infty_\omega \hookrightarrow L^2$. All terms in the last integral above can easily be crudely bounded. For instance
    \begin{align*}
        \iint_{\calO} g \rho[h^\varsigma] h^\varsigma &\lesssim Ca (\|g\|_{L^2}^2 + \|h^\varsigma\|_{L^2}^2),
    \end{align*}
    and
    \begin{align*}
        \iint_{\calO} \bfB[h^\varsigma] \cdot \nabla_v g \, h^\varsigma &= \iint_{\calO} (\rho[h^\varsigma] v - j[h^\varsigma]) \cdot \nabla_v g \, h^\varsigma \\
        &= - \iint_{\calO} \rho[h^\varsigma] (d h^\varsigma + v\cdot \nabla_v h^\varsigma) g \\
        &\lesssim a \|h^\varsigma\|_{L^2} \|g\|_{L^2} + a \|v g\|_{L^2} \|\nabla_v h^\varsigma\|_{L^2},
    \end{align*}
    by applying the Gauss--Ostrogradsky theorem. Note that the last term can be absorbed through Young's inequality. In conclusion, we can obtain that $(h^\varsigma)$ is bounded in $L^\infty_t L^2_{t,x} \cap L^2_{t,x} H^1_v$, especially thanks to the uniform nondegeneracy of $\sigma_\varsigma$ (see \eqref{S: E: coeff}).

    At last, using these bounds for $(h^\va)$ just established, we can obtain the same energy estimate on the trace by computing as in \eqref{P: E: Trace Egy}. Namely, using $\varphi_R(x,v) = (v\cdot n_x)\chi_R(v)$ as a test function with $\chi_R\in C_c^\infty(B_{R+1})$ and $\mathbf{1}_{|v|\le R} \le \chi_R \le \mathbf{1}_{|v|\le R+1}$, we may obtain
    \begin{equation}\label{S: E: trace loc}
        \begin{split}
        \iiint_{[0,T]\times \p\Omega\times B_R} |\gamma h^\va|^2 (n_x\cdot v)^2 
        &\le C_R \Big( \|h^\va\|_{L^\infty(0,T;L^2)}^2 + \|h_0\|_{L^2}^2 + \|h^\va\|_{L^2_{t,x} H^1_v}^2 \Big)  \\
        &\le C_R,
    \end{split}
    \end{equation}
    where the first inequality follows by resorting to the bounds in \eqref{S: E: va macs}--\eqref{S: E: vasig A}--\eqref{S: E: vasig B}.

    (Step 2: Passing to the limit and obtaining a weak solution to \eqref{S: E: nonlin}.) We now aim to pass to the limit in the Green's formula of \eqref{S: E: Reg Nonlin} satisfied by $(h^\va, \gamma h^\va)$. First we observe that just as in (Step 2) of the proof of Proposition \ref{S: P: Exist Nonlin}, we may apply the hypoelliptic estimates of \cite{Bou02} on a suitably truncated sequence of $h^\varsigma$, to deduce
    \begin{align*}
        h^\varsigma \to h \quad \text{strongly in} \quad L^2_{\rm loc}((0,T) \times \calO).
    \end{align*}
    Passing to subsequences, then arguing diagonally, we obtain a subsequence such that
    \begin{equation*}
        h^\varsigma \to h \quad \text{a.e. in} \quad (0,T)\times \calO.
    \end{equation*}
    The dominated convergence theorem of Vitali, along with the bounds in \eqref{S: E: va macs}, then show
    \begin{equation*}
        \rho[h^\varsigma] \to \rho[h], \quad j[h^\varsigma] \to j[h], \quad E[h^\varsigma] \to E[h] \quad \text{in}\quad L^p(\Omega^T), \quad \forall p\in [1,\infty).
    \end{equation*}
    It is also clear that
    \begin{align*}
        &\rho[h^\varsigma] *_t \bfs_\va \star_\varsigma \bfr_{\varsigma} - \rho[h] \\
        &= (\rho[h^\varsigma] - \rho[h]) *_t \bfs_\va \star_\varsigma \bfr_{\varsigma} + ( \rho[h] *_t \bfs_\va \star_\varsigma \bfr_{\varsigma} - \rho[h]) \\
        &\to 0 \quad \text{in} \quad L^p(\Omega^T), \quad \forall p\in [1,\infty),
    \end{align*}
    and similarly $j[h^\varsigma] *_t \bfs_\va \star_\varsigma \bfr_{\varsigma} \to j[h]$ in $L^p(\Omega^T)$ for all $p\in [1,\infty)$. Consequently, up to further subsequences,
    \begin{align*}
        \bfA_\varsigma[h^\varsigma] \to \bfA[h], \quad \bfB[h^\varsigma] \to \bfB[h] \quad \text{a.e. and in} \quad L^p((0,T)\times \Omega; L^p_{\rm loc}(\R^d)) \quad \forall p\in [1,\infty).
    \end{align*}
    The convergence of $\bfA_\varsigma[h^\varsigma]$ implies in particular that
    \begin{align*}
        \sigma_{\varsigma}[h^\varsigma] \to \sigma[h] := \frac{1}{d}\lt(E[g_0] + E[h] - \bfA[h] \rt) \quad \text{a.e. and in} \quad L^p(\Omega^T), \quad \forall p\in [1,\infty).
    \end{align*}
    On the other hand, the convergence of $\bfB[h^\va]$ implies
    \begin{align*}
        \zeta[h^\va] &:= v + \bfB[h^\va]\\
        &\to v + \bfB[h] = \zeta[h] \quad \text{in} \quad L^p_{\rm loc}([0,T]\times \ov{\calO}), \quad \forall p\in [1,\infty).
    \end{align*}
    Next, we have
    \begin{align*}
        \eta[h^\va] &:= d(1 + \rho[h^\va]) \\
        &\to d(1 + \rho[h]) = \eta[h] \quad \text{in} \quad L^p_{\rm loc}([0,T]\times \ov{\calO}), \quad \forall p\in [1,\infty).
    \end{align*}
    Similarly, denoting the remaining terms (last line) of \eqref{S: E: Reg Nonlin} as
    \begin{align*}
        P_\varsigma[h^\varsigma] := dg\rho[h^\varsigma] + \frac{1}{d}\Delta_v g E[h^\varsigma] + \bfB[h^\varsigma] \cdot \nabla_v g - \frac{1}{d}\Delta_v g \, \bfA_\varsigma [h^\varsigma],
    \end{align*}
    the previously established convergences imply
    \begin{align*}
        P_\va[h^\va] &\to P[h]\\
        &:= dg\rho[h] + \frac{1}{d}\Delta_v g E[h] + \bfB[h] \cdot \nabla_v g - \frac{1}{d}\Delta_v g \, \bfA [h] 
    \end{align*}
    in $L^p_{\rm loc}([0,T]\times \ov{\calO})$, again for all $p\in [1,\infty)$. Finally, for the trace, from the uniform estimate \eqref{S: E: trace loc} we know that there is a limit $\wt{\gamma}$ for which (up to a subsequence)
    \begin{equation*}
        \gamma h^\va \,(n_x\cdot v) \weakto \wt{\gamma} \quad \text{weakly in } L^2_{\rm loc}([0,T]\times \p\Omega \times \R^d).
    \end{equation*}
    We then define $\ov{\gamma} := \wt{\gamma}/(n_x\cdot v)$ with the convention $\ov{\gamma} = 0$ when $n_x\cdot v = 0$.
    
    Collecting all of the convergences above, we may write the Green's formula of \eqref{S: E: Reg Nonlin} that is satisfied by $(h^\va, \gamma h^\va)$ then pass to the limit, to obtain that $(h,\ov{\gamma})$ satisfies the Green's formula of \eqref{S: E: nonlin}. By the uniqueness of the trace we deduce $\ov{\gamma} = \gamma h$. It is clear that $\gamma h$, being the weak limit of $\gamma h^\va$, satisfies the specular reflection boundary condition because each $\gamma h^\va$ does.

    (Step 3: The solution is renormalized.) We consider the $(x,v)$-regularized functions $h_\ve:= h \star_\ve \bfr_\ve *_v \bfr_\ve$ as was done in the proof of Lemma \ref{p: l: dual}. Since $h\in L^2_{t,x} H^1_v \cap L^\infty_\omega$, it clearly holds that (up to a subsequence)
    \begin{equation}\label{S: E: hve itself}
    \begin{split}
        &h_{\ve t} \to h_t \quad \text{a.e. and in} \quad L^p(\calO) \quad \forall p\in [2,\infty), \\
        &h_\ve \to h \quad \text{a.e. and in} \quad L^p((0,T)\times \calO) \quad \forall p\in [2,\infty), \\
        &\nabla_v h_\ve \to \nabla_v h \quad \text{a.e. and in} \quad L^2((0,T)\times\Omega;L^2(\R^d)).
    \end{split}
    \end{equation}
    Interpolating this with the fact that $h\in \calX_{T,\omega,a}$, we obtain also that
    \begin{equation}\label{S: E: hve macs}
        \rho[h_\ve] \to \rho[h], \quad j[h_\ve] \to j[h], \quad E[h_\ve] \to E[h] \quad \text{a.e. and in} \quad L^p(\Omega^T) \quad \forall p\in [1,\infty), 
    \end{equation}
    and consequently
    \begin{equation}\label{S: E: hve coefficients}
    \begin{split}
        &\sigma[h_\ve] \to \sigma[h], \quad \eta[h_\ve] \to \eta[h] \quad \text{in} \quad L^p(\Omega^T), \quad \forall p\in [1,\infty), \\
        &\zeta[h_\ve] \to \zeta[h] \quad \text{in} \quad L^p([0,T]\times \Omega; L^p_{\rm loc}(\R^d)) \quad \forall p\in [1,\infty).
    \end{split}
    \end{equation}
    Also, we have that $h_\ve$ is a solution to
    \begin{equation}\label{S: E: hve}
    \begin{split}
        \p_t h_\ve &= -(v\cdot \nabla_x h)_\ve + \nabla_v\cdot (\sigma[h] \nabla_v h)_\ve \\
        &\quad + (\zeta[h]\cdot \nabla_v h)_\ve + (\eta[h] h)_\ve + (P[h])_\ve  \quad \text{in}\quad \calD'((0,T)\times \ov{\calO}),
    \end{split}
    \end{equation}
    and from this very equation $h_\ve \in W^{1,1}((0,T); W^{1,\infty}_{\rm loc}(\ov{\calO}))$. By Sobolev embedding we get a version of $h_\ve$ that is continuous on all of $(0,T) \times \ov{\calO}$. It is clear that $h_\ve$ thus admits traces $h_{\ve t}$ and $\gamma h_\ve$ for every $t\in (0,T)$.

    We now rewrite \eqref{S: E: hve} into the more convenient form
    \begin{equation}\label{S: E: hve weak}
    \begin{split}
        \p_t h_\ve &= - v\cdot \nabla_x h_\ve + \nabla_v\cdot (\sigma[h_\ve] \nabla_v h_\ve) \\
        &\quad + \zeta[h_\ve] \cdot \nabla_v h_\ve + \eta[h_\ve] h_\ve + (P[h])_\ve + \scrC_\ve , 
    \end{split}
    \end{equation}
    where the commutator term is
    \begin{align*}
        \scrC_\ve &:= - \Big( (v\cdot \nabla_x) h_\ve - (v\cdot \nabla_x h)_\ve \Big)
        +  (\zeta[h] \cdot \nabla_v h)_\ve  - \zeta[h_\ve] \cdot \nabla_v h_\ve  \\
        &\quad + (\eta[h] h)_\ve - \eta[h_\ve] h_\ve + \nabla_v\cdot \Big( (\sigma[h] \nabla_v h)_\ve - \sigma[h_\ve] \nabla_v h_\ve \Big).
    \end{align*}
    Let us put $\scrC_\ve = \scrC_\ve^1 + \nabla_v\cdot \scrC_\ve^2$ so that $\scrC_\ve^2 := (\sigma[h]\nabla_v h)_\ve - \sigma[h_\ve] \nabla_v h_\ve$. To make clear how $\scrC_\ve$ vanishes in the limit, we write 
    \begin{equation} \label{S: E: comm rewrite}
    \begin{split}
        \scrC_\ve^1 &= - \Big( (v\cdot \nabla_x) h_\ve - (v\cdot \nabla_x h)_\ve \Big) \\
        &\quad + \Big( (\zeta[h] \cdot \nabla_v h)_\ve - \zeta[h] \cdot \nabla_v h \Big) + \Big( \zeta[h] \cdot \nabla_v h  - \zeta[h_\ve] \cdot \nabla_v h_\ve \Big) \\
        &\quad + \Big( (\eta[h] h)_\ve - \eta[h] h \Big) + \Big( \eta[h] h - \eta[h_\ve] h_\ve \Big) .
    \end{split}
    \end{equation}
    On the right-hand side of \eqref{S: E: comm rewrite}, as $\ve\searrow 0$ we have that the first line tends to zero in $L^2_{\rm loc}([0,T]\times \ov{\calO})$ by \cite[Lemma II.1]{DiPLion89}. For the second line, its first term clearly tends to zero in $L^2_{\rm loc}([0,T]\times\ov{\calO})$ by standard property of the mollifier. For its second term, we can rewrite again
    \begin{align*}
        &\zeta[h]\cdot \nabla_v h - \zeta[h_\ve] \cdot \nabla_v h_\ve\\
        &= (\zeta[h] - \zeta[h_\ve] ) \cdot \nabla_v h + \zeta[h_\ve] \cdot (\nabla_v h - \nabla_v h_\ve),
    \end{align*}
    where on the one hand, $(\zeta[h] - \zeta[h_\ve] ) \cdot \nabla_v h \to 0$ in $L^p_{\rm loc}([0,T]\times\ov{\calO})$ for every $p\in [1,\infty)$ thanks to \eqref{S: E: hve coefficients} and the fact that $\nabla_v h \in L^2$. On the other hand $\zeta[h_\ve]\cdot (\nabla_v h - \nabla_v h_\ve)\to 0$ in $L^2_{\rm loc}([0,T]\times \ov{\calO})$ because we can combine $\eqref{S: E: hve itself}_3$ with the fact that $|\zeta[h_\ve]| = | v + \bfB[h_\ve]| \lesssim 1+|v|$. The last line of $\scrC_\ve^1$ can be checked in the same way, and we obtain that $\scrC_\ve^1 \to 0$ at least in $L^2_{\rm loc}([0,T]\times\ov{\calO})$.

    Regarding $\scrC_\ve^2$, by rewriting
    \begin{equation*}
        \scrC_\ve^2 = \Big( (\sigma[h] \nabla_v h)_\delta - \sigma[h]\nabla_v h\Big) + \Big( \sigma[h] \nabla_v h - \sigma[h_\ve] \nabla_v h_\ve \Big),
    \end{equation*}
    its first term clearly tends to zero in $L^2$, by standard property of the mollifier. The second term converges to zero in $L^q([0,T]\times \Omega; L^q_{\rm loc}(\R^d))$ for any $q\in [1,2)$, because
    \begin{align*}
        & \| \sigma[h]\nabla_v h - \sigma[h_\ve] \nabla_v h_\ve \|_{L^q_{t,x} L^{q}_{v, {\rm loc}}} \\
        &\le  \lt\| \sigma[h] \Big(\nabla_v h - \nabla_v h_\ve \Big) \rt\|_{L^q_{t,x} L^{q}_{v, {\rm loc}}} + \lt\| \Big(\sigma[h] - \sigma[h_\ve] \Big) \nabla_v h_\ve \rt\|_{L^q_{t,x} L^{q}_{v, {\rm loc}}} \\
        &\le \|\sigma[h]\|_{L^\infty_{t,x}} \|\nabla_v h - \nabla_v h_\ve\|_{L^q_{t,x} L^{q}_{v, {\rm loc}}} + \|\sigma[h_\ve] - \sigma[h]\|_{{L^{\frac{2q}{2-q}}_{t,x} } L^{\frac{2q}{2-q}}_{v, {\rm loc}} } \|\nabla_v h_\ve\|_{L^2_{t,x,v}},
    \end{align*}
    and the right-hand side tends to zero due to \eqref{S: E: hve itself}--\eqref{S: E: hve coefficients}. Altogether, we have established that
    \begin{equation*}
        \scrC_\ve \to 0 \quad \text{in} \quad L^2_{\rm loc}([0,T]\times \ov{\calO}) + L^q([0,T]\times\Omega; W^{-1,q}_{\rm loc}(\R^d)) \quad \forall q\in [1,2).
    \end{equation*}

    Now recalling the regularity of $h_\ve$, note that the chain rule applies, so for any $\beta\in C^2 \cap W^{2,\infty}$ we obtain that
    \begin{equation}\label{S: E: hve renorm}
    \begin{split}
        \p_t \beta(h_\ve) + v\cdot \nabla_x \beta(h_\ve) &= \nabla_v\cdot (\sigma[h_\ve] \nabla_v \beta(h_\ve) ) -  \beta''(h_\ve) \sigma[h_\ve] |\nabla_v h_\ve|^2 \\
        &\quad + \zeta[h_\ve]\cdot \nabla_v \beta(h_\ve) + \beta'(h_\ve) \Big( \eta[h_\ve] h_\ve + (P[h])_\ve + \scrC_\ve^1 \Big) \\
        &\quad + \nabla_v \cdot (\scrC_\ve^2 \beta'(h_\ve)) - \beta''(h_\ve) \nabla_v h_\ve \quad \text{in} \quad \calD'([0,T]\times \ov{\calO}). 
    \end{split}
    \end{equation}
    Since $\beta\in W^{2,\infty}$, \eqref{S: E: hve itself} implies
    \begin{equation} \label{S: E: beta hve conv}
        \beta(h_\ve) \to \beta(h), \quad \beta'(h_\ve)\to \beta'(h), \quad \beta''(h_\ve) \to \beta''(h) \quad \text{in} \quad L^p_{\rm loc}([0,T]\times \ov{\calO}) \quad \forall p\in [1,\infty).
    \end{equation}
    Now using the convergences in \eqref{S: E: hve itself}--\eqref{S: E: hve macs}--\eqref{S: E: hve coefficients}--\eqref{S: E: beta hve conv}, we can pass to the limit $\ve\searrow 0$ in \eqref{S: E: hve renorm}. For instance let us demonstrate the convergence of just a few of the terms in \eqref{S: E: hve renorm}. Writing
    \begin{equation} \label{S: E: beta two prime}
    \begin{split}
        &\beta''(h_\ve)\sigma[h_\ve] |\nabla_v h_\ve|^2 - \beta''(h) \sigma[h] |\nabla_v h| \\
        &= \beta''(h_\ve) \sigma[h_\ve] (|\nabla_v h_\ve|^2 - |\nabla_v h|^2) + |\nabla_v h|^2 (\beta''(h_\ve)\sigma[h_\ve] - \beta''(h) \sigma[h]) ,
    \end{split}
    \end{equation}
    the first term on the right-hand side of \eqref{S: E: beta two prime} converges to zero in $L^1_{t,x,v}$ because
    \begin{align*}
        &\iiint_{\calO^T} |\beta''(h_\ve) \sigma[h_\ve]| \Big| \,|\nabla_v h_\ve|^2 - |\nabla_v h|^2 \Big| \\
        &= \iiint_{\calO^T} |\beta''(h_\ve) \sigma[h_\ve]| (|\nabla_v h_\ve| + |\nabla_v h|) \Big| \, |\nabla_v h_\ve| - |\nabla_v h| \Big| \\
        &\lesssim \lt(\iiint_{\calO^T} (|\nabla_v h_\ve|^2 + |\nabla_v h|^2)\rt)^{1/2} \lt(\iiint_{\calO^T} \Big| \, |\nabla_v h_\ve | - |\nabla_v h| \Big|^2 \rt)^{1/2} \\
        &\le \lt(\iiint_{\calO^T} (|\nabla_v h_\ve|^2 + |\nabla_v h|^2)\rt)^{1/2} \lt(\iiint_{\calO^T} |\nabla_v h_\ve  - \nabla_v h |^2 \rt)^{1/2} \\
        &\le C \lt(\iiint_{\calO^T} |\nabla_v h_\ve  - \nabla_v h |^2 \rt)^{1/2} \\
        &\to 0.
    \end{align*}
    Here we used the fact that $\|\beta''(h_\ve) \sigma[h_\ve]\|_{L^\infty} \lesssim 1$. The second term on the right-hand side of \eqref{S: E: beta two prime} converges to zero in $\calD'((0,T)\times\calO)$ because $\beta''(h_\ve)\sigma[h_\ve] \to \beta''(h)\sigma[h]$ a.e. implies that $\beta''(h_\ve)\sigma[h_\ve] \weakto \beta''(h)\sigma[h]$ in $L^\infty-*$, which allows us to conclude using the fact that $|\nabla_v h|^2 \in L^1$. As another example, the term $\nabla_v\cdot (\scrC_\ve^2 \beta'(h_\ve))$ in \eqref{S: E: hve renorm} vanishes in the limit since for any test function $\chi\in \calD((0,T)\times\calO)$, we have
    \begin{align*}
        \iiint_{\calO^T} \nabla_v\cdot (\scrC_\ve^2 \beta'(h_\ve)) \chi &= - \iiint_{\calO^T} \scrC_\ve^2 \beta'(h_\ve) \nabla_v\cdot \chi \to 0
    \end{align*}
    owing to the fact that $(\beta'(h_\ve) \nabla_v\cdot \chi)$ is bounded in $L^\infty$. In conclusion, taking the limit yields that $\beta(h)$ solves, at least in the sense of $\calD'([0,T]\times \calO)$:
    \begin{equation}\label{S: E: beta h}
    \begin{split}
        \p_t \beta(h) + v\cdot \nabla_x \beta(h) &= \nabla_v\cdot (\sigma[h] \nabla_v \beta(h) ) - \beta''(h) \sigma[h] |\nabla_v h|^2 \\
        &\quad + \zeta[h]\cdot \nabla_v \beta(h) + \beta'(h) \Big( \eta[h] h + P[h] \Big)  - \beta''(h) \nabla_v h .
    \end{split}
    \end{equation}

    In order to recover the trace of $\beta(h)$, we briefly go back to the weak formulation \eqref{S: E: hve weak} satisfied by $h_\ve$. Our goal is to first prove the strong convergence $\gamma h_\ve \to \gamma h$. Consider $\ve_1,\ve_2>0$ and the equation satisfied by $(h_{\ve_2} - h_{\ve_1})^2$:
    \begin{equation} \label{S: E: diff squared}
    \begin{split}
        &(\p_t + v\cdot \nabla_x) (h_{\ve_2} - h_{\ve_1})^2 \\
        &= \sigma[h_{\ve_1}] \Delta_v (h_{\ve_2} - h_{\ve_1})^2 - 2\sigma[h_{\ve_1}] |\nabla_v (h_{\ve_2} - h_{\ve_1})|^2 \\
        &\quad + 2 (h_{\ve_2} - h_{\ve_1}) (\sigma[h_{\ve_2}] - \sigma[h_{\ve_1}]) \Delta_v h_{\ve_2} \\
        &\quad + \zeta[h_{\ve_1}]\cdot \nabla_v (h_{\ve_2} - h_{\ve_1})^2 + (\zeta[h_{\ve_2}] - \zeta[h_{\ve_1}]) \cdot \nabla_v h_{\ve_2} \\
        &\quad + 2 (h_{\ve_2} - h_{\ve_1}) \Big( (P[h])_{\ve_2} - (P[h])_{\ve_1} + \scrC_{\ve_2}^1 - \scrC_{\ve_1}^1 + \nabla_v\cdot (\scrC_{\ve_2}^2 - \scrC_{\ve_1}^2) \Big).
    \end{split}
    \end{equation}
    Also, let $\varphi_R(x,v) = (n_x\cdot v)\chi_R$, where $\chi_R\in C^\infty_c(B_{R+1})$ is the cutoff as introduced near \eqref{P: E: Trace Egy}. Observing that 
    \begin{align*}
        &\gamma (h_{\ve_2} - h_{\ve_1})^2 = (\gamma (h_{\ve_2} - h_{\ve_1}))^2 = (\gamma h_{\ve_2} - \gamma h_{\ve_1})^2, 
    \end{align*}
    we can use the Green's formula with $\varphi_R$ as a test function in \eqref{S: E: diff squared}, to write an estimate for
    \begin{equation}\label{S: E: elem Trace}
    \begin{split}
        &\iiint_{(0,T)\times\Omega\times B_R} |\gamma h_{\ve_2} - \gamma h_{\ve_1}|^2 (n_x\cdot v)^2 \,\tnd\sigma_x \dv \dt,
    \end{split}
    \end{equation}
    in almost the same way as we did in \eqref{P: E: Trace Egy}. Then, utilizing the convergences \eqref{S: E: hve itself}--\eqref{S: E: hve macs}--\eqref{S: E: hve coefficients}, noting that $\scrC_\ve^1, \scrC_\ve^2 \in L^2_{t,x}L^2_{v, {\rm loc}}$, and keeping in mind that $\sigma[h],\zeta[h],\eta[h],P[h]$ are members of $L^\infty_{\rm loc}$, we can use the obtained estimate to prove as in \eqref{P: E: TraceCauchy} that \eqref{S: E: elem Trace} vanishes as $\ve_1,\ve_2\to 0$. Therefore $(\gamma h_\ve)$ is Cauchy in $L^2((0,T)\times\p\Omega\times B_R,(n_x\cdot v)^2\tnd\sigma_x\dv\dt)$, and there is some limit $\gamma'$ for which up to a diagonal extraction $\gamma h_\ve \to \gamma'$ a.e. and strongly in $L^2_{\rm loc}(\Sigma^T,(n_x\cdot v)^2\tnd\sigma_x\dv\dt)$. Using this convergence, we can then pass to the limit in the Green's formula satisfied by $(h_\ve, \gamma h_\ve)$ in \eqref{S: E: hve weak}, to deduce by uniqueness of the trace that $\gamma' = \gamma h$.
    
    Now coming back to the renormalized equation for $h_\ve$, we write the Green's formula for \eqref{S: E: hve renorm} satisfied by $(\beta(h_\ve), \beta(\gamma h_\ve))$. Since $\beta\in W^{2,\infty}$, the strong convergence $\gamma h_\ve \to \gamma' = \gamma h$ implies $\beta(\gamma h_\ve) \to \beta(\gamma h)$ a.e. Hence we can pass to the limit and find that $(\beta(h), \beta(\gamma h))$ satisfies the Green's formula for \eqref{S: E: beta h}, with $\gamma \beta(h) = \beta(\gamma h)$. The proof is now complete.
\end{proof}

\subsection{Proof of Theorem \ref{Thm1}}

Writing $f = g + h$, thanks to the regularity assumption on $g$ and recalling the embedding $L^\infty_{\omega}\hookrightarrow L^2$, we immediately obtain that $f\in L^\infty_{\omega} \cap L^\infty_t L^2_{x,v} \cap L^2_{t,x}H^1_v$ and $\sup_{t\in [0,T]}\|f_t - g_t\|_{L^\infty_{\omega}(\calO)} \le a$, along with the fact that $f$ is a solution to \eqref{eq: main} in $\calD'([0,T]\times\ov{\calO})$. In particular, using that $f\in L^2_{t,x}H^1_v$ we may repeat Step 3, Proof of Proposition \ref{S: P: final} (with no changes in the proof) to deduce that $f$ is a renormalized solution to \eqref{eq: main} satisfying the Green's formula \eqref{P: E: Renorm Green Formula}. Finally, if it is the case that $f_0$ satisfies the conservation laws in (C), we may use the fact that $L^\infty_{\omega}(\calO)\hookrightarrow L^1_{\la v \ra^2}(\calO)$ to justify the admission of $(1,R(x)\cdot v,|v|^2)$ as test functions in the Green's formula for $f$. Thanks to the pointwise decay of $f$ (remember that $f\in L^\infty_{\omega}$) any applications of the Gauss--Ostrogradsky theorem are easily justified and after some basic computations, we deduce that \eqref{Eq/ Mass Egy}--\eqref{Eq/ Mom} hold for every $R\in \calR_{\Omega}$, which imply that $f$ satisfies (C).


\section{On the close-to-Maxwellian regime}

\subsection{Setup of the problem} \label{Sec/ Max Setup}

In this section, we prove Theorem \ref{Thm2}. We consider the close to equilibrium regime and set 
\begin{equation*}
    f = \mu + h.
\end{equation*}
Thus $h$ again corresponds to the perturbation as in Section \ref{Sec: Inhom: setup}, but for the particular choice of $g = \mu$. By formally computing the Gateaux derivative of the nonlinear Fokker--Planck operator $\scrF$ at $\mu$ (see Appendix \ref{App: A: Formal}), we find that the following linear operator acts as its first order approximation
\begin{equation*}
\sfC_{\rm lin} h := \Delta_v h + \nabla_v\cdot (vh) + \sum_{i=1}^d \pi_i h + 2\pi_{d+1} h,
\end{equation*}
where we use the following notations for the macroscopic projections in the space $L^2_{\mu^{-1/2}}(\R^d)$:
\begin{equation*}
\pi_i h := \begin{cases}
  \displaystyle \lt(\int_{\R^d} h(w)\dw\rt) \mu(v) = \rho[h] \mu,  & i=0, \\
  \displaystyle \lt(\int_{\R^d} w_i h(w)\dw \rt) v_i \, \mu(v) = j[h]_i v_i \mu, & i=1,2,\ldots,d, \\
  \begin{aligned}[c]
  \displaystyle \lt(\int_{\R^d} \frac{|w|^2-d}{\sqrt{2d}} h(w)\,\dw \rt) & \frac{|v|^2-d}{\sqrt{2d}} \mu(v)  \\
  & \hspace{-3cm} = \frac{1}{2d}(E[h] - d\rho[h]) (|v|^2-d)\mu, 
  \end{aligned}  & i=d+1.
\end{cases}
\end{equation*}
Now writing the equation formally satisfied by $h := f - \mu$, then subtracting the linear part $-v\cdot \nabla_x h + \scrL h$ from the equation, we find that the nonlinear remainder is given by
\begin{equation}\label{Eq/ Upsilon Def}
\begin{split}
    \Upsilon(h) &:= \scrF[f] - \sfC_{\rm lin} h \\
    &= \frac{1}{d}\lt(E[h] - \frac{|j[h]|^2}{(1+\rho[h])} \rt) \Delta_v h + (v\rho[h] - j[h])\cdot \nabla_v h \\
    &\quad + d\rho[h] h - \frac{|j[h]|^2}{d(1+\rho[h])} (|v|^2-d)\mu. 
\end{split}
\end{equation}

Towards the end of working in a setting which is compatible with the conservation laws of \eqref{eq: main} (see (C)), we now make the following observations. First, we can easily check that the same conservation properties of $\scrF$ hold true for the linearized operator $\sfC_{\rm lin}$. That is, for every $h:\R^d\to \R$ sufficiently integrable:
\begin{equation}\label{M/Eq/ Lin Con}
\int_{\R^d} (1,v,|v|^2) \sfC_{\rm lin} h \,\dv = 0.
\end{equation}
It follows as a consequence of this and \eqref{Eq/ F Conv} that
\[
\int_{\R^d} (1,v,|v|^2) \Upsilon(h) \,\dv = 0,
\]
so that $\Upsilon(h) = \leftperp{\Upsilon}(h)$, where $\leftperp{\Upsilon}(h) := (I - \pi) \Upsilon(h)$ and $\pi := \sum_{i=0}^{d+1} \pi_i$. In particular, we are tempted to write as in \cite{CarMis24}:
\begin{equation*}
    \Upsilon(h) = \leftperp{\Upsilon}[h] h,
\end{equation*}
by defining for each given function $k=k(t,x,v)$, the linear operator $\Upsilon[k]$ as
\begin{equation*} 
    \Upsilon[k] h := \frac{1}{d}\lt(E[k] - \frac{|j[k]|^2}{1+\rho[k]} \rt) \Delta_v h + (v\rho[k] - j[k])\cdot \nabla_v h + d\rho[k] h - \frac{1}{d} \frac{ |j[k]|^2 }{1+\rho[k]} (|v|^2 - d)\mu.
\end{equation*}

However, as we previously observed in Section \ref{Sec: Inhom: setup}, the quadratic diffusion coefficient is ill-behaved with respect to weak$-*$ convergence. Therefore, we are motivated to use again the trick of employing convolution operations and define
\begin{equation*}
    \Upsilon_{\va}[k] h := \frac{1}{d}\lt(E[k] - \bfA_\varsigma[k] \rt) \Delta_v h + (v\rho[k] - j[k])\cdot \nabla_v h + d\rho[k] h - \frac{1}{d} \bfA_\varsigma[k] (|v|^2-d) \mu,
\end{equation*}
where we used the same notation as in Section \ref{Sec: Inhom: setup}:
\begin{equation*}
    \bfA_\varsigma[k] := \frac{| j[k] *_t \bfs_\va \star_\varsigma \bfr_\varsigma |^2}{1 + \rho[k] *_t \bfs_\va \star_\varsigma \bfr_\varsigma }, \quad \varsigma>0.
\end{equation*}
Then, observing that
\begin{align*}
    &\rho[\Upsilon_{\va}[k] h] = 0,\\
    &j[\Upsilon_{\va}[k] h] = \rho[h]j[k] - \rho[k]j[h], \\
    &E[\Upsilon_{\va}[k] h] = 2 \lt(E[k] - \bfA_\varsigma[k] \rt) \rho[h] - 2 \rho[k] E[h] + 2j[k]\cdot j[h] - 2 \bfA_\varsigma[k],
\end{align*}
we obtain
\begin{equation}\label{eq: nonlinear perp}
    \begin{split}
    \leftperp{\Upsilon_{\va}[k]} h &:= (I - \pi) \Upsilon_{\va}[k] h \\
    &= \Upsilon_\va[k] h - j[\Upsilon_\va[k] h] \cdot v\mu - \frac{1}{2d} E[\Upsilon_\va[k] f] (|v|^2 - d) \mu \\
    &= \frac{1}{d}\lt( E[k] - \bfA_\varsigma[k] \rt) \Delta_v h + (v\rho[k] - j[k]) \cdot \nabla_v h + d\rho[k] h \\
    &\quad - (\rho[h] j[k] - \rho[k] j[h]) \cdot v\mu \\
    &\quad - \frac{1}{d} \Big( (E[k] - \bfA_\varsigma[k]) \rho[h] - \rho[k] E[h] + j[k]\cdot j[h] \Big) (|v|^2 - d)\mu.
\end{split}
\end{equation}

At last, we denote
\[ \sfF_{\varsigma}[k] := - v\cdot \nabla_x + \sfC_{\rm lin} + \leftperp{\Upsilon_{\va}[k]}, \]
whose explicit form is given by
\begin{equation*}
\begin{split}
    &\sfF_\va[k] h = -v\cdot \nabla_x h + \lt(1 + \frac{1}{d}E[k] - \frac{1}{d}\bfA_\varsigma[k] \rt) \Delta_v h \\
    &\qquad \qquad + \big( v(1+\rho[k])-j[k]\big) \cdot \nabla_v h + d(1+\rho[k]) h \\
    &\qquad \qquad + \Big( (1 + \rho[k]) j[h] - j[k] \rho[h] \Big)\cdot v\mu \\
    &\qquad\qquad + \frac{1}{d} \lt( - \Big( d + E[k] - \bfA_\varsigma[k] \Big) \rho[h] - j[k] \cdot j[h] + (1 + \rho[k]) E[h]  \rt) (|v|^2 - d) \mu ,
\end{split}
\end{equation*}
and our goal is to investigate the properties of the now linear evolution equation
\begin{equation}\label{eq: f g}
    \begin{cases}
        \p_t h = \sfF_\va[k] h &\text{in }\R_+\times \calO, \\
        h|_{t=0} = h_0 &\text{on }\calO,
    \end{cases}
\end{equation}
equipped with the specular reflection boundary condition \eqref{eq: specular}. Once all of the desired properties have been deduced from \eqref{eq: f g}, we will commence a fixed point argument to transition all of those properties to the nonlinear problem
\begin{equation}\label{Eq/ Nonlin but mollified}
    \begin{cases}
        \p_t h = \sfF_\va[h] h = -v\cdot \nabla_x h + \sfC_{\rm lin} h + \leftperp{\Upsilon_\va}[h] h &\text{in }\R_+\times \calO, \\
            h |_{t=0} = h_0 &\text{on }\calO, \\
            \gamma_- h(t,x,v) = \gamma_+ h(t,x,\scrV_x) &\text{on }\R_+\times \Sigma_-,
    \end{cases}
\end{equation}
see Proposition \ref{Prop/ reg}. Finally, in Proposition \ref{Prop: noreg}, we pass to the limit $\va\to 0$ to obtain a weak solution to
\begin{equation*}
    \begin{cases}
        \p_t h = -v\cdot \nabla_x h + \sfC_{\rm lin} h + \Upsilon(h) &\text{in }\R_+\times \calO, \\
            h |_{t=0} = h_0 &\text{on }\calO, \\
            \gamma_- h(t,x,v) = \gamma_+ h(t,x,\scrV_x) &\text{on }\R_+\times \Sigma_-,
    \end{cases}
\end{equation*}

We close this section by mentioning that from hereon, we keep a weight $\omega\in \calW_{\mu^{-1/2}}$ fixed. Furthermore, we will impose the assumption that the given function $k=k(t,x,v)$ is extracted from $\calX_{\omega,c}$, which is defined for each $c>0$ as
\begin{equation*}
    \calX_{\omega,c} := \lt\{k:(0,\infty)\times \calO \to \R \;\middle| \; \|k\|_{L^\infty_\omega((0,\infty)\times \calO)} \le c \rt\}.
\end{equation*}
In order to simplify the notation, we will also adopt the nomenclature
\begin{align*}
    &\sigma_\varsigma[k] := \lt(1 + \frac{1}{d}E[k] - \frac{1}{d} \bfA_\varsigma[k] \rt), \\
    &\zeta[k] := v(1+\rho[k]) - j[k] , \quad 
    \eta[k] := d(1+\rho[k]).
\end{align*}
In this way, we can write the collisional part of the operator $\sfF_\va[k]$ as
\begin{equation*}
    \sfL_\varsigma[k] h := \sigma_\varsigma[k] \Delta_v h + \zeta[k] \cdot \nabla_v h + \eta[k] h.
\end{equation*}
Then, setting $\sfP_\varsigma[k]$ as the projection part of $\sfF_\va[k]$:
\begin{equation}\label{Eq/ P_va}
\begin{split}
    \sfP_\varsigma[k]  h &:= \Big( (1+\rho[k])j[h] - j[k] \rho[h] \Big) \cdot v\mu \\
    &\quad + \frac{1}{d} \lt( - \Big( d + E[k] - \bfA_\varsigma[k] \Big) \rho[h] - j[k] \cdot j[h] + (1 + \rho[k]) E[h]  \rt) (|v|^2 - d) \mu ,
\end{split}
\end{equation}
we have that
\begin{equation*}
    \sfF_\va[k]  = - v\cdot \nabla_x + \sfL_{\va}[k] + \sfP_\va[k] .
\end{equation*}

\subsection{Sketch of the proof of Theorem \ref{Thm2}} \label{Sec/ Sketch2}

Motivated by the preceding works \cite{CarMis24KFP,GuaMisMou17,MisMou16}, we also factorize $\sfF_\va[k]$ in a different direction, by writing
\begin{equation*}
    \sfF_\va[k] = \sfG_\va[k] + \sfH_\va[k],
\end{equation*}
where
\begin{align*}
    \sfG_\va[k] &:= \sfP_\va[k] + M \psi_R(v), \\
    \sfH_\va[k] &:=  - v\cdot \nabla_x + \sfL_\va[k] - M \psi_R.
\end{align*}
Here, $\psi_R = \psi_R(v)$ is a smooth radial cutoff function satisfying
\begin{equation*}
    \mathbf{1}_{|v|\le R} \le \psi_R \le \mathbf{1}_{|v|\le 2R},
\end{equation*}
and $M,R>1$ are large constants to be determined later. Below is a sketch of proof of Theorem \ref{Thm2}. As aforementioned, most of the work is done on the linear problem associated to $\sfF_\va[k]$, and we pass the results to the original model later.

The ingredients of Section \ref{Sec/ WP sfH}:
\begin{enumerate}
    \item In Lemma \ref{M: L: B L2 framework WP}, we prove that the evolution equation corresponding to $\sfH_\va[k]$ is well-posed in $L^p_{\omega_1}$ whenever $\omega_1\in \calW_{\mu^{-1/2}}$ and $p\in [2,\infty]$, by manipulating the class of polynomial weights in $\wt{\calW}$ (we use the same notation as in Section \ref{Sec: Inhom: setup} here).
    \item In Proposition \ref{M:Prop: h diss}, we prove that whenever the factorization $\sfF_\va[k] = \sfG_\va[k] + \sfH_\va[k]$ is done suitably (that is, for all appropriately large choices of $M,R$), the flow corresponding to $\sfH_\va[k]$ is dissipative in $L^p_{\omega_1}$. This is done by working on the backwards equation, see Lemma \ref{Max: Lem: B Back L2 WP}, and resorting to duality results.
\end{enumerate}

The ingredients of Section \ref{S/ ultra}: the goal of this section is summarized in Proposition \ref{M/P/ Ultra}. Informally, this proposition states that for any suitable exponential weight $\bfw \in \calW_{\mu^{-1/2}}$, the flow corresponding to $\sfH_\va[k]$ satisfies an $L^2_{\bfw}\to L^\infty_{\omega_\star}$ ultracontractivity estimate, for every suitable choice of weight $\omega_\star\in \calW$. The proof of Proposition \ref{M/P/ Ultra} goes as follows:
\begin{enumerate}
    \item As in Section \ref{Sec/ WP sfH}, we work on the backwards dual equation corresponding to $\sfH_\va[k]^*$, and therefore we use a dual weight $\frakm$ such that $\frakm^{-1} = \bfw \in \calW_{\mu^{-1/2}}$. By incorporating an appropriate multiplier in the spirit of \cite{CarMis24KFP,LionsPer92,Mi10}, we adapt the estimates of \cite{CarMis24KFP} to prove a boundary penalizing estimate for the backwards dual equation in Lemma \ref{M: Lem: penal}.
    \item Next, in Lemma \ref{M: Lem: local}, we prove a local ultracontractivity estimate for the backwards equation by taking advantage of the Green's function known in the whole space.
    \item Combining these two results, we end up with an $L^2\to L^r$ estimate (with $r>2$) in Lemma \ref{M: Lem: L2 Lr} that holds \textit{up to the boundary}.
    \item At last, by using some classical estimates in the spirit of Nash, we obtain the $L^1\to L^2$ estimate for the backwards equation in Lemma \ref{M:Lem: pure L1 L2}. By duality we recover the $L^2\to L^\infty$ estimate promised in Proposition \ref{M/P/ Ultra}.
\end{enumerate}

The ingredients of Sections \ref{S/ F WP}--\ref{S/ F Hypo}: From hereon, we end our discussion of $\sfH_\va[k]$, and aim to establish properties of the full operator $\sfF_\va[k]$.

\begin{enumerate}
    \item In Proposition \ref{P/ F WP}, we prove that the evolution equation corresponding to $\sfF_\va[k]$ is well-posed in the reference space $L^2_{\omega_1}$, whenever $\omega_1\in \calW_{\mu^{-1/2}}$ and $k$ is chosen suitably small corresponding to $\omega_1$.
    \item In Proposition \ref{prop: L_g hypo} we prove that the flow corresponding to $\sfF_\va[k]$ dissipates exponentially to the global Maxwellian, in the reference space $L^2_{\mu^{-1/2}}$. This is done by adapting the $L^2$-hypocoercivity framework of \cite{BCMT23}, and by revisiting computations on the \textit{linear} Fokker--Planck operator, making sure that the estimates are refined enough so that all unwanted terms arising from the nonlinearity can be absorbed. 
\end{enumerate}

The ingredients of Section \ref{Sec/ Ext}:
\begin{enumerate}
\item Using an extension trick, we prove in the spirit of \cite{CarMis24} that the evolution system corresponding to $\sfF_\va[k]$ is hypodissipative in $L^\infty_{\omega}$. This result is summarized in Proposition \ref{Prop/ decay}.
\item The proof of Proposition \ref{Prop/ decay} is based on an iterated Duhamel formula as in \cite{GuaMisMou17,MisMou16} which is reminiscent of the Dyson series. The results of Proposition \ref{Prop/ decay} make crucial use of all of the previous results in Sections \ref{Sec/ WP sfH}--\ref{S/ ultra}--\ref{S/ F WP}--\ref{S/ F Hypo}. We take care in ensuring that all of the estimates can be made independent of the regularization parameter $\va$.
\end{enumerate}

The ingredients of Sections \ref{Sec/ Back to Nonlin}--\ref{Sec/ Thm2}: 

\begin{enumerate}
    \item In Proposition \ref{Prop/ reg}, we perform a fixed point argument $k\mapsto \calU_{\sfF_\va[k]}(t,0)h_0$, where $\calU_{\sfF_\va[k]}$ is the evolution system corresponding to $\sfF_\va[k]$, in order to prove the existence of a solution to the nonlinear (but regularized) equation \eqref{Eq/ Nonlin but mollified}. 
    \item At last, in Proposition \ref{Prop: noreg}, we pass to the limit $\va\to 0$ to prove the existence of a weak solution $h$ to the full equation $\partial_t h +v\cdot \nabla_x h = \sfC_{\rm lin}h + \Upsilon(h)$, which satisfies the decay as claimed in Theorem \ref{Thm2}. 
    \item The proof of Theorem \ref{Thm2} then follows as a consequence of all we have established so far, and we sketch it in Section \ref{Sec/ Thm2}.
\end{enumerate}

\subsection{Well-posedness and dissipativity of $\sfH_\va[k]$} \label{Sec/ WP sfH}

In this section, we investigate the well-posedness of
\begin{equation}\label{M: E: B}
    \begin{cases}
        \p_t h = \sfH_\va[k] h &\text{in }\R_+\times \calO, \\
        \gamma_-h = \gamma_+ h \circ \scrV_x &\text{on }\R_+\times \Sigma_-, \\
        h|_{t=0} = h_0 &\text{on }\calO,
    \end{cases}
\end{equation}
and establish its dissipative properties for suitably large $M,R$.

\begin{lemma}\label{M: L: B L2 framework WP}
    There exists $c_1>0$ small enough (independent of $\va$) such that for any $c\le c_1$, $k\in \calX_{\omega,c}$, and $M,R \ge 1$, the following holds true: for any $p\in [2,\infty]$, $\omega_1\in \calW_{\mu^{-1/2}}$, and $h_0\in L^p_{\omega_1}(\calO)$, the linear problem \eqref{M: E: B} associated to $\sfH_\va[k]$ admits a unique solution $h$ which lies in $C([0,T];L^2_{\wt{\omega}}(\calO))$ for all weights $\wt{\omega}\in \wt{\calW}$ such that $\wt{\omega}\omega_1^{-1} \in L^2\cap L^\infty(\R^d)$.
\end{lemma}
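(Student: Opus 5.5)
The plan is to reduce to Proposition \ref{prop: WP}, as in Lemma \ref{l: sp: for lin WP}, with no nonlocal part: take $\scrP\equiv 0$, $S\equiv 0$ and
\[
\scrL h := \sigma_\va[k]\Delta_v h + \zeta[k]\cdot\nabla_v h + (\eta[k] - M\psi_R)h .
\]
First I fix $c_1$ so small that $k\in\calX_{\omega,c}$ with $c\le c_1$ gives $|\rho[k]|\le 1/2$, $|j[k]|\le 1/2$, $|E[k]|\le 1/2$ and $0\le\bfA_\va[k]\le C c^2$ pointwise in $(t,x)$. This uses the bound $\|\la v\ra^2 k\|_{L^1_v}\le\|\la v\ra^2\omega^{-1}\|_{L^1}\|k\|_{L^\infty_\omega}$ and the fact that the convolutions $*_t\bfs_\va\star_\va\bfr_\va$ do not increase $L^\infty$ norms. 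It then follows that $1/2\le\sigma_\va[k]\le 3/2$, uniformly in $\va$, so (L1) holds.

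Next I check (L2) for a polynomial weight $\wt\omega=\la v\ra^m\in\wt\calW$ using \eqref{S: E: simp gamma} with $p=2$:
\[
\Gamma_{\scrL,\wt\omega,2} = \sigma_\va[k]\frac{|\nabla_v\wt\omega|^2}{\wt\omega^2} - \zeta[k]\cdot\frac{\nabla_v\wt\omega}{\wt\omega} + \tfrac d2(1+\rho[k]) - M\psi_R .
\]
All terms are bounded uniformly in $(t,x,v)$, in $\va$, and in $M,R$, because $-M\psi_R\le0$. So (L2) holds with some $\lambda_\scrL$ and $\Lambda\equiv 0$. (If one wants $\Lambda$, the term $-\frac m2(1+\rho[k])|v|^2/\la v\ra^2$ offers none, so $\Lambda=0$ suffices.) Condition (L3) follows from $|\zeta[k]|\lesssim\la v\ra$ and the boundedness of $\sigma_\va[k]$, $\eta[k]$ and $M\psi_R$. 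Proposition \ref{prop: WP} then gives, for each such $\wt\omega$ and each $h_0\in L^2_{\wt\omega}(\calO)$, a unique $h\in C([0,T];L^2_{\wt\omega}(\calO))\cap\calH_{\wt\omega,T}$ with specular reflection.

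It remains to relate the data and to show the solution does not depend on $\wt\omega$. Since $\wt\omega\omega_1^{-1}\in L^2\cap L^\infty(\R^d)$ and $\Omega$ is bounded, Hölder's inequality gives $L^p_{\omega_1}(\calO)\hookrightarrow L^2_{\wt\omega}(\calO)$ for every $p\in[2,\infty]$. (The $L^\infty$ part handles $p=2$, the $L^2$ part handles $p=\infty$, and interpolation covers the rest.) Hence $h_0\in L^2_{\wt\omega}$ for every admissible $\wt\omega$. If $\wt\omega_1$ and $\wt\omega_2$ are two admissible weights, then $\min(\wt\omega_1,\wt\omega_2)$ is comparable to one of them, namely the one with the smaller exponent. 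Both solutions therefore lie in $C([0,T];L^2_{\wt\omega_{\min}})$ and solve the same problem there, and uniqueness in Proposition \ref{prop: WP} for the smaller weight identifies them. This yields a single $h$ lying in all the claimed spaces.

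The main (mild) obstacle is being careful that $c_1$ is independent of $\va$, $M$ and $R$. Here $c_1$ depends only on $\omega$ through $\|\la v\ra^2\omega^{-1}\|_{L^1}$, and the $M\psi_R$ term only helps in (L2).
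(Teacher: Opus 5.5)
Your proposal is correct and follows the paper's route: apply Proposition \ref{prop: WP} with $\scrP\equiv 0$, $S\equiv 0$, $\eta=\eta[k]-M\psi_R$, a polynomial weight $\wt{\omega}$ (where (L2) is immediate since $-M\psi_R\le 0$), and the H\"older embedding $L^p_{\omega_1}\hookrightarrow L^2_{\wt{\omega}}$. Your extra step, identifying the solutions for different admissible $\wt{\omega}$ by uniqueness in the smaller weight, is something the paper leaves implicit; it works because both solutions also lie in $\calH_{\wt{\omega}_{\min},T}$, which is the class where that uniqueness holds. One trivial slip: for $d=1$ your thresholds only give $\sigma_\va[k]\ge \tfrac12 - Cc^2$, so shrink them slightly; any uniform positive lower bound suffices for (L1).
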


\begin{proof}
    The proof follows the method of Lemma \ref{l: sp: for lin WP}. We fix $\wt{\omega} \in \wt{\calW}$ growing slowly enough so that
    \begin{equation*}
        \wt{\omega}\omega_1^{-1} \in L^2 \cap L^\infty(\R^d).
    \end{equation*}
    In particular $\wt{\omega}\omega_1^{-1} \in L^q(\R^d)$ for every $q\in [2,\infty]$, and this implies that the embedding $L^p_{\omega_1} \hookrightarrow L^2_{\wt{\omega}}$ is continuous. Then provided that $c_1$ is chosen small enough so that $c\le c_1$ and $k\in \calX_{\omega,c}$ imply at least
    \begin{equation}\label{M: E: Coeff}
     (|\rho[k]| + |j[k]| + |E[k]|) \le \frac{1}{10}, \quad \frac{9}{10} \le \sigma_\va[k] \le \frac{11}{10},
    \end{equation}
    we may apply Proposition \ref{prop: WP}, to deduce that for given $h_0\in L^p_{\omega_1}(\calO)$ there is a unique corresponding solution $h \in C([0,T];L^2_{\wt{\omega}}(\calO))$ (notice that (L2) is automatically satisfied because the weight $\wt{\omega}$ is a polynomial; see for instance Lemma \ref{Lem/ Asymptote} below). 
\end{proof}

Note that the above lemma only provides the well-posedness of \eqref{M: E: B} in a weighted $L^2$-framework; it does not tell us whether the solution to \eqref{M: E: B} actually lies in $C([0,T];L^p_{\omega_1}(\calO))$. For this reason, we now commence an argument reminiscent of those in Lemmas \ref{sp: l: back lin WP L2}--\ref{s: l: b L1} and Proposition \ref{S: P: infty}. Namely, we consider the backwards dual problem
\begin{equation}\label{M: E: B Back}
    \begin{cases}
        -\p_t b = \sfH_\va[k]^* b &\text{in }\R_+\times \calO, \\
        \gamma_+ h = \gamma_- h \circ \scrV_x &\text{on }\R_+\times \Sigma_+, \\
        b|_{t=T} = b_T &\text{on }\calO,
    \end{cases}
\end{equation}
where $\sfH_\va[k]^*$ is the formal adjoint of $\sfH_\va[k]$:

\begin{align*}
    \sfH_\va[k]^* b &:= v\cdot \nabla_x b + \sigma_{\va}[k] \Delta_v b - \nabla_v\cdot (\zeta[k] b) + \eta[k] b - M \psi_R b.
\end{align*}

Prior to stating the lemma which provides the dissipative structure of \eqref{M: E: B Back}, let us briefly revisit the computations near \eqref{S: E: simp gamma} and establish the following asymptotic behavior of $\Gamma_{\sfL_\va[k],\omega_1,p}$ for $\omega_1\in \calW_{\mu^{-1/2}}$, in order to avoid repeating many arguments.

\begin{lemma}\label{Lem/ Asymptote}
    Let $\omega_1 = \la v \ra^m e^{\gamma|v|^\tau} \in \calW_{\mu^{-1/2}}$. Then
    \begin{enumerate}
        \item If $\gamma=0$ so that $\omega_1 = \la v \ra^m$, then $c\le c_1$ and $k\in \calX_{\omega,c}$ implies
        \begin{equation*}
            \sup_{\R_+\times \Omega} \sup_{p\in [1,\infty]} \limsup_{|v|\to\infty}  \Gamma_{\sfL_\va[k],\omega_1,p} \le -1 .
        \end{equation*}
        \item If $\tau\in (0,2)$, $m\ge 0$, and $\gamma>0$, then $c\le c_1$ and $k\in \calX_{\omega,c}$ implies
        \begin{equation*}
            \sup_{\R_+\times \Omega} \sup_{p\in [1,\infty]} \limsup_{|v|\to\infty}  \Gamma_{\sfL_\va[k],\omega_1,p} = -\infty.
        \end{equation*}
        \item If $\tau=2$, $m\ge 0$, and $\gamma\in \lt(0,\frac{1}{2}\rt)$, then there exists $c_{\omega_1}>0$ (independent of $\va$) such that whenever $c\le \min\{c_1,c_{\omega_1}\}$ and $k\in \calX_{\omega,c}$, 
        \begin{equation*}
            \sup_{\R_+\times \Omega} \sup_{p\in [1,\infty]} \limsup_{|v|\to\infty}  \Gamma_{\sfL_\va[k],\omega_1,p} = -\infty.
        \end{equation*}
    \end{enumerate}
    For the sake of convenience, we define the convention $c_{\omega_1} = +\infty$ if either $\gamma=0$ or $\tau\in (0,2)$.
\end{lemma}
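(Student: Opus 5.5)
Plan: compute $\Gamma_{\sfL_\va[k],\omega_1,p}$ explicitly with the simplified formula \eqref{S: E: simp gamma}. For $\sfL_\va[k]$ we have $\sigma = \sigma_\va[k]$ depending only on $(t,x)$, $\zeta = (1+\rho[k])v - j[k]$, $\nabla_v\cdot\zeta = d(1+\rho[k])$, and $\eta = d(1+\rho[k])$. With $W=\log\omega_1$ this gives
\begin{equation*}
\Gamma_{\sfL_\va[k],\omega_1,p} = \sigma_\va[k]|\nabla_v W|^2 + \Big(\tfrac{2}{p}-1\Big)\sigma_\va[k]\Delta_v W - (1+\rho[k])\, v\cdot\nabla_v W + j[k]\cdot\nabla_v W + \Big(1-\tfrac1p\Big) d(1+\rho[k]).
\end{equation*}
Each term is now bounded uniformly in $(t,x)$ and in $p\in[1,\infty]$, using \eqref{M: E: Coeff}. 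Note that $|\rho[k]|+|j[k]|\le 1/10$ and $9/10\le\sigma_\va[k]\le 11/10$ hold pointwise, with constants independent of $\va$, because $\bfA_\va[k]$ is controlled by $\|j[k]\|_{L^\infty}^2/(1-\|\rho[k]\|_{L^\infty})$. Also $|2/p-1|\le1$ and $1-1/p\in[0,1]$.

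\emph{Case (1), $\gamma=0$.} Here $\nabla_v W = m v/\la v\ra^2$, so $|\nabla_v W|^2\to0$, $\Delta_v W\to0$, $j[k]\cdot\nabla_vW\to0$, and $v\cdot\nabla_v W\to m$. Hence
\begin{equation*}
\limsup_{|v|\to\infty}\Gamma \le -(1+\rho[k])m + d(1+\rho[k]) \le -\tfrac{9}{10}(m-d),
\end{equation*}
using $m>d$. To reach $-1$ we use the membership $\omega_1\in\calW_{\mu^{-1/2}}$, namely $m>d+2$; this gives $-(1+\rho[k])(m-d)\le -\tfrac{9}{10}\cdot 2<-1$. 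The bound is uniform in $(t,x,p)$ since all the error terms vanish uniformly.

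\emph{Case (2), $\tau\in(0,2)$.} Now $\nabla_v W\sim \gamma\tau|v|^{\tau-2}v$. The dominant negative term is $-(1+\rho[k])\gamma\tau|v|^\tau\le -\tfrac{9}{10}\gamma\tau|v|^\tau\to-\infty$. By comparison, $\sigma|\nabla_vW|^2\lesssim|v|^{2\tau-2}$, which is $o(|v|^\tau)$ since $\tau<2$; $\Delta_vW=O(|v|^{\tau-2}+1)$; and $j[k]\cdot\nabla_vW=O(|v|^{\tau-1})$. All constants here are uniform, so the $\limsup$ is $-\infty$.

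\emph{Case (3), $\tau=2$.} This is the only delicate case. The leading order is
\begin{equation*}
2\gamma\big(2\gamma\sigma_\va[k]-(1+\rho[k])\big)|v|^2,
\end{equation*}
exactly as in the proof of Lemma \ref{s: l: b L1}, and the lower-order terms are $O(|v|)$ uniformly. Since $\gamma<1/2$, write $1/(2\gamma)=1+\kappa_0$ with $\kappa_0>0$. We choose $c_{\omega_1}$ small so that $c\le c_{\omega_1}$ forces
\begin{equation*}
\frac{\sigma_\va[k]}{1+\rho[k]}\le 1+\frac{\kappa_0}{2}.
\end{equation*}
This is possible because $|\sigma_\va[k]-1|\lesssim |E[k]|+\bfA_\va[k]\lesssim c$ and $|\rho[k]|\lesssim c$, with constants depending only on $\|\la v\ra^2\omega^{-1}\|_{L^1}$ and not on $\va$. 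Then the leading coefficient is at most $-2\gamma(1+\rho[k])\big(1-(1+\kappa_0/2)/(1+\kappa_0)\big)<-c'<0$ uniformly, so the $\limsup$ is $-\infty$.

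The main point to watch is uniformity in $\va$ and in $(t,x)$. This holds because every coefficient bound is pointwise, obtained through the $L^\infty_\omega$ control of the moments $\rho[k]$, $j[k]$, $E[k]$ and of the mollified quantity $\bfA_\va[k]$.
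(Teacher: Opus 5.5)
Your proposal is correct and takes essentially the same route as the paper. You evaluate $\Gamma_{\sfL_\va[k],\omega_1,p}$ directly from \eqref{S: E: simp gamma} and single out the dominant term in each case: $-(1+\rho[k])\,v\cdot\nabla_v W$ in cases (1) and (2), and $2\gamma\big(2\gamma\sigma_\va[k]-(1+\rho[k])\big)|v|^2$ in case (3). In the Gaussian case you then shrink $c$ so that $\sigma_\va[k]/(1+\rho[k])\le 1+\kappa/2<1/(2\gamma)$, exactly as the paper does. Your account is more careful than the paper's about uniformity in $(t,x)$, $p$ and $\va$, and that uniformity is what the later choice of $M,R$ in Lemma~\ref{Max: Lem: B Back L2 WP} relies on.
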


\begin{proof}
    We merely recall the formulae in \eqref{S: E: simp gamma} and compute directly.
    
    (1) If $\gamma=0$, then by definition of $\calW_{\mu^{-1/2}}$ we have $m>d+2$, and thus for all $p\in [1,\infty]$:
    \begin{align*}
        \Gamma_{\sfH_\va[k],\omega_1,p} &\sim - \zeta[k] \cdot \nabla_v W_1 - \frac{1}{p} \nabla_v\cdot \zeta[k] + \eta[k] \\
        &\sim -m (1+\rho[k]) + \lt(1 - \frac{1}{q'}\rt) d(1+\rho[k]) \\
        &\le -(m-d) (1+\rho[k]) \\
        &\le -1 \quad \text{as }|v|\to\infty,
    \end{align*}
    because $|\rho[k]|\le 1/2$.
    
    (2) If $\gamma>0$ and $\tau\in (0,2)$, then $-\zeta[k]\cdot \nabla_v W_1$ is the only dominant term, and we can easily check that $\Gamma_{\sfH_\va[k],\omega_1,p}\to -\infty$ unconditionally.
    
    (3) If $\tau=2$ and $\gamma\in\lt(0,\frac{1}{2}\rt)$, we argue as in the proof of Lemma \ref{s: l: b L1}. We have that
    \begin{equation*}
        \Gamma_{\sfH_\va[k],\omega_1,q'} \sim 2\gamma \Big(2\gamma \sigma_\va[k] - (1+\rho[k]) \Big) |v|^2,
    \end{equation*}
    so that $\Gamma_{\sfH_\va[k],\omega_1,q'}\to -\infty$ if and only if 
    \begin{equation*}
        \frac{\sigma_\va[k]}{1+\rho[k]} < \frac{1}{2\gamma}.
    \end{equation*}
    Since $\gamma\in (0,1/2)$ we can write
    \begin{equation*}
        \frac{1}{2\gamma} = 1 + \kappa_{\omega_1}
    \end{equation*}
    for some $\kappa_{\omega_1}>0$. Next, we can prove that if $c_{\omega_1}$ is sufficiently small, then $c\le c_{\omega_1}$ implies
    \begin{equation*}
        \frac{\sigma_\va[k]}{1+\rho[k]} \le 1 + \frac{\kappa_{\omega_1}}{2} < \frac{1}{2\gamma},
    \end{equation*}
    because $\sigma_\va[k] \sim 1$ and $1+\rho[k] \sim 1$, independently of $\va$, if $k$ is small in $\|\cdot\|_{L^\infty_{\omega}}$. 
\end{proof}

\begin{lemma} \label{Max: Lem: B Back L2 WP}
    Let $M,R>1$, and suppose that $\omega_1 \in \calW_{\mu^{-1/2}}$ is given. Also we let $k\in \calX_{\omega,c}$ with $c\le \min\{c_1,c_{\omega_1}\}$, where $c_{\omega_1}>0$ is as in Lemma \ref{Lem/ Asymptote}. Consider any weight $\wt{\omega}\in \wt{\calW}$ satisfying $\wt{\omega} \omega_1^{-1} \in L^2\cap L^\infty(\R^d)$. Then for any $b_T\in L^2_{\wt{\omega}^{-1}}$, the dual backwards problem \eqref{M: E: B Back} associated to $\sfH_\va[k]^*$ admits a unique solution in $b\in C([0,T];L^2_{\wt{\omega}^{-1}}(\calO))$. As a result, for any $q\in [1,2]$ we also have $b\in C([0,T];L^q_{\omega_1^{-1}}(\calO))$. Furthermore, there exist $M_{\omega_1},R_{\omega_1}>1$ sufficiently large (and independent of $\va$ or the exponent $q$) such that for all
    \begin{equation*}
        M \ge M_{\omega_1}, \quad R \ge R_{\omega_1},
    \end{equation*}
    the dissipative estimate below holds for some constant $\lambda' > 0$:
    \begin{equation*}
        -\ddt \frac{1}{q}\iint_{\calO} |b \omega_1^{-1}|^q  + 4\frac{(q-1)}{q^2} \iint_{\calO} \sigma_\va[k] |\nabla_v (b \omega_1^{-1})^{q/2}|^2 \le - \lambda' \iint_{\calO} |b \omega_1^{-1}|^q  .
    \end{equation*}
    In particular
    \begin{equation}\label{M: E: B Back dissipative}
     \|b_t\|_{L^q_{\omega_1^{-1}}(\calO)} \le  e^{- \lambda' (T-t)} \|b_T\|_{L^q_{\omega_1^{-1}}(\calO)} \quad \forall t\in [0,T], \quad \forall q\in [1,2].
    \end{equation}
\end{lemma}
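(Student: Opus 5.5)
The plan follows the pattern of Lemmas \ref{sp: l: back lin WP L2}--\ref{s: l: b L1}, with one new ingredient: the absorption $-M\psi_R$. The proof has three steps.

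\textbf{Step 1: well-posedness.} I would apply Proposition \ref{prop: WP} to the backward problem, written forward in time $s=T-t$, with weight $\wt{\omega}^{-1}$. The diffusion is $\sigma_\va[k]\Delta_v$, and \eqref{M: E: Coeff} gives (L1). There is no nonlocal part, so (P1) is trivial. The zeroth-order term $-M\psi_R$ is bounded, so (L3) holds. For (L2), the defect satisfies $\Gamma_{\sfH_\va[k]^*,\wt{\omega}^{-1},2}=\Gamma_{\sfL_\va[k],\wt{\omega},2}-M\psi_R$, exactly as in Lemma \ref{sp: l: back lin WP L2}. This is bounded above because $\wt{\omega}$ is polynomial (Lemma \ref{Lem/ Asymptote}(1) and continuity). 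This yields $b\in C([0,T];L^2_{\wt{\omega}^{-1}})$. Since $\wt{\omega}\omega_1^{-1}\in L^2\cap L^\infty$, H\"older gives the continuous embedding $L^2_{\wt{\omega}^{-1}}\hookrightarrow L^q_{\omega_1^{-1}}$ for $q\in[1,2]$, with exponent $2q/(2-q)$ on the ratio. Hence $b\in C([0,T];L^q_{\omega_1^{-1}})$.

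\textbf{Step 2: the $L^q$ identity.} I would use the renormalized formulation with $\beta(z)\approx|z|^q$, regularized as $(z^2+\delta^2)^{q/2}$ when $q<2$, and test function $\omega_1^{-q}$ cut off by $\chi_R(v)$. This is justified as in Lemma \ref{p: l: dual}, with the limit $R\to\infty$ handled by dominated convergence since $b$ is in the $L^2_{\wt{\omega}^{-1}}$ energy class. The boundary term vanishes because $\omega_1$ is radial and $b$ satisfies the dual specular condition, via the change of variables $v\mapsto\scrV_x v$. The formal identity \eqref{p: e: formal}, applied to $\sfH_\va[k]^*$ with weight $\omega_1^{-1}$ and exponent $q$, then gives
\[
-\ddt\frac1q\iint|b\omega_1^{-1}|^q+\frac{4(q-1)}{q^2}\iint\sigma_\va[k]|\nabla_v(b\omega_1^{-1})^{q/2}|^2=\iint\Big(\Gamma_{\sfL_\va[k]^*,\omega_1^{-1},q}-M\psi_R\Big)|b\omega_1^{-1}|^q .
\]
A direct check with \eqref{S: E: simp gamma} (adjoint drift $-\zeta$, zeroth-order term $\eta-\nabla_v\!\cdot\zeta$, weight $W\to-W$) shows that $\Gamma_{\sfL_\va[k]^*,\omega_1^{-1},q}=\Gamma_{\sfL_\va[k],\omega_1,q'}$ with $1/q+1/q'=1$, so $q'\in[2,\infty]$. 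For $q=1$ the gradient term is dropped, and the limit $\delta\to0$ is taken as in Lemma \ref{s: l: b L1}.

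\textbf{Step 3: the choice of $M,R$ (main obstacle).} The task is to bound $\Gamma_{\sfL_\va[k],\omega_1,q'}-M\psi_R\le-\lambda'$ uniformly in $\va$, $k$, $(t,x)$ and $q'\in[2,\infty]$. Lemma \ref{Lem/ Asymptote} gives $\limsup_{|v|\to\infty}\Gamma\le-1$ uniformly in these parameters, but this has to be upgraded to a bound that is uniform at a finite radius. The way to do this is to note that $\Gamma$ is affine in $1/p$ and its coefficients depend on $k$ only through $\rho[k],j[k],E[k],\bfA_\va[k]$, all bounded by \eqref{M: E: Coeff}. One can then write explicit leading-order asymptotics with error terms controlled uniformly, choosing $R_{\omega_1}$ so that $\Gamma\le-1/2$ for $|v|\ge R_{\omega_1}$. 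On the ball $B_{R}$ with $R\ge R_{\omega_1}$, $\Gamma$ is bounded by a constant $C_{\omega_1}$ that is again uniform. Taking $M_{\omega_1}=C_{\omega_1}+1$ then gives $\Gamma-M\psi_R\le-1/2=:-\lambda'$ everywhere. Inserting this into the identity of Step 2 yields the differential inequality in the statement. Gr\"onwall's inequality, integrated backward from $T$, then gives \eqref{M: E: B Back dissipative} for the $q$-th power, and taking the $q$-th root shows it with rate $\lambda'/q$. The rate $\lambda'/q\ge\lambda'/2$ since $q\le2$, so after renaming the constant the rate is independent of $q$.
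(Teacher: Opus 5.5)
Your proposal is correct and follows the paper's proof. The steps match: well-posedness from Proposition~\ref{prop: WP} together with the H\"older embedding $L^2_{\wt{\omega}^{-1}}\hookrightarrow L^q_{\omega_1^{-1}}$; the weighted $L^q$ identity from \eqref{p: e: formal} with $\Gamma_{\sfL_\va[k]^*,\omega_1^{-1},q}=\Gamma_{\sfL_\va[k],\omega_1,q'}$; Lemma~\ref{Lem/ Asymptote} to fix $c_{\omega_1}$; large $M,R$; and Gr\"onwall. Your explicit upgrade of the pointwise $\limsup$ in Lemma~\ref{Lem/ Asymptote} to a bound uniform in $(t,x,k,\va,q)$ beyond a fixed radius fills a step the paper passes over quickly. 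One small correction: because of the factor $\frac1q$, Gr\"onwall actually gives $\|b_t\|_{L^q_{\omega_1^{-1}}}^q\le e^{-q\lambda'(T-t)}\|b_T\|_{L^q_{\omega_1^{-1}}}^q$, so the $q$-th root yields rate exactly $\lambda'$ and no renaming of the constant is needed.
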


\begin{proof}
    We can easily check that the embedding $L^2_{\wt{\omega}^{-1}}\hookrightarrow L^q_{\omega_1^{-1}}$ is continuous. Thus, the well-posedness of \eqref{M: E: B Back} in $C([0,T];L^q_{\omega_1^{-1}})$ is an immediate consequence of Proposition \ref{prop: WP}.
    
    It now suffices to prove the estimate \eqref{M: E: B Back dissipative}, under the assumption that $c_{\omega_1}>0$ is small enough and $M,R$ are large enough. One has (for instance by using the renormalized Green's formula)
    \begin{equation} \label{M: E: B Back Lq}
    \begin{split}
        &- \ddt  \frac{1}{q} \iint_{\calO} |b_t|^q \omega_1^{-q} \\
        &= \iint_{\calO} |b_t|^{q-2} b_t (\sfH_\varsigma[k]^*  b_t) \omega_1^{-q} \\
        &= \iint_{\calO} |b_t|^{q-2} b_t \Big[ (\sfL_\varsigma[k]^* - M \psi_R) b_t \Big] \omega_1^{-q} \\ 
        &= \iint_{\calO} \Big( \Gamma_{\sfL_{\varsigma}[k]^*, \omega_1^{-1}, q} - M \psi_R \Big) |b_t|^q \omega_1^{-q} - \frac{4(q-1)}{q^2} \iint_{\calO} \sigma_\va[k] |\nabla_v (b_t \omega_1^{-1})^{q/2}|^2 ,
    \end{split}
    \end{equation}
    where we used \eqref{p: e: formal} and the fact that no boundary term appears thanks to the specular reflection boundary condition. Now
    \begin{equation*}
        \Gamma_{\sfL_\va[k]^*, \omega_1^{-1}, q} = \Gamma_{\sfL_\va[k], \omega_1, q'},
    \end{equation*}
    where $1/q'+1/q = 1$. Using Lemma \ref{Lem/ Asymptote}, we choose $c_{\omega_1}>0$ small enough so that $c\le \min\{c_1,c_{\omega_1}\}$ and $k\in \calX_{\omega,c}$ implies
    \begin{equation*}
        \sup_{\R_+\times \Omega} \sup_{q'\in [2,\infty]} \limsup_{|v|\to\infty}  \Gamma_{\sfL_\va[k], \omega_1, q'} (v) \le -1/2,
    \end{equation*}
    and thus for all sufficiently large $M, R>1$, it holds that
    \begin{equation*}
        \sup_{q\in [1,2]} \Big( \Gamma_{\sfL_\va[k], \omega_1, q'}(v) - M \psi_R(v) \Big) \le - 1/4 \quad \text{in }\R_+ \times \calO.
    \end{equation*}
    Going back to \eqref{M: E: B Back Lq} and then applying Gr\"onwall's inequality, we arrive at \eqref{M: E: B Back dissipative}.
\end{proof}

Using the backwards equation, we can now extend the $L^2$ results of Lemma \ref{M: L: B L2 framework WP} to a general $L^p_\omega$-framework. We will only consider the case of large $M,R$, since that is enough for our purposes later.

\begin{proposition}\label{M:Prop: h diss}
    Let $p\in[2,\infty]$, $\omega_1\in \calW_{\mu^{-1/2}}$, and suppose $M\ge M_{\omega_1},R \ge R_{\omega_1}$. We further assume that $c\le \min\{c_1,c_{\omega_1}\}$ and $k\in \calX_{\omega,c}$ (see the notations of Lemmas \ref{Lem/ Asymptote}--\ref{Max: Lem: B Back L2 WP}). Then for given $h_0\in L^p_{\omega_1}(\calO)$, for the linear problem \eqref{M: E: B} associated to $\sfH_\va[k]$, its unique solution in $h\in C([0,T];L^2_{\wt{\omega}}(\calO))$ (see Lemma \ref{M: L: B L2 framework WP}) satisfies
    \begin{equation*}
        \|h_T\|_{L^p_{\omega_1}} \le e^{-\lambda' T} \|h_0\|_{L^p_{\omega_1}} \quad \forall T>0.
    \end{equation*}
    Here $\lambda'>0$ is given as in Lemma \ref{Max: Lem: B Back L2 WP}. Furthermore, the solution lies in $h\in C([0,T];L^p_{\omega_1}(\calO))$ if $p\in [2,\infty)$, and $C([0,T];L^\infty_{\omega_1}(\calO)-*)$ if $p=\infty$; thus the linear problem \eqref{M: E: B} is well-posed in these spaces. We may therefore associate to it an evolution system (or non-autonomous semigroup) which we denote $(\calU_{\sfH_\va[k]}(t,s))_{0\le s \le t}$. This family satisfies the following dissipative estimate:
    \begin{equation}\label{Max: eq: B diss}
        \forall h_s\in L^p_{\omega_1}, \quad \|\calU_{\sfH_\va[k]}(t,s) h_s\|_{L^p_{\omega_1}} \le e^{-\lambda' (t-s)} \|h_s\|_{L^p_{\omega_1}}.
    \end{equation}
\end{proposition}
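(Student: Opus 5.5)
The plan is to argue by duality, exactly as in Proposition \ref{S: P: infty}, now using the backwards dissipative estimate \eqref{M: E: B Back dissipative} in place of Lemma \ref{s: l: b L1}. Fix $p\in[2,\infty]$ and its conjugate $q=p'\in[1,2]$. Fix also $\wt\omega\in\wt{\calW}$ with $\wt\omega\omega_1^{-1}\in L^2\cap L^\infty$. By Lemma \ref{M: L: B L2 framework WP}, $h\in C([0,T];L^2_{\wt\omega})$. For any $b_T\in L^2_{\wt\omega^{-1}}$, Lemma \ref{Max: Lem: B Back L2 WP} gives the solution $b\in C([0,T];L^2_{\wt\omega^{-1}})$ of \eqref{M: E: B Back}. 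Both $h$ and $b$ lie in the $L^2_{t,x}H^1_v$ classes with the appropriate weights, thanks to the energy estimates of Proposition \ref{prop: WP}. The cutoff term $-M\psi_R$ is a bounded zeroth-order term, so it fits into $\eta$, and $S\equiv0$, $\scrP\equiv 0$. Lemma \ref{p: l: dual} then applies and gives $\iint h_T b_T=\iint h_0 b_0$.

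Next I take the supremum over $b_T$. The space $L^2_{\wt\omega^{-1}}$ is dense in $L^q_{\omega_1^{-1}}$ for $q\in[1,2]$. Moreover $(L^q_{\omega_1^{-1}})^*=L^p_{\omega_1}$ under the unweighted pairing, which includes the case $q=1$, $p=\infty$. Hence
\[
\|h_T\|_{L^p_{\omega_1}}=\sup_{\substack{b_T\in L^2_{\wt\omega^{-1}}\\ \|b_T\|_{L^q_{\omega_1^{-1}}}\le1}}\iint h_0 b_0\le \|h_0\|_{L^p_{\omega_1}}\sup\|b_0\|_{L^q_{\omega_1^{-1}}}\le e^{-\lambda' T}\|h_0\|_{L^p_{\omega_1}},
\]
using \eqref{M: E: B Back dissipative} at $t=0$. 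This bound is first obtained for the $L^2_{\wt\omega}$-valued solution; it then shows a posteriori that $h_T\in L^p_{\omega_1}$. Applying the same argument on $[s,t]$, which is legitimate because the coefficients are time-dependent but the lemmas hold on any interval, gives \eqref{Max: eq: B diss}. The evolution system property follows from uniqueness in Lemma \ref{M: L: B L2 framework WP}.

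For continuity in time I proceed as at the end of Proposition \ref{S: P: infty}. With $\mathfrak b\in L^2_{\wt\omega^{-1}}$ and $b$ the backward solution on $[0,t]$ with $b_t=\mathfrak b$, duality gives $\iint (h_t-h_s)\mathfrak b=\iint h_s(b_s-\mathfrak b)$. The right-hand side is bounded by $\sup_\tau\|h_\tau\|_{L^p_{\omega_1}}\|b_s-\mathfrak b\|_{L^q_{\omega_1^{-1}}}$, which tends to $0$ since $b\in C([0,t];L^q_{\omega_1^{-1}})$. Continuity in the $s$ variable likewise follows from $h\in C([0,T];L^2_{\wt\omega})$. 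This yields weak (weak-$*$ for $p=\infty$) continuity. For $p<\infty$ I upgrade to strong continuity: weak continuity together with continuity of the norm suffices by uniform convexity. Continuity of $t\mapsto\|h_t\|_{L^p_{\omega_1}}$ I would get from the $L^p$ renormalized energy identity, the forward analogue of \eqref{M: E: B Back Lq}, which is justified by renormalization with $\beta(z)\approx|z|^p$ truncated.

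The main obstacle is this strong continuity for $p\in(2,\infty)$. The $L^p$ energy identity requires justifying $\beta(z)=|z|^p$ with a weight $\omega_1^p$ that may be exponential. The remedy I would try first is an approximation: truncate $\beta$ at level $N$ and the weight by a cutoff $\chi_R$, use the renormalized Green formula \eqref{P: E: Renorm Green Formula}, and pass to the limit by monotone convergence using the a priori bound already proved. If that proves delicate, an alternative is density: take $h_0$ in $L^p_{\omega_1}\cap L^2_{\omega_2}$ with a stronger weight $\omega_2$, then use the uniform contraction bound to transfer continuity.
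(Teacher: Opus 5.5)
Your argument is correct and is essentially the paper's proof. It uses duality through Lemma~\ref{p: l: dual} against the backward solution, a supremum over the dense class $L^2_{\wt{\omega}^{-1}}\subset L^{p'}_{\omega_1^{-1}}$, the backward contraction \eqref{M: E: B Back dissipative} at $t=0$, and time translation for general $(t,s)$.

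On time continuity you are more careful than the paper, which only invokes the duality formula; that yields weak (resp.\ weak-$*$) continuity. If you want strong continuity for $p\in[2,\infty)$, the density route works cleanly with the dense class $L^2_{\omega_1}\cap L^\infty_{\omega_1}$:
\begin{itemize}
\item Proposition~\ref{prop: WP} applied directly with weight $\omega_1$ (legitimate under $c\le c_{\omega_1}$) gives $C([0,T];L^2_{\omega_1})$.
\item The case $p=\infty$ gives a uniform $L^\infty_{\omega_1}$ bound.
\item Interpolation then gives $C([0,T];L^p_{\omega_1})$, and the uniform contraction extends this to all of $L^p_{\omega_1}$.
\end{itemize}
Your stated fallback, a dense class in $L^p_{\omega_1}\cap L^2_{\omega_2}$, would by itself only interpolate to exponents below $p$.
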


\begin{proof}
    Since $\wt{\omega} \omega_1^{-1} \in L^p(\R^d)$, we remind the reader that $h_0 \in L^p_{\omega_1}(\calO) \subset L^2_{\wt{\omega}}(\calO)$. For any given $b_T\in L^2_{\wt{\omega}^{-1}}$, we may consider the unique solution $b\in C([0,T];L^2_{\wt{\omega}^{-1}})$ to the backwards dual problem \eqref{M: E: B Back} associated to $\sfH_\va[k]^*$ (see Lemma \ref{Max: Lem: B Back L2 WP}). Owing to Lemma \ref{p: l: dual}, we have the following duality result
    \begin{align*}
        \iint_{\calO} h_T b_T \,\dx\dv = \iint_{\calO} h_0 b(0) \,\dx\dv.
    \end{align*}
    Consequently, by using that $L^2_{\wt{\omega}^{-1}}$ is dense in $L^{p'}_{\omega^{-1}}$, we can compute the $L^p_\omega$-norm of $h_T$ as follows:
    \begin{align*}
        \|h_T\|_{L^p_\omega} &= \sup_{ \substack{ b_T \in L^2_{\wt{\omega}^{-1}}, \\ \|b_T\|_{L^{p'}_{\omega^{-1}}} \le 1 }   }  \iint_{\calO} h_T b_T \,\dx\dv \\
        &= \sup_{ \substack{ b_T \in L^2_{\wt{\omega}^{-1}}, \\ \|b_T\|_{L^{p'}_{\omega^{-1}}} \le 1 }   } \iint_{\calO} h_0 b(0) \,\dx\dv \\
        &\le e^{-\lambda' T} \|h_0\|_{L^p_\omega}.
    \end{align*}
    It follows that $h_T \in L^p_\omega$ for every $T\ge 0$. The claim that $t\mapsto h_t\in L^p_\omega$ (or $L^\infty_\omega-*$) is continuous just follows as a consequence of the duality formula; we refer to the proof of Proposition \ref{S: P: infty}. Finally, the general result \eqref{Max: eq: B diss} follows by a time translation and it is thus skipped. 
\end{proof}

\subsection{Ultracontractivity property of the dissipative operator $\sfH_\va[k]$} \label{S/ ultra}

In this section we aim to prove an ultracontractivity estimate for the solution to \eqref{M: E: B} associated to $\sfH_\va[k]$. The arguments of this section follow in large what has already been demonstrated in \cite{CarMis24}. As before, most of the analysis will be performed on the backwards problem \eqref{M: E: B Back}, and we will transition later by resorting to the duality argument in Lemma \ref{p: l: dual}. We stress that all of the estimates here hold \textit{independently of $\va$}, as those in Section \ref{Sec/ WP sfH}.

The starting step is to obtain a boundary penalizing estimate for $b$. The following lemma, inspired from \cite{CarMis24KFP}, exploits a suitable twisted weight which allows us to obtain the penalized estimate, while killing the troublesome boundary contributions at the same time. The weight utilized here takes a simpler form due to the fact that we only incorporate the specular reflection boundary condition, contrary to \cite{CarMis24KFP,CarMis24} where the Maxwell boundary condition was considered.

\begin{lemma}\label{M: Lem: penal}
    We let $\mathfrak{m}:(0,\infty)\to\R^d$ be such that $\frakm^{-1} \in \calW_{\mu^{-1/2}}$ and $\frakm^{-1}$ is not a polynomial weight (in other words $\frakm^{-1}\in \calW_{\mu^{-1/2}}$ and $\frakm^{-1} = \la v \ra^m e^{\gamma |v|^\tau}$ with $\gamma > 0$, $\tau > 0$). Let $b\in C([0,T];L^2_{\frakm^{-1}}(\calO))$ be the solution to the dual equation \eqref{M: E: B Back} associated to $\sfH_\va[k]^*$ as constructed under the setting of Lemma \ref{Max: Lem: B Back L2 WP}. Then, for $c\le \min\{c_1,c_{\frakm^{-1}}\}$ and $k\in \calX_{\omega,c}$, it holds for all $M,R$ large enough (chosen dependently on $\mathfrak{m}^{-1}$) that for any $0\le \varphi\in \calD((0,T))$ and $\alpha\in (0,1)$:
    \begin{align*}
        &\iiint_{\calO^T} b_t^2 \ov{\mathfrak{m}}^2 \frac{(n_x\cdot v)^2}{\la v \ra^2 \mathfrak{d}(x)^{1-\alpha}} \varphi^2 + \int_0^T \|b \mathfrak{m} \|_{L^2_x H^1_v}^2 \varphi^2 \\
        &\lesssim \iiint_{\calO^T} b_t^2 \ov{\mathfrak{m}}^2 \varphi (\varphi')_- .
    \end{align*}
    Here $\ov{\mathfrak{m}}:\calO\to (0,\infty)$ is some twisted weight function satisfying $\mathfrak{m} \lesssim \ov{\mathfrak{m}} \lesssim \mathfrak{m}$, to be specified in the proof, and $c_{\frakm^{-1}}$ is the constant as in the third statement of Lemma \ref{Lem/ Asymptote}, with weight $\omega_1 = \frakm^{-1}$.
\end{lemma}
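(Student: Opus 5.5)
The plan is a weighted $L^2$ energy estimate on the backwards equation \eqref{M: E: B Back}, localized in time by $\varphi^2$. The weight carries a transport-type multiplier that produces the boundary penalization. I would take the twisted weight
\begin{equation*}
    \ov{\frakm}^2 := \frakm^2 \Big( 1 + \epsilon\, \frac{n_x\cdot v}{\la v \ra^2}\, \frakd(x)^{\alpha} \Big),
\end{equation*}
with $\epsilon>0$ small enough (depending on $\|\frakd\|_{L^\infty}$) that $\frac12\frakm^2 \le \ov{\frakm}^2 \le 2\frakm^2$. The sign of $\epsilon$ will be fixed once the sign of the transport contribution is checked.

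First, I will test the equation $-\p_t b = \sfH_\va[k]^* b$ against $b\,\ov{\frakm}^2\varphi^2$ and integrate over $\calO^T$. This is justified through the renormalized Green's formula of Proposition \ref{prop: WP}, using $\beta(z)\nearrow z^2$ and the regularity $b\in L^2_{t,x}H^1_{v}$. Since $\varphi\in\calD((0,T))$, the time-endpoint terms disappear. The time derivative then contributes only $\iiint b^2\ov{\frakm}^2\varphi\varphi'$, and after a sign check only its negative part $\varphi(\varphi')_-$ stays on the right-hand side.

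Next, the boundary term $\iiint_{\Sigma^T} (\gamma b)^2 \ov{\frakm}^2 \varphi^2 (n_x\cdot v)$ splits into two pieces. The untwisted part vanishes because $\frakm$ is radial and the dual specular condition $\gamma_+ b = \gamma_- b\circ\scrV_x$ holds. The twisted part vanishes because $\frakd^\alpha = 0$ on $\p\Omega$ (here $\alpha>0$ is used).

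Then I will look at the transport operator acting on the twist. We have $v\cdot\nabla_x \frakd^\alpha = -\alpha\,\frakd^{\alpha-1}|\nabla\frakd|\,(n_x\cdot v)$ and $|\nabla \frakd|\ge c_\Omega$. So $v\cdot\nabla_x$ applied to the twist yields, up to sign, $\epsilon\alpha c_\Omega\, \frakm^2 (n_x\cdot v)^2 \la v \ra^{-2}\frakd^{\alpha-1}$, which is exactly the penalization term in the statement. The remainder $\epsilon\frakd^\alpha \frakm^2 \la v\ra^{-2}\, v\cdot D_x n_x\, v$ is bounded by $C\frakm^2$, because $\frakd\in W^{2,\infty}$.

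For the collisional part I will use \eqref{p: e: formal} with $p=2$. This gives the dissipation $\iint\sigma_\va[k]|\nabla_v(b\frakm)|^2$, which is bounded below thanks to \eqref{M: E: Coeff}. It also gives the defect $\Gamma_{\sfL_\va[k],\frakm^{-1},2}-M\psi_R$. The velocity derivatives falling on the twist factor cost only $\epsilon\frakd^\alpha\la v\ra^{-1}$, since $|\nabla_v^j(\frac{n_x\cdot v}{\la v\ra^2})|\lesssim \la v \ra^{-1-j}$. These produce cross terms $\lesssim \epsilon |b\frakm||\nabla_v(b\frakm)| + \epsilon\la v\ra^{\tau}b^2\frakm^2$, which Young's inequality absorbs into the dissipation and into the defect.

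The key point is that $\frakm^{-1}$ is not polynomial. Lemma \ref{Lem/ Asymptote} (cases (2)--(3), with $c\le c_{\frakm^{-1}}$) then gives $\Gamma\to-\infty$ at the rate $-\la v\ra^{\tau}$. After choosing $M,R$ large, the defect is therefore $\le -1 - \kappa\la v\ra^{\tau}$ on all of $\R^d$. This absorbs every bounded error above, and yields the $L^2$ part of $\|b\frakm\|_{L^2_xH^1_v}^2$ together with the required coercivity.

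The main obstacle I expect is the bookkeeping of signs and powers. The transport term has to come out with the favourable sign in the backwards equation, and the unfavourable remainders of size $\frakm^2\la v\ra^{0}$ and $\epsilon\frakd^{\alpha}\la v\ra^{\tau}$ must be dominated by $-\Gamma+M\psi_R$ uniformly in $\va$, $t$ and $x$. If the polynomial factor from the twist turns out to be too strong, I would first try replacing $\la v\ra^{-2}$ by $\la v\ra^{-1}$ or a higher power.
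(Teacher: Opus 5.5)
Your proposal is correct and matches the paper's proof step by step. The paper uses the same twisted multiplier $\ov{\frakm}^2=\frakm^2\bigl(1+\epsilon\,\frakd^{\alpha}\frac{n_x\cdot v}{\la v\ra^2}\bigr)$ tested with $\varphi^2$, the same boundary cancellation (specular reflection plus $\frakd=0$ on $\p\Omega$), and the same absorption of the remainder $\frakd^\alpha\, v\cdot D_x n_x\, v$ using Lemma \ref{Lem/ Asymptote} with large $M,R$. The sign check you left open gives $\epsilon<0$, which is exactly the paper's choice $\epsilon=-1/(4D^\alpha)$, and your explicit tracking of the $v$-derivatives of the twist inside $\Gamma$ is slightly more careful than the paper's appeal to the equivalent growth of $\ov{\frakm}$ and $\frakm$.
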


\begin{proof}[Proof of Lemma \ref{M: Lem: penal}]
    For any $\chi\in \calD(\ov{\calO})$, the solution $b$ satisfies
    \begin{align*}
        - \frac{1}{2} \ddt \iint_{\calO} b_t^2 \chi \,\dx\dv &= \iint_{\calO} b_t (\sfH_\va[k]^* b_t) \chi \,\dx\dv + \frac{1}{2} \iint_{\p\calO} b_t^2 \chi (v\cdot n_x) \,\tnd\sigma_x \dv,
    \end{align*}
    or equivalently
    \begin{align*}
        - \frac{1}{2} \ddt \iint_{\calO} b_t^2 \chi \,\dx\dv &= - \frac{1}{2} \iint_{\calO} b_t^2 (v\cdot \nabla_x \chi) \,\dx\dv +  \iint_{\calO} b_t (\sfH_\va[k]^* b_t) \chi \,\dx\dv \\
        &\quad + \frac{1}{2} \iint_{\p\calO} \gamma b_t^2 \chi (v\cdot n_x) \,\tnd\sigma_x \dv.
    \end{align*}
    Therefore, for any $0\le \varphi\in \calD((0,T))$ we can do a simple integration by parts to obtain
    \begin{equation} \label{Max: E: Penal 1}
    \begin{split}
        &\frac{1}{2} \iiint_{\calO^T} b_t^2 \chi \varphi \varphi'  \\
        &= - \frac{1}{2} \iiint_{\calO^T} b_t^2 (v \cdot \nabla_x \chi) \varphi^2 + \iint_{\calO^T} b_t (\sfH_\va[k]^* b_t) \chi \varphi^2 \\
        &\quad + \frac{1}{2} \iiint_{(0,T)\times \p\calO} \gamma b_t^2 \chi \varphi^2 (v\cdot n_x).
    \end{split}
    \end{equation}

    We choose the following specific test function
    \begin{equation*}
        \chi = \ov{\mathfrak{m}}^2 := \mathfrak{m}^2 \lt(1 - \frac{\mathfrak{d}(x)^{\alpha}}{4 D^{\alpha}} \frac{n_x\cdot v}{\la v \ra^2 } \rt) ,
    \end{equation*}
    which obviously satisfies the relations $\mathfrak{m}\lesssim \ov{\mathfrak{m}} \lesssim \mathfrak{m}$. With this choice, let us explicitly compute the three terms on the right-hand side of \eqref{Max: E: Penal 1}. For the first term, we have
    \begin{align*}
        &- \frac{1}{2} \iiint_{\calO^T} b_t^2 (v \cdot \nabla_x \chi) \varphi^2 \\
        &= \iiint_{\calO^T} b_t^2 \frac{\mathfrak{m}^2}{8 D^\alpha \la v \ra^2} v\cdot \nabla_x \Big( \mathfrak{d}(x)^\alpha (n_x\cdot v) \Big)\varphi^2 \\
        &= \iiint_{\calO^T} b_t^2 \frac{\mathfrak{m}^2}{8 D^\alpha \la v \ra^2} v\cdot \Big( \alpha \mathfrak{d}(x)^{\alpha-1} \nabla_x \mathfrak{d}(x) \,  (n_x\cdot v) + \mathfrak{d}(x)^\alpha \nabla_x n_x \, v  \Big) \varphi^2 \\
        &= \iiint_{\calO^T} b_t^2 \frac{\mathfrak{m}^2}{8 D^\alpha \la v \ra^2} \Big( - \alpha \mathfrak{d}(x)^{\alpha-1} |\nabla_x \mathfrak{d}(x)| (n_x\cdot v)^2 + \mathfrak{d}(x)^\alpha v\cdot \nabla_x n_x \, v \Big) \varphi^2 ,
    \end{align*}
    where we recalled that $\nabla_x \mathfrak{d}(x) = - |\nabla_x \mathfrak{d}(x)| n_x$. For the second term, we may use \eqref{p: e: formal} to obtain
    \begin{align*}
        \iiint_{\calO^T} b_t (\sfH_\va[k]^* b_t) \ov{\mathfrak{m}}^2 \varphi^2 &= \iiint_{\calO^T} \Gamma_{\sfH_\va[k]^*, \ov{\mathfrak{m}} , 2} |b_t|^2 \ov{\mathfrak{m}}^2 \varphi^2 - \iiint_{\calO^T} \sigma_\va[k] |\nabla_v (b \ov{\mathfrak{m}})|^2 \varphi^2.
    \end{align*}
    Finally, using that $\mathfrak{d}(x) = 0$ for $x\in \p\Omega$ and also the (dual) specular reflection boundary condition, we get
    \begin{align*}
        \iiint_{(0,T)\times \p\calO} (\gamma b_t)^2 \chi \varphi^2 &= \iiint_{(0,T)\times \p\calO} (\gamma b_t)^2 \mathfrak{m}^2 \varphi^2 = 0.
    \end{align*}

    Collecting everything, \eqref{Max: E: Penal 1} is now equivalent to
    \begin{equation}\label{M: E: penal2}
    \begin{split}
        &\iiint_{\calO^T} b_t^2 \mathfrak{m}^2 \frac{\alpha}{8 D^\alpha \la v \ra^2 \mathfrak{d}(x)^{1-\alpha}} |\nabla_x \mathfrak{d}(x)| (n_x\cdot v)^2 \varphi^2  + \iiint_{\calO^T} \sigma_\va[k] |\nabla_v (b\ov{\mathfrak{m}})|^2 \varphi^2 \\
        &= - \iiint_{\calO^T} b_t^2 \ov{\mathfrak{m}}^2 \lt(1 - \frac{\mathfrak{d}^\alpha}{4D^\alpha} \frac{n_x\cdot v}{\la v \ra^2}\rt) \varphi \varphi'  \\
        &\quad + \iiint_{\calO^T} b_t^2 \mathfrak{m}^2 \frac{\mathfrak{d}(x)^\alpha v\cdot \nabla_x n_x \, v}{4D^\alpha \la v \ra^2} \varphi^2
        + \iiint_{\calO^T} \Gamma_{\sfH_\va[k]^*, \ov{\mathfrak{m}},2} |b_t|^2 \ov{\mathfrak{m}}^2 \varphi^2 .
    \end{split}
    \end{equation}
    In order to conclude, we observe that $\Gamma_{\sfH_\va[k]^*, \ov{\mathfrak{m}}, 2} = \Gamma_{\sfH_\va[k], \ov{\omega}, 2}$ with $\ov{\omega}:=\ov{\mathfrak{m}}^{-1}$. Now $\ov{\mathfrak{m}}$ and $\mathfrak{m}$ have equivalent growth, so it is enough to compute $\Gamma_{\sfH_\va[k], \frakm^{-1},2}$. Since $\frakm^{-1}$ is assumed not to be a polynomial weight, we can apply either the second or third statement of Lemma \ref{Lem/ Asymptote} to find that for $c\le \min\{c_1, c_{\frakm^{-1}}\}$, it holds for all sufficiently large $M,R$ that 
    \begin{align*}
      \Gamma_{\sfH_\va[k], \ov{\frakm}^{-1} , 2} =  \Gamma_{\sfL_\va[k], \frakm^{-1},2} - M \psi_R \le - \frac{8}{3}\|\nabla_x n_x\|_{L^\infty}.
    \end{align*}
    In particular, for such choices of $M,R$, the last line of \eqref{M: E: penal2} is strictly negative, and more specifically it satisfies
    \begin{align*}
         &\iiint_{\calO^T} b_t^2 \mathfrak{m}^2 \frac{\mathfrak{d}(x)^\alpha v\cdot \nabla_x n_x \, v}{4D^\alpha \la v \ra^2} \varphi^2
        + \iiint_{\calO^T} \Gamma_{\sfH_\va[k]^*, \ov{\mathfrak{m}},2} |b_t|^2 \ov{\mathfrak{m}}^2 \varphi^2 \\
        &\le - \|\nabla_x n_x\|_{L^\infty} \iiint_{\calO^T} |b_t|^2 \mathfrak{m}^2 \varphi^2.
    \end{align*}
    Consequently, from \eqref{M: E: penal2} we deduce
    \begin{align*}
        &\iiint_{\calO^T} b_t^2 \mathfrak{m}^2 \frac{\alpha}{\la v \ra^2 \mathfrak{d}(x)^{1-\alpha}} (n_x\cdot v)^2 \varphi^2  + \iiint_{\calO^T} \Big[ |b \mathfrak{m}|^2 + \sigma_\va[k] |\nabla_v (b\ov{\mathfrak{m}})|^2 \Big] \varphi^2 \\
        &\lesssim \iiint_{\calO^T} b_t^2 \ov{\mathfrak{m}}^2 \varphi (\varphi')_- .
    \end{align*}
    The thesis of the lemma then follows by recalling that $\mathfrak{m}\lesssim \ov{\mathfrak{m}}\lesssim \mathfrak{m}$, $\sigma_\va[k] \gtrsim 1$ (see \eqref{M: E: Coeff}), and also observing that
    \begin{align*}
        \NORM{b \mathfrak{m} }_{L^2_x H^1_v} \lesssim \NORM{b \ov{\mathfrak{m}} }_{L^2_x H^1_v} \lesssim \NORM{b \mathfrak{m}}_{L^2_x H^1_v}.
    \end{align*}
\end{proof}

We then recall the following useful interpolation estimate from \cite[Lemma 5.3]{CarMis24}.

\begin{lemma}
    For any function $f:\calO\to \R$, we have
    \begin{align*}
        \|\mathfrak{d}^{-1/4} \la v \ra^{-1} f \|_{L^2(\calO)}^2 \lesssim \iint_{\calO} f^2 \frac{(n_x\cdot v)^2}{\la v \ra^2 \mathfrak{d}^{1/2}} + \|\nabla_v f\|_{L^2(\calO)}^2.
    \end{align*}
\end{lemma}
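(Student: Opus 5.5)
The plan is to reduce the estimate to a one-dimensional Hardy-type inequality along the normal direction, applied at each fixed $x$ near the boundary, and then average over $x$. Away from the boundary, say on $\{\frakd \ge \delta_0\}$, the factor $\frakd^{-1/4}\la v\ra^{-1}$ is bounded, so that region contributes at most $\|f\|_{L^2}^2$. But $\|f\|_{L^2}$ does not appear on the right-hand side, so it must itself be controlled by the two terms there. I therefore work pointwise in $x$ and interpolate in $v$.

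Fix $x\in\Omega$ and write $n=n_x$, $\delta=\frakd(x)$. Decompose $\R^d_v$ into the region $A:=\{|n\cdot v|\ge \delta^{1/4}\la v\ra\}$ and its complement $A^c$.

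On $A$, the weight satisfies
\begin{equation*}
\frac{(n\cdot v)^2}{\la v\ra^2\delta^{1/2}} \ge 1 \ge \frac{\delta^{1/2}}{\la v\ra^2}.
\end{equation*}
Hence, on $A$, $\delta^{-1/2}\la v\ra^{-2}f^2$ is bounded by $\delta^{-1}\cdot\frac{(n\cdot v)^2}{\la v\ra^2\delta^{1/2}}f^2$. That bound is too lossy as $\delta\to 0$, so I refine it: on $A$ we have $(n\cdot v)^2\ge\delta^{1/2}\la v\ra^2$, which gives
\begin{equation*}
\frac{1}{\delta^{1/2}\la v\ra^2} \le \frac{(n\cdot v)^2}{\delta\la v\ra^4}.
\end{equation*}
This is still not good enough, so I change the splitting. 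I take $A:=\{|n\cdot v|\ge \delta^{1/4}\}$ and use $\la v\ra^{-2}\le1$. On this $A$ the integrand $\delta^{-1/2}\la v\ra^{-2}f^2$ is at most
\begin{equation*}
\frac{(n\cdot v)^2}{\delta\la v\ra^2}f^2 .
\end{equation*}
This has the wrong power of $\delta$ relative to the first term on the right, so one more adjustment is needed.

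The correct route, which I will follow, is a one-dimensional interpolation in the variable $s=n\cdot v$, with the tangential velocity components frozen. The basic inequality is: for $g\in H^1(\R)$ and $\eta>0$,
\begin{equation*}
\int_{|s|<\eta} g^2\,\ds \lesssim \eta\,\|g\|_{L^\infty}^2 \lesssim \eta\,\|g\|_{L^2(|s|>\eta)}\,\|g'\|_{L^2} + \eta^{-1}\int_{|s|>\eta}g^2 .
\end{equation*}
I choose $\eta=\delta^{1/4}$ and apply this to $g=\la v\ra^{-1}f$, whose $v$-gradient is controlled by $|\nabla_v f|+|f|\la v\ra^{-2}$. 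Then:
\begin{itemize}
\item The contribution of the slab $|s|<\eta$, multiplied by $\delta^{-1/2}$, is bounded by $\delta^{-1/4}\|g\|\,\|\nabla_v f\|$ together with $\delta^{-3/4}\int_{|s|>\eta}g^2$.
\item On $|s|>\eta$, the pointwise bound $1\le s^2\delta^{-1/2}$ gives $\delta^{-3/4}g^2\le \frac{s^2}{\la v\ra^2\delta^{1/2}}\,\delta^{-1/4}\la v\ra^{-2}f^2\cdot$ (harmless factors).
\end{itemize}

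The remaining exponent bookkeeping is the main obstacle. The powers must be balanced so that, after Young's inequality, the left-hand side $\delta^{-1/2}\la v\ra^{-2}f^2$ reappears with a small coefficient and can be absorbed. The pure $\|f\|^2$ term must not appear uncontrolled; where it does arise (from the derivative of $\la v\ra^{-1}$), it carries an extra factor $\delta^{1/2}$ and so can also be absorbed near the boundary. If the exponents do not balance exactly, I would instead follow the proof of Lemma 5.3 in \cite{CarMis24}, which is cited as the source of the statement.

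Finally, I integrate over $x$ in a tubular neighbourhood of $\p\Omega$. The interior region is handled by the same inequality with $\delta\sim1$, which is valid because $|\nabla\frakd|\ge c_\Omega$.
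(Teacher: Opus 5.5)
You have a genuine gap: the step you defer as ``exponent bookkeeping'' cannot be carried out, because with the exponents as printed the inequality is false. The paper gives no argument beyond citing \cite{CarMis24}, so falling back on that source only helps if its exponents differ from those transcribed here.

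Your own scheme shows where it breaks. Fix $x$ and write $\delta=\frakd(x)$, $s=n_x\cdot v$ and $g=\la v\ra^{-1}f$. The slab bound $\int_{|s|<\eta}g^2\le 2\eta\|g\|_{L^2}\|\partial_s g\|_{L^2}$, followed by Young, leaves (besides absorbable terms) $\delta^{-1/2}\eta^2\|\nabla_v f\|^2$. This is admissible only if $\eta\lesssim\delta^{1/4}$. On $\{|s|>\eta\}$ the only pointwise comparison is $1\le s^2/\eta^2$, so
\begin{equation*}
\delta^{-1/2}\int_{|s|>\eta}g^2\le\delta^{-1/2}\eta^{-2}\int s^2g^2 .
\end{equation*}
This is bounded by the first term on the right only if $\eta\gtrsim 1$. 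With your choice $\eta=\delta^{1/4}$, the outer region costs $\delta^{-1}\int s^2g^2$, which is $\delta^{-1/2}$ times the first term on the right. Your second bullet hides exactly this factor among the ``harmless factors''. The two constraints on $\eta$ are incompatible as $\delta\to 0$.

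This failure is intrinsic, not a weakness of the method. Take $\psi\in C_c^\infty(-1,1)$ with $\psi=1$ on $[-\tfrac12,\tfrac12]$, and $\chi\in C_c^\infty(B_3)$ with $\chi=1$ on $B_2$. For small $\delta_0>0$ set $\eta:=\delta_0^{1/8}$ and
\begin{equation*}
f(x,v):=\mathbf{1}_{\{\delta_0<\frakd(x)<2\delta_0\}}\,\psi\big(\eta^{-1}\,n_x\cdot v\big)\,\chi(v).
\end{equation*}
For every $x$ in the layer:
\begin{itemize}
\item $\int\la v\ra^{-2}f^2\,\dv\gtrsim\eta$;
\item $\int(n_x\cdot v)^2f^2\,\dv\lesssim\eta^3$;
\item $\int|\nabla_v f|^2\,\dv\lesssim\eta^{-1}$.
\end{itemize}
The layer has measure $\sim\delta_0$ and $\frakd\sim\delta_0$ on it. Hence the left-hand side is $\gtrsim\delta_0^{1/2}\eta=\delta_0^{5/8}$, while the right-hand side is $\lesssim\delta_0^{1/2}\eta^3+\delta_0\eta^{-1}\lesssim\delta_0^{7/8}$. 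The ratio blows up like $\delta_0^{-1/4}$. Adding $\|f\|_{L^2}^2\sim\delta_0^{9/8}$ to the right would not help either.

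What your one-dimensional interpolation does prove is the scaling-consistent version. Absorb the lower-order term via $|\partial_s(\la v\ra^{-1}f)|\le|\nabla_v f|+\la v\ra^{-1}|f|$. Then for $\eta\le\tfrac18$,
\begin{equation*}
\int_{\R^d}\frac{f^2}{\la v\ra^2}\,\dv\le 2\eta^{-2}\int_{\R^d}\frac{(n_x\cdot v)^2}{\la v\ra^2}f^2\,\dv+8\eta^2\int_{\R^d}|\nabla_v f|^2\,\dv .
\end{equation*}
Choosing $\eta^2=c\,\frakd(x)^{1/4}$ and integrating in $x$ gives
\begin{equation*}
\|\frakd^{-1/8}\la v\ra^{-1}f\|_{L^2(\calO)}^2\lesssim\iint_{\calO}f^2\frac{(n_x\cdot v)^2}{\la v\ra^2\frakd^{1/2}}+\|\nabla_v f\|_{L^2(\calO)}^2 .
\end{equation*}
Equivalently, keeping $\frakd^{-1/4}$ on the left forces $\frakd$ instead of $\frakd^{1/2}$ in the first term on the right. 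The $\frakd^{-1/8}$ version seems to be all the paper actually needs: in the subsequent H\"older interpolation one replaces $\tfrac14$ by $\tfrac18$ in the definition of $\theta$.
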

Applying this lemma with $f = b\ov{\mathfrak{m}}$, we deduce immediately that the assumptions of Lemma \ref{M: Lem: penal} imply (with the choice of $\alpha=1/2$)
\begin{equation}\label{M: E: clr penal}
    \lt\| b \ov{\mathfrak{m}} \mathfrak{d}^{-1/4} \la v \ra^{-1} \varphi \rt\|_{L^2(\calO^T)}^2 \lesssim \lt\| b \ov{\mathfrak{m}} \sqrt{\varphi (\varphi')_-} \rt\|_{L^2(\calO^T)}^2 .
\end{equation}

In order to make use of \eqref{M: E: clr penal}, we mention a rather standard gain of integrability estimate which is local in nature.

\begin{lemma}\label{M: Lem: local}
    We adopt the setting and all of the assumptions of Lemma \ref{M: Lem: penal}. Let $b$ be the solution to the dual equation \eqref{M: E: B Back} associated to $\sfH_\va[k]^*$ as constructed under the setting of Lemma \ref{Max: Lem: B Back L2 WP}. Then it holds for each $p\in \lt(2, \frac{2d+1}{d}\rt)$ and $p_1 \in \lt(p, \frac{2d+1}{d}\rt)$ that
    \begin{align*}
        &\lt\| b \frakm \mathfrak{d}^{p_1/p} \la v \ra^{-2} \varphi  \rt\|_{L^p(\calO^T)} \lesssim C_T \lt( \lt\| b \frakm \sqrt{\varphi(\varphi')_-} \rt\|_{L^2(\calO^T)} + \|b \frakm \varphi'\|_{L^2(\calO^T)} \rt) , \\
        &C_T := \lt( T^{\frac{(2d+1)(p+2)}{2p} - 2d} + T^{\frac{(2d+1)(p+2)}{2p} - 2d - \frac{1}{2}} \rt).
    \end{align*}
\end{lemma}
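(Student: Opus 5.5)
This is an interior (local) gain-of-integrability estimate. The plan is to localize away from the boundary, use the Green's function of the whole-space Kolmogorov operator to gain integrability, and use the distance weight to absorb the localization. Fix a cutoff in $x$ adapted to $\mathfrak{d}$: for each dyadic layer $\{2^{-j-1} < \mathfrak{d} < 2^{-j}\}$, take a smooth $\chi_j(x)$ supported in $\{\mathfrak{d}\gtrsim 2^{-j-2}\}$ with $|\nabla_x\chi_j|\lesssim 2^{j}$. Alternatively, take a single cutoff $\mathfrak{d}^{\beta}$ with suitable $\beta$; the factor $\mathfrak{d}^{p_1/p}$ in the statement plays exactly this role. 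Set $\mathfrak{g}:= b\frakm\,\chi(x)\varphi(t)$ and truncate large velocities by $\la v\ra^{-2}$. Then $\mathfrak{g}$ solves, in $\calD'$ of the whole space $\R\times\R^d_x\times\R^d_v$, a backward Kolmogorov equation
\[ -\p_t\mathfrak{g} - v\cdot\nabla_x\mathfrak{g} - \sigma_\va[k]\Delta_v\mathfrak{g} = F_1 + \nabla_v\cdot F_2 . \]
Here $F_1$ collects $b\frakm\chi\varphi'$, the transport commutator $b\frakm\,(v\cdot\nabla_x\chi)\varphi$, and the zero-order and drift terms produced by conjugating with $\frakm$. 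The divergence part $F_2$ is $\lesssim |b\frakm|\,|\nabla_v\log\frakm|$ plus drift terms. All coefficients depend only on $(t,x)$ in the second-order part and are bounded, by \eqref{M: E: Coeff}. Since the diffusion coefficient $\sigma_\va[k](t,x)$ is not constant, I would write $\sigma_\va\Delta_v = \Delta_v + (\sigma_\va-1)\Delta_v$. Because $|\sigma_\va-1|\le 1/10$ and $\sigma_\va$ is $v$-independent, the perturbation is $\nabla_v\cdot((\sigma_\va-1)\nabla_v\mathfrak{g})$, a divergence-form source in $L^2$ controlled by $\|\nabla_v(b\frakm)\chi\varphi\|_{L^2}$.

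Next I would represent $\mathfrak{g}$ by Duhamel's formula with the explicit (Kolmogorov) fundamental solution $\mathcal{G}$ of $\p_t + v\cdot\nabla_x - \Delta_v$ on $[0,T]$. I would then apply the standard kinetic Young-type bounds: $\|\mathcal{G}(t)\|_{L^q_{x,v}}\lesssim t^{-2d(1-1/q)}$ and $\|\nabla_v\mathcal{G}(t)\|_{L^q}\lesssim t^{-2d(1-1/q)-1/2}$, with $1+1/p = 1/q + 1/2$. Integrability in time holds precisely when $p<(2d+1)/d$. The two powers of $T$ in $C_T$ (exponents $\frac{(2d+1)(p+2)}{2p}-2d$ and the same minus $1/2$) are exactly what comes out of integrating these two kernel bounds over $(0,T)$. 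This yields
\[ \|\mathfrak{g}\|_{L^p} \lesssim C_T\big(\|F_1\|_{L^2}+\|F_2\|_{L^2}\big). \]

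It remains to bound the right-hand side by $\|b\frakm\sqrt{\varphi(\varphi')_-}\|_{L^2}+\|b\frakm\varphi'\|_{L^2}$. The $\varphi'$ term is direct. The $\nabla_v$ terms and the zero-order terms are handled by Lemma \ref{M: Lem: penal}, using $\frakm\lesssim\ov{\frakm}$. The velocity growth in $\nabla_v\log\frakm$ and in $v\cdot\nabla_x\chi$ is killed by the $\la v\ra^{-2}$ factor; if needed, I would also slightly change the exponent of $\frakm$, since $\frakm$ decays in a non-polynomial way. The transport commutator term $b\frakm\la v\ra^{-1}\,|\nabla_x\chi|$ is, for $\chi\approx\mathfrak{d}^{\beta}$, of size $b\frakm\la v\ra^{-1}\mathfrak{d}^{\beta-1}$. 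This is where \eqref{M: E: clr penal} enters: it controls $b\ov{\frakm}\mathfrak{d}^{-1/4}\la v\ra^{-1}\varphi$ in $L^2$.

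The main obstacle is matching exponents. I need $\beta-1\ge -1/4$ in $L^2$ while keeping $\chi\gtrsim\mathfrak{d}^{p_1/p}$. Since $p_1/p>1$ this looks comfortable, so I would take $\chi=\mathfrak{d}^{p_1/p}$. Doing so requires checking that $\mathfrak{d}^{p_1/p}$ is regular enough: its gradient is $\mathfrak{d}^{p_1/p-1}\nabla\mathfrak{d}$, which is bounded. The transport term is then even bounded by plain $L^2$ energy. However, the extension of $\mathfrak{g}$ by zero outside $\Omega$ must be legitimate. This holds because $\chi$ vanishes on $\p\Omega$, so no boundary trace term appears; making it rigorous requires the renormalized Green's formula, and this is the delicate point. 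If the direct choice fails on the $\nabla_x$ regularity of $\mathfrak{d}$ near $\p\Omega$, I would fall back to the dyadic layer decomposition and sum, using the $\mathfrak{d}^{-1/4}$ gain to make the sum converge.
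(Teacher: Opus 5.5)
Your argument is correct, and its engine is the same as the paper's. You localize in $x$, conjugate by $\frakm\la v\ra^{-2}$, and write $\sigma_\va[k]\Delta_v=\Delta_v+\nabla_v\cdot((\sigma_\va[k]-1)\nabla_v\,\cdot\,)$. This gives a constant-coefficient Kolmogorov equation with $L^2$ sources $T_1+\nabla_v\cdot T_2$ controlled by Lemma~\ref{M: Lem: penal}. Integrability is then gained from the explicit fundamental solution and Young's inequality with $q=2p/(p+2)$, which yields exactly $C_T$ and the restriction $p<(2d+1)/d$.

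The genuine difference is the localization. The paper uses cutoffs $\nu_k\in\calD(\Omega)$ equal to $1$ on $\{\frakd>2^{-k}\}$ with $\|\nu_k\|_{W^{1,\infty}}\lesssim 2^k$. It gets an $L^p$ bound with a loss of $2^k$ on each shell and sums against $\frakd^{p_1}\sim 2^{-kp_1}$; the hypothesis $p_1>p$ is exactly what makes $\sum_k 2^{k(p-p_1)}$ converge. You instead use the single cutoff $\chi=\frakd^{p_1/p}$, where $p_1>p$ guarantees that $\chi$ is Lipschitz. The transport commutator $b\frakm\la v\ra^{-2}\varphi\,v\cdot\nabla_x\chi$ is then bounded by the plain energy of Lemma~\ref{M: Lem: penal}, so neither \eqref{M: E: clr penal} nor any summation is needed. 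This is shorter and slightly stronger: any power $\frakd^{\beta}$ with $\beta\ge 1$ works, and $\frakd^{p_1/p}\lesssim\frakd$ on the bounded domain.

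What the paper's version buys is that each $\nu_k$ is compactly supported, so extension by zero is immediate. The point you call delicate can be settled the same way, without traces or the renormalized formula. Run the compactly supported argument with $\nu_\epsilon:=((\frakd-\epsilon)_+)^{p_1/p}$; only $\|\nu\|_{W^{1,\infty}}$ enters the estimates (or mollify), and $\|\nu_\epsilon\|_{W^{1,\infty}}\lesssim 1$ uniformly in $\epsilon$. Then let $\epsilon\to0$ by monotone convergence. In your dyadic fallback, the sum already converges from $p_1>p$ alone; the $\frakd^{-1/4}$ gain is not needed for that.
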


\begin{proof}
    Given $\nu(x)\in \calD(\Omega)$, we denote $\mathfrak{m}_0 := \la v \ra^{-2} \mathfrak{m}$ and $\mathfrak{b} := b \mathfrak{m}_0 \nu \varphi$. Then $\mathfrak{b}$ is a solution to
    \begin{align*}
        (\p_t  + v\cdot \nabla_x) \mathfrak{b} &= - \sigma_\va[k] \Delta_v \mathfrak{b} - \sigma_\va[k] \nu \varphi \Big( 2 \nabla_v b \cdot \nabla_v \mathfrak{m}_0 + b \Delta_v \mathfrak{m}_0 \Big) \\
        &\quad + \nabla_v\cdot (\zeta[k] \mathfrak{b}) - \zeta[k] \cdot b \nabla_v \mathfrak{m}_0 \, \nu \varphi \\
        &\quad - \eta[k]\mathfrak{b} + M \chi_R \mathfrak{b} + v b \mathfrak{m}_0 \varphi \cdot \nabla_x \nu + b \mathfrak{m}_0 \nu \varphi' 
    \end{align*}
    in the sense of $\calD'((0,\infty)\times \R^d\times\R^d)$. The above can be rewritten as
    \begin{align*}
        \lt(-\p_t - v\cdot \nabla_x - \Delta_v \rt) \mathfrak{b} &= T_1 + \nabla_v \cdot T_2,
    \end{align*}
    where
    \begin{align*}
        T_1 &:= - \sigma_\va[k] \nu \varphi \Big( 2 \nabla_v b \cdot \nabla_v \mathfrak{m}_0 + b \Delta_v \mathfrak{m}_0 \Big) - \zeta[k]\cdot b \nu \varphi \nabla_v \mathfrak{m}_0 \\
        &\qquad - (\eta[k] + M \chi_R) \mathfrak{b} + vb\mathfrak{m}_0\varphi\cdot \nabla_x \nu + b \mathfrak{m}_0 \nu \varphi' , \\
        T_2 &:= (\sigma_\va[k] - 1) \nabla_v \mathfrak{b} + \zeta[k] \mathfrak{b}.
    \end{align*}
    We point out that since $\mathfrak{m}^{-1} \in \calW$, it holds
    \begin{align*}
        |\nabla_v \mathfrak{m}| \lesssim \la v \ra \mathfrak{m}, 
        \quad |\nabla_v \mathfrak{m}_0| \lesssim \la v \ra^{-1} \mathfrak{m},
        \quad |\Delta_v \mathfrak{m}_0| \lesssim \mathfrak{m}.
    \end{align*}
    Consequently, using that 
    \begin{equation*}
        \sigma_\va[k] \lesssim 1, \quad \zeta[k] \lesssim \la v \ra, \quad \eta[k] \lesssim 1,
    \end{equation*}
    we obtain
    \begin{align*}
        |T_1|^2 &\lesssim \la v \ra^{-2} |\nabla_v (b \mathfrak{m})|^2 \varphi^2 + b^2 \mathfrak{m}^2 \varphi^2 + \la v \ra^{-2} b^2 \mathfrak{m}^2 \varphi^2 |\nabla_x \nu|^2  + \la v \ra^{-2} b^2 \mathfrak{m}^2 |\varphi'|^2 .
    \end{align*}
    Similarly,
    \begin{align*}
        |T_2|^2 \lesssim \la v \ra^{-4} |\nabla_v (b \mathfrak{m})|^2 \varphi^2 + \la v \ra^{-6} b^2 \mathfrak{m}^2 \varphi^2 + \la v \ra^{-2} b^2 \mathfrak{m}^2 \varphi^2.
    \end{align*}
    By estimating all terms of the form $\|b \frakm\|_{L^2_x H^1_v}^2 \varphi^2$ by using Lemma \ref{M: Lem: penal}, namely that
    \begin{equation*}
        \int_0^T \|b \frakm \|_{L^2_x H^1_v}^2 \varphi^2 \lesssim \iiint_{\calO^T} b^2 \frakm^2 \varphi(\varphi')_-,
    \end{equation*}
    we deduce (rather crudely)
    \begin{align*}
        &\|T_1\|_{L^2([0,T]\times\R^d\times\R^d)} + \|T_2\|_{L^2([0,T]\times\R^d\times\R^d)} \\
        &\lesssim \|\nu\|_{W_x^{1,\infty}(\Omega)} \lt( \lt\|b \mathfrak{m} \sqrt{\varphi (\varphi')_-} \rt\|_{L^2(\calO^T)} +  \|b \mathfrak{m} \varphi' \|_{L^2(\calO^T)} \rt).
    \end{align*}
    
    The remaining parts of the proof are then completely the same as with \cite{CarMis24}. We clean up the signs by setting $\mathfrak{B}(t,x,v) = \mathfrak{b}(T-t,x,-v)$ and similarly $S_i(t,x,v) = T_i(T-t,x,-v)$, $i=1,2$. In this way, $\mathfrak{B}$ solves
    \begin{align*}
        (\p_t + v\cdot \nabla_v - \Delta_v) \mathfrak{B} = S_1 - \nabla_v\cdot S_2
    \end{align*}
    in the sense of $\calD'((0,\infty)\times \R^d\times \R^d)$, and therefore admits the representation formula
    \begin{align*}
        \mathfrak{B}(t,x,v) &= \iiint_{t,x,v}  \mathfrak{K}(t-t', x-x'-(t-t')v', v-v') (S_1(t',x',v') - \nabla_{v'}\cdot S_2(t',x',v')) \,\dt'\dx'\dv' \\
        &= \iiint_{t,x,v} \mathfrak{K}(t-t', x-x'-(t-t')v', v-v') S_1(t',x',v') \,\dt'\dx' \dv' \\
        &\quad - \iiint_{t,x,v} \nabla_v \mathfrak{K}(t-t', x-x'-(t-t')v', v-v') S_2(t',x',v') \,\dt'\dx'\dv',
    \end{align*}
    where $\mathfrak{K}$ is the fundamental solution of the Kolmogorov equation (see for instance \cite{AncReb22} or the classical \cite{Kol34}). Namely, we have the explicit expression
    \begin{equation*}
        \frakK(t,x,v) = \frac{C_0}{t^{2d}} \exp\lt( -\frac{3}{t^3} \lt|x - \frac{t}{2}v \rt|^2 - \frac{1}{4t} |v|^2 \rt)
    \end{equation*}
    where $C_0$ is a normalizing constant, and for any $q\in \lt[1, \frac{4d+2}{4d+1} \rt)$, a direct computation shows that
    \begin{align*}
        &\|\frakK\|_{L^q([0,T]\times \R^d\times\R^d)} \lesssim T^{\frac{2d+1}{q} - 2d} , \\
        &\|\nabla_v \frakK\|_{L^q([0,T]\times \R^d\times \R^d)}  \lesssim T^{\frac{2d+1}{q} - 2d - \frac{1}{2}} .
    \end{align*}
    Hence the representation formula and Young's inequality for convolutions show
    \begin{align*}
        &\|\frakB\|_{L^p([0,T]\times\R^d\times\R^d)}  \\
        &\le \|\mathfrak{K}\|_{ L^{\frac{2p}{p+2}}([0,T]\times\R^d\times\R^d) } \|S_1\|_{L^2([0,T]\times\R^d\times\R^d)} + \|\nabla_v \mathfrak{K}\|_{L^{\frac{2p}{p+2}}([0,T]\times\R^d\times\R^d)} \|S_2\|_{L^2([0,T]\times\R^d\times\R^d)} \\
        &\lesssim C_T \|\nu\|_{W^{1,\infty}_x(\Omega)} \lt( \lt\| b \frakm \sqrt{\varphi(\varphi')_-} \rt\|_{L^2(\calO^T)} + \|b \frakm \varphi'\|_{L^2(\calO^T)} \rt)
    \end{align*}
    for every $p\in \lt[2, \frac{2d+1}{d}\rt)$.
    Note that in the above, we made use of the fact that $\|S_i\|_{L^2_{t,x,v}} = \|T_i\|_{L^2_{t,x,v}}$. In the same way, we have $\|\frakB\|_{L^p_{t,x,v}} = \|\frakb\|_{L^p_{t,x,v}}$, and so we have obtained
    \begin{align*}
        &\|\frakb\|_{L^p([0,T]\times\R^d\times\R^d)} \\
        &\lesssim C_T \|\nu\|_{W^{1,\infty}_x(\Omega)} \lt( \lt\| b \frakm \sqrt{\varphi(\varphi')_-} \rt\|_{L^2(\calO^T)} + \|b \frakm \varphi'\|_{L^2(\calO^T)} \rt).
    \end{align*}
    
    Finally, we set $\Omega_k := \lt\{x\in \Omega \; \middle| \; \mathfrak{d}(x) > 2^{-k} \rt\}$ and define $\calO^T_k := (0,T)\times \Omega_k \times \R^d$. We choose the cutoffs $\nu_k\in \calD(\Omega)$ so that $\mathbf{1}_{\Omega_{k+1}} \le \nu_k \le \mathbf{1}_{\Omega_k}$ and $\|\nu_k\|_{W^{1,\infty}_x(\Omega)} \lesssim 2^k$ for every $k\in \bbN$. From the previous arguments that we just established, we know that for any $p\in \lt[1,\frac{2d+1}{d}\rt)$, there holds
    \begin{align*}
        &\|b \frakm_0 \varphi\|_{L^p(\calO_k^T)} \lesssim C_T 2^k \lt( \lt\| b \frakm \sqrt{\varphi(\varphi')_-} \rt\|_{L^2(\calO^T)} + \|b \frakm \varphi'\|_{L^2(\calO^T)} \rt) , \\
        &C_T := T^{\frac{2d+1}{q} - 2d} + T^{\frac{2d+1}{q} - 2d - \frac{1}{2}} . 
    \end{align*}
    Then using that $\mathfrak{d}\sim 2^{-k}$ on each $\calO^T_k$, we can obtain
    \begin{align*}
        \iiint_{\calO^T} \mathfrak{d}^{p_1} |b\mathfrak{m}_0 \varphi|^p &= \sum_{k=1}^\infty \iiint_{\calO^T_{k+1}\setminus \calO^T_{k}} \mathfrak{d}^{p_1} |b \frakm_0 \varphi|^p \\
        &\lesssim \sum_{k=1}^\infty 2^{-k p_1} \iiint_{\calO^T_{k+1}\setminus \calO^T_k} |b \frakm_0 \varphi|^p \\
        &\le C_T^p \lt( \lt\| b \frakm \sqrt{\varphi(\varphi')_-} \rt\|_{L^2(\calO^T)} + \|b \frakm \varphi'\|_{L^2(\calO^T)} \rt)^p  \sum_{k=1}^\infty 2^{k(p-p_1)+1} \\
        &\lesssim C_T^p \lt( \lt\| b \frakm \sqrt{\varphi(\varphi')_-} \rt\|_{L^2(\calO^T)} + \|b \frakm \varphi'\|_{L^2(\calO^T)} \rt)^p,
    \end{align*}
    which finishes the proof.
\end{proof}

Combining the result of Lemma \ref{M: Lem: local} with what we mentioned in \eqref{M: E: clr penal}, we obtain an up-to-the-boundary $L^2\to L^r$ estimate for some $r>2$.

\begin{lemma}\label{M: Lem: L2 Lr}
    We adopt the setting and all of the assumptions of Lemma \ref{M: Lem: penal}, along with the notations of Lemma \ref{M: Lem: local}. Then there exists an exponent $r\in (2,p)$ such that 
    \begin{equation}\label{M: eq: L2Lr}
        \|b \frakm \la v \ra^{-2+\theta} \varphi\|_{L^r(\calO^T)} \lesssim C_T^{1-\theta} \lt(\lt\| b \frakm \sqrt{\varphi(\varphi')_-} \rt\|_{L^2(\calO^T)} + \|b \frakm \varphi'\|_{L^2(\calO^T)} \rt) ,
    \end{equation}
    where
    \begin{equation*}
        \theta := \frac{p_1}{p} \lt(\frac{p_1}{p} + \frac{1}{4} \rt)^{-1} \in (0,1).
    \end{equation*}
\end{lemma}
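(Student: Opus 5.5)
The plan is to interpolate pointwise, with Hölder's inequality, between the two estimates already available for $F := b\,\frakm\,\la v \ra^{-2}\varphi$. The first is the boundary penalization \eqref{M: E: clr penal}, which gives an $L^2$ bound on $F\,\la v \ra\,\frakd^{-1/4}$; here we use $\frakm\lesssim\ov{\frakm}$ and note $\la v \ra^{-1} = \la v \ra^{-2}\la v \ra$. The second is Lemma \ref{M: Lem: local}, which gives an $L^p$ bound on $F\,\frakd^{p_1/p}$. Write
\begin{equation*}
A := \lt\| b \frakm \sqrt{\varphi(\varphi')_-} \rt\|_{L^2(\calO^T)} + \|b \frakm \varphi'\|_{L^2(\calO^T)}.
\end{equation*}
Then the first estimate bounds $\|F \la v \ra \frakd^{-1/4}\|_{L^2}$ by a constant times $A$, uniformly in $T$. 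The second bounds $\|F \frakd^{p_1/p}\|_{L^p}$ by $C_T A$.

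The key step is a factorization that cancels the distance function. Set $\theta\in(0,1)$ so that the powers of $\frakd$ balance:
\begin{equation*}
\theta\cdot\lt(-\tfrac14\rt) + (1-\theta)\,\tfrac{p_1}{p} = 0, \qquad\text{i.e.}\qquad \theta = \tfrac{p_1}{p}\lt(\tfrac{p_1}{p}+\tfrac14\rt)^{-1}.
\end{equation*}
This is exactly the $\theta$ in the statement. We then write pointwise
\begin{equation*}
|F|\la v \ra^{\theta} = \big(|F|\la v \ra\frakd^{-1/4}\big)^{\theta}\,\big(|F|\frakd^{p_1/p}\big)^{1-\theta},
\end{equation*}
and define $r$ through $\frac1r = \frac{\theta}{2}+\frac{1-\theta}{p}$. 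Since $p>2$ and $\theta\in(0,1)$, this gives $r\in(2,p)$.

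Hölder's inequality with exponents $2/\theta$ and $p/(1-\theta)$ then yields
\begin{equation*}
\|F\la v \ra^{\theta}\|_{L^r} \le \|F\la v \ra\frakd^{-1/4}\|_{L^2}^{\theta}\,\|F\frakd^{p_1/p}\|_{L^p}^{1-\theta} \lesssim A^{\theta}(C_TA)^{1-\theta} = C_T^{1-\theta}A.
\end{equation*}
Because $F\la v \ra^\theta = b\frakm\la v \ra^{-2+\theta}\varphi$, this is \eqref{M: eq: L2Lr}.

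The main thing to check carefully is the bookkeeping of the weights. The first point is replacing $\ov{\frakm}$ by $\frakm$ in \eqref{M: E: clr penal}, which is legitimate since $\frakm\lesssim\ov{\frakm}\lesssim\frakm$. The second point concerns the right-hand side of \eqref{M: E: clr penal}: it contains only $\|b\ov{\frakm}\sqrt{\varphi(\varphi')_-}\|_{L^2}$, which is dominated by $A$. The third point is that the factor $\varphi$ appears with total power one on both sides of the factorization, which holds because it enters linearly in both estimates. The exponent $\alpha=1/2$ fixed in Lemma \ref{M: Lem: penal} is what produces the $\frakd^{-1/4}$ used here.
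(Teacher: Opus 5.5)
Your proposal is correct and follows the paper's proof: the same H\"older interpolation between the penalized $L^2$ bound \eqref{M: E: clr penal} and the local $L^p$ bound of Lemma \ref{M: Lem: local}, with $\theta$ chosen so that the powers of $\frakd$ cancel and $\frac1r=\frac\theta2+\frac{1-\theta}{p}$. The only difference is cosmetic. The paper bounds the $L^2$ factor by its first term alone and then uses Young's inequality on $A_1^\theta A_2^{1-\theta}$, whereas you bound that factor directly by the full sum $A$, which gives $C_T^{1-\theta}A$ at once.
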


\begin{proof}
    Using H\"older's inequality, we immediately obtain
    \begin{equation*}
        \|b \frakm \la v \ra^{-2+\theta}  \varphi\|_{L^r(\calO^T)} \le \|b \frakm \delta^{-1/4} \la v \ra^{-1} \varphi\|^\theta_{L^2(\calO^T)} \|b \frakm \delta^{p_1/p} \la v \ra^{-2} \varphi \|^{1-\theta}_{L^p(\calO^T)},
    \end{equation*}
    where 
    \begin{equation*}
        \frac{1}{r} = \frac{\theta}{2} + \frac{1-\theta}{p}.
    \end{equation*}
    Thanks to \eqref{M: E: clr penal} and Lemma \ref{M: Lem: local}, we further obtain that
    \begin{equation*}
        \|b\frakm \la v \ra^{-2+\theta}\varphi\|_{L^r(\calO^T)} \lesssim C_T^{1-\theta} A_1^\theta (A_1 + A_2)^{1-\theta},
    \end{equation*}
    where $A_1 = \lt\| b \frakm \sqrt{\varphi(\varphi')_-} \rt\|_{L^2(\calO^T)}$ and $A_2 = \|b \frakm \varphi'\|_{L^2(\calO^T)}$. Since $(A_1+A_2)^{1-\theta}\le A_1^{1-\theta} + A_2^{1-\theta}$, we get 
    \begin{align*}
        \|b\frakm \la v \ra^{-2+\theta}\varphi\|_{L^r(\calO^T)} &\lesssim C_T^{1-\theta} (A_1 + A_1^\theta A_2^{1-\theta}) \\
        &\lesssim C_T^{1-\theta} (A_1 + A_2),
    \end{align*}
    that last inequality following due to Young's inequality. This concludes the proof.
\end{proof}

At last, we may establish the $L^1\to L^2$ estimate for the solution to the backwards equation, which is the cornerstone of De Giorgi's arguments.

\begin{lemma}\label{M:Lem: pure L1 L2}
    We adopt the same assumptions and notations of Lemmas \ref{M: Lem: penal}--\ref{M: Lem: local}--\ref{M: Lem: L2 Lr}. Suppose that $T\in (0,1]$, and that the terminal datum $b_T$ associated to the dual problem \eqref{M: E: B Back} satisfies $b_T\in L^1_{\frakm_1}(\calO)$, with
    \begin{equation*}
        \frakm_1 := \frakm \la v \ra^{2-\theta}.
    \end{equation*}
    Then whenever $c\le \min\{c_1,c_{\frakm^{-1}}, c_{\frakm_1^{-1}}\}$ and $M,R$ are made larger as necessary (so that in particular $M\ge M_{\frakm_1^{-1}}$ and $R\ge R_{\frakm_1^{-1}}$), there exists a constant $\vartheta>0$ such that
    \begin{align*}
        \|b_0 \|_{L^2_{\frakm}(\calO)} \lesssim \frac{1}{T^{\vartheta}} \|b_T \|_{L^1_{\frakm_1}(\calO)}.
    \end{align*}
\end{lemma}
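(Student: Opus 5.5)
We prove the $L^1\to L^2$ bound by a Nash-type interpolation argument. It combines three ingredients: the $L^2\to L^r$ gain of Lemma \ref{M: Lem: L2 Lr}, the $L^1$-dissipativity of Lemma \ref{Max: Lem: B Back L2 WP} in the weight $\frakm_1$, and the monotonicity in time of the $L^2_{\frakm}$ norm. Working backwards in time, write $X(t):=\|b_t\|_{L^2_{\frakm}(\calO)}^2$ and $Y(t):=\|b_t\|_{L^1_{\frakm_1}(\calO)}$. Lemma \ref{Max: Lem: B Back L2 WP} applied with $\omega_1=\frakm^{-1}$ and $q=2$ shows that $X$ is nondecreasing in $t$, so it decays as we go backwards. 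Applied with $\omega_1=\frakm_1^{-1}$ and $q=1$, it shows $Y(t)\le Y(T)$ for all $t\in[0,T]$. This second application is where the conditions $c\le c_{\frakm_1^{-1}}$, $M\ge M_{\frakm_1^{-1}}$, $R\ge R_{\frakm_1^{-1}}$ enter; note that $\frakm_1^{-1}=\frakm^{-1}\la v\ra^{-2+\theta}$ still belongs to $\calW_{\mu^{-1/2}}$, being an exponential weight. Strictly, one should first take $b_T$ in the dense class $L^2_{\wt\omega^{-1}}$ so that all the lemmas apply, and conclude by density.

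The main step is to interpolate $L^2_{\frakm}$ between $L^1_{\frakm_1}$ and the space $L^r$ weighted by $\frakm\la v\ra^{-2+\theta}$, which is exactly the norm controlled in \eqref{M: eq: L2Lr}. Since $\frakm\la v\ra^{-2+\theta}\cdot\frakm_1=\frakm^2$, Hölder's inequality with $\frac12=\frac{\alpha}{1}+\frac{1-\alpha}{r}$ gives the pointwise-in-time bound
\begin{equation*}
\|b_t\frakm\|_{L^2}\le \|b_t\frakm_1\|_{L^1}^{\alpha}\,\|b_t\frakm\la v\ra^{-2+\theta}\|_{L^r}^{1-\alpha},
\end{equation*}
with $\alpha=\frac{r-2}{2(r-1)}$. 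Integrating this in time against $\varphi$ and using Hölder again in $t$ yields
\begin{equation*}
\int_0^T X\varphi^2 \lesssim \sup_t Y^{2\alpha}\,\|b\frakm\la v\ra^{-2+\theta}\varphi\|_{L^r(\calO^T)}^{2(1-\alpha)}\,(\text{power of }T).
\end{equation*}
By \eqref{M: eq: L2Lr}, the $L^r$ norm is controlled by $\int X\bigl(\varphi(\varphi')_-+|\varphi'|^2\bigr)$, and $\frakm\lesssim\ov{\frakm}\lesssim\frakm$ lets us pass between the two weights freely.

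To close the argument, choose $\varphi$ localized on a dyadic time window, for example supported in $(0,2\tau)$, equal to $1$ on $[\tau/2,\tau]$, with $|\varphi'|\lesssim 1/\tau$. Monotonicity of $X$ then gives lower and upper bounds for the two time integrals in terms of $X(\tau/2)$ and $X(2\tau)$, of the form
\begin{equation*}
\tau X(\tau/2)\lesssim Y_T^{2\alpha}\bigl(C_\tau^{2(1-\theta)}\tau^{-1}X(2\tau)\bigr)^{1-\alpha}\tau^{\kappa}
\end{equation*}
for some explicit $\kappa$. This is a recursive inequality $X(\tau/2)\le A\,\tau^{-\beta}Y_T^{2\alpha}X(2\tau)^{1-\alpha}$. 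Iterating it along $\tau_j=4^{-j}T$ (from time $T$ down towards $0$, noting that time $0$ corresponds to the furthest backward point, so one should reparametrize $s=T-t$), the exponents $(1-\alpha)^j$ form a convergent geometric series. The a priori bound $X\le\|b_T\|^2_{L^2_{\frakm}}<\infty$, valid in the dense class, is raised to the power $(1-\alpha)^j\to0$ and therefore disappears in the limit. The result is $X(0)\lesssim T^{-2\vartheta}Y_T^2$, which is the claim.

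The main obstacle is the bookkeeping of this iteration. One must make sure that the powers of $C_\tau$ (which blow up like negative powers of $\tau$ since $T\le1$) and the factors $1/\tau$ from $\varphi'$ accumulate only geometrically, so that $\prod_j(4^{j}/T)^{\beta(1-\alpha)^j}$ converges to a finite power $T^{-\vartheta}$. A cleaner alternative, which I would try first, is a Nash differential inequality $\frac{d}{ds}X\le -cX^{1+\epsilon}Y_T^{-2\epsilon}$, but the gain in \eqref{M: eq: L2Lr} is only space-time, so the dyadic-window scheme is the safer route.
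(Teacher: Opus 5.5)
Your ingredients are the same as the paper's: monotonicity of $X$, the bound $Y(t)\le Y(T)$ from Lemma~\ref{Max: Lem: B Back L2 WP}, spatial interpolation with H\"older in time, and \eqref{M: eq: L2Lr}. The gap is that the iteration you build from them does not close. With $\varphi$ supported in $(0,2\tau)$ and $\tau_j=4^{-j}T$, the windows shrink towards $t=0$, which is the time you want to estimate, so the smallest windows enter the recursion last. Unrolling $X(\tau_j/2)\le A\tau_j^{-\beta}Y_T^{2\alpha}X(\tau_{j-1}/2)^{1-\alpha}$ gives
\begin{equation*}
    X(\tau_n/2)\le\prod_{j=0}^{n}\bigl(A\tau_j^{-\beta}Y_T^{2\alpha}\bigr)^{(1-\alpha)^{n-j}}\,\|b_T\|_{L^2_{\frakm}}^{2(1-\alpha)^{n+1}},
\end{equation*}
so $(4^j/T)^{\beta}$ is raised to $(1-\alpha)^{n-j}$, not to $(1-\alpha)^{j}$. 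The product therefore contains $4^{\beta\sum_{j\le n}j(1-\alpha)^{n-j}}\approx 4^{\beta n/\alpha}$. This diverges at exactly the stage where the power of $\|b_T\|_{L^2_{\frakm}}$ tends to zero. Optimizing in $n$ leaves a factor growing like a power of $\log\bigl(\|b_T\|^2_{L^2_{\frakm}}/Y_T^2\bigr)$, so you get no bound by $Y_T$ alone.

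The convergent product $\prod_j(4^j/T)^{\beta(1-\alpha)^j}$ you quote belongs to the reversed geometry. Take cutoffs supported in the annuli $[T-2^{-k}T,\,T-2^{-k-1}T]$, which accumulate at the terminal time $t=T$. Monotonicity then yields $X(T-2^{-k}T)\lesssim(2^k/T)^{\beta}Y_T^{2\alpha}X(T-2^{-k-1}T)^{1-\alpha}$. Unrolling from $k=0$ (the window $[0,T/2]$), the $k$-th window carries the exponent $(1-\alpha)^k$, the a priori bound enters with exponent $(1-\alpha)^{n+1}\to0$, and you obtain $X(0)\lesssim T^{-\beta/\alpha}Y_T^2$. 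Incidentally, $C_\tau$ does not blow up: both exponents in $C_T$ are positive for $p<(2d+1)/d$, so $C_\tau\lesssim 1$ for $\tau\le 1$.

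The paper needs no iteration at all. It first turns \eqref{M: eq: L2Lr} into an $L^1\to L^r$ bound: it interpolates the two $L^2$ norms on the right-hand side between $L^1_{\frakm_1}$ and the $L^r$ norm on the left, then divides by $\|b\frakm\la v\ra^{-2+\theta}\varphi\|_{L^r}^{1-\Lambda}$ (with $\Lambda$ your $\alpha$). The price is the time weights $(\varphi'/\varphi)^{1/(i\Lambda)}\varphi$, which are integrable for a suitably flat $\varphi_0$. A single window $\varphi=\varphi_0(\cdot/T)$, the bound $X(0)\le X(t)$, one more interpolation with H\"older in time, and $Y(t)\le Y(T)$ then finish in one step.

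Finally, your justification of the spatial interpolation is wrong, and the paper's identical step has the same defect. H\"older with exponents $1/(2\alpha)$ and $r/(2(1-\alpha))$ requires $\frakm_1^{2\alpha}\bigl(\frakm\la v\ra^{-2+\theta}\bigr)^{2(1-\alpha)}=\frakm^2$, not $\frakm_1\cdot\frakm\la v\ra^{-2+\theta}=\frakm^2$. Since $\alpha<1/2$, with $\frakm_1=\frakm\la v\ra^{2-\theta}$ it only controls $\|b\frakm\la v\ra^{-(2-\theta)(1-2\alpha)}\|_{L^2}$. The inequality indeed fails when $b\frakm$ is the indicator of a unit ball centred at $|v|=V\to\infty$. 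Using $\frakm\la v\ra^{(2-\theta)(1-\alpha)/\alpha}$ as the $L^1$ weight repairs this. The change is harmless in Proposition~\ref{M/P/ Ultra}, where $\bfw=\mu^{-1/2}$ and $\omega_\star=e^{|v|^2/8}$.
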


\begin{proof}
In order to transition the result of Lemma \ref{M: Lem: L2 Lr} into an $L^1\to L^2$ estimate, we use H\"older's inequality to write each term of the right-hand side of \eqref{M: eq: L2Lr} as
\begin{align*}
    &\|b \frakm \sqrt{\varphi (\varphi')_-}\|_{L^2} \le \lt\|b \frakm \la v \ra^{2-\theta } (\varphi'/\varphi)^{1/2\Lambda} \varphi \rt\|_{L^1}^{\Lambda} \|b \frakm \la v \ra^{-2+\theta} \varphi\|_{L^r}^{1-\Lambda} , \\
    &\|b \frakm \varphi'\|_{L^2} \lesssim \lt\|b \frakm \la v \ra^{ 2-\theta } (\varphi' / \varphi)^{1/\Lambda} \varphi \rt\|^{\Lambda}_{L^1} \|b \frakm \la v \ra^{-2+\theta} \varphi\|_{L^r}^{1-\Lambda},
\end{align*}
where $\Lambda = \frac{r-2}{2(r-1)}\in (0,1)$. Inserting these bounds into the right-hand side of \eqref{M: eq: L2Lr} and then dividing both sides by $\|b \frakm \la v \ra^{-2+\theta}\varphi\|_{L^r}^{1-\Lambda}$, we obtain
\begin{equation}\label{M: Eq: L1 Lr}
    \|b \frakm \la v \ra^{-2+\theta}\varphi \|_{L^r(\calO^T)} \lesssim C_T^{\frac{1-\theta}{\Lambda}} \lt( \lt\|b \frakm \la v \ra^{ 2-\theta } (\varphi'/\varphi)^{\frac{1}{2\Lambda}} \varphi \rt\|_{L^1} + \lt\|b \frakm \la v \ra^{ 2-\theta } (\varphi'/\varphi)^{\frac{1}{\Lambda}} \varphi \rt\|_{L^1} \rt),
\end{equation}
in other words an $L^1\to L^r$ estimate.

For the sake of simplicity, let us now set
\begin{align*}
    \frakm_1 := \frakm \la v \ra^{ 2-\theta }, \quad \frakm_2 := \frakm \la v \ra^{-2+\theta},
\end{align*}
so that \eqref{M: Eq: L1 Lr} is nothing but
\begin{equation}\label{M: Eq: rest L1 Lr}
\begin{split}
    &\|b \frakm_2 \varphi\|_{L^r(\calO^T)} \lesssim C_T^{\frac{1-\theta}{\Lambda}} (B_1 + B_2), \\
    &B_1 := \lt\|b \frakm_1 (\varphi'/\varphi)^{\frac{1}{2\Lambda}} \varphi \rt\|_{L^1(\calO^T)}, \quad B_2 := \lt\|b \frakm_1 (\varphi'/\varphi)^{\frac{1}{\Lambda}} \varphi \rt\|_{L^1(\calO^T)}.
\end{split}
\end{equation}
Furthermore, let us fix $0 \le \varphi_0 \in \calD((0,1))$ and define $\varphi\in \calD((0,T))$ explicitly as $\varphi(t) := \varphi_0(t/T)$. Then using $L^p$-interpolation, we can obtain the following estimate for the $L^2$-norm:
\begin{equation}\label{M:Eq: DG1}
\begin{split}
    \|\varphi_0'\|_{L^2(0,1)}^2 \|b_0 \bfm\|_{L^2(\calO)}^2
    &= T \int_0^T (\varphi')^2 \|b_0 \bfm\|_{L^2(\calO)}^2 \\
    &\le T \int_0^T (\varphi')^2 \|b_t \frakm \|^2_{L^2(\calO)} \\
    &\le T \int_0^T (\varphi')^2 \|b_t \frakm_1\|_{L^1(\calO)}^{2\Lambda} \|b_t \frakm_2\|_{L^r}^{2(1-\Lambda)} \\
    &= T \int_0^T \Big[ (\varphi'/\varphi)^{2} \varphi^{2\Lambda} \|b_t \frakm_1\|_{L^1(\calO)}^{2\Lambda} \Big] \Big[ \varphi^{2(1-\Lambda)} \|b_t \frakm_2 \|_{L^r}^{2(1-\Lambda)} \Big] \\
    &\le T \lt(\int_0^T (\varphi'/\varphi)^{\frac{1}{\Lambda}} \varphi \|b_t \frakm_1\|_{L^1(\calO)} \rt)^{\frac{r-2}{r-1}} \lt( \int_0^T \varphi^r \|b_t \frakm_2\|^r_{L^r} \rt)^{\frac{1}{r-1}} \\
    &\le T C_T^{ \frac{2r(1-\theta)}{r-2} } B_1^{\frac{r-2}{r-1}} \lt( B_1 + B_2 \rt)^{\frac{r}{r-1}} .
\end{split}
\end{equation}
In the above, the first inequality follows by an appropriate application of Lemma \ref{Max: Lem: B Back L2 WP} (with $q=2$ and by recalling that $\frakm^{-1}\in \calW_{\mu^{-1/2}}$, $c\le c_{\frakm^{-1}}$). The second inequality is a result of $L^p$-interpolation. The second-to-last inequality is due to H\"older's inequality with the exponent $\frac{1}{2\Lambda}=\frac{r-1}{r-2}$ and its conjugate $\frac{1}{1-2\Lambda}=r-1$. The final inequality is due to \eqref{M: Eq: rest L1 Lr}. Using that $z\mapsto z^{\frac{r}{r-1}}$ is convex, we can estimate further as
\begin{align*}
    \|\varphi_0'\|_{L^2(0,1)}^2 \|b_0 \bfm\|_{L^2(\calO)}^2 &\lesssim T C_T^{ \frac{2r(1-\theta)}{r-2} } (B_1^2 + B_1^{\frac{r-2}{r-1}} B_2^{\frac{r}{r-1}} ) \\
    &\lesssim T C_T^{ \frac{2r(1-\theta)}{r-2} } (B_1^2 + B_2^2),
\end{align*}
the last line following by Young's inequality with exponent $\frac{1}{\Lambda} = \frac{2(r-1)}{r-2}$ and its conjugate. Now, for all $c\le \min\{c_1,c_{\frakm^{-1}}, c_{\frakm_1^{-1}}\}$ and $M,R$ sufficiently large, Lemma \ref{Max: Lem: B Back L2 WP} shows that
\begin{align*}
    \|b_t\|_{L^q_{\frakm_1}(\calO)} \le e^{-\lambda'(T-t)} \|b_T\|_{L^q_{\frakm_1}(\calO)} \quad \forall t\in [0,T], \quad \forall q\in [1,2], 
\end{align*}
which shows
\begin{equation}\label{M:Eq: DG2}
\begin{split}
    &T C_T^{ \frac{2r(1-\theta)}{r-2} } (B_1^2 + B_2^2) \\
    &= T C_T^{ \frac{2r(1-\theta)}{r-2} } \lt( \lt( \int_0^T (\varphi'/\varphi)^{1/2\Lambda}  \varphi \|b_t \frakm_1\|_{L^1(\calO)}  \rt)^2 + \lt( \int_0^T (\varphi'/\varphi)^{1/\Lambda} \varphi \|b_t \frakm_1\|_{L^1(\calO)} \rt)^2 \rt) \\
    &\lesssim  T C_T^{\frac{2r(1-\theta)}{r-2}} \|b_T \frakm_1\|_{L^1(\calO)}^2 \sum_{i=1}^2 \lt(\int_0^T (\varphi'/\varphi)^{1/(i\Lambda)} \varphi \rt)^2.
\end{split}
\end{equation}
Inserting \eqref{M:Eq: DG2} into \eqref{M:Eq: DG1} and then changing variables back to $\varphi_0$, we have established
\begin{align*}
    \|b_0 \bfm\|_{L^2(\calO)} &\lesssim \|\varphi_0'\|_{L^2(0,1)}^{-1}  T^{\frac12} C_T^{\frac{r(1-\theta)}{r-2}} \sqrt{\sum_{i=1}^2 T^{1 - \frac{1}{i\Lambda}} \lt(\int_0^1 (\varphi_0'/\varphi_0)^{1/(i\Lambda)} \varphi_0 \rt)^2 } \|b_T \frakm_1\|_{L^1(\calO)} \\
    &\lesssim  T^{-\vartheta} \|\varphi_0'\|_{L^2(0,1)}^{-1} \sqrt{\sum_{i=1}^2 \lt(\int_0^1 (\varphi_0'/\varphi_0)^{1/(i\Lambda)} \varphi_0 \rt)^2 } \|b_T \frakm_1\|_{L^1(\calO)} \quad \forall T\in (0,1],
\end{align*}
where
\begin{equation*}
    \vartheta = \frac{r+5}{2(r-2)} - \frac{r(1-\theta)}{r-2} \lt( \frac{(2d+1)(p+2)}{2p} - 2d - \frac{1}{2} \rt) > 0.
\end{equation*}
We conclude by choosing any $0\le \varphi_0\in \calD((0,1))$ appropriately so that
\begin{equation*}
    \|\varphi_0'\|_{L^2(0,1)} > 0, \quad \int_0^1 (\varphi_0'/\varphi_0)^{1/(i\Lambda)} \varphi_0 < \infty, \quad \forall i\in \{1,2\}.
\end{equation*}
\end{proof}

Restating everything in terms of the forward equation, we now have the $L^2\to L^\infty$ ultracontractivity property of the solution to \eqref{M: E: B} as follows.

\begin{proposition} \label{M/P/ Ultra}
    We adopt the assumptions and notations of Lemma \ref{M:Lem: pure L1 L2}, so that in particular $c\le \min\{c_1, c_{\frakm^{-1}}, c_{\frakm_1^{-1}}\}$. Denote $\bfw := \frakm^{-1}$. Assume that $\omega_\star \in \calW_{\mu^{-1/2}}$ is another weight satisfying
    \begin{equation*}
        \lt\| \frac{\omega_\star }{\bfw} \la v \ra^{2-\theta} \rt\|_{L^\infty(\R^d)} =: C_{\omega_\star ,\bfw} < \infty.
    \end{equation*}
    Then for every $0\le s < t$,
    \begin{equation}\label{M/Eq/ L2 Linfty}
        \| \calU_{\sfH_\va[k]} (t,s)  h_s \|_{L^\infty_{\omega_\star }(\calO)} \lesssim C_{\omega_\star , \bfw} \frac{e^{-\lambda' (t-s)}}{\min\{(t-s)^{\vartheta}, 1\}} \|h_s \|_{L^2_{\bfw}(\calO)}.
    \end{equation}
\end{proposition}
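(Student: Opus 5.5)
We prove \eqref{M/Eq/ L2 Linfty} by duality, turning the $L^1\to L^2$ estimate of Lemma \ref{M:Lem: pure L1 L2} for the backwards problem \eqref{M: E: B Back} into an $L^2\to L^\infty$ estimate for the forward flow $\calU_{\sfH_\va[k]}$. Since $\sfH_\va[k]$ depends on time only through $k$, and all constants in Lemmas \ref{M: Lem: penal}--\ref{M:Lem: pure L1 L2} and Lemma \ref{Max: Lem: B Back L2 WP} are uniform in $k\in\calX_{\omega,c}$, we may translate time and assume $s=0$, $t=T$. We first treat $T\in(0,1]$ and then extend to $T>1$ with the dissipativity estimate \eqref{Max: eq: B diss}.

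\textbf{Short times, $T\le 1$.} Take $h_0\in L^2_{\bfw}(\calO)$. Let $b_T$ range over a dense subclass of $L^1_{\omega_\star^{-1}}$, for instance $L^2_{\wt\omega^{-1}}$ with $\wt\omega$ a slowly growing polynomial weight as in Proposition \ref{M:Prop: h diss}, so that Lemma \ref{Max: Lem: B Back L2 WP} gives a solution $b$. The duality formula of Lemma \ref{p: l: dual} (no source term) gives
\begin{equation*}
\iint_{\calO} h_T b_T=\iint_{\calO} h_0 b_0 .
\end{equation*}
By Cauchy--Schwarz with $\frakm=\bfw^{-1}$, then Lemma \ref{M:Lem: pure L1 L2},
\begin{equation*}
\Big|\iint_{\calO} h_0 b_0\Big|\le \|h_0\|_{L^2_{\bfw}}\|b_0\|_{L^2_{\frakm}}\lesssim T^{-\vartheta}\|h_0\|_{L^2_{\bfw}}\|b_T\|_{L^1_{\frakm_1}}.
\end{equation*}
Since $\frakm_1=\bfw^{-1}\la v\ra^{2-\theta}\le C_{\omega_\star,\bfw}\,\omega_\star^{-1}$, we have $\|b_T\|_{L^1_{\frakm_1}}\le C_{\omega_\star,\bfw}\|b_T\|_{L^1_{\omega_\star^{-1}}}$. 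Taking the supremum over such $b_T$ with $\|b_T\|_{L^1_{\omega_\star^{-1}}}\le1$, exactly as in the proof of Proposition \ref{S: P: infty}, yields
\begin{equation*}
\|h_T\|_{L^\infty_{\omega_\star}}\lesssim C_{\omega_\star,\bfw}T^{-\vartheta}\|h_0\|_{L^2_{\bfw}}.
\end{equation*}
For $T\le1$ the factor $e^{-\lambda' T}$ is bounded below, so it can be inserted at no cost.

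\textbf{Long times, $T>1$.} Use the evolution property to write
\begin{equation*}
\calU_{\sfH_\va[k]}(T,0)=\calU_{\sfH_\va[k]}(T,T-1)\,\calU_{\sfH_\va[k]}(T-1,0).
\end{equation*}
Apply the short-time bound with $T=1$ to the first factor. Bound the second factor by \eqref{Max: eq: B diss} with $p=2$ and $\omega_1=\bfw\in\calW_{\mu^{-1/2}}$, provided $M\ge M_{\bfw}$ and $R\ge R_{\bfw}$; this gives $e^{-\lambda'(T-1)}\|h_0\|_{L^2_{\bfw}}$. Combining the two yields the factor $e^{-\lambda' T}/\min\{T^\vartheta,1\}$ up to the constant $e^{\lambda'}$.

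\textbf{Main obstacle.} The delicate part is justifying the duality step rather than the estimates themselves. The forward solution is only known a priori in $C([0,T];L^2_{\wt\omega})$, and the backward one in $C([0,T];L^2_{\wt\omega^{-1}})$, with $L^2_x H^1_v$ regularity from Proposition \ref{prop: WP}. Lemma \ref{p: l: dual} applies only once these are checked, and we must also know that the chosen class of $b_T$ is dense in $L^1_{\omega_\star^{-1}}$ and lies in $L^1_{\frakm_1}$. To arrange this, we choose $\wt\omega$ growing slowly enough that $\wt\omega\,\bfw^{-1}\in L^2\cap L^\infty$, and we impose the same smallness on $c$ and largeness on $M,R$ as in Lemma \ref{M:Lem: pure L1 L2}, so that every cited lemma applies simultaneously.
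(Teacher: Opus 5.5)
Your proposal is correct and follows the paper's proof essentially step for step. For $T\le 1$ you combine the duality formula for the backward problem, Cauchy--Schwarz and the $L^1\to L^2$ estimate of Lemma~\ref{M:Lem: pure L1 L2}; for $T>1$ you split $\calU_{\sfH_\va[k]}(T,0)=\calU_{\sfH_\va[k]}(T,T-1)\,\calU_{\sfH_\va[k]}(T-1,0)$ and pair the short-time bound with the $L^2_{\bfw}$ dissipativity of Proposition~\ref{M:Prop: h diss}, and you also make explicit the bound $\frakm_1\le C_{\omega_\star,\bfw}\,\omega_\star^{-1}$ that the paper uses only implicitly.
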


\begin{proof}
    All of the results can be obtained for general $0\le s < t$ by time translation, so we will only consider the case $s=0$ and $t=T$.
    
    First, we discuss the case $t-s\le 1$, \textit{i.e.} $T\le 1$. Choosing any $\wt{\omega}\in \wt{\calW}$ with $\wt{\omega}\omega_\star ^{-1}\in L^2(\R^d)$, and using the fact that that $L^2_{\wt{\omega}^{-1}}$ is dense in $L^1_{\omega_\star ^{-1}}(\calO)$, we can compute the $L^\infty_{\omega_\star }$-norm of $h_T$ as follows
    \begin{equation}
    \label{M/Eq/ Duality}
    \begin{split}
        \|h_T\|_{L^\infty_{\omega_\star }} &= \sup_{ \substack{ b_T \in L^2_{\wt{\omega}}(\calO), \\ \|b_T\|_{L^1_{\omega_\star ^{-1}}} \le 1   } } \iint_{\calO} h_T b_T  \\
        &= \sup_{ \substack{ b_T \in L^2_{\wt{\omega}}(\calO), \\ \|b_T\|_{L^1_{\omega_\star ^{-1}}} \le 1   } } \iint_{\calO} h_0 b_0 \\
        &\le \|h_0\|_{L^2_{\bfw}}  \sup_{ \substack{ b_T \in L^2_{\wt{\omega}}(\calO), \\ \|b_T\|_{L^1_{\omega_\star ^{-1}}} \le 1   } }  \|b_0\|_{L^2_{\frakm}} \\
        &\lesssim \frac{1}{ \min\{ T^{\vartheta} , 1\} } \|h_0\|_{L^2_{\bfw}}  \sup_{ \substack{ b_T \in L^2_{\wt{\omega}}(\calO), \\ \|b_T\|_{L^1_{\omega_\star ^{-1}}} \le 1   } }  \|b_T \|_{L^1_{\frakm_1}(\calO)}.
    \end{split}
    \end{equation}
    In the above, the second line is due to the duality formula in Lemma \ref{p: l: dual}. The last line follows by applying Lemma \ref{M:Lem: pure L1 L2}. From the conclusion of \eqref{M/Eq/ Duality}, we deduce \eqref{M/Eq/ L2 Linfty} for every $T\in (0,1]$, because in this range we have $1\lesssim e^{-\lambda' T}$.

    For the general case of $T>1$, we write
    \begin{align*}
        &\|\calU_{\sfH_\va[k]}(T,0) h_0 \|_{L^\infty_{\omega_\star }} \\
        &= \|\calU_{\sfH_\va[k]}(T,T-1) \, \calU_{\sfH_\va[k]}(T-1,0) \, h_0\|_{L^\infty_{\omega_\star }} \\
        &\le \|\calU_{\sfH_\va[k]}(T,T-1)\|_{ \scrB(L^2_{\bfw}, L^\infty_{\omega_\star})} \|\calU_{\sfH_\va[k]}(T-1,0) \, h_0\|_{L^2_{\bfw}} \\
        &\lesssim C_{\omega_\star ,\bfw} e^{-\lambda'} \|\calU_{\sfH_\va[k]}(T-1,0) \, h_0\|_{L^2_{\bfw}}  \\
        &\le C_{\omega_\star , \bfw} e^{-\lambda' T} \|h_0\|_{L^2_{\bfw}},
    \end{align*}
    where we used \eqref{M/Eq/ L2 Linfty} for the case $t-s\le 1$ to estimate $\|\calU_{\sfH_\va[k]}(T,T-1)\|_{\scrB(L^2_{\bfw}, L^\infty_{\omega_\star})}$;
    the last inequality is due to Proposition \ref{M:Prop: h diss} with $p=2$ (recall that we assume $c\le c_{\frakm^{-1}}$). This completes the proof.
\end{proof}

\subsection{Well-posedness of the full operator $\sfF_\va[k]$ in a reference space} \label{S/ F WP}

In this section and those to follow, we return to the discussion of the full operator $\sfF_\va[k] = \sfG_\va[k] + \sfH_\va[k]$. Here, let us now mention the well-posedness of \eqref{eq: f g}, namely the evolution equation associated to $\sfF_\va[k]$, which we recall here:

\begin{equation}\label{M/Eq/ Full Evol Eq}
    \begin{cases}
        \p_t h = \sfF_\va[k] h &\text{in }\R_+\times \calO, \\
        h|_{t=0} = h_0 &\text{on }\calO, \\
        \gamma_- h = \gamma_+ h \circ \scrV_x &\text{on } \R_+ \times \Sigma_-.
    \end{cases}
\end{equation}
We also remind the reader that $\sfF_\va[k] = -v\cdot \nabla_x + \sfL_\va[k] + \sfP_\va[k]$, where
    \begin{align*}
        \sfL_\va[k] h &:= \sigma_\va[k] \Delta_v h + \zeta[k]\cdot \nabla_v h + \eta[k] h, \\
        \sigma_\va[k] &:= 1 + \frac{1}{d}E[k] - \frac{1}{d}\bfA_\va[k], \quad \bfA_\va[k] := \frac{|j[k]| *_t \bfs_\va \star_\va \bfr_\va|^2}{1 + \rho[k] *_t \bfs_\va \star_\va \bfr_\va }, \\
        \zeta[k] &:= v(1+\rho[k]) - j[k], \quad \eta[k] := d(1+\rho[k]), \\
        \sfP_\va[k] h &:= \Big( (1+\rho[k])j[h] - j[k] \rho[h] \Big) \cdot v\mu \\
        &\quad + \frac{1}{d} \lt( - \Big( d + E[k] - \bfA_\varsigma[k] \Big) \rho[h] - j[k] \cdot j[h] + (1 + \rho[k]) E[h]  \rt) (|v|^2 - d) \mu .
    \end{align*}

\begin{proposition}\label{P/ F WP}
    Let $\omega_1 \in \calW_{\mu^{-1/2}}$. We assume that $k\in \calX_{\omega,c}$ where $c\le \min\{c_1,c_{\omega_1}\}$ (see Lemmas \ref{M: L: B L2 framework WP}--\ref{Lem/ Asymptote}). Then for each $h_0 \in L^2_{\omega_1}(\calO)$, the system \eqref{M/Eq/ Full Evol Eq} is well-posed in $C([0,T];L^2_{\omega_1}(\calO))$. Consequently we may associate to \eqref{M/Eq/ Full Evol Eq} an evolution system $(\calU_{\sfF_\va[k]}(t,s))$ for which $\calU_{\sfF_\va[k]}(t,s)h_s = h_t$ for every $0\le s \le t$ and $h_s\in L^2_{\omega_1}(\calO)$. Furthermore, the associated solution $h$ satisfies the conservation laws in the following sense: denoting
    \begin{align*}
        &\calC := \{h_0 \in L^1(\calO, \la v \ra^2 \,\dx\dv) \; | \; \doublebrackets{h_0} = \doublebrackets{|v|^2 h_0} = \doublebrackets{(R(x)\cdot v) h_0} = 0 \quad \forall R\in \calR_\Omega \}, \\
        &\doublebrackets{h_0} := \iint_{\calO} h_0 \,\dx\dv,
    \end{align*}
    and writing $\Pi: L^2_{\mu^{-1/2}}(\calO) \to \calC$ as the orthogonal projection operator, we have
    \begin{equation*}
        (I - \Pi) \, \calU_{\sfF_\va[k]}(t,s) h_s = \calU_{\sfF_\va[k]}(t,s) (I - \Pi) h_s \quad \forall h_s\in L^2_{\omega_1}(\calO).
    \end{equation*}
    Consequently, provided that $h_0\in \calC$, we have $h_t \in \calC$ for every $t\ge 0$.
\end{proposition}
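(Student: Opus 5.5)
Well-posedness will follow from Proposition \ref{prop: WP}, in the same way as Lemma \ref{l: sp: for lin WP}, with $\scrL=\sfL_\va[k]$, $\scrP=\sfP_\va[k]$, $S\equiv0$ and weight $\omega_1$. The coefficient bounds \eqref{M: E: Coeff} give (L1) with $\underline\sigma=9/10$ and give (L3) directly. For (L2) I rely on Lemma \ref{Lem/ Asymptote} with $p=2$: since $c\le\min\{c_1,c_{\omega_1}\}$, the quantity $\Gamma_{\sfL_\va[k],\omega_1,2}$ is bounded above uniformly in $(t,x)$ and $\va$. That gives some $\lambda_\scrL$ and a choice of $\Lambda\ge0$, which can be taken to be $0$. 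For (P1), each term of $\sfP_\va[k]h$ is a moment $\rho[h]$, $j[h]$ or $E[h]$ multiplied by a bounded function of $(t,x)$ and by $\mu$, $v\mu$ or $(|v|^2-d)\mu$. Cauchy--Schwarz then gives $|\rho[h]|+|j[h]|+|E[h]|\le \|\la v\ra^2\omega_1^{-1}\|_{L^2}\|h\|_{L^2_{\omega_1}}$, which is finite because $\omega_1\in\calW_{\mu^{-1/2}}$, and $\|\la v\ra^2\mu\,\omega_1\|_{L^2}<\infty$ because $\omega_1$ grows strictly slower than $\mu^{-1/2}$. The coefficients $\rho[k],j[k],E[k],\bfA_\va[k]$ are bounded by $Cc$. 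Hence $\lambda_\scrP\lesssim 1$, uniformly in $\va$. Proposition \ref{prop: WP} then gives existence and uniqueness in $C([0,T];L^2_{\omega_1})$ for every $T$, and uniqueness together with time translation yields the evolution system $\calU_{\sfF_\va[k]}(t,s)$.

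For the conservation laws I first show that the three functionals $\doublebrackets{h}$, $\doublebrackets{|v|^2h}$ and $\doublebrackets{(R\cdot v)h}$, $R\in\calR_\Omega$, are constant along the flow. Taking moments in $v$ of the two pieces of $\sfC_{\rm lin}+\leftperp{\Upsilon_\va}[k]$ gives
\[\int(1,v,|v|^2)\big(\sfL_\va[k]+\sfP_\va[k]\big)h\,\dv=0,\]
by \eqref{M/Eq/ Lin Con} and the fact that $\leftperp{\Upsilon_\va}[k]h=(I-\pi)\Upsilon_\va[k]h$ is orthogonal to the collision invariants. This is a pointwise-in-$(t,x)$ algebraic identity, which I can also check directly against the explicit formula for $\sfF_\va[k]$. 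I then test the renormalized Green's formula, with $\beta(z)=z$, against $\chi=\varphi(t)\psi(x,v)$ for $\psi\in\{1,|v|^2,R(x)\cdot v\}$, approximating by $\psi\chi_R(v)$ and letting $R\to\infty$. The $v$-moments are handled by the identity above.

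Two further contributions remain. The transport term gives $\doublebrackets{h\,v\cdot\nabla_x\psi}$. This vanishes for $\psi=1,|v|^2$. For $\psi=R\cdot v$ it equals $\doublebrackets{h\,v\cdot Uv}=0$ because $U$ is skew. The boundary term is $\iint_{\Sigma}\gamma h\,\psi\,(n_x\cdot v)$. It vanishes by the specular condition and the change of variables $v\mapsto\scrV_xv$ on $\Sigma_-$, because $\psi\circ\scrV_x=\psi$: for $R\cdot v$ this uses $R\cdot n_x=0$ on $\p\Omega$. The truncation errors are controlled by $h\in L^2_{\omega_1}\subset L^1_{\la v\ra^2}$ and $\nabla_vh\in L^2_{\omega_1}$. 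This truncation limit, together with justifying the trace integrals, is the main technical obstacle. I would handle it with radial cutoffs, as in Step 4 of Lemma \ref{p: l: dual}, so that the boundary term still cancels exactly.

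Conservation implies commutation. Write $\Pi$ for the orthogonal projection onto $\calC$ in $L^2_{\mu^{-1/2}}$. Then $I-\Pi$ is the finite-rank projection onto $\mathrm{span}\{\mu,|v|^2\mu,(R\cdot v)\mu\}$, whose coefficients are determined by the conserved functionals. If $\phi$ is one of these Maxwellian-type functions, then $\calU(t,s)\phi=\phi$: each of $\mu$, $|v|^2\mu$, $(R\cdot v)\mu$ is annihilated by $\sfF_\va[k]$, which I must verify by direct computation. It is immediate for $\mu$ and $|v|^2\mu$ only if the $k$-dependent terms cancel. If that check fails, I instead define the relevant projection as "subtract the element with the same conserved moments", and use linearity together with conservation.

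Combining the two, $(I-\Pi)\calU h_s=(I-\Pi)h_s$, since the moments are conserved. Also $\calU(I-\Pi)h_s=(I-\Pi)h_s$ if these functions are stationary, which gives the claim. If stationarity fails, I fall back on proving only the final assertion: $h_0\in\calC$ implies $h_t\in\calC$, which follows directly from conservation.
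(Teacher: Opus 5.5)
Your well-posedness argument and your derivation of the conservation laws coincide with the paper's. The paper checks (L1)--(L3) via \eqref{M: E: Coeff} and Lemma~\ref{Lem/ Asymptote}. It verifies (P1) from $\la v\ra^2\mu\,\omega_1,\ \la v\ra^2\omega_1^{-1}\in L^2(\R^d)$. It then tests with $1$, $R(x)\cdot v$, $|v|^2$, using that $\sfC_{\rm lin}+\leftperp{\Upsilon_\va[k]}$ has vanishing $(1,v,|v|^2)$-moments. Your treatment of the transport term, the specular boundary term and the radial truncation fills in what the paper calls ``immediate''.

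The gap is the commutation identity, which you leave conditional on $\mathrm{span}\{\mu,|v|^2\mu,(R\cdot v)\mu\}$ being stationary. That check fails. You do have $\sfF_\va[k]\mu=0$. But take $\phi:=(|v|^2-d)\mu$, so that $\rho[\phi]=j[\phi]=0$, $E[\phi]=2d$ and $\sfC_{\rm lin}\phi=0$. Using $\nabla_v\cdot(v\phi)=-2\phi-\Delta_v\phi$, the explicit form of $\sfF_\va[k]$ gives
\[
\sfF_\va[k]\phi=\big(\sigma_\va[k]-1-\rho[k]\big)\Delta_v\phi-\big(d+2-|v|^2\big)(j[k]\cdot v)\,\mu .
\]
This is nonzero unless $j[k]=0$ and $\sigma_\va[k]=1+\rho[k]$, and $(R\cdot v)\mu$ fails in the same way. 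Your alternative of ``subtracting the element with the same conserved moments'' is the same $L^2_{\mu^{-1/2}}$-orthogonal projection, so it does not help. More generally, any projection with range or kernel $\calC$ that commutes with every $\calU_{\sfF_\va[k]}(t,s)$ needs a complement of $\calC$ that the flow preserves. Since the flow preserves moments, that complement would be fixed pointwise.

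In fact the identity as stated is false. Take the constant $k=\epsilon(|v|^2-d)\mu\in\calX_{\omega,c}$ with $\epsilon$ small. Then $\rho[k]=0$, $j[k]=0$, $\bfA_\va[k]=0$ and $\sigma_\va[k]=1+2\epsilon$, so $\sfF_\va[k]\phi=2\epsilon\Delta_v\phi\neq0$. Hence $\calU_{\sfF_\va[k]}(t,s)\phi\neq\phi$ for some $t>s$; otherwise $h\equiv\phi$ would be the solution and force $\sfF_\va[k]\phi=0$. Conservation of moments, on the other hand, gives
\[
(I-\Pi)\calU_{\sfF_\va[k]}(t,s)\phi=(I-\Pi)\phi=\phi\neq\calU_{\sfF_\va[k]}(t,s)\phi=\calU_{\sfF_\va[k]}(t,s)(I-\Pi)\phi .
\]
The same $\phi$ breaks the identity if $\Pi$ is read as the projection onto $\calC^\perp$ instead.

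The paper's proof gives no argument for the commutation either; it stops at the moment identities. What is actually established, there and by your fallback, is $(I-\Pi)\calU_{\sfF_\va[k]}(t,s)=I-\Pi$, i.e.\ the invariance of $\calC$. That is the correct content of the statement. Be aware that the later Duhamel iteration for the $L^\infty_\omega$ decay identifies $\calU_{\sfF_\va[k]}(I-\Pi)$ with $(I-\Pi)\calU_{\sfF_\va[k]}$, so that step needs a different justification.
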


\begin{proof}
    The well-posedness just follows by checking the assumptions of Proposition \ref{prop: WP}. Under the assumption of $k\in \calX_{\omega,c}$ and $c\le \min\{c_1,c_{\omega_1}\}$, the conditions (L1)--(L2)--(L3) are easy to check, and in particular (L2) follows by the choice of $c_{\omega_1}$ (see Lemma \ref{Lem/ Asymptote}). We will therefore only verify (P1). Namely, we need to prove that 
    \begin{equation}\label{M/Eq/ P1 ETS}
        \iint_{\calO} |\sfP_\va[k]|^2 \omega_1^2 \,\dx\dv \lesssim \iint_{\calO} |f|^2 \omega_1^2 \,\dx\dv.
    \end{equation}
    Since $k\in \calX_{\omega,c}$, we observe that every coefficient in the definition of $\sfP_\va[k]h$ may be bounded as
    \begin{align*}
        &\Big| (1+\rho[k])j[h] - j[k]\rho[h] \Big| \lesssim_c |j[h]| + |\rho[h]| , \\
        &\lt| -(d + E[k] - \bfA_\va[k]) \rho[h] -  j[k]\cdot j[h] + (1+\rho[k])E[h] \rt| \lesssim_c |\rho[h]| + |j[h]| + |E[h]|.
    \end{align*}
    Henceforth,
    \begin{align*}
        \iint_{\calO} |\sfP_\va[k]|^2 \omega_1^2 &\lesssim \iint_{\calO} (|\rho[h]| + |j[h]| + |E[h]|)^2 \la v \ra^4 \mu^2 \omega_1^2  \\
        &= \lt(\int_{\R^d} \la v \ra^4 \mu^2 \omega_1^2 \rt) \lt(\int_{\Omega} (|\rho[h]| + |j[h]| + |E[h]|)^2 \rt) \\
        &\le C \int_{\Omega} (|\rho[h]| + |j[h]| + |E[h]|)^2 \\
        &\le C \int_\Omega \lt(\int_{\R^d} \la v \ra^2 h \rt)^2 \\
        &\le C  \int_\Omega \lt( \int_{\R^d} h^2 \omega_1^2 \rt) \lt( \int_{\R^d} \la v \ra^4 \omega_1^{-2} \rt)  \\
        &= C \iint_{\calO} h^2 \omega_1^2 .
    \end{align*}
    In the above, we took advantage of the fact that $\la v \ra^2 \mu \omega_1, \la v \ra^2 \omega_1^{-1} \in L^2(\R^d)$. This proves \eqref{M/Eq/ P1 ETS} and thus (P1).

    Next, we verify that the conservation laws are satisfied by $h$, \textit{i.e.} $h\in \calC$. It is enough to observe that since $L^\infty_{\omega_1} \hookrightarrow L^1_{\la v \ra^2}$, any $\chi\in \calD(\ov{\calO})$ satisfying $\chi \lesssim \la v \ra^2$ is admissible as a test function in the Green's formula for $h$. By construction (see \eqref{M/Eq/ Lin Con}--\eqref{eq: nonlinear perp}), we have $\iint_{\calO}(1,v,|v|^2) (\sfC_{\rm lin} + \leftperp{\Upsilon_{\va}[k]}) h = 0$. Recalling that $\sfF_\va[k] = -v\cdot \nabla_x + \sfC_{\rm lin} + \leftperp{\Upsilon_{\va}[k]}$, the conservation laws follow immediately.
\end{proof}

\subsection{Hypocoercivity of the full operator $\sfF_\va[k]$ in a reference space} \label{S/ F Hypo}

In this section we establish the hypocoercivity property of the operator $\sfF_\va[k]$ in the most typical $L^2$-space which will serve as a reference point later. Let us remind the reader that
\begin{equation*}
    \calC := \{h_0 \in L^1(\calO, \la v \ra^2 \,\dx\dv) \; | \; \doublebrackets{h_0} = \doublebrackets{|v|^2 h_0} = \doublebrackets{(R(x)\cdot v) h_0} = 0 \quad \forall R\in \calR_\Omega \}.
\end{equation*}

\begin{proposition}\label{prop: L_g hypo}
    There exists $c_2>0$ small enough such that the following assertion is true: if $k\in \calX_{\omega,c}$ with $c\le \min\{c_1,c_2\}$, then there exists a scalar product $\doubleparantheses{\cdot}{\cdot}_{L^2_{\mu^{-1/2}}}$ on $L^2_{\mu^{-1/2}}(\calO)$ such that the induced norm $\NORMMM{\cdot}_{L^2_{\mu^{-1/2}}}$ is equivalent to $\NORM{\cdot}_{L^2_{\mu^{-1/2}}}$, and moreover the Dirichlet form corresponding to $\sfF_\va[k]$ satisfies
    \begin{equation*}
        \doubleparantheses{-\sfF_\va[k] f}{f}_{L^2_{\mu^{-1/2}}} \gtrsim \|f\|_{L^2_{\mu^{-1/2}}}^2
    \end{equation*}
    for every $f\in \textnormal{Dom}(\sfF_\va[k]) \cap \calC$ satisfying the specular reflection boundary condition \eqref{eq: specular}.
\end{proposition}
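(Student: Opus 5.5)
We will treat $\sfF_\va[k]$ as a perturbation of the linearized operator $\sfF_0 := -v\cdot\nabla_x + \sfC_{\rm lin}$. We then build the twisted scalar product in the spirit of \cite{BCMT23} for $\sfF_0$ with specular reflection, and check that the perturbative terms of size $O(c)$ are absorbed.

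\textbf{Step 1: microscopic coercivity.} Write $\sfF_\va[k] = \sfF_0 + \leftperp{\Upsilon_\va}[k]$. In $L^2_{\mu^{-1/2}}$, the operator $\sfC_{\rm lin}$ is symmetric and nonpositive, with kernel $\mathrm{span}\{\mu, v\mu, |v|^2\mu\}$. The linear Fokker--Planck part $\Delta_v + \nabla_v\cdot(v\,\cdot)$ has spectral gap $1$ and kills only $\mu$. The terms $\pi_i$ and $2\pi_{d+1}$ exactly compensate the eigenvalues $-1$ (for $v\mu$) and $-2$ (for $(|v|^2-d)\mu$). This gives the refined estimate
\begin{equation*}
  -\la \sfC_{\rm lin} f, f\ra_{L^2_{\mu^{-1/2}}} \ge \|\nabla_v (f/\mu)\mu^{1/2}\|_{L^2}^2 - \|\pi f\|^2 \gtrsim \|(I-\pi)f\|_{H^1_{v,*}}^2 ,
\end{equation*}
where $\|\cdot\|_{H^1_{v,*}}$ denotes the weighted $H^1$ norm, including the $\la v\ra$ moment. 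This is the content of the elementary Lemma \ref{lem: L coercive} alluded to in the introduction. For the perturbation, every coefficient of $\leftperp{\Upsilon_\va}[k]$ is $O(c)$ pointwise in $(t,x)$ (using $|\bfA_\va[k]|\lesssim c^2$ uniformly in $\va$). The diffusive part $\frac1d(E[k]-\bfA_\va[k])\Delta_v h$ is integrated by parts against $h\mu^{-1}$ and gives $O(c)\|\nabla_v h\|^2_{L^2_{\mu^{-1/2}}}$ plus lower order terms. The drift $(v\rho[k]-j[k])\cdot\nabla_v h$ gives $O(c)$ times $\|\la v\ra h\|^2_{L^2_{\mu^{-1/2}}}$. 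The projection terms are bounded by $c\|h\|^2_{L^2_{\mu^{-1/2}}}$. The main obstacle arises here. The perturbation acts on the full $h$, not only on $(I-\pi)h$, so it produces terms $c\|\pi h\|^2$, which are not controlled by microscopic dissipation alone. These must be absorbed by the macroscopic part recovered in Step 2. This is why we need the sharp constant in Step 1, and why we work in $H^1_{v,*}$ rather than plain $L^2$.

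\textbf{Step 2: macroscopic coercivity via twisted scalar product.} Following \cite{BCMT23}, we define
\begin{equation*}
\doubleparantheses{f}{g} := \la f,g\ra_{L^2_{\mu^{-1/2}}} + \eta_1\la \nabla_x u_f, j[g]\ra + \eta_2 \la \nabla^s_x m_f, \text{(momentum flux of } g)\ra + \eta_3\la \nabla_x \theta_f, \text{(energy flux of } g)\ra + \text{sym.},
\end{equation*}
with $0<\eta_3\ll\eta_2\ll\eta_1\ll1$. Here $u_f,m_f,\theta_f$ solve elliptic problems in $\Omega$: a Neumann problem for the density part, a Lamé-type problem with the boundary condition $m\cdot n_x=0$ and tangential stress condition compatible with specular reflection for the momentum part (using Korn's inequality modulo $\calR_\Omega$, which is where $f\in\calC$ is used), and a Neumann problem for the temperature part. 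These correctors are inverses of macroscopic operators, so they are bounded by $\|f\|_{L^2_{\mu^{-1/2}}}$, and the norm equivalence follows for small $\eta_i$. Computing $\doubleparantheses{-\sfF_0 f}{f}$ reproduces the estimate of \cite{BCMT23}:
\begin{equation*}
\doubleparantheses{-\sfF_0 f}{f} \gtrsim \|(I-\pi)f\|^2_{H^1_{v,*}} + \|\pi f\|^2_{L^2} .
\end{equation*}
The boundary terms vanish because the reflection $\scrV_x$ preserves $|v|$ and the correctors satisfy the matched boundary conditions.

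\textbf{Step 3: absorption.} In the twisted terms, the perturbation contributes $\eta_i\la \nabla_x(\text{corrector}), \text{moments of } \leftperp{\Upsilon_\va}[k] f\ra$. Since moments of $\leftperp{\Upsilon_\va}[k]f$ of order at most $3$ are bounded by $c\,\|f\|_{H^1_{v,*}}$, these terms are $O(c)\|f\|^2$. Adding the $O(c)(\|(I-\pi)f\|^2_{H^1_{v,*}}+\|\pi f\|^2)$ bounds from Step 1, we choose $c_2$ small relative to the $\eta_i$-dependent constant of Step 2. This absorbs all perturbative terms and yields $\doubleparantheses{-\sfF_\va[k]f}{f}\gtrsim\|f\|^2_{L^2_{\mu^{-1/2}}}$.
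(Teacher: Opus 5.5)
Your route is the paper's: microscopic coercivity of $\sfC_{\rm full}$ that keeps $\|\nabla_v(\mu^{-1}\leftperp{f})\|_{L^2_{\mu^{+1/2}}}$ and $\|v\leftperp{f}\|_{L^2_{\mu^{-1/2}}}$ (Lemma~\ref{lem: L coercive}), the twisted product of \cite{BCMT23} with those norms retained (Lemma~\ref{M/Lem/ Cfull coer}), $O(c)$ bounds on $\leftperp{\Upsilon_\va}[k]$ in exactly those norms (Lemmas~\ref{lem: non sma11}--\ref{lem: non sma22}), then absorption. However, the hierarchy you fix in Step 2 is backwards, and Step 2 fails as written. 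You take $0<\eta_3\ll\eta_2\ll\eta_1\ll1$ with $\eta_1$ on the mass corrector.

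The reason is that $\rho[-\sfC_{\rm full}f]=\nabla_x\cdot j[f]$. Whichever sign makes the mass corrector produce $+\eta_1\|\rho[f]\|_{L^2(\Omega)}^2$, its companion term is $\eta_1\la\nabla_x u[\nabla_x\cdot j[f]],j[f]\ra_{L^2(\Omega)}=-\eta_1\|\nabla_x u[\nabla_x\cdot j[f]]\|_{L^2(\Omega)}^2$. This follows by integrating by parts with $j[f]\cdot n_x=0$ and the Neumann condition, and it is minus the squared norm of the gradient part of $j[f]$. The same corrector also produces the cross term $\eta_1\sqrt{2/d}\,(\rho[f],\tau[f])_{L^2(\Omega)}$.

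Momentum and energy are collision invariants of $\sfC_{\rm lin}$, so $j[f]$ and $\tau[f]$ are macroscopic and invisible to the microscopic dissipation. The only positive $\|j[f]\|^2$ and $\|\tau[f]\|^2$ contributions come from the momentum and energy correctors, with weights $\eta_2$ and $\eta_3$. Hence $\eta_1\ll\eta_2$ and $\eta_1\ll\eta_3$ are necessary. Under your ordering, the net coefficient of $\|j[f]\|^2$ is of order $-\eta_1$ whenever $j[f]$ is a gradient field, which is compatible with $f\in\calC$, so no coercivity can follow.

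The paper instead takes the mass weight smallest, $\delta_1\ll\delta_2\ll\delta_3\ll1$. The energy weight is largest because, with the non-trace-free $\wt{M}[f]=\int(v\otimes v-I)f\,\dv$, the momentum estimate also creates $\|\tau[f]\|^2$ and $\|\rho[f]\|\,\|\tau[f]\|$ errors. Reversing your hierarchy, with the usual compatibility conditions between consecutive weights, repairs Step 2, and your Step 3 then goes through as in the paper.

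There is also a smaller slip in Step 1. The display $-\la\sfC_{\rm lin}f,f\ra\ge\|\nabla_v(f/\mu)\mu^{1/2}\|^2-\|\pi f\|^2$ is false: for $f=(|v|^2-d)\mu$ the left side is $0$ and the right side is $4d-2d=2d$. What holds is the identity $(-\sfC_{\rm full}f,f)_{L^2_{\mu^{-1/2}}}=\|\nabla_v(\mu^{-1}\leftperp{f})\|_{L^2_{\mu^{+1/2}}}^2$ from Lemma~\ref{lem: elem}. Your $H^1_{v,*}$ bound on $\leftperp{f}$ then follows from it exactly as in Lemma~\ref{lem: L coercive}.
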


Toward the proof of this proposition we introduce some handy notations, following the work of \cite{BCMT23}. After appropriately translating as necessary, we assume without loss of generality that
\begin{equation*}
    \int_{\Omega} x \,\dx = 0,
\end{equation*}
so that by the arguments in \cite[Section 1.2]{BCMT23} we see that $\calR_\Omega$ now reduces to the \textit{centered} infinitesimal rigid displacement fields:
\begin{equation*}
    \calR_\Omega = \left\{ U\in \R^{d\times d} \; \middle | \; \textnormal{$U$ is skew-symmetric and $Ux \cdot n_x=0$ $\forall x\in \p\Omega$} \right\}.
\end{equation*}

Then, for a given source term $S\in L^2(\Omega)$, we denote by $u[S]\in H^1(\Omega)$ the solution to the Poisson equation
\begin{equation}\label{M/Eq/ Poi}
    \begin{cases}
    -\Delta_x u[S] = S &\text{in } \Omega, \\
    \nabla_x u[S] \cdot n_x = 0 &\text{on } \p\Omega.
    \end{cases}
\end{equation}
We remark that whenever the compatibility condition $\int_\Omega S = 0$ holds, the classical Lax--Milgram theorem ensures the existence of $u[S]\in H^1$ which solves \eqref{M/Eq/ Poi} in the variational sense and satisfies $\int_\Omega u[S] = 0$. An immediate consequence of the Lax--Milgram theorem is that
\begin{equation}\label{eq: poi lax}
    \|u[S]\|_{H^1(\Omega)} \lesssim \|S\|_{L^2(\Omega)},
\end{equation}
but it is well-known that the regularity can be improved:
\begin{equation}\label{M/Eq/ Poi H2}
    \|u[S]\|_{H^2(\Omega)} \lesssim \|S\|_{L^2(\Omega)}.
\end{equation}

In similar nature, for $S\in L^2(\Omega;\R^d)$ we denote by $\wt{u}[S] \in H^1(\Omega)$ the solution to the elliptic equation
\begin{equation}\label{M/Eq/ Lame}
    \begin{cases}
    -\nabla_x\cdot (\nabla_x^s \wt{u}[S]) = S &\text{in }\Omega, \\
    \wt{u}[S]\cdot n_x = 0 &\text{on }\p\Omega, \\
    \nabla_x^s \wt{u}[S] n_x - (\nabla_x^s \wt{u}[S] : n_x\otimes n_x) n_x = 0 &\text{on }\p\Omega.
    \end{cases}
\end{equation}
where $\nabla_x^s$ stands for the symmetric gradient $(\nabla_x^s U)_{ij} := \frac{1}{2}(\p_{x_j} U_i + \p_{x_i} U_j)$. Now writing the skew-symmetric gradient as $(\nabla_x^a)_{ij}:= \frac{1}{2}(\p_{x_j}U_i - \p_{x_i} U_j)$, we define the Hilbert space
\begin{align*}
    \calV := \lt\{ W \in H^1(\Omega;\R^d) \; \middle| \; W \cdot n = 0 \text{ on }\p\Omega \text{ and } \Pi_\Omega \int_\Omega \nabla^a W \,\dx = 0 \rt\},
\end{align*}
equipped with the $H^1(\Omega)$-norm, where $\Pi_\Omega$ denotes the orthogonal projection onto $\calR_\Omega$. From the results of \cite{BCMT23}, for each $S\in L^2(\Omega;\R^d)$ we know the existence and uniqueness of $\wt{u}[S]\in \calV$ solving \eqref{M/Eq/ Lame} in the variational sense, with (again thanks to the Lax--Milgram theorem)
\begin{equation}\label{eq: lame lax}
    \|\wt{u}[S]\|_{H^1(\Omega)} \lesssim \|S\|_{L^2(\Omega)}.    
\end{equation}
If additionally $S$ is such that $\int_\Omega S\cdot Ux = 0$ for every $U\in \calR_\Omega$, then the following regularity estimate holds (see \cite[Theorem 2.11]{BCMT23})
\begin{equation}\label{M/Eq/ Lame H2}
    \|\wt{u}[S]\|_{H^2(\Omega)} \lesssim \|S\|_{L^2(\Omega)}.
\end{equation}

We then define the scalar product $\doubleparantheses{\cdot}{\cdot}$ as
\begin{equation*}
    \begin{split}
    \doubleparantheses{f}{h}_{L^2_{\mu^{-1/2}}} :&= (f,h)_{L^2_{\mu^{-1/2}}} \\
    &\quad + \delta_1 (-\nabla_x u[\rho[f]], j[h])_{L^2(\Omega)} + \delta_1 (-\nabla_x u[\rho[h]], j[f])_{L^2(\Omega)} \\
    &\quad + \delta_2 (-\nabla_x^s \wt{u}[j[f]], \wt{M}[h])_{L^2(\Omega)} + \delta_2 (-\nabla_x^s \wt{u}[j[h]], \wt{M}[f])_{L^2(\Omega)} \\
    &\quad + \delta_3 (-\nabla_x u[\tau[f]], M[h])_{L^2(\Omega)} + \delta_3 (-\nabla_x u[\tau[h]], M[f])_{L^2(\Omega)},
    \end{split}
\end{equation*}
where $\delta_i>0$ are constants to be chosen later, and
\begin{equation*}
\begin{split}
    &\tau[f](x,v) := \int_{\R^d} \frac{|v|^2-d}{\sqrt{2d}} f(x,v) \,\dv, \\
    &M[f] := \frac{1}{\sqrt{2d}} \int_{\R^d} v(|v|^2-d-2) f \,\dv ,\\
    &\wt{M}[f] := \int_{\R^d} (v\otimes v - I_{d\times d}) f \,\dv.
\end{split}
\end{equation*}
In the above, $I_{d\times d}\in \R^{d\times d}$ is the identity matrix, and $(\cdot,\cdot)_{L^2}$ refers to the usual $L^2$ scalar product. Here we point out that if $f$ satisfies the specular reflection boundary condition \eqref{eq: specular}, then there is no flux of mass through the boundary, \textit{i.e.}
\begin{equation*}
    \forall x\in \p\Omega, \quad \int_{\R^d}  \gamma f \, (n_x\cdot v) \,\dv = 0,
\end{equation*}
and therefore $j[f]\cdot n_x = 0$ whenever $x\in \p\Omega$. Furthermore, if $f\in \calC$ then by its definition we have $\iint_{\calO} Ux\cdot vf \,\dx\dv = 0$ for every $U\in \calR_{\Omega}$, which translates to $\int_{\Omega} Ux\cdot j[f] \,\dx = 0$ for every $U\in \calR_{\Omega}$. This ensures that the regularity estimate \eqref{M/Eq/ Lame H2} applies to $\wt{u}[j[f]]$. In similar nature, if $f\in \calC$ then $\doublebrackets{f} = \int_{\Omega} \rho[f] = 0$ and so $u[\rho[f]]$ obeys the estimate in \eqref{M/Eq/ Poi H2}.

Prior to making computations on $\doubleparantheses{\cdot}{\cdot}_{L^2_{\mu^{-1/2}}}$ it is worth mentioning that the ``linearized operator part'' of $\sfF_\va[k]$, namely $-v\cdot \nabla_x + \sfC_{\rm lin}$, is quite well-behaved and falls into the scope of \cite{BCMT23}. In particular, the main result of \cite{BCMT23} immediately guarantees that there exist suitable choices of $\delta_1,\delta_2,\delta_3>0$ such that the scalar product $\doubleparantheses{\cdot}{\cdot}_{L^2_{\mu^{-1/2}}}$ satisfies
    \begin{equation}\label{eq: hypo of L}
     \doubleparantheses{ (-v\cdot \nabla_x + \sfC_{\rm lin}) f}{f}_{L^2_{\mu^{-1/2}}} \lesssim - \|f\|_{L^2_{\mu^{-1/2}}}^2 
     \end{equation}
for every $f$ which satisfies the specular reflection boundary condition. Unfortunately, the result \eqref{eq: hypo of L} as it stands is not enough to incorporate the terms which arise from the ``nonlinear part'' $\Upsilon^\perp_{k,\va}$. Indeed, we cannot expect the right-hand side of \eqref{eq: hypo of L} to be able to absorb all of the terms which arise from it, especially due to the fact that $\Upsilon^\perp_{k,\va}$ is part of the principal operator of $\sfF_\va[k]$. Thus, it is necessary for us to go back to the level of basic computations regarding $\sfC_{\rm lin}$ where derivatives are still present.

Now, we establish some properties of the linear operators in discussion, which will aid in the forthcoming analysis. Let us denote by $\sfC_{\rm FP}$ the linear Fokker--Planck operator:
\begin{equation*}
\begin{split}
    \sfC_{\rm FP} f &:= \Delta_v f + \nabla_v\cdot (vf), \\
\end{split}
\end{equation*}
so that $\sfC_{\rm lin}$ is nothing but
\begin{equation*}
    \sfC_{\rm lin} f = \sfC_{\rm FP} f + \sum_{i=1}^d \pi_i f + 2 \pi_{d+1} f .
\end{equation*}
When the transport operator is also taken into effect, we will denote 
\begin{equation*}
    \sfC_{\rm full} f := -v\cdot \nabla_x f+ \sfC_{\rm lin} f .
\end{equation*}

\begin{lemma} \label{lem: elem}
    The following properties hold.
    \begin{enumerate}
        \item The operator $\sfC_{\rm lin}$ is self-adjoint and its kernel in $L^2_{\mu^{-1/2}}(\R^d_v)$ is given by
        $$K:= \textnormal{Span}\{\mu,v_1\mu,\ldots,v_d\mu, |v|^2\mu\},$$ so that $\sfC_{\rm lin} \pi = 0$.
        \item For any $f\in  \textnormal{Dom}(\sfC_{\rm full})$ satisfying the specular reflection boundary condition \eqref{eq: specular},
        \begin{equation*}
            (\sfC_{\rm full} f, f)_{L^2_{\mu^{-1/2}}} = (\sfC_{\rm FP}\leftperp{f}, \leftperp{f})_{L^2_{\mu^{-1/2}}},
        \end{equation*}
        where $(\cdot,\cdot)_{L^2_{\mu^{-1/2}}}$ is the scalar product defined on $L^2_{\mu^{-1/2}}(\calO)$:
        \begin{equation*}
            (f,h)_{L^2_{\mu^{-1/2}}} := \iint_{\calO} f h \mu^{-1} \,\dx\dv,
        \end{equation*}
        and $\leftperp{f} := (I - \pi) f$.
    \end{enumerate}
\end{lemma}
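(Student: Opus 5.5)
The plan is to reduce everything to the spectral structure of the linear Fokker--Planck operator in $L^2_{\mu^{-1/2}}(\R^d)$ and then treat the transport part via the specular reflection symmetry.

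\emph{Part (1).} I would first check that $\sfC_{\rm FP}$ is self-adjoint in $L^2_{\mu^{-1/2}}$. Writing $f = \mu g$, one has $\sfC_{\rm FP} f = \nabla_v\cdot(\mu\nabla_v g)$, so that
\begin{equation*}
(\sfC_{\rm FP} f, h)_{L^2_{\mu^{-1/2}}} = -\int_{\R^d} \mu\, \nabla_v(f/\mu)\cdot\nabla_v(h/\mu)\,\dv,
\end{equation*}
which is symmetric and nonpositive. Its kernel is $\textnormal{Span}\{\mu\}$.

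Next I would observe that the $\pi_i$, $i=0,\ldots,d+1$, are the orthogonal projections onto the orthonormal family $\mu$, $v_i\mu$, $\frac{|v|^2-d}{\sqrt{2d}}\mu$ in $L^2_{\mu^{-1/2}}$, so each $\pi_i$ is self-adjoint. This gives self-adjointness of $\sfC_{\rm lin}$.

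For the kernel I would use the Hermite eigenfunction structure of $\sfC_{\rm FP}$: the element $v_i\mu$ is an eigenfunction with eigenvalue $-1$, and $(|v|^2-d)\mu$ is an eigenfunction with eigenvalue $-2$. Hence $\sfC_{\rm FP} + \sum_{i=1}^d\pi_i + 2\pi_{d+1}$ annihilates $K$ and commutes with $\pi$, which gives $\sfC_{\rm lin}\pi = 0$. Conversely, on $K^\perp$ the operator reduces to $\sfC_{\rm FP}$, which is strictly negative there by the spectral gap. So the kernel is exactly $K$.

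\emph{Part (2).} Using self-adjointness, $\sfC_{\rm lin}\pi = 0$ and $\pi\sfC_{\rm lin} = (\sfC_{\rm lin}\pi)^* = 0$, we get
\begin{equation*}
(\sfC_{\rm lin} f, f) = (\sfC_{\rm lin} \leftperp{f}, \leftperp{f}) = (\sfC_{\rm FP} \leftperp{f}, \leftperp{f}),
\end{equation*}
since $\pi\leftperp{f} = 0$.

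For the transport term I would apply Green's formula:
\begin{equation*}
(-v\cdot\nabla_x f, f)_{L^2_{\mu^{-1/2}}} = -\frac12\iint_{\Sigma} (\gamma f)^2 \mu^{-1} (v\cdot n_x)\,\tnd\sigma_x\dv.
\end{equation*}
This boundary integral vanishes. Indeed, $\mu$ is radial, $\scrV_x$ is an isometry that flips the sign of $v\cdot n_x$, and $\gamma_- f = \gamma_+ f\circ\scrV_x$, so the change of variables $v\mapsto \scrV_x v$ on $\Sigma_-^x$ cancels the $\Sigma_+$ and $\Sigma_-$ contributions.

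The only mildly delicate point is justifying Green's formula for $f$ in the domain. I would invoke the trace theory used in Proposition~\ref{prop: WP}, or argue by density of smooth functions satisfying the boundary condition.
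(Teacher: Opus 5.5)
Your proposal is correct and follows essentially the paper's route. Part (2) is the same chain: the transport term vanishes by Green's formula and the specular symmetry, and then you use $\sfC_{\rm lin}\pi = 0$, self-adjointness, and $\pi_i(I-\pi)=0$. The differences in part (1) are cosmetic: you justify self-adjointness (the Dirichlet form for $\sfC_{\rm FP}$, and the $\pi_i$ being orthogonal projections), which the paper only asserts, and for the reverse kernel inclusion you write $f=\pi f+\leftperp{f}$ and use $\ker\sfC_{\rm FP}=\textnormal{Span}\{\mu\}$ instead of explicitly solving the differential inclusion $\sfC_{\rm FP}f\in K$, which is a cleaner form of the same argument.
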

\begin{proof}
A straightforward computation shows that $K\subset \textnormal{Ker}(\sfC_{\rm lin})$. The opposite inclusion also follows easily. Indeed, let $f\in \textnormal{Dom}(\sfC_{\rm FP}) \subset L^2_{\mu^{-1/2}}(\R^d_v)$ and observe that
\begin{equation*}
    \sfC_{\rm lin} f = 0 \implies \sfC_{\rm FP}f \in K,
\end{equation*}
because $\pi_i f$ are trivially members of $K$. We can then explicitly solve this differential inclusion to prove that $f \in K$.

To see the second assertion, we compute
\begin{align*}
        (\sfC_{\rm full} f, f)_{L^2_{\mu^{-1/2}}} &= \lt( \sfC_{\rm lin} f, f \rt)_{L^2_{\mu^{-1/2}}} \\
        &= \lt( \sfC_{\rm lin} \leftperp{f}  , \pi f + \leftperp{f} \rt)_{L^2_{\mu^{-1/2}}} \\
        &= (\sfC_{\rm lin} \leftperp{f}, \leftperp{f} )_{L^2_{\mu^{-1/2}}} \\
        &= (\sfC_{\rm FP} \leftperp{f} , \leftperp{f} )_{L^2_{\mu^{-1/2}}} .
    \end{align*}
The right-hand side of the first line follows from the fact that the transport term vanishes, owing to the Gauss--Ostrogradsky theorem and the specular reflection boundary condition. To obtain the second line, we used the fact that $\sfC_{\rm lin} \pi f = 0$. The third line is due to the fact that $\sfC_{\rm lin}$ is self-adjoint and again that $\sfC_{\rm lin}\pi = 0$. Finally the last line comes from $\pi_i (\sfI - \pi) = 0$ for all $i=0,1,\ldots,d+1$.
\end{proof}

A direct consequence of Lemma \ref{lem: elem} is the following. Although the proof follows more or less classical estimates regarding the linear Fokker--Planck operator, we fully include the arguments for the sake of completeness.
\begin{lemma} \label{lem: L coercive}
    For any $f\in \textnormal{Dom}(\sfF_\va[k]) \cap \calC$ satisfying \eqref{eq: specular}, we have the coercivity estimate
    \begin{equation*}
        \begin{split}
            (-\sfC_{\rm full} f, f)_{L^2_{\mu^{-1/2}}} \gtrsim \|\nabla_v (\mu^{-1} \leftperp{f})\|_{L^2_{\mu^{+1/2}}}^2 +  \|\nabla_v \leftperp{f}\|_{L^2_{\mu^{-1/2}}}^2 + \|v \, \leftperp{f}\|_{L^2_{\mu^{-1/2}}}^2 + \|\leftperp{f}\|_{L^2_{\mu^{-1/2}}}^2.
        \end{split}
    \end{equation*}
    
\end{lemma}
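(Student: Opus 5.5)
By Lemma \ref{lem: elem}(2), the left-hand side equals $-(\sfC_{\rm FP}\leftperp{f},\leftperp{f})_{L^2_{\mu^{-1/2}}}$, so the whole statement reduces to a purely velocity-space coercivity estimate for the linear Fokker--Planck operator acting on $g := \leftperp{f}$, integrated in $x$ afterwards. The conservation laws in $\calC$ and the boundary condition enter only through Lemma \ref{lem: elem}; from here on we work pointwise in $x$.

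The first step is the classical Dirichlet form identity. Write $\sfC_{\rm FP} g = \nabla_v\cdot(\mu \nabla_v(\mu^{-1} g))$ and integrate by parts in $v$. This gives
\begin{equation*}
    -(\sfC_{\rm FP} g, g)_{L^2_{\mu^{-1/2}}} = \int_{\R^d} |\nabla_v(\mu^{-1}g)|^2 \mu \,\dv = \|\nabla_v(\mu^{-1} g)\|_{L^2_{\mu^{+1/2}}}^2,
\end{equation*}
which is exactly the first term on the right-hand side.

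The second step applies the Gaussian Poincaré inequality to $\phi := \mu^{-1} g$. Since $\pi g = 0$, we have $\int \phi\,\mu\,\dv = \rho[g] = 0$. The Poincaré inequality for $\mu$ (spectral gap $1$) then gives $\|g\|_{L^2_{\mu^{-1/2}}}^2 = \int \phi^2 \mu \le \int |\nabla_v \phi|^2\mu$, which controls the fourth term.

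The third step upgrades to the weighted quantities via the standard identity obtained by setting $g = \mu^{1/2} G$. Then $\int|\nabla_v\phi|^2\mu = \int |\nabla_v G + \tfrac{v}{2}G|^2 = \int |\nabla_v G|^2 + \tfrac14\int |v|^2G^2 - \tfrac d2\int G^2$, where the cross term was integrated by parts. Combined with the Poincaré bound, this yields
\begin{equation*}
    \int |\nabla_v G|^2 + \tfrac14\int|v|^2G^2 \le (1+\tfrac d2)\int|\nabla_v\phi|^2\mu .
\end{equation*}
This controls $\|v g\|_{L^2_{\mu^{-1/2}}}^2 = \int|v|^2G^2$. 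For the gradient term, $\nabla_v g \,\mu^{-1/2} = \nabla_v G - \tfrac v2 G$, so $\|\nabla_v g\|_{L^2_{\mu^{-1/2}}}^2 \le 2\int|\nabla_v G|^2 + \tfrac12\int|v|^2G^2$, which is bounded by the same quantity. Integrating over $\Omega$ and summing the four bounds concludes the proof, with constants depending only on $d$.

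The only delicate point is justification. We must ensure that $g$ is regular enough for the integrations by parts, with no boundary terms at $|v|\to\infty$. Membership in $\mathrm{Dom}(\sfF_\va[k])\subset L^2_{\mu^{-1/2}}$ with $\nabla_v$-regularity handles this, either by a density argument with smooth compactly supported (in $v$) approximations or by a cutoff $\chi_R(v)$ and passage to the limit. No real obstacle is expected beyond this.
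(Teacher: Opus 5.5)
Your proposal is correct and follows essentially the same route as the paper: the reduction via Lemma~\ref{lem: elem}, the Dirichlet form identity, the Gaussian Poincar\'e inequality using $\rho[\leftperp{f}]=0$, and integration by parts of the cross term $v\leftperp{f}\cdot\nabla_v\leftperp{f}$. Your ground-state substitution $G=\mu^{-1/2}\leftperp{f}$ is the same computation as the paper's completing-the-square step with $\alpha=1/2$, $\beta=1$, because $\mu^{-1/2}\bigl(\tfrac{v}{2}\leftperp{f}+\nabla_v\leftperp{f}\bigr)=\nabla_v G$; your version simply bounds $\|v\leftperp{f}\|$ and $\|\nabla_v\leftperp{f}\|$ together, with explicit constants.
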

\begin{proof}
Thanks to Lemma \ref{lem: elem}, we have the explicit identity
\begin{equation}\label{eq: Coercive 0}
    (-\sfC_{\rm full} f, f)_{L^2_{\mu^{-1/2}}} =  (-\sfC_{\rm FP} \leftperp{f}, \leftperp{f})_{L^2_{\mu^{-1/2}}}  = \iint_{\calO} |\nabla_v (\mu^{-1} \leftperp{f} )|^2 \, \mu \,\dx\dv.
\end{equation}
Expanding the square and the gradient, we observe that the integral of the right-hand side is equal to
\begin{equation} \label{eq: Expansion Poincare}
\begin{split}
    \iint_{\calO} \lt|\nabla_v(\mu^{-1} \leftperp{f}) \rt|^2 \mu \,\dx\dv &= \iint_{\calO} \lt| v \leftperp{f} +  \nabla_v \leftperp{f}  \rt|^2 \mu^{-1} \,\dx\dv \\
    &= \iint_{\calO} |v|^2 |\leftperp{f}|^2 \mu^{-1} \,\dx\dv + \int |\nabla_v \leftperp{f}|^2 \mu^{-1} \,\dx\dv \\
    &\quad + 2 \iint_{\calO} (v\mu^{-1} \leftperp{f})\cdot (\mu^{-1} \nabla_v \leftperp{f}) \mu \,\dx\dv \\
    &= \iint_{\calO} \Big( -d|\leftperp{f}|^2 + |\nabla_v \leftperp{f}|^2 \Big) \mu^{-1} \,\dx\dv,
\end{split}
\end{equation}
where the integral $\iint_{\calO} |v|^2 |\leftperp{f}|^2 \mu^{-1}$ vanished because the crossing term satisfies:
\begin{equation}\label{eq: Crossing Term}
\begin{split}
    2\iint_{\calO} (v\mu^{-1}\leftperp{f}) \cdot (\mu^{-1} \nabla_v \leftperp{f}) \mu \,\dx\dv &= 2 \iint_{\calO} v\leftperp{f} \cdot \nabla_v \leftperp{f} \,\mu^{-1}\,\dx\dv \\
    &= \iint_{\calO} v \cdot \nabla_v |\leftperp{f}|^2 \mu^{-1}\,\dx\dv \\
    &= - \iint_{\calO} |v|^2 |\leftperp{f}|^2 \mu^{-1} \,\dv - d \iint_{\calO} |\leftperp{f}|^2 \mu^{-1} \,\dv .
\end{split}
\end{equation}

Hence, from \eqref{eq: Coercive 0} and \eqref{eq: Expansion Poincare} we currently have
\begin{equation}\label{eq: Coercive 1}
(-\sfC_{\rm full} f, f)_{L^2_{\mu^{-1/2}}} \gtrsim \|\nabla_v(\mu^{-1}\leftperp{f})\|^2_{L^2_{\mu^{+1/2}}} + \|\nabla_v \leftperp{f}\|_{L^2_{\mu^{-1/2}}}^2 - \|\leftperp{f}\|_{L^2_{\mu^{-1/2}}}^2. 
\end{equation}
That last negative term does not create any problem, because the Poincar\'e inequality applied to the Gaussian measure gives
\begin{equation}\label{eq: Coercive 3}
    (-\sfC_{\rm full} f, f) \gtrsim \|\leftperp{f}\|_{L^2_{\mu^{-1/2}}}^2.
\end{equation}

It thus remains for us to somehow restore the term $\| v \, \leftperp{f}\|_{L^2_{\mu^{-1/2}}}$ to the right-hand side. Let us take advantage of the presence of $\|\nabla_v \leftperp{f}\|_{L^2_{\mu^{-1/2}}}$ in \eqref{eq: Coercive 1}. In order to obtain an identity for $\|\nabla_v \leftperp{f}\|_{L^2_{\mu^{-1/2}}}$ which is not as damaging as in \eqref{eq: Crossing Term}, we introduce a slightly variational term as follows: for any $\alpha, \beta >0$
\begin{equation*}
\begin{split}
    \iint_{\calO} \lt| \alpha v \leftperp{f} + \beta \nabla_v \leftperp{f} \rt|^2 \mu^{-1} \,\dx\dv &= \alpha^2 \| v\leftperp{f} \|_{L^2_{\mu^{-1/2}}}^2 + \beta^2 \|\nabla_v \leftperp{f}\|_{L^2_{\mu^{-1/2}}}^2 \\
    &\quad + 2\alpha\beta \int v\leftperp{f} \cdot \nabla_v \leftperp{f} \, \mu^{-1} \,\dx\dv \\
    &= \beta^2 \|\nabla_v \leftperp{f}\|_{L^2_{\mu^{-1/2}}}^2 + \alpha \lt(\alpha - \beta \rt) \| v\leftperp{f} \|_{L^2_{\mu^{-1/2}}}^2 - d\alpha\beta \|\leftperp{f}\|_{L^2_{\mu^{-1/2}}}^2.
    \end{split}
\end{equation*}
Equivalently we have
\begin{align*}
    \beta^2 \|\nabla_v \leftperp{f}\|_{L^2_{\mu^{-1/2}}}^2 &= \lt\| \alpha v\leftperp{f} + \beta \nabla_v \leftperp{f}\rt\|_{L^2_{\mu^{-1/2}}}^2 - \alpha \lt(\alpha - \beta \rt) \|v\leftperp{f}\|_{L^2_{\mu^{-1/2}}}^2 + d\alpha\beta \|\leftperp{f}\|_{L^2_{\mu^{-1/2}}}^2.
\end{align*}
Choosing $\beta=1$ and $\alpha=1/2$ , we get
\begin{equation*}
    \|\nabla_v \leftperp{f}\|_{L^2_{\mu^{-1/2}}}^2 = \lt\| \alpha v\leftperp{f} + \beta \nabla_v \leftperp{f}\rt\|_{L^2_{\mu^{-1/2}}}^2 + \frac{1}{4} \|v \leftperp{f}\|_{L^2_{\mu^{-1/2}}}^2 + \frac{d}{2} \|\leftperp{f}\|_{L^2_{\mu^{-1/2}}}^2,
\end{equation*}
so in particular
\begin{equation*}
    \|\nabla_v \leftperp{f}\|_{L^2_{\mu^{-1/2}}}^2 \gtrsim \|v \leftperp{f}\|_{L^2_{\mu^{-1/2}}}^2 .
\end{equation*}
Inserting this into \eqref{eq: Coercive 1} we deduce
\begin{equation}\label{eq: Coercive 2}
    \begin{split}
    (-\sfC_{\rm full} f, f) &\gtrsim \|v\leftperp{f}\|_{L^2_{\mu^{-1/2}}}^2 - \|\leftperp{f}\|_{L^2_{\mu^{-1/2}}}^2.
    \end{split}
\end{equation}

Combining \eqref{eq: Coercive 1}--\eqref{eq: Coercive 3}--\eqref{eq: Coercive 2} we complete the proof.
\end{proof}

As is typical with kinetic collisional operators, at the moment we only have a coercivity estimate with respect to the orthogonal complement. We can proceed further by using the arguments from \cite{BCMT23}, then combining them with Lemma \ref{lem: L coercive}, in order to obtain full coercivity of the linearized operator $\sfL$ with respect to the twisted product $\doubleparantheses{\cdot}{\cdot}_{L^2_{\mu^{-1/2}}}$. Note that below, we keep the derivative term, and also the weighted norm $\|v\leftperp{f}\|_{L^2_{\mu^{-1/2}}}$, as aforementioned, they are both crucial in absorbing the terms which arise from $\leftperp{\Upsilon_\va[k]}$ (see Lemmas \ref{lem: non sma11}--\ref{lem: non sma22} below).

\begin{lemma}\label{M/Lem/ Cfull coer}
    There exist suitable choices of $\delta_1,\delta_2,\delta_3>0$ such that for any $f\in \textnormal{Dom}(\sfF_\va[k]) \cap \calC$ satisfying \eqref{eq: specular},
    \begin{equation*}
        \doubleparantheses{-\sfC_{\rm full} f}{f}_{L^2_{\mu^{-1/2}}} \gtrsim \|f\|_{L^2_{\mu^{-1/2}}}^2 + \|\nabla_v(\leftperp{f}\mu^{-1})\|_{L^2_{\mu^{+1/2}}}^2 + \|v\leftperp{f}\|_{L^2_{\mu^{-1/2}}}^2.
    \end{equation*}
\end{lemma}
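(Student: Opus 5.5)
The plan is to follow the $L^2$-hypocoercivity scheme of \cite{BCMT23} with the twisted product $\doubleparantheses{\cdot}{\cdot}_{L^2_{\mu^{-1/2}}}$. We keep the microscopic dissipation from Lemma \ref{lem: L coercive} intact, and we use the three corrector terms to recover the macroscopic quantities $\rho[f]$, $j[f]$ and $\tau[f]$. Write
\begin{equation*}
\doubleparantheses{-\sfC_{\rm full} f}{f}_{L^2_{\mu^{-1/2}}} = (-\sfC_{\rm full} f,f)_{L^2_{\mu^{-1/2}}} + \delta_1 \calD_1 + \delta_2 \calD_2 + \delta_3 \calD_3,
\end{equation*}
where $\calD_1,\calD_2,\calD_3$ collect the corrector contributions. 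The first term is bounded below by Lemma \ref{lem: L coercive} by
\begin{equation*}
D^\perp := \|\nabla_v(\mu^{-1}\leftperp f)\|_{L^2_{\mu^{1/2}}}^2+\|v\leftperp f\|_{L^2_{\mu^{-1/2}}}^2+\|\leftperp f\|_{L^2_{\mu^{-1/2}}}^2.
\end{equation*}
Each $\calD_i$ will be computed by taking the moments $1$, $v$, $|v|^2$, $v\otimes v - I$ and $v(|v|^2-d-2)$ of the equation $\p_t f=\sfC_{\rm full} f$. This produces the macroscopic fluid-type system in which $\pi f$ is expressed through $(\rho,j,\tau)$.

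For $\calD_3$, we take the $v(|v|^2-d-2)/\sqrt{2d}$ moment of $\sfC_{\rm full}f$. This gives $-\nabla_x \tau[f]$ up to constants, plus terms involving only $\leftperp f$ through its higher moments and the collision part. Pairing with $-\nabla_x u[\tau[f]]$ then yields $\|\tau[f]\|_{L^2}^2$. The error terms are bounded by $\|\nabla_x u[\tau]\|_{H^1}\|\leftperp f\|$ together with $\|\nabla_x u[\p_t\tau]\|\,\|M[f]\|$. Since $\p_t\tau$ is a divergence of fluxes that are controlled by $j$ and $\leftperp f$, the bound \eqref{eq: poi lax} gives $\|\nabla u[\p_t \tau]\|\lesssim \|j\|+\|\leftperp f\|$.

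For $\calD_2$, we take the $\wt M$ moment. This gives $-\nabla_x^s j$ plus $\leftperp f$-terms. With the elliptic problem \eqref{M/Eq/ Lame} and the regularity \eqref{M/Eq/ Lame H2}, whose use is justified by $f\in\calC$ as explained before the lemma, we obtain $\|j[f]\|^2$ modulo errors of size $\|\tau\|\,\|j\|$, $\|\rho\|\,\|j\|$ and $\|\leftperp f\|$. The boundary terms vanish because of $j\cdot n_x=0$ and the boundary conditions in \eqref{M/Eq/ Lame}.

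For $\calD_1$, we use the $v$ moment. This gives $\nabla_x(\rho+\text{const}\,\tau)$ plus $\nabla_x\cdot \wt M[\leftperp f]$. By \eqref{M/Eq/ Poi} and \eqref{M/Eq/ Poi H2}, valid since $\int_\Omega\rho=0$, we obtain $\|\rho\|^2$ with errors bounded by $\|\tau\|\,\|\rho\|$, $\|j\|^2$ and $\|\leftperp f\|\,\|\rho\|$. The conservation laws encoded in $\calC$ ensure exactly that the kernel directions (constants in $\rho$ and $\tau$ via $\int |v|^2 f=0$, rigid motions in $j$) are absent, so the elliptic inverses are well defined.

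Collecting these estimates, we choose the constants hierarchically as $\delta_1\ll\delta_2\ll\delta_3\ll1$, so that each cross error is absorbed either by the positive macroscopic term of the preceding level or by $D^\perp$ via Young's inequality. This gives
\begin{equation*}
\doubleparantheses{-\sfC_{\rm full}f}{f}\gtrsim D^\perp+\|\rho\|^2+\|j\|^2+\|\tau\|^2 .
\end{equation*}
Since $\|f\|^2_{L^2_{\mu^{-1/2}}}\lesssim \|\pi f\|^2+\|\leftperp f\|^2$ and $\|\pi f\|\sim\|(\rho,j,\tau)\|_{L^2(\Omega)}$, the claim follows. Equivalence of $\NORMMM{\cdot}$ with the $L^2_{\mu^{-1/2}}$ norm holds for small $\delta_i$ by \eqref{eq: poi lax} and \eqref{eq: lame lax}.

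The main obstacle is ensuring that the microscopic errors are bounded by norms appearing in $D^\perp$ \emph{without} $x$-derivatives of $\leftperp f$. The $x$-derivatives produced by the transport part land, after integration by parts, on $u[\cdot]$ and $\wt u[\cdot]$, so the $H^2$ elliptic regularity is essential there. The boundary integrals from this integration by parts must also vanish under specular reflection. If the explicit moment bookkeeping becomes heavy, one can instead invoke the abstract estimate of \cite{BCMT23} for the macroscopic part and simply add back the retained terms $\|\nabla_v(\mu^{-1}\leftperp f)\|$ and $\|v\leftperp f\|$ with a small factor from Lemma \ref{lem: L coercive}.
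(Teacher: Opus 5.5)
Your plan is the paper's proof: keep the microscopic dissipation of Lemma~\ref{lem: L coercive} (retaining $\|\nabla_v(\mu^{-1}\leftperp f)\|$ and $\|v\leftperp f\|$), recover $\rho[f]$, $j[f]$, $\tau[f]$ from the three correctors via \eqref{M/Eq/ Poi H2}--\eqref{M/Eq/ Lame H2} (legitimate because $f\in\calC$), and choose $\delta_1\ll\delta_2\ll\delta_3\ll1$ as in \cite{BCMT23}; your fallback of quoting \cite{BCMT23} for the macroscopic part is exactly what the paper does.

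There is one bookkeeping slip at the $j$-level. Since $j[-\sfC_{\rm full}f]=\nabla_x(\rho+\sqrt{2/d}\,\tau)+\nabla_x\cdot\wt M[\leftperp f]$ and $\wt M[f]=\sqrt{2/d}\,\tau I+\wt M[\leftperp f]$, the cross errors there are $\|\rho\|\|\tau\|$, $\|\tau\|^2$, $\|\rho\|\|\leftperp f\|$, $\|j\|\|\leftperp f\|$ and $\|\leftperp f\|^2$, not $\|\rho\|\|j\|$. This matters because a genuine $\delta_2\|\rho\|\|j\|$ term could not be absorbed once $\delta_1\ll\delta_2$. Absorbing the correct terms, together with $\delta_3\|j\|\|\leftperp f\|$, also requires $\delta_2^2\ll\delta_1\delta_3$ and $\delta_3^2\ll\delta_2$, which is compatible with your ordering.
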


\begin{proof}
    We only sketch the proof and refer to \cite{BCMT23} for the details. Thanks to Lemma \ref{lem: L coercive} and the results of \cite[Section 3]{BCMT23} (namely \eqref{M/Eq/ Poi H2}--\eqref{M/Eq/ Lame H2}), there exist $C_i>0$ and $C>0$ such that
    {\small \begin{equation*}
        \begin{split}
            &\doubleparantheses{-\sfC_{\rm full} f}{f}_{L^2_{\mu^{-1/2}}} \\
            &\ge C_0 \lt( \|\leftperp{f}\|_{L^2_{\mu^{-1/2}}}^2 + \|\nabla_v(\mu^{-1}\leftperp{f})\|_{L^2_{\mu^{+1/2}}}^2 + \|v\leftperp{f}\|_{L^2_{\mu^{-1/2}}}^2 \rt) \\
            &\quad + \delta_1 \lt( C_1 \|\rho[f]\|_{L^2(\Omega)}^2 - C \Big( \|j[f]\|_{L^2(\Omega)}^2 + \|\tau[f]\|_{L^2(\Omega)}^2 + \|\leftperp{f}\|_{L^2_{\mu^{-1/2}}}^2 \Big) \rt) \\
            &\quad + \delta_2 \lt( C_2 \|j[f]\|_{L^2(\Omega)}^2 - C  \Big(\|\leftperp{f}\|_{L^2_{\mu^{-1/2}}} \|\rho\|_{L^2(\Omega)} - \|\tau[f]\|_{L^2(\Omega)} \|\rho\|_{L^2(\Omega)} - \|\tau[f]\|_{L^2(\Omega)} - \|\leftperp{f}\|_{L^2_{\mu^{-1/2}}} \Big) \rt) \\
            &\quad + \delta_3 \lt( C_3 \|\tau[f]\|_{L^2(\Omega)} - C \Big( \|j[f]\|_{L^2(\Omega)} \|\leftperp{f}\|_{L^2_{\mu^{-1/2}}} - \|\leftperp{f}\|_{L^2_{\mu^{-1/2}}}^2 \Big) \rt) .
        \end{split}
    \end{equation*}
    }
    Here Lemma \ref{lem: L coercive} was used to obtain the first line. Proceeding as in \cite[Proof of Theorem 1.1]{BCMT23}, we can then choose $0\ll \delta_1 \ll \delta_2 \ll \delta_3 \ll 1$ appropriately small enough so that
    \begin{equation*}
    \begin{split}
        \doubleparantheses{-\sfC_{\rm full} f}{f}_{L^2_{\mu^{-1/2}}} &\gtrsim \Big(\|\leftperp{f}\|_{L^2_{\mu^{-1/2}}}^2 + \|\rho[f]\|_{L^2(\Omega)}^2 + \|j[f]\|_{L^2(\Omega)}^2 + \|\tau[f]\|_{L^2(\Omega)}^2 \Big) \\
        &\quad + \|\nabla_v (\mu^{-1} \leftperp{f})\|_{L^2_{\mu^{+1/2}}}^2 + \|v\leftperp{f}\|_{L^2_{\mu^{-1/2}}}^2 .
    \end{split}
    \end{equation*}
    The right-hand side of the first line is equivalent to $\|f\|_{L^2_{\mu^{-1/2}}}^2$, and we therefore obtain the desired result.
\end{proof}

In order to extend to result to $\doubleparantheses{-\sfF_\va[k]f}{f}_{L^2_{\mu^{-1/2}}}$, it is now enough to prove that the nonlinear remainder does not alter the scalar product too much. We recall that the nonlinear remainder satisfies (by definition)
\begin{equation*}
    \rho[\leftperp{\Upsilon_{\va}[k]}  f] = j[\leftperp{\Upsilon_{\va}[k]}  f] = \tau[\leftperp{\Upsilon_{\va}[k]}  f] = 0,
\end{equation*}
which allows us to write
\begin{equation*}
    \begin{split}
        \doubleparantheses{\sfF_\va[k] f}{f}_{L^2_{\mu^{-1/2}}} &= \doubleparantheses{\sfC_{\rm full} f}{f}_{L^2_{\mu^{-1/2}}} + (\leftperp{\Upsilon_{\va}[k]}  f, f)_{L^2_{\mu^{-1/2})}} \\
        &\quad + \delta_2 (-\nabla_x^s \wt{u}[j[f]], \wt{M}[\leftperp{\Upsilon_{\va}[k]}  f])_{L^2(\Omega)} \\
        &\quad + \delta_3 (-\nabla_x u[\tau[f]], M[\leftperp{\Upsilon_{\va}[k]}  f])_{L^2(\Omega)}.
    \end{split}
\end{equation*}
We proceed to estimate the remaining terms on the right-hand side, one by one.
\begin{lemma}\label{lem: non sma11}
    We assume that $\|k\|_{L^\infty_\omega}\ll 1$. Then for any $f\in \textnormal{Dom}(\leftperp{\Upsilon_{\va}[k]} )$, the following estimate holds:
    \begin{equation*}
    \begin{split}
        &\lt|(\leftperp{\Upsilon_{\va}[k]}  f, f)_{L^2_{\mu^{-1/2}}}\rt| \\
        &\le C\|k\|_{L^\infty_\omega} \lt(\|\nabla_v(\leftperp{f} \mu^{-1})\|_{L^2_{\mu^{+1/2}}}^2 + \|f\|_{L^2_{\mu^{-1/2}}}^2 + \|v\leftperp{f}\|_{L^2_{\mu^{-1/2}}}^2 \rt).
    \end{split}        
    \end{equation*}
\end{lemma}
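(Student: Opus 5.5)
The plan is to expand $(\leftperp{\Upsilon_{\va}[k]} f, f)_{L^2_{\mu^{-1/2}}}$ term by term using the explicit formula \eqref{eq: nonlinear perp}. The first structural remark is that the projection-type terms, namely those proportional to $v\mu$ and to $(|v|^2-d)\mu$, lie in the range of $\pi$. Because $\leftperp{\Upsilon_{\va}[k]}f$ has vanishing $\rho$, $j$ and $\tau$ moments, its pairing with $\pi f$ is zero. Hence
\[
(\leftperp{\Upsilon_{\va}[k]} f, f)_{L^2_{\mu^{-1/2}}} = (\leftperp{\Upsilon_{\va}[k]} f, \leftperp{f})_{L^2_{\mu^{-1/2}}}.
\]
Moreover, pairing any element of $\operatorname{Ran}\pi$ against $\leftperp{f}$ gives zero. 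So it suffices to estimate $(\mathcal{D}f, \leftperp{f})_{L^2_{\mu^{-1/2}}}$, where
\[
\mathcal{D}f := \tfrac1d (E[k]-\bfA_\va[k])\Delta_v f + (v\rho[k]-j[k])\cdot\nabla_v f + d\rho[k] f .
\]

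The coefficients depend only on $(t,x)$. By the bounds used in Lemma \ref{l: sp: for lin WP}, each of them is pointwise $\lesssim \|k\|_{L^\infty_\omega}$:
\[
|\rho[k]|,\ |j[k]|,\ |E[k]|,\ \bfA_\va[k] \lesssim \|k\|_{L^\infty_\omega},
\]
uniformly in $\va$ (the bound on $\bfA_\va[k]$ is in fact quadratic, which is harmless). Next I write $f = \pi f + \leftperp{f}$ inside $\mathcal{D}f$. Since $\pi f = (a(x) + b(x)\cdot v + c(x)|v|^2)\mu$, all derivatives of $\pi f$ are polynomial times $\mu$. Therefore, by Cauchy--Schwarz, the contributions of $\mathcal D(\pi f)$ are bounded by
\[
\|k\|_{L^\infty_\omega}\, \|f\|_{L^2_{\mu^{-1/2}}}\, \|\langle v\rangle \leftperp{f}\|_{L^2_{\mu^{-1/2}}}.
\]
For the $\leftperp{f}$ part, I work in the variable $g := \mu^{-1}\leftperp{f}$, so that $\nabla_v\leftperp{f}\,\mu^{-1/2} = (\nabla_v g - v g)\mu^{1/2}$ and $\|g\mu^{1/2}\| = \|\leftperp{f}\|_{L^2_{\mu^{-1/2}}}$.

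The diffusion term is handled by integrating by parts once in $v$:
\[
\int \Delta_v\leftperp{f}\, g = -\int \nabla_v\leftperp{f}\cdot\nabla_v g .
\]
This is bounded by $\|\nabla_v\leftperp{f}\|_{L^2_{\mu^{-1/2}}}\,\|\nabla_v g\|_{L^2_{\mu^{1/2}}}$. The first factor is controlled via
\[
\nabla_v \leftperp{f} = \mu\,(\nabla_v g - v g),
\]
i.e. by $\|\nabla_v(\mu^{-1}\leftperp f)\|_{L^2_{\mu^{1/2}}} + \|v\leftperp f\|_{L^2_{\mu^{-1/2}}}$. The drift term $(v\rho[k]-j[k])\cdot\nabla_v\leftperp{f}\,\mu^{-1}\leftperp{f}$ is bounded the same way by Cauchy--Schwarz. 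It produces $\|v\leftperp{f}\|_{L^2_{\mu^{-1/2}}}$ times the gradient quantity, so it is absorbed by the same right-hand side. The zeroth-order term $d\rho[k]\leftperp f$ is trivial. Finally, Young's inequality turns every product into a sum of squares of the three norms in the statement.

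I expect the main obstacle to be checking that no uncontrolled weight appears. In particular, the drift carries a factor $v$ multiplying $\nabla_v\leftperp{f}$. This must be split as $v\leftperp{f}$ in $L^2_{\mu^{-1/2}}$ against $\nabla_v\leftperp{f}$, and I must not produce $|v|^2|\leftperp f|^2$. The way around this is to integrate by parts using
\[
v\cdot\nabla_v\leftperp{f}\,\leftperp{f} = \tfrac12 v\cdot\nabla_v|\leftperp{f}|^2 ,
\]
as in \eqref{eq: Crossing Term}. This yields $|v|^2|\leftperp{f}|^2\mu^{-1}$ and $|\leftperp{f}|^2\mu^{-1}$ terms with coefficient $\rho[k]$. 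These are controlled by $\|v\leftperp{f}\|^2_{L^2_{\mu^{-1/2}}}$ and $\|f\|^2_{L^2_{\mu^{-1/2}}}$ respectively, multiplied by $\|k\|_{L^\infty_\omega}$. Throughout, the boundary terms in $x$ play no role, since every integration by parts is in $v$ only.
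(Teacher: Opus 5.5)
Your proposal is correct and is essentially the paper's argument: reduce to the pairing with $\leftperp{f}$, split $f=\pi f+\leftperp{f}$, integrate by parts in $v$ only, and close with Cauchy--Schwarz and Young; the only difference is that you bound the microscopic part directly via $\nabla_v\leftperp{f}=\mu(\nabla_v g - vg)$, $g=\mu^{-1}\leftperp{f}$, rather than through the paper's explicit Fokker--Planck identities. One clarification: the term $\iint |v|^2|\leftperp{f}|^2\mu^{-1}$ that you say must be avoided is exactly $\|v\leftperp{f}\|^2_{L^2_{\mu^{-1/2}}}$, which is already on the right-hand side, so the drift is handled either by your direct Cauchy--Schwarz bound or by the integration by parts.
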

\begin{proof}
    We first perform the computation
    \begin{align}
        (\leftperp{\Upsilon_{\va}[k]}  f, f)_{L^2_{\mu^{-1/2}}} &= (\leftperp{\Upsilon_{\va}[k]}  f, \leftperp{f} )_{L^2_{\mu^{-1/2}}} \nonumber \\
        &= \iint_{\calO} \leftperp{f} \lt( \rho[k]\sfC_{\rm FP}f + \frac{1}{d} \lt(E[k] - \bfA_\va[k] \rt) \Delta_v f - j[k]\cdot \nabla_v f \rt) \mu^{-1} \,\dx\dv, \label{eq: N_g Dirichlet}
    \end{align}
    where the second line is due to the fact that $\pi\leftperp{\Upsilon_{\va}[k]}  = 0$; the last equality then follows since $\pi \leftperp{f} = 0$. We now estimate each term in the last line \eqref{eq: N_g Dirichlet}.
    
    To compute and estimate the first term in \eqref{eq: N_g Dirichlet}, we decompose $f = \leftperp{f} + \rho[f]\mu + j[f]\cdot v\mu + \tau[f] \frac{|v|^2-d}{\sqrt{2d}}\mu$, then use the eigenvalue identities
    \begin{equation*}
        \sfC_{\rm FP} (\mu, v_i\mu, (|v|^2-d)\mu) = (0, -v_i\mu, -2(|v|^2-d)\mu ) \\
    \end{equation*}
    in order to write
    \begin{equation}\label{eq: N_g Dirichlet Comp1}
    \begin{split}
        &\iint_{\calO} \leftperp{f} \rho[k] \sfC_{\rm FP}f \,\mu^{-1} \,\dx\dv \\
        &= \iint_{\calO} \rho[k] \lt( \leftperp{f} \sfC_{\rm FP} \leftperp{f} \, \mu^{-1} - \leftperp{f} j[f]\cdot v -2 \leftperp{f} \, \tau[f] \frac{|v|^2-d}{\sqrt{2d}} \rt) \,\dx\dv .
    \end{split}
    \end{equation}
    On one hand, by standard computations on the Fokker--Planck operator, we can immediately compute
    \begin{align*}
        \iint_{\calO} \rho[k] \leftperp{f} \sfC_{\rm FP} \leftperp{f} \, \mu^{-1} \,\dx\dv &= - \iint_{\calO} \rho[k] |\nabla_v (\leftperp{f} \mu^{-1})|^2 \mu \,\dx\dv  \\
        &\lesssim \|k\|_{L^\infty_\omega} \|\nabla_v (\leftperp{f} \mu^{-1})\|_{L^2_{\mu^{+1/2}}}^2 .
    \end{align*}
    On the other hand, the remaining terms on the right-hand side of \eqref{eq: N_g Dirichlet Comp1} vanish thanks to the identities
    \begin{equation}\label{eq: perp no moments}
        j[\leftperp{f}] = \tau[\leftperp{f}] = 0.
    \end{equation}
    Therefore, the first term in the right-hand side of \eqref{eq: N_g Dirichlet} is simply bounded as
    \begin{equation}\label{eq: N_g Dirichlet Comp1 Complete}
        \lt|\iint_{\calO} \leftperp{f} \rho[k] \sfC_{\rm FP} f \,\mu^{-1} \,\dx\dv \rt| \lesssim \|k\|_{L^\infty_\omega} \|\nabla_v(\leftperp{f} \mu^{-1})\|_{L^2_{\mu^{+1/2}}}^2 .
    \end{equation}

    Next, we move on to compute the term in \eqref{eq: N_g Dirichlet} containing $\Delta_v f$. We observe that
    \begin{equation} \label{eq: N_g Dirichlet Comp2}
    \begin{split}
        &\int_{\R^d} \leftperp{f} \Delta_v f \mu^{-1} \,\dv
        \\
        &= - \int_{\R^d} \nabla_v (\leftperp{f}\mu^{-1}) \cdot \nabla_v f\,\dv \\
        &= \int_{\R^d} \nabla_v (\leftperp{f} \mu^{-1}) \cdot ( \nabla_v (f\mu^{-1}) - v f \mu^{-1} ) \mu \,\dv \\
        &= \int_{\R^d} \nabla_v (\leftperp{f} \mu^{-1}) \cdot \lt( \nabla_v (\leftperp{f} \mu^{-1}) + j[f] + \sqrt{\frac{2}{d}} \tau[f] v \rt) \mu \,\dv - \int_{\R^d} \nabla_v(\leftperp{f} \mu^{-1}) \cdot (vf) \,\dv \\
        &= \|\nabla_v (\leftperp{f} \mu^{-1})\|_{L^2_{\mu^{+1/2}}}^2 - \int_{\R^d} (\leftperp{f}\mu^{-1}) \cdot \lt( j[f] v \mu + \sqrt{\frac{2}{d}} \tau[f] (d-|v|^2) \mu \rt) \\
        &\quad + \int_{\R^d} (\leftperp{f} \mu^{-1}) \nabla_v\cdot (vf) \,\dv \\
        &= \|\nabla_v (\leftperp{f} \mu^{-1})\|_{L^2_{\mu^{+1/2}}}^2 + j[f]\cdot \int_{\R^d} v\leftperp{f} \,\dv + \sqrt{\frac{2}{d}} \tau[f] \int_{\R^d} (|v|^2-d) \leftperp{f} \,\dv \\
        &\quad + \int_{\R^d} \leftperp{f} \mu^{-1} (df + v \cdot \nabla_v f) \,\dv \\
        &= \|\nabla_v(\leftperp{f} \mu^{-1})\|_{L^2_{\mu^{+1/2}}}^2 + d\|\leftperp{f}\|_{L^2_{\mu^{-1/2}}}^2 \\
        &\quad + \int_{\R^d} \leftperp{f} \mu^{-1} \lt( v\cdot \nabla_v \lt(\leftperp{f} + \rho[f] \mu + j[f]\cdot v\mu + \tau[f] \frac{|v|^2-d}{\sqrt{2d}} \mu \rt) \rt) \,\dv,
    \end{split}
    \end{equation}
    the last line following again from \eqref{eq: perp no moments}. It remains to estimate the integral in the last line. A direct computation first gives
    \begin{equation}\label{eq: N_g Dirichlet Comp 2_1}
    \begin{split}
        \lt|\int_{\R^d} \leftperp{f} \mu^{-1} v\cdot \nabla_v \leftperp{f} \,\dv \rt| &= \lt| \int_{\R^d} \nabla_v\cdot(v\mu^{-1}) |\leftperp{f}|^2 \,\dv \rt| \\
        &\le \int_{\R^d} (|v|^2 + d) |\leftperp{f}|^2 \mu^{-1} \,\dv \\
        &\lesssim \|\leftperp{f}\|_{L^2_{\mu^{-1/2}}(\R^d_v)}^2 + \|v\leftperp{f}\|_{L^2_{\mu^{-1/2}}(\R^d_v)}^2,
    \end{split}
    \end{equation}
    and for the remaining terms containing the macroscopic variables, we can estimate in the following way. For instance:
    \begin{equation} \label{eq: N_g Dirichlet Comp 2_2}
    \begin{split}
        &\int_{\R^d} \leftperp{f} \mu^{-1} v\cdot \nabla_v \lt(\tau[f] \frac{|v|^2-d}{\sqrt{2d}}\mu \rt) \,\dv \\
        &= \tau[f] \int_{\R^d} \leftperp{f} |v|^2 \lt( \sqrt{\frac{2}{d}}   - \frac{|v|^2 - d}{\sqrt{2d}}  \rt) \,\dv \\
        &\le |\tau[f]|  \|\leftperp{f}\|_{L^2_{\mu^{-1/2}}(\R^d_v)} \lt(\int_{\R^d} |v|^4 \lt( \sqrt{\frac{2}{d}}   - \frac{|v|^2 - d}{\sqrt{2d}}  \rt)^2 \mu\,\dv \rt)^{1/2} \\
        &\lesssim |\tau[f]|^2 + \|\leftperp{f}\|_{L^2_{\mu^{-1/2}}(\R^d_v)},
    \end{split}
    \end{equation}
    where we used the Cauchy--Schwarz inequality. Therefore, by inserting \eqref{eq: N_g Dirichlet Comp 2_1}--\eqref{eq: N_g Dirichlet Comp 2_2} into \eqref{eq: N_g Dirichlet Comp2}, we deduce that the second term in \eqref{eq: N_g Dirichlet} is bounded as
    \begin{equation}\label{eq: N_g Dirichlet Comp2 Complete}
        \begin{split}
            &\iint_{\calO} \leftperp{f} \lt(E[k] - \bfA_\va[k] \rt) \Delta_v f \, \mu^{-1} \,\dx\dv \\
            &\lesssim \|k\|_{L^\infty_\omega} (1+\|k\|_{L^\infty_\omega}) \|k\|_{L^\infty_\omega} \Bigg( \|\nabla_v (\leftperp{f} \mu^{-1})\|_{L^2_{\mu^{+1/2}}} + \|\leftperp{f}\|_{L^2_{\mu^{-1/2}}}^2 + \|v\leftperp{f} \|_{L^2_{\mu^{-1/2}}} \\
            &\hspace{6cm} + \|\rho[f]\|_{L^2(\Omega)}^2 + \|j[f]\|_{L^2(\Omega)}^2 + \|\tau[f]\|_{L^2(\Omega)}^2 \Bigg).
        \end{split}
    \end{equation}

    We now estimate the final term in \eqref{eq: N_g Dirichlet}. A direct computation shows
    \begin{align*}
        \iint_{\calO} \leftperp{f} j[k]\cdot \nabla_v f \,\mu^{-1} \,\dx\dv &= \iint_{\calO} \leftperp{f} j[k]\cdot \nabla_v \lt( \leftperp{f} + \rho[f]\mu + j[f]\cdot v\mu + \tau[f] \frac{|v|^2-d}{\sqrt{2d}}\mu \rt) \mu^{-1} \,\dx\dv \\
        &= \iint_{\calO} \frac{1}{2} j[k]\cdot \nabla_v |\leftperp{f}|^2\,\mu^{-1} \,\dx\dv \\
        &\quad + \iint_{\calO} \leftperp{f} j[k]\cdot \nabla_v \lt( \rho[f] \mu + j[f]\cdot v\mu + \tau[f] \frac{|v|^2-d}{\sqrt{2d}} \mu \rt) \mu^{-1} \,\dx\dv \\
        &=: I_1 + I_2.
    \end{align*}
    The Gauss--Ostrogradsky theorem and Young's inequality show
    \begin{equation*}
        |I_1| \lesssim \|k\|_{L^\infty_\omega} \iint_{\calO} (1+|v|^2) |\leftperp{f}|^2 \mu^{-1} \,\dx\dv \le \|k\|_{L^\infty_\omega} \lt( \|f\|_{L^2_{\mu^{-1/2}}}^2 + \|vf\|_{L^2_{\mu^{-1/2}}}^2 \rt).
    \end{equation*}
    For the terms in $I_2$ we estimate with almost the same method as in \eqref{eq: N_g Dirichlet Comp 2_2}, to establish
    \begin{equation*}
        |I_2| \lesssim \|k\|_{L^\infty_\omega} \lt( \|\rho[f]\|_{L^2(\Omega)}^2 +  \|j[f]\|_{L^2(\Omega)}^2 + \|\tau[f]\|_{L^2(\Omega)}^2  + \|\leftperp{f}\|_{L^2_{\mu^{-1/2}}}^2  \rt).
    \end{equation*}
    In all, it follows that the last term in \eqref{eq: N_g Dirichlet} is bounded as
    \begin{equation}\label{eq: N_g Dirichlet Comp3 Complete}
    \begin{split}
        &\lt| \iint_{\calO} \leftperp{f} j[k]\cdot \nabla_v f \, \mu^{-1} \,\dx\dv \rt| \\
        &\lesssim \|k\|_{L^\infty_\omega} \lt(\|f\|_{L^2_{\mu^{-1/2}}}^2 + \|vf\|_{L^2_{\mu^{-1/2}}}^2 + \|\rho[f]\|_{L^2(\Omega)}^2 +  \|j[f]\|_{L^2(\Omega)}^2  + \|\tau[f]\|_{L^2(\Omega)}^2 \rt).
    \end{split}
    \end{equation}
    Collecting \eqref{eq: N_g Dirichlet Comp1 Complete}--\eqref{eq: N_g Dirichlet Comp2 Complete}--\eqref{eq: N_g Dirichlet Comp3 Complete}, and keeping in mind $\|\rho[f],j[f],\tau[f]\|_{L^2(\Omega)}\lesssim \|f\|_{L^2_{\mu^{-1/2}}(\calO)}$, we obtain the estimate announced in the lemma.
\end{proof}

\begin{lemma}\label{lem: non sma22}
    For any $f\in \textnormal{Dom}(\leftperp{\Upsilon_\va[k]})$, if $\|k\|_{L^\infty}\ll 1$, then the following estimate holds:
    \begin{equation*}
    \begin{split}
        &(-\nabla_x^s \wt{u}[j[f]], \wt{M}(\leftperp{\Upsilon_{\va}[k]}  f) )_{L^2(\Omega)} \lesssim \|k\|_{L^\infty_\omega} \|f\|_{L^2_{\mu^{-1/2}}}^2, \\
        &(-\nabla_x u[\tau[f]], M[\leftperp{\Upsilon_{\va}[k]}  f])_{L^2(\Omega)} \lesssim \|k\|_{L^\infty_\omega} \|f\|_{L^2_{\mu^{-1/2}}}^2.
    \end{split}
    \end{equation*}
\end{lemma}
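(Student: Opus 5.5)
We bound the two pairings by Cauchy--Schwarz in $L^2(\Omega)$, treating each factor separately. For the first factors, the elliptic estimates \eqref{eq: lame lax} and \eqref{eq: poi lax} give
\begin{equation*}
\|\nabla_x^s \wt{u}[j[f]]\|_{L^2(\Omega)} \lesssim \|j[f]\|_{L^2(\Omega)} \lesssim \|f\|_{L^2_{\mu^{-1/2}}}, \qquad \|\nabla_x u[\tau[f]]\|_{L^2(\Omega)} \lesssim \|\tau[f]\|_{L^2(\Omega)} \lesssim \|f\|_{L^2_{\mu^{-1/2}}}.
\end{equation*}
We need only the $H^1$ bounds here, not the $H^2$ bounds \eqref{M/Eq/ Poi H2}--\eqref{M/Eq/ Lame H2}. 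Note that $\tau[f]$ has zero mean when $f\in\calC$, so $u[\tau[f]]$ is well defined. It therefore suffices to show
\begin{equation*}
\|\wt{M}[\leftperp{\Upsilon_\va[k]} f]\|_{L^2(\Omega)} + \|M[\leftperp{\Upsilon_\va[k]} f]\|_{L^2(\Omega)} \lesssim \|k\|_{L^\infty_\omega}\|f\|_{L^2_{\mu^{-1/2}}},
\end{equation*}
after which Cauchy--Schwarz concludes.

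For these moments we use the explicit formula \eqref{eq: nonlinear perp}. Since $\wt{M}$ and $M$ are $v$-moments against the polynomials $\phi(v) = v\otimes v - I$ and $v(|v|^2-d-2)/\sqrt{2d}$, every derivative in $\leftperp{\Upsilon_\va[k]}$ can be moved onto $\phi$ by integration by parts in $v$. This is legitimate for $f\in L^2_{\mu^{-1/2}}$ with the regularity of $\textnormal{Dom}$, since no boundary terms arise in $v$. The three contributions become
\begin{equation*}
\int \phi\,\Delta_v f\,\dv = \int f\,\Delta_v\phi\,\dv, \qquad \int \phi\,(v\rho[k]-j[k])\cdot\nabla_v f\,\dv = -\int f\,\nabla_v\cdot\big(\phi\,(v\rho[k]-j[k])\big)\,\dv, \qquad \int \phi\,\rho[k] f\,\dv .
\end{equation*}
After this step each term is $\int f\,q(v)\,\dv$ with $q$ a polynomial of degree at most $3$. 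Its coefficient is one of $E[k]-\bfA_\va[k]$, $\rho[k]$ or $j[k]$, and all of these are bounded pointwise in $(t,x)$ by $C\|k\|_{L^\infty_\omega}$. For $\rho[k]$, $j[k]$, $E[k]$ this follows from $\la v\ra^2\omega^{-1}\in L^1$. For $\bfA_\va[k]$ it follows from the bound $\bfA_\va[k]\lesssim \|k\|^2$ shown earlier, which is $\lesssim\|k\|$ since $\|k\|\ll1$.

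The macroscopic correction terms in \eqref{eq: nonlinear perp} have the form (coefficient)$\times v\mu$ or (coefficient)$\times(|v|^2-d)\mu$. Their coefficients are products of a moment of $k$ with $\rho[f]$, $j[f]$ or $E[f]$, or a term $(E[k]-\bfA_\va[k])\rho[f]$. The corresponding $\phi$-moments of $v\mu$ and $(|v|^2-d)\mu$ are finite constants, so these terms are also bounded by $C\|k\|_{L^\infty_\omega}$ times a moment of $f$.

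Every remaining term is therefore bounded pointwise in $x$ by $C\|k\|_{L^\infty_\omega}\int |f|\la v\ra^3\,\dv$. By Cauchy--Schwarz in $v$,
\begin{equation*}
\int |f|\la v\ra^3\,\dv \le \Big(\int \la v\ra^6\mu\,\dv\Big)^{1/2}\|f(x,\cdot)\|_{L^2_{\mu^{-1/2}}(\R^d_v)} .
\end{equation*}
Taking the $L^2(\Omega)$ norm gives the required bound on $\wt{M}$ and $M$, hence the lemma.

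The only step needing care is the integration by parts in $v$ for the diffusion and drift terms. The bound must not involve $\nabla_v f$: no derivative quantity appears on the right-hand side of the lemma, unlike Lemma \ref{lem: non sma11}. Moving the derivatives onto the polynomial test functions avoids this, and the Gaussian weight makes the $v$-decay trivial.
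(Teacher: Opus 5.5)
Your proposal is correct and follows essentially the same route as the paper. It uses Cauchy--Schwarz in $L^2(\Omega)$ with the $H^1$ Lax--Milgram bounds \eqref{eq: poi lax}--\eqref{eq: lame lax}. It bounds $\|\wt{M}[\leftperp{\Upsilon_\va[k]} f]\|_{L^2(\Omega)}+\|M[\leftperp{\Upsilon_\va[k]} f]\|_{L^2(\Omega)}$ by integrating the $v$-derivatives onto the polynomial test weight and controlling the macroscopic correction terms by moments of $f$. Your remark that $\int_\Omega \tau[f]=0$ for $f\in\calC$, which the Neumann problem requires, makes explicit a point the paper leaves implicit.
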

\begin{proof}
    We consider any $\omega_{\bullet}:\R^d\to \R_+$ such that $\nabla_v\omega_{\bullet}$ and $\Delta_v\omega_{\bullet}$ are of polynomial growth. Then we compute
    \begin{align*}
        &\intr \omega_{\bullet}(v) \leftperp{\Upsilon_{\va}[k]}  f \,\dv \\
        &= \intr \omega_{\bullet}(v) \lt( \frac{1}{d}\lt(E[k] - \bfA_\va[k] \rt) \Delta_v f - j[k]\cdot \nabla_v f + \rho[k] \nabla_v\cdot (vf) \rt) \,\dv \\
        &\quad + \intr \omega_{\bullet}(v) (\rho[k] j[f] - \rho[f] j[k]) \cdot v\mu \,\dv \\
        &\quad - \intr \omega_{\bullet}(v) \frac1d \lt( \lt(E[k] - \bfA_\va[k] \rt) \rho[f] - \rho[k] E[f] + j[k]\cdot j[f] \rt) (|v|^2-d) \mu  \,\dv \\
        &=: I_1 + I_2 + I_3.
    \end{align*} 
    For $I_1$, the Gauss--Ostrogradsky formula gives
    \begin{align*}
        |I_1| &= \lt|\intr -\nabla_v \omega_{\bullet}(v) \cdot \lt(\frac{1}{d}\lt(E[k] - \bfA_\va[k] \rt) \nabla_v f - j[k] f + \rho[k] vf \rt) \,\dv \rt|  \\
        &= \lt| \intr \Delta_v \omega_{\bullet} (v) \frac{1}{d}\lt(E[k]-\bfA_\va[k]\rt) f + \nabla_v \omega_{\bullet}(v) \cdot f (j[k] - v\rho[k]) \,\dv \rt| \\
        &\le C_{\omega_{\bullet},d} \|k\|_{L^\infty_\omega} ( 1 +  \|k\|_{L^\infty_\omega}) \|f\|_{L^2_v(\mu^{-1/2})}.
    \end{align*}
    For $I_2$, we simply have
    \begin{align*}
        |I_2| &= \lt|\intr \omega_{\bullet}(v) (\rho[k] j[f] - \rho[f] j[k]) \cdot v\mu \,\dv\rt| \\
        &\le C_{\omega_{\bullet}} \|k\|_{L^\infty_\omega} \Big(|j[f]| + |\rho[f]| \Big) \\
        &\le C_{\omega_{\bullet}} \|k\|_{L^\infty_\omega} \|f\|_{L^2_{\mu^{-1/2}}},
    \end{align*}
    and the estimate for $I_3$ is analogous. Combining these lead to
    \begin{equation*}
        \|\wt{M}[\leftperp{\Upsilon_{\va}[k]}  f]\|_{L^2(\Omega)} + \|M[\leftperp{\Upsilon_{\va}[k]}  f]\|_{L^2(\Omega)} \lesssim \|k\|_{L^\infty_\omega} \|f\|_{L^2_{\mu^{-1/2}}}.
    \end{equation*}
    We deduce, using \eqref{eq: poi lax}--\eqref{eq: lame lax}, that
    \begin{align*}
        (-\nabla_x^s \wt{u}[j[f]], \wt{M}(\leftperp{\Upsilon_{\va}[k]} f) )_{L^2(\Omega)} &\lesssim \|\nabla_x^s \wt{u}[j[f]]\|_{L^2(\Omega)} \|\wt{M}[\leftperp{\Upsilon_{\va}[k]} f]\|_{L^2(\Omega)} \\
        &\lesssim \|j[f]\|_{L^2(\Omega)}  \|k\|_{L^\infty_\omega} \|f\|_{L^2_{\mu^{-1/2}}} \\
        &\lesssim \|k\|_{L^\infty_\omega} \|f\|_{L^2_{\mu^{-1/2}}}^2, \\
        (-\nabla_x u[\tau[f]], M[\leftperp{\Upsilon_{\va}[k]} f])_{L^2(\Omega)} &\lesssim \|\nabla_x u[\tau[f]]\|_{L^2(\Omega)} \|M[\leftperp{\Upsilon_{\va}[k]} f]\|_{L^2(\Omega)} \\
        &\lesssim \|\tau[f]\|_{L^2(\Omega)} \|k\|_{L^\infty_\omega} \|f\|_{L^2_{\mu^{-1/2}}} \\
        &\lesssim \|k\|_{L^\infty_\omega} \|f\|_{L^2_{\mu^{-1/2}}}^2.
    \end{align*}
\end{proof}


\subsubsection{Proof of Proposition \ref{prop: L_g hypo}}

Gathering Lemmas \ref{M/Lem/ Cfull coer}--\ref{lem: non sma11}--\ref{lem: non sma22}, we obtain that
\begin{align*}
    \doubleparantheses{-\sfF_\va[k] f}{f} &\gtrsim \|f\|_{L^2_{\mu^{-1/2}}}^2 + \|\nabla_v (\leftperp{f} \mu^{-1})\|_{L^2_{\mu^{+1/2}}} + \|v \leftperp{f}\|_{L^2_{\mu^{-1/2}}}  \\
    &\quad - C\|k\|_{L^\infty_\omega} \Big( \|f\|_{L^2_{\mu^{-1/2}}}^2 + \|\nabla_v (\leftperp{f} \mu^{-1})\|_{L^2_{\mu^{+1/2}}} + \|v \leftperp{f}\|_{L^2_{\mu^{-1/2}}} \Big). 
\end{align*}
for every $\|k\|_{L^\infty_\omega}\ll 1$ and $f\in \textnormal{Dom}(\sfF_\va[k])\cap \calC$ satisfying the boundary condition \eqref{eq: specular}. We just choose $c_2>0$ so small that $c\le c_2$ and $k\in \calX_{\omega,c}$ imply the negative terms can be absorbed. This concludes the proof.


\subsection{Extension of the evolution system $\calU_{\sfF_\va[k]}$ to $L^\infty_\omega$} \label{Sec/ Ext}

Gathering the results of Sections \ref{S/ F WP}--\ref{S/ F Hypo}, we now establish the extension of the evolution system $\calU_{\sfF_\va[k]}$ to $L^\infty_\omega$ and also its decay.

\begin{proposition} \label{Prop/ decay}
    There exists $c_\star>0$ small enough so that if $c\le c_\star$ and $k\in \calX_{\omega,c}$, then the evolution system as constructed in Proposition \ref{P/ F WP} satisfies the following decay estimate
    \begin{equation*}
        \forall h_s \in L^\infty_\omega \cap \calC, \quad  \|\calU_{\sfF_\va[k]}(t,s) h_s\|_{L^\infty_\omega} \lesssim  e^{-\iota'(t-s)} \|h_s\|_{L^\infty_\omega},
    \end{equation*}
    up to constructive constants and some $\iota'>0$ which are independent of $\va$.
\end{proposition}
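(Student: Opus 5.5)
Throughout, write $\calU_\sfF := \calU_{\sfF_\va[k]}$, $\calU_\sfH := \calU_{\sfH_\va[k]}$ and $\sfG := \sfG_\va[k]$. The plan is the extension argument of \cite{GuaMisMou17,MisMou16} built on an iterated Duhamel formula. Every constant below must be independent of $\va$. Fix $M\ge M_{\omega}$ and $R\ge R_{\omega}$ large. Then choose auxiliary weights: an exponential weight $\bfw\in\calW_{\mu^{-1/2}}$, not polynomial, with $\mu^{-1/2}\lesssim \bfw$ is not possible in general. So instead I take $\bfw$ with $\omega\la v\ra^{2-\theta}\lesssim \bfw$ and $\bfw\lesssim \mu^{-1/2}$ (e.g. $\bfw=e^{\gamma|v|^2}$ with $\gamma<1/2$ close to $1/2$, or $\bfw$ stretched exponential if $\omega$ is). This makes Proposition \ref{M/P/ Ultra} applicable with $\omega_\star=\omega$, and gives $L^2_{\mu^{-1/2}}\hookrightarrow L^2_\bfw$. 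The constant $c_\star$ is the minimum of $c_1,c_2,c_\omega,c_{\bfw},c_{\frakm^{-1}},c_{\frakm_1^{-1}}$.

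The key structural fact is that $\sfG=\sfP_\va[k]+M\psi_R$ is regularizing. Its range consists of functions compactly supported in $v$, or multiples of $\mu$ times polynomials. Moreover $\sfP_\va[k]$ only sees the moments $\rho,j,E$. Hence $\sfG\in\scrB(L^\infty_\omega, L^2_{\mu^{-1/2}})$ and $\sfG\in \scrB(L^2_{\bfw},L^2_{\mu^{-1/2}})$, with norms bounded uniformly in $\va$ because the coefficients of $\sfP_\va[k]$ are bounded by \eqref{M: E: Coeff}. Iterating $\calU_\sfF(t,s)=\calU_\sfH(t,s)+\int_s^t \calU_\sfF(t,\tau)\sfG\,\calU_\sfH(\tau,s)\,\tnd\tau$ twice gives
\begin{equation*}
\calU_\sfF(t,s)=\calU_\sfH(t,s)+\int_s^t \calU_\sfH(t,\tau)\sfG\,\calU_\sfH(\tau,s)\,\tnd\tau+\iint_{s\le\tau_1\le\tau_2\le t}\calU_\sfH(t,\tau_2)\,\sfG\,\calU_\sfF(\tau_2,\tau_1)\,\sfG\,\calU_\sfH(\tau_1,s)\,\tnd\tau_1\tnd\tau_2 .
\end{equation*}
The first two terms are controlled in $L^\infty_\omega$ by Proposition \ref{M:Prop: h diss} with $p=\infty$, which gives decay $e^{-\lambda'(t-s)}$, together with boundedness of $\sfG$ on $L^\infty_\omega$. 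The resulting factor $(t-s)e^{-\lambda'(t-s)}$ is absorbed by shrinking the rate.

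For the third term, read from the right. First, $\calU_\sfH(\tau_1,s)$ is bounded on $L^\infty_\omega$ by Proposition \ref{M:Prop: h diss}. Next, $\sfG$ maps $L^\infty_\omega\to L^2_{\mu^{-1/2}}$. Then $\calU_\sfF(\tau_2,\tau_1)$ decays in $L^2_{\mu^{-1/2}}$: Proposition \ref{prop: L_g hypo} together with the equivalence of the twisted norm gives $e^{-\kappa(\tau_2-\tau_1)}$ on $\calC$. Then $\sfG$ maps $L^2_{\mu^{-1/2}}\to L^2_{\mu^{-1/2}}\subset L^2_\bfw$. Finally, $\calU_\sfH(t,\tau_2)$ maps $L^2_\bfw\to L^\infty_\omega$ with bound $e^{-\lambda'(t-\tau_2)}\min\{(t-\tau_2)^\vartheta,1\}^{-1}$ by Proposition \ref{M/P/ Ultra}. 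If $\vartheta\ge1$ this singularity is not integrable. In that case I iterate Duhamel more times, interleaving $L^2_\bfw$-to-$L^2_\bfw$ steps (Proposition \ref{M:Prop: h diss} with $p=2$) with a chain of intermediate weights so that each ultracontractivity step uses a fractional power. Alternatively I split $\calU_\sfH(t,\tau_2)=\calU_\sfH(t,t-1)\calU_\sfH(t-1,\tau_2)$ when $t-\tau_2\ge 1$ and treat $t-\tau_2<1$ via smaller substeps. The convolution of exponentials gives $e^{-\iota'(t-s)}$ for any $\iota'<\min\{\lambda',\kappa\}$.

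The main obstacle is the conservation constraint. Hypocoercivity holds only on $\calC$, but $\sfG\,\calU_\sfH(\tau_1,s)h_s$ need not lie in $\calC$. My plan is to use the commutation $(I-\Pi)\calU_\sfF=\calU_\sfF(I-\Pi)$ from Proposition \ref{P/ F WP}, and split $\sfG\,\calU_\sfH h_s = \Pi(\cdot)+(I-\Pi)(\cdot)$. The $\calC$-part decays. The complementary part lies in the finite-dimensional span of $\mu,(R(x)\cdot v)\mu,|v|^2\mu$ and is conserved by $\calU_\sfF$; I need to show that its total contribution to the formula cancels. Since $h_s\in\calC$, the full $\calU_\sfF(t,s)h_s$ stays in $\calC$. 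So I can apply $\Pi$ to the entire Duhamel identity, and the finite-rank pieces combine into $(I-\Pi)\calU_\sfF(t,s)h_s=0$. Carrying this out requires $\Pi$ to be bounded on $L^\infty_\omega$, which holds since $\omega^{-1}\la v\ra^2\in L^1$. I expect the bookkeeping of this cancellation, together with the $\vartheta$-singularity, to be the delicate part.
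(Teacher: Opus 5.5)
Your architecture is the paper's: iterated Duhamel with the regularizing $\sfG_\va[k]$, dissipation and ultracontractivity of $\calU_{\sfH_\va[k]}$, and hypocoercivity of $\calU_{\sfF_\va[k]}$ on $\calC$ in $L^2_{\mu^{-1/2}}$. However, two steps fail as written.

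\textbf{The time singularity.} In your three-term formula the only ultracontractive factor is $\calU_{\sfH_\va[k]}(t,\tau_2)$. You would therefore integrate $(t-\tau_2)^{-\vartheta}$ up to $\tau_2=t$. This diverges, since the $\vartheta$ produced by Lemma \ref{M:Lem: pure L1 L2} is larger than $1$ (one checks $\vartheta>5/2$). Your two proposed fixes do not help:
\begin{itemize}
\item changing weights does not change this time exponent;
\item factoring through $t-1$ leaves the singularity at $\tau_2\to t$ untouched.
\end{itemize}
Iterating Duhamel is the right instinct, but the missing mechanism is that of Lemma \ref{Lem/ n-2}. Iterate until the left block is $(\calU_{\sfH_\va[k]}\sfG_\va[k])^{\star n}$. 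Each convolution $\int_s^t A(t,\Theta)B(\Theta,s)\,\tnd\Theta$ then lowers the singularity by one power. To see this, split at $(t+s)/2$ and use the singular $L^2_{\mu^{-1/2}}\to L^\infty_{\omega_\star}$ bound only on the factor whose time-length is at least $(t-s)/2$. The other factor is bounded on $L^2_{\mu^{-1/2}}$ or on $L^\infty_{\omega_\star}$, as appropriate. Once $n-2>\vartheta$, the kernel is $\lesssim e^{-\iota''(t-s)}$.

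\textbf{The weights.} Your requirements $\omega\la v\ra^{2-\theta}\lesssim\bfw\lesssim\mu^{-1/2}$ are incompatible for admissible $\omega=e^{\gamma|v|^2}$ with $\gamma\in(1/4,1/2)$. Your own example, $\gamma$ close to $1/2$, already violates the second requirement. The paper instead lands the ultracontractive bound in $L^\infty_{\omega_\star}$ with $\omega_\star=e^{|v|^2/8}$, and lifts to $L^\infty_\omega$ through $\sfG_\va[k]\in\scrB(L^\infty_{\omega_\star},L^\infty_\omega)$. In your scheme the repair is to use that $\sfG_\va[k]$ maps $L^2_{\mu^{-1/2}}$ into $L^2_{\bfw}$ for every Gaussian $\bfw$ with exponent below $1/2$, rather than relying on an embedding.

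\textbf{The conservation constraint.} This is not resolved. Let $\Pi_0$ be the $L^2_{\mu^{-1/2}}$-orthogonal projection onto $\textnormal{Span}\{\mu,|v|^2\mu,(R(x)\cdot v)\mu\}$, so that $\calC=\ker\Pi_0$.
\begin{itemize}
\item Projecting the \emph{output} of the identity gives no control of $\calU_{\sfF_\va[k]}(\tau_2,\tau_1)$ on its non-$\calC$ \emph{input}.
\item The finite-rank contribution does not cancel. Even if $\calU_{\sfF_\va[k]}$ fixed the range of $\Pi_0$, you would still be left with $\iint\calU_{\sfH_\va[k]}(t,\tau_2)\sfG_\va[k]\,\Pi_0\sfG_\va[k]\calU_{\sfH_\va[k]}(\tau_1,s)h_s$, and the outer block is not moment-preserving.
\end{itemize}
The paper sidesteps this with the two-sided formula \eqref{Eq/ Duhamel}, which keeps $\calU^\perp_{\sfF_\va[k]}$ in the middle. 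That device rests on the commutation stated in Proposition \ref{P/ F WP}, which you also invoke. Do not lean on it. Its proof only shows that $\calC$ is invariant. Moreover, combined with moment conservation, the commutation would force $\calU_{\sfF_\va[k]}$ to fix $|v|^2\mu$. But $\sfF_\va[k](|v|^2\mu)$ has $|v|^4\mu$-coefficient $\frac1d(E[k]-\bfA_\va[k])-\rho[k]$, which is nonzero for general $k\in\calX_{\omega,c}$.

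A robust substitute for the cancellation you want is as follows. Set $p(\tau):=\Pi_0\calU_{\sfH_\va[k]}(\tau,s)h_s$, so that $p(s)=0$. Since $\Pi_0\sfF_\va[k]=0$ on functions with specular traces, $\dot p(\tau)=-\Pi_0\sfG_\va[k]\calU_{\sfH_\va[k]}(\tau,s)h_s$. Integrating by parts in $\tau$ gives
\[ \int_s^{t}\calU_{\sfF_\va[k]}(t,\tau)\,\Pi_0\sfG_\va[k]\,\calU_{\sfH_\va[k]}(\tau,s)h_s\,\tnd\tau=-p(t)-\int_s^{t}\calU_{\sfF_\va[k]}(t,\tau)\,\sfF_\va[k](\tau)p(\tau)\,\tnd\tau . \]
Here $\sfF_\va[k](\tau)p(\tau)\in\calC$ is a Gaussian of $L^2_{\mu^{-1/2}}$-size $\lesssim e^{-\lambda'(\tau-s)}\|h_s\|_{L^\infty_\omega}$. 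Hypocoercivity therefore applies to it, and both pieces decay. With this identity in the middle factor and the $n$-fold iteration on the left, the argument closes.
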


The proof of Proposition \ref{Prop/ decay} is quite long and so we split it into several lemmas which provide the appropriate necessary estimates.

\subsubsection{Proof of Proposition \ref{Prop/ decay}}

(Step 1: Collecting previous estimates.) At this moment, we know from Proposition \ref{P/ F WP} that for $c\le \min\{c_1, c_{\mu^{-1/2}}\}$, the evolution equation corresponding to $\sfF_\va[k]$ is well-posed in $C([0,T];L^2_{\mu^{-1/2}}(\calO))$.

Consequently, combining with Proposition \ref{prop: L_g hypo}, we have that for all $c\le \min\{c_1, c_{\mu^{-1/2}} ,c_2\}$, for every $\mathfrak{f}_s\in L^2_{\mu^{-1/2}}\cap \calC$ there holds
\begin{equation}
\label{Eq/ Ref Hypo}
\begin{split}
    \| \calU_{\sfF_\va[k]}(t,s) \mathfrak{f}_s \|_{L^2_{\mu^{-1/2}}(\calO)} \lesssim \frakp_{\lambda}(t-s) \|\mathfrak{f}_s\|_{L^2_{\mu^{-1/2}}(\calO)},
\end{split}
\end{equation}
where for $\lambda>0$ we denoted
\begin{equation*}
    \frakp_{\lambda}(t) := e^{-\lambda t}.
\end{equation*}

On the other hand, in regard of the dissipative part of the operator $\sfH_\va[k]$, we know from Proposition \ref{M:Prop: h diss} that for each $\omega_1\in \calW$, there exists $c_{\omega_1}>0$ and $M_{\omega_1},R_{\omega_1}$ such that $c\le \min\{c_1,c_{\omega_1}\}$, $M\ge M_{\omega_1}$, and $R\ge R_{\omega_1}$ imply for some $\lambda'>0$
\begin{equation} \label{Eq/ H dissipative}
    \forall p\in [2,\infty], \quad  \|\calU_{\sfH_\va[k]}(t,s)\|_{\scrB(L^p_{\omega_1})} \lesssim \frakp_{\lambda'}(t-s).
\end{equation}

In similar nature, by applying 
\eqref{M/Eq/ L2 Linfty} of Proposition \ref{M/P/ Ultra} with the specific choice of $\bfw = \mu^{-1/2} \approx e^{|v|^2/4}$, we know that for $c\le \min\{c_1, c_{\mu^{-1/2}}, c_{\mu^{-1/2} \la v \ra^{\theta-2} }\}$, it holds for any weight $\omega_{\star}\in \calW$ that
\begin{equation} \label{Eq/ Ultra}
\begin{split}
    &\| \calU_{\sfH_\va[k]}(t,s) \|_{\scrB(L^2_{\mu^{-1/2}},L^\infty_{\omega_\star})} \lesssim \lt\|\omega_\star \mu^{1/2} \la v \ra^{2-\theta} \rt\|_{L^\infty(\R^d)} \frakq(t-s) ,\\
    &\frakq(t) := \frac{ \frakp_{\lambda'} (t) }{ \min\{t^{\vartheta}, 1\} } = e^{- \lambda' t}  \max\{t^{-\vartheta} , 1\},
\end{split}
\end{equation}
at least when $M,R$ are sufficiently large (with respect to the choice of $\mu^{-1/2}$).

From hereon, we fix the exponential weight $\omega_\star$, say for instance $\omega_\star := e^{|v|^2/8}$, so that the right-hand side of $\eqref{Eq/ Ultra}_1$ is finite. In this way, we can fix $M,R>1$ so large that, whenever $c\le c_\star$ and $k\in \calX_{\omega,c}$, (i) \eqref{Eq/ H dissipative} holds for all three choices of $\omega_1 = \omega, \mu^{-1/2},\omega_\star$ and (ii) \eqref{Eq/ Ultra} holds. Finally, we denote $\iota := \min\{\lambda,\lambda'\}$ and consider arbitrary numbers $\iota' \in (0,\iota)$ and $\iota''\in (\iota',\iota)$.

Now denoting again by $\Pi$ the orthogonal projection onto $\calC$ (in the space $L^2_{\mu^{-1/2}}(\calO)$), we introduce the notations
\begin{align*}
    \calU_{\sfF_\va[k]}^\perp &:= \calU_{\sfF_\va[k]} (I - \Pi) = (I - \Pi) \calU_{\sfF_\va[k]} =: \leftperp{\calU_{\sfF_\va[k]}}, \\
    \calU_{\sfH_\va[k]}^\perp &= \calU_{\sfH_\va[k]} (I - \Pi), \\
    \leftperp{\calU_{\sfH_\va[k]}} &:= (I - \Pi) \calU_{\sfH_\va[k]}.
\end{align*}
It is easy to observe that $\Pi\in \scrB(L^\infty_{\omega_1})$ for every $\omega_1\in \calW_{\mu^{-1/2}}$, and therefore \eqref{Eq/ H dissipative} also implies
\begin{equation}\label{Eq/ Perps dissipative}
     \|\leftperp{\calU_{\sfH_\va[k]}}\|_{\scrB(L^\infty_{\omega_1})} + \|\calU_{\sfH_\va[k]}^{\perp}\|_{\scrB(L^\infty_{\omega_1})} \lesssim \frakp_{\lambda}(t-s), \quad \forall \omega_1 \in \lt\{\omega, \omega_\star\rt\}.
\end{equation}

(Step 2: The Duhamel formula and iteration.) For given two-parameter families of operators $(\calU_1(t,s))_{0\le s \le t}$ and $(\calU_2(t,s))_{0\le s \le t}$, we define
\begin{equation*}
    \calU_1 \star \calU_2 (t,s) := \int_s^t \calU_1(t,\Theta) \calU_2(\Theta,s) \,\tnd\Theta.
\end{equation*}
In addition, for a two-parameter family of operators $(\calU(t,s))_{0\le s \le t}$ and a one-parameter family of operators $(\sfG(t))_{t\ge 0}$, we define the multiplication operation
\begin{align*}
    (\calU \sfG)(t,s) := \calU(t,s) \sfG(s), \quad (\sfG \calU)(t,s) := \sfG(t) \calU(t,s).
\end{align*}
Then Duhamel's formula provides that
\begin{equation}\label{Eq/ Duhamel}
\begin{split}
    \calU_{\sfF_\va[k]}^\perp &= \calU_{\sfH_\va[k]}^\perp + (\calU_{\sfH_\va[k]} \sfG_\va[k]) \star \calU_{\sfF_\va[k]}^\perp \\
    &= \leftperp{\calU_{\sfH_\va[k]}} + \calU_{\sfF_\va[k]}^\perp \star (\sfG_\va[k] \calU_{\sfH_\va[k]}).
\end{split}
\end{equation}
Iterating \eqref{Eq/ Duhamel}, we obtain for every $n\in \bbN$ that
\begin{equation}\label{Eq/ Iter 1}
    \calU_{\sfF_\va[k]}^\perp = \calU_{\sfH_\va[k]}^\perp + \sum_{i=1}^{n-1} (\calU_{\sfH_\va[k]} \sfG_\va[k])^{\star i} \star \calU_{\sfH_\va[k]}^\perp + (\calU_{\sfH_\va[k]} \sfG_\va[k])^{\star n} \star \calU_{\sfF_\va[k]}^\perp ,
\end{equation}
where we denoted the $n$-fold convolution as $(\calU)^{\star n} := \calU \star \cdots \star \calU$. We also have the alternate expression from \eqref{Eq/ Duhamel}, for every $m\in \bbN$
\begin{equation}\label{Eq/ Iter 2}
    \calU_{\sfF_\va[k]}^\perp
    = \leftperp{\calU_{\sfH_\va[k]}} + \sum_{i=1}^{m-1} \leftperp{\calU_{\sfH_\va[k]}} \star (\sfG_{\va[k]} \calU_{\sfH_\va[k]})^{\star i} + \calU_{\sfF_\va[k]}^\perp \star (\sfG_\va[k] \calU_{\sfH_\va[k]})^{\star m}.
\end{equation}
Putting \eqref{Eq/ Iter 1}--\eqref{Eq/ Iter 2} together with the choice of $m=2$ (which is enough for our purposes), we obtain the Dyson-like series
\begin{equation} \label{Eq/ Final Iter}
\begin{split}
    \calU_{\sfF_\va[k]}^\perp &= \calU_{\sfH_\va[k]} + \sum_{i=1}^{n-1} (\calU_{\sfH_\va[k]}\sfG_\va[k])^{\star i} \star \calU_{\sfH_\va[k]}^\perp
    + (\calU_{\sfH_\va[k]} \sfG_\va[k])^{\star n} \star \leftperp{\calU_{\sfH_\va[k]}} \\
    &\quad + (\calU_{\sfH_\va[k]} \sfG_\va[k])^{\star n} \star \leftperp{\calU_{\sfH_\va}[k]} \star (\sfG_\va[k] \calU_{\sfH_\va}[k]) \\
    &\quad + (\calU_{\sfH_\va[k]} \sfG_\va[k])^{\star n} \star \calU_{\sfF_\va[k]}^\perp \star (\sfG_\va[k] \calU_{\sfH_\va[k]})^{\star 2}.
\end{split}
\end{equation}

(Step 3: $L^\infty_\omega \to L^\infty_\omega$ estimates.) From \eqref{Eq/ Final Iter} it obviously holds
\begin{align*}
    \|\calU_{\sfF_\va[k]}^\perp\|_{\scrB(L^\infty_\omega)} &\le \|\calU_{\sfH_\va[k]}\|_{\scrB(L^\infty_\omega)} + \sum_{i=1}^{n-1} \lt\|(\calU_{\sfH_\va[k]} \sfG_\va[k])^{\star i} \star \calU_{\sfH_\va[k]}^\perp \rt\|_{\scrB(L^\infty_\omega)} \\
    &\quad + \lt\| (\calU_{\sfH_\va[k]} \sfG_\va[k])^{\star n} \star \leftperp{\calU_{\sfH_\va[k]}} \rt\|_{\scrB(L^\infty_\omega)} \\
    &\quad + \lt\| (\calU_{\sfH_\va[k]} \sfG_\va[k])^{\star n} \star \leftperp{\calU_{\sfH_\va}[k]} \star (\sfG_\va[k] \calU_{\sfH_\va}[k]) \rt\|_{\scrB(L^\infty_\omega)} \\
    &\quad + \lt\|(\calU_{\sfH_\va[k]} \sfG_\va[k])^{\star n} \star \calU_{\sfF_\va[k]}^\perp \star (\sfG_\va[k] \calU_{\sfH_\va[k]})^{\star 2} \rt\|_{\scrB(L^\infty_\omega)} \\
    &=: I_1 + I_2 + I_3 + I_4 + I_5.
\end{align*}

Now beginning with $I_1$, thanks to \eqref{Eq/ H dissipative} it is immediate that
\begin{equation*}
    I_1 \lesssim \frakp_{\iota}(t-s).
\end{equation*}

For $I_2,I_3,I_4$, we claim
\begin{lemma}
    Under the assumptions of Proposition \ref{Prop/ decay}, there holds
    \begin{equation*}
        I_2 + I_3 + I_4 \lesssim \frakp_{\iota'}(t-s).
    \end{equation*}
\end{lemma}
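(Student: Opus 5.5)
Each of $I_2$, $I_3$, $I_4$ is a finite convolution product in which every factor is either $\calU_{\sfH_\va[k]}$ (or one of its projected versions) acting on $L^\infty_\omega$, or the bounded operator $\sfG_\va[k]$. So the plan is to bound every factor in operator norm by an exponential in the time gap and then convolve the scalar bounds.

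\emph{Step 1: bound $\sfG_\va[k]$ on $L^\infty_\omega$.} Recall $\sfG_\va[k]=\sfP_\va[k]+M\psi_R$. The term $M\psi_R$ is a bounded multiplier. The term $\sfP_\va[k]h$ equals a combination of $v\mu$ and $(|v|^2-d)\mu$ whose coefficients are controlled by $|\rho[h]|+|j[h]|+|E[h]|\lesssim\|h\|_{L^\infty_\omega}$, since $\la v\ra^2\omega^{-1}\in L^1$. It also gains any weight we like, because $\la v\ra^2\mu\,\omega_1\in L^\infty$ for every $\omega_1\in\calW_{\mu^{-1/2}}$. Hence
\begin{equation*}
\|\sfG_\va[k](t)\|_{\scrB(L^\infty_\omega)}\lesssim 1,
\end{equation*}
uniformly in $t$, $\va$ and $k\in\calX_{\omega,c}$.

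\emph{Step 2: bound each term by scalar convolutions.} Combining Step 1 with \eqref{Eq/ H dissipative} (for $\omega_1=\omega$), each factor $\calU_{\sfH_\va[k]}\sfG_\va[k]$ satisfies
\begin{equation*}
\|(\calU_{\sfH_\va[k]}\sfG_\va[k])(t,s)\|_{\scrB(L^\infty_\omega)}\lesssim \frakp_{\lambda'}(t-s).
\end{equation*}
The projected factors $\calU^\perp_{\sfH_\va[k]}$ and $\leftperp{\calU_{\sfH_\va[k]}}$ obey the same bound by \eqref{Eq/ Perps dissipative}, and the last factor $\sfG_\va[k]\calU_{\sfH_\va[k]}$ in $I_4$ is handled identically. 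Putting these together, $I_2$, $I_3$ and $I_4$ are each bounded by a finite sum of $\ell$-fold convolutions $\frakp_{\iota}^{\star \ell}(t-s)$, with $\ell\le n+2$ and $n$ fixed.

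\emph{Step 3: evaluate the scalar convolutions.} For the exponential kernel one computes
\begin{equation*}
\frakp_{\iota}^{\star\ell}(t)=\frac{t^{\ell-1}}{(\ell-1)!}\,e^{-\iota t}\le C_{\ell,\iota,\iota'}\,e^{-\iota' t}\qquad\text{for any }\iota'<\iota.
\end{equation*}
Summing over the finitely many terms gives
\begin{equation*}
I_2+I_3+I_4\lesssim \frakp_{\iota'}(t-s).
\end{equation*}
The constants depend on $M$, $R$ and $n$, all of which are fixed at this stage, and are independent of $\va$.

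\emph{Main obstacle.} The delicate point is Step 1. We must check that $\sfG_\va[k]$ is genuinely bounded on $L^\infty_\omega$ uniformly in $\va$ and $k$, which relies on $\bfA_\va[k]\lesssim c^2$, and that $\Pi$ and $I-\Pi$ commute correctly so that the perpendicular factors are legitimate. The $L^\infty_\omega$ boundedness of $\Pi$ needed here is noted just before \eqref{Eq/ Perps dissipative}. No hypocoercivity or ultracontractivity enters these three terms; those tools are reserved for $I_5$.
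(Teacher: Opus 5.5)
Your proposal is correct and follows essentially the same route as the paper. You bound $\sfG_\va[k]$ in $\scrB(L^\infty_\omega)$ uniformly in $\va$, using $\bfA_\va[k]\lesssim c^2$ and the fact that $M\psi_R$ is a bounded multiplier, then combine this with \eqref{Eq/ H dissipative}--\eqref{Eq/ Perps dissipative}, and finally convolve the exponential bounds at the cost of an arbitrarily small loss in the rate. Your explicit formula $\frakp_\iota^{\star\ell}(t)=\frac{t^{\ell-1}}{(\ell-1)!}e^{-\iota t}$ for the finitely many iterated convolutions ($\ell\le n+2$) is a slightly more careful version of the paper's step, which bounds the whole factor $(\calU_{\sfH_\va[k]}\sfG_\va[k])^{\star i}$ directly by $\frakp_\iota$.
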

\begin{proof}
We begin by recalling the mere definition
\begin{equation}\label{Eq/ Mere}
    \sfG_\va[k] = \sfP_\va[k] + M \psi_R (v),
\end{equation}
where $\sfP_\va[k]$ is the projection part as written in \eqref{Eq/ P_va}. On the one hand, thanks to $\|k\|_{L^\infty_\omega}\ll 1$, the very definition of $\calW$, and using the fact that $L^\infty_\omega(\R^d) \hookrightarrow L^1_{\la v \ra^2}(\R^d)$, we can easily establish that
\begin{equation*}
    \|\sfP_\va[k]\|_{\scrB(L^\infty_\omega)} \lesssim 1.
\end{equation*}
This inequality holds up to a constant that is independent of $\va$, thanks to the fact that $|\bfA_\va[k]| \lesssim \|k\|^2_{L^\infty_\omega}$. On the other hand, having now fixed $M,R$ sufficiently large, $M\psi_R$ is nothing but a compactly supported and bounded function, and so
\begin{equation}\label{Eq/ G bounded}
    \|\sfG_\va[k]\|_{\scrB(L^\infty_\omega)} \lesssim 1.
\end{equation}
Using \eqref{Eq/ G bounded}, we may therefore deduce that for every $i\in \bbN$ there holds
\begin{align*}
    &\lt\|(\calU_{\sfH_\va[k]} \sfG_\va[k])^{\star i} \star \calU_{\sfH_\va[k]}^\perp (t,s) \rt\|_{\scrB(L^\infty_\omega)}  \\
    &\lesssim \int_s^t \lt\| (\calU_{\sfH_\va[k]} \sfG_\va[k])^{\star i} (t,\Theta) \rt\|_{\scrB(L^\infty_\omega)}  \|\calU_{\sfH_\va[k]}^\perp (\Theta,s)\|_{\scrB(L^\infty_\omega)} \,\tnd\Theta \\
    &\lesssim \int_s^t \frakp_{\iota}(t-\Theta) \frakp_{\iota}(\Theta-s) \,\tnd\Theta \\
    &\lesssim \frakp_{\iota'}(t-s),
\end{align*}
by utilizing \eqref{Eq/ Perps dissipative}. Hence
\begin{equation*}
    I_2 \lesssim \frakp_{\iota'}(t-s).
\end{equation*}
In basically the same way, we obtain
\begin{equation*}
    I_3 + I_4 \lesssim \frakp_{\iota'}(t-s).
\end{equation*}
\end{proof}

At last, we consider $I_5$. We begin by listing some more basic estimates for the operator $\sfG_\va[k]$:

\begin{lemma}
    Under the assumptions of Proposition \ref{Prop/ decay}, the operator $\sfG_\va[k]$ satisfies
    \begin{align}
        &\|\sfG_\va[k]\|_{\scrB( L^\infty_{\omega}, L^2_{\mu^{-1/2}} )} \lesssim 1,  \label{Eq/ omg mu}\\
        &\|\sfG_\va[k]\|_{\scrB(L^\infty_{\omega_{\star}} , L^\infty_{\omega} )} \lesssim 1, \label{Eq/ omg star omg} \\
        &\|\sfG_\va[k]\|_{\scrB(L^\infty_{\omega_\star})} \lesssim 1 \label{Eq/ omg star}, \\
        &\|\sfG_\va[k]\|_{\scrB( L^2_{\mu^{-1/2}} )} \lesssim 1, \label{Eq/ mu} ,
    \end{align}
    up to some constants independent of time and $\va$.
\end{lemma}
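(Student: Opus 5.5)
We treat $\sfG_\va[k] = \sfP_\va[k] + M\psi_R$ as the sum of a finite-rank macroscopic operator and a compactly supported multiplication operator, and we bound each piece separately in each of the four operator norms. The constants will depend only on $d$, $M$, $R$, the fixed weights $\omega,\omega_\star,\mu^{-1/2}$ and $c$, and never on $\va$ or $t$.

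\textbf{The multiplication part.} Since $0\le\psi_R\le\mathbf{1}_{|v|\le 2R}$, we have $|M\psi_R h|\le M|h|\mathbf{1}_{|v|\le 2R}$. On $B_{2R}$ every weight involved is bounded above and below by constants depending on $R$. This gives all four bounds immediately. For \eqref{Eq/ omg mu} we use
\begin{equation*}
\iint |M\psi_R h|^2\mu^{-1}\le M^2|\Omega|\,|B_{2R}|\,\sup_{B_{2R}}(\mu^{-1}\omega^{-2})\,\|h\|_{L^\infty_\omega}^2,
\end{equation*}
and the other three are analogous.

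\textbf{The projection part.} We first control the coefficients in \eqref{Eq/ P_va}. Because $k\in\calX_{\omega,c}$, the quantities $|\rho[k]|$, $|j[k]|$ and $|E[k]|$ are $\lesssim c$ pointwise in $(t,x)$. Moreover $0\le\bfA_\va[k]\lesssim c^2$ uniformly in $\va$, by the same convolution bound used earlier, since $\|\cdot *_t\bfs_\va\star_\va\bfr_\va\|_{L^\infty}\le\|\cdot\|_{L^\infty}$. Hence
\begin{equation*}
|\sfP_\va[k]h|(t,x,v)\lesssim \big(|\rho[h]|+|j[h]|+|E[h]|\big)(t,x)\,\la v\ra^3\mu(v).
\end{equation*}
It then remains to control the moments of $h$ in each input space:
\begin{itemize}
\item For $h\in L^\infty_{\omega_1}$ with $\omega_1\in\{\omega,\omega_\star\}$, we have $|\rho[h]|+|j[h]|+|E[h]|\le \|\la v\ra^2\omega_1^{-1}\|_{L^1}\|h\|_{L^\infty_{\omega_1}}$, which is finite by the definition of $\calW_{\mu^{-1/2}}$.
\item For $h\in L^2_{\mu^{-1/2}}$, Cauchy--Schwarz gives the same moments bounded by $\|\la v\ra^2\mu^{1/2}\|_{L^2_v}\|h_x\|_{L^2_{\mu^{-1/2}}}$ pointwise in $x$.
\end{itemize}
For the output norms, $L^\infty_{\omega_1}$ needs $\sup_v\la v\ra^3\mu\,\omega_1<\infty$, and $L^2_{\mu^{-1/2}}(\calO)$ needs $\la v\ra^3\mu^{1/2}\in L^2_v$ together with $\Omega$ bounded. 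Combining these gives \eqref{Eq/ omg mu}--\eqref{Eq/ mu}. In \eqref{Eq/ omg mu} we integrate a bounded $x$-function over the bounded set $\Omega$. In \eqref{Eq/ mu} we integrate in $x$ the square of the pointwise-in-$x$ moment bound.

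\textbf{The main point to check.} The only step needing care is $\sup_v\la v\ra^3\mu\,\omega_\star<\infty$ and $\sup_v\la v\ra^3\mu\,\omega<\infty$. For $\omega_\star=e^{|v|^2/8}$ this is clear. For $\omega\in\calW_{\mu^{-1/2}}$ with $\tau=2$ it holds because $\gamma<1/2$, so $\mu\,\omega$ still decays like a Gaussian. For $\tau<2$ and for polynomial weights the Gaussian factor dominates. Finally, \eqref{Eq/ omg star omg} is simply \eqref{Eq/ omg star} composed with the embedding $L^\infty_{\omega}\supset L^\infty_{\omega_\star}$, valid when $\omega\lesssim\omega_\star$; if that fails we argue directly as above, since only moments of the input enter.
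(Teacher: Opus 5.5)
Your proposal is correct and follows essentially the same route as the paper. You split $\sfG_\va[k]=\sfP_\va[k]+M\psi_R$ and bound the coefficients of $\sfP_\va[k]$ uniformly in $\va$ via $0\le\bfA_\va[k]\lesssim\|k\|_{L^\infty_\omega}^2$. You then control $\rho[h],j[h],E[h]$ in each input space (by integrability of $\la v\ra^2\omega_1^{-1}$, or by Cauchy--Schwarz in $L^2_{\mu^{-1/2}}$), absorb the output weight into the Gaussian decay of $\la v\ra^2\mu$, and treat $M\psi_R$ as a compactly supported bounded multiplier.

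Your write-up is just more explicit than the paper's terse argument. The closing remark about composing with an embedding is superfluous, since your direct moment argument already gives \eqref{Eq/ omg star omg} for every $\omega\in\calW_{\mu^{-1/2}}$.
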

\begin{proof}
    Recalling \eqref{Eq/ Mere}, we note that obviously $M\psi_R\in \scrB(L^\infty_\omega, L^2_{\mu^{-1/2}})$, whereas it holds $\|\sfP_\va[k]\|_{\scrB(L^\infty_\omega,L^2_{\mu^{-1/2}})} \lesssim 1$ independently of $\va$ because $|\bfA_\va[k]|\le C\|k\|_{L^\infty_\omega}^2$. Consequently \eqref{Eq/ omg mu} holds. 

    For \eqref{Eq/ omg star omg}, we use the very definition of $\calW$, and in particular observe that
\begin{equation*}
    \la v \ra^2 \mu \omega \in L^\infty(\R^d).
\end{equation*}
Using also the fact that (since $\omega_\star$ is exponential)
\begin{equation*}
    |\rho[h]| + |j[h]| + |E[h]| \lesssim \|h\|_{L^\infty_{\omega\star}},
\end{equation*}
we can straightforwardly prove that the projection part of the operator $\sfP_\va[k]$ in \eqref{Eq/ P_va} satisfies
\begin{equation*}
    \|\sfP_\va[k]\|_{\scrB(L^\infty_{\omega_\star}, L^\infty_{\omega})} \lesssim 1.
\end{equation*}
Since $M\psi_R$ merely truncates the support, we also clearly have 
\begin{equation*}
    \forall h\in L^\infty_{\omega_\star}, \quad |M \psi_R h \omega| \lesssim |h \omega_\star| \mathbf{1}_{|v|\le 2R} \frac{\omega}{\omega_\star} \lesssim_R \|h\|_{L^\infty_{\omega_\star}} .    
\end{equation*}
Thus, we conclude to
\begin{equation*}
    \|\sfG_\va[k]\|_{\scrB(L^\infty_{\omega_\star}, L^\infty_{\omega})} \lesssim 1,    
\end{equation*}
proving \eqref{Eq/ omg star omg}. The proof of \eqref{Eq/ omg star} is almost the same and straightforward.

At last, the estimate \eqref{Eq/ mu} also follows similarly. For instance, looking at just the very last term of $\sfP_\va[k] h$ in \eqref{Eq/ P_va}, we can compute
\begin{align*}
    \| (1+\rho[k])E[h] (|v|^2-d)\mu \|_{L^2_{\mu^{-1/2}}} &\lesssim \iint_{\calO} |E[h]|^2 \la v \ra^4 \mu \,\dx\dv \\
    &= \int_{\Omega} |E[h]|^2 \lt(\int_{\R^d} \la v \ra^4 \mu \,\dv \rt) \dx \\
    &= C \int_{\Omega} |E[h]|^2 \,\dx \\
    &\lesssim \|h\|_{L^2_{\mu^{-1/2}}}^2,
\end{align*}
the last line following by Cauchy--Schwarz. All of the other terms in \eqref{Eq/ P_va} can be handled in the same way, and we obtain that
\begin{equation*}
    \|\sfP_\va[k]\|_{\scrB(L^2_{\mu^{-1/2}})}\lesssim 1.
\end{equation*}
We easily conclude to \eqref{Eq/ mu} and this completes the proof of the lemma.
\end{proof}

Now, we break down the convolution in $I_5$ into
\begin{equation}\label{Eq/ Three}
    \lt\| (\calU_{\sfH_\va[k]} \sfG_\va[k])^{\star n} \rt\|_{\scrB(L^2_{\mu^{-1/2}}, L^\infty_\omega)} , \quad 
    \lt\| \calU_{\sfF_\va[k]}^\perp \rt\|_{\scrB(L^2_{\mu^{-1/2}}) } , \quad \lt\| (\sfG_\va[k] \calU_{\sfH_\va[k]})^{\star 2} \rt\|_{\scrB(L^\infty_\omega, L^2_{\mu^{-1/2}})} 
\end{equation}
and look at each portion separately. Starting from the right, we put \eqref{Eq/ omg mu} and \eqref{Eq/ H dissipative} together to deduce
\begin{align*}
    \lt\| \sfG_\va[k] \calU_{\sfH_\va[k]} \rt\|_{\scrB(L^\infty_\omega, L^2_{\mu^{-1/2}})} &\lesssim \frakp_{\iota}(t-s).
\end{align*}
However we can also put together \eqref{Eq/ G bounded} with \eqref{Eq/ H dissipative}, and thus
\begin{equation*}
    \lt\| \sfG_\va[k] \calU_{\sfH_\va[k]} \rt\|_{\scrB(L^\infty_\omega)} \lesssim \frakp_{\iota}(t-s).
\end{equation*}
In conclusion,
\begin{equation}\label{Eq/ projector bound}
\begin{split}
    \lt\| (\sfG_\va[k] \calU_{\sfH_\va[k]})^{\star 2} \rt\|_{\scrB(L^\infty_\omega, L^2_{\mu^{-1/2}})} &\lesssim \frakp_{\iota''}(t-s).
\end{split}
\end{equation}

For the middle term in \eqref{Eq/ Three}, we just use the bound in \eqref{Eq/ Ref Hypo}:
\begin{equation}\label{Eq/ Hypo again}
    \lt\| \calU_{\sfF_\va[k]}^\perp \rt\|_{\scrB(L^2_{\mu^{-1/2}})} \lesssim \frakp_{\lambda}(t-s) \lesssim \frakp_{\iota}(t-s).
\end{equation}
For the remaining left term in \eqref{Eq/ Three}, 
in the spirit of \cite{CarMis24,GuaMisMou17,MisMou16} we shall prove that there exists $n\in \bbN$ large enough so that 
\begin{equation}\label{Eq/ real n conv}
    \lt\|(\calU_{\sfH_\va[k]} \sfG_\va[k])^{\star n} \rt\|_{\scrB(L^2_{\mu^{-1/2}}, L^\infty_\omega)} \lesssim \frakp_{\iota''}(t-s).
\end{equation}
Toward this end, we can first write (using that $\|\sfG_\va[k]\|_{\scrB(L^\infty_\omega)} \lesssim 1$)
\begin{equation}\label{Eq/ nconv First}
\begin{split}
    &\lt\|(\calU_{\sfH_\va[k]} \sfG_\va[k])^{\star n} \rt\|_{\scrB(L^2_{\mu^{-1/2}}, L^\infty_\omega)} \\
    &\lesssim \lt\| (\sfU_{\sfH_\va[k]} \sfG_\va[k]) \star (\sfU_{\sfH_\va[k]} \sfG_\va[k])^{\star (n-2)} \star \calU_{\sfH_\va[k]}  \rt\|_{\scrB(L^2_{\mu^{-1/2}}, L^\infty_\omega )} \\
    &\lesssim \int_s^t \int_s^{\Theta} \lt\|(\calU_{\sfH_\va[k]} \sfG_\va[k])(t,\Theta)\rt\|_{\scrB( L^\infty_{\omega_\star} , L^\infty_{\omega} )}  \lt\| (\calU_{\sfH_\va[k]} \sfG_\va[k])^{\star(n-2)} (\Theta,\Theta')\rt\|_{\scrB(L^2_{\mu^{-1/2}} , L^\infty_{\omega_\star} ) } \\
    &\hspace{2cm} \times \lt\|\calU_{\sfH_\va[k]} (\Theta',s) \rt\|_{\scrB(L^2_{\mu^{-1/2}})} \,\tnd\Theta' \tnd\Theta. 
\end{split}
\end{equation}
Thanks to \eqref{Eq/ omg star omg} and \eqref{Eq/ H dissipative}, the first term on the right-hand side of \eqref{Eq/ nconv First} may be bounded as
\begin{align*}
    \|(\calU_{\sfH_\va[k]} \sfG_\va[k])(t,\Theta)\|_{\scrB(L^\infty_{\omega_\star}, L^\infty_{\omega})} \lesssim \|\calU_{\sfH_\va[k]}(t,\Theta)\|_{\scrB(L^\infty_{\omega})} \lesssim \frakp_{\iota}(t-\Theta).
\end{align*}
For the second term on the right-hand side of \eqref{Eq/ nconv First}, we claim that:

\begin{lemma}\label{Lem/ n-2}
There exists $n\in \bbN$ large enough so that
\begin{equation}\label{Eq/ n conv goal}
    \lt\|(\calU_{\sfH_\va[k]} \sfG_\va[k])^{\star (n-2)} \star \calU_{\sfH_\va[k]} \rt\|_{\scrB(L^2_{\mu^{-1/2}}, L^\infty_{\omega_\star})} \lesssim \frakp_{\iota''}(t-s) .
\end{equation}
\end{lemma}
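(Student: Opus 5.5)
The plan is the classical iterated-convolution argument of \cite{MisMou16,GuaMisMou17}. It uses the ultracontractivity estimate \eqref{Eq/ Ultra} exactly once, and absorbs its integrable singularity $t^{-\vartheta}$ by means of the convolution structure. Write the $(n-1)$-fold convolution
\begin{equation*}
(\calU_{\sfH_\va[k]} \sfG_\va[k])^{\star (n-2)} \star \calU_{\sfH_\va[k]}(t,s)
\end{equation*}
as an integral over the ordered times $s\le \Theta_{n-1}\le \dots\le \Theta_1\le t$ of products of $n-1$ factors $\calU_{\sfH_\va[k]}(\Theta_{i-1},\Theta_i)$, separated by $\sfG_\va[k](\Theta_i)$. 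The idea is to route the chain of spaces as
\begin{equation*}
L^2_{\mu^{-1/2}} \to \cdots \to L^2_{\mu^{-1/2}} \to L^\infty_{\omega_\star} \to \cdots \to L^\infty_{\omega_\star}.
\end{equation*}
Exactly one factor $\calU_{\sfH_\va[k]}$ should carry us from $L^2_{\mu^{-1/2}}$ to $L^\infty_{\omega_\star}$. All other factors stay inside either $L^2_{\mu^{-1/2}}$ or $L^\infty_{\omega_\star}$.

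The steps are as follows. First, bound every factor with known estimates:
\begin{itemize}
\item $\|\calU_{\sfH_\va[k]}(\Theta,\Theta')\|_{\scrB(L^2_{\mu^{-1/2}})}\lesssim \frakp_{\iota}$ by \eqref{Eq/ H dissipative} with $\omega_1=\mu^{-1/2}$ and $p=2$;
\item $\|\calU_{\sfH_\va[k]}\|_{\scrB(L^\infty_{\omega_\star})}\lesssim \frakp_{\iota}$ by \eqref{Eq/ H dissipative} with $\omega_1=\omega_\star$;
\item $\|\sfG_\va[k]\|$ in $\scrB(L^2_{\mu^{-1/2}})$ and in $\scrB(L^\infty_{\omega_\star})$ are $\lesssim1$ by \eqref{Eq/ mu} and \eqref{Eq/ omg star};
\item the single transition factor satisfies $\|\calU_{\sfH_\va[k]}(\Theta,\Theta')\|_{\scrB(L^2_{\mu^{-1/2}},L^\infty_{\omega_\star})}\lesssim \frakq(\Theta-\Theta')$ by \eqref{Eq/ Ultra}. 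Its constant $\|\omega_\star\mu^{1/2}\la v\ra^{2-\theta}\|_{L^\infty}$ is finite for $\omega_\star=e^{|v|^2/8}$.
\end{itemize}
This gives the bound
\begin{equation*}
\|(\calU_{\sfH_\va[k]} \sfG_\va[k])^{\star (n-2)} \star \calU_{\sfH_\va[k]}(t,s)\|_{\scrB(L^2_{\mu^{-1/2}},L^\infty_{\omega_\star})}\lesssim (\frakp_\iota^{\star a}\star \frakq\star \frakp_\iota^{\star b})(t-s), \qquad a+b=n-2,
\end{equation*}
where $\star$ now denotes ordinary time convolution on $\R_+$. Taking $n=3$ with a single transition factor is already consistent with this scheme. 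Nevertheless, I would keep general $n$ as in the statement, since the smoothing only needs to be absorbed once.

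Second, estimate the scalar convolutions. Since $\vartheta$ may be larger than $1$, $\frakq$ need not be integrable near $0$. This is the main obstacle. The remedy is to use the ultracontractivity in a factored form, splitting the transition so that each piece carries a singularity $t^{-\vartheta/N}$. To do this, write the time interval of the transition factor as $N$ consecutive subintervals. Use the semigroup property $\calU(\Theta,\Theta')=\calU(\Theta,\Theta_1)\cdots\calU(\Theta_{N-1},\Theta')$, and interpolate between $L^2_{\mu^{-1/2}}$ and $L^\infty_{\omega_\star}$ with intermediate weighted $L^{p_j}$ spaces: Riesz--Thorin interpolation of \eqref{Eq/ Ultra} against \eqref{Eq/ H dissipative} gives $L^{p_j}\to L^{p_{j+1}}$ bounds with singularity $t^{-\vartheta/N}$. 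Alternatively, and more simply, I would distribute the transition across $N$ of the $n-1$ factors. Each such factor is interpolated to gain a fraction of integrability, with a singularity $\lesssim t^{-\vartheta/N}$, and the weights are adjusted using the flexibility of $\calW_{\mu^{-1/2}}$, all of which admit the dissipativity \eqref{Eq/ H dissipative}. Choosing $N>\vartheta$ makes every factor locally integrable.

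Third, apply the elementary convolution rule: $\frakp_{\iota}\star \frakp_{\iota}\lesssim \frakp_{\iota''}$, and more generally $(t^{-\alpha}e^{-\iota t})\star(t^{-\beta}e^{-\iota t})\lesssim t^{1-\alpha-\beta}e^{-\iota'' t}$ whenever $\alpha,\beta<1$. Iterating over $n-1\ge N$ factors removes the singularity altogether and yields $\lesssim\frakp_{\iota''}(t-s)$, which is \eqref{Eq/ n conv goal}. All constants are independent of $\va$, because each ingredient estimate is.
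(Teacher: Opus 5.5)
The gap is in your Step 2. With the transition at a fixed position, Step 1 produces $\frakp_\iota^{\star a}\star\frakq\star\frakp_\iota^{\star b}$, which is $+\infty$ as soon as $\vartheta\ge 1$. So $n=3$ is consistent with your scheme only when $\vartheta<1$.

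Your first remedy cannot fix this. Suppose you factor one $\calU_{\sfH_\va[k]}(\Theta,\Theta')$ at fixed intermediate times and bound the pieces, of lengths $\tau_1,\dots,\tau_N$, by interpolated estimates of order $\tau_j^{-\vartheta/N}$. Then AM--GM gives $\prod_j\tau_j^{-\vartheta/N}\ge N^{\vartheta}(\Theta-\Theta')^{-\vartheta}$, which is the same non-integrable singularity. A gain only comes from integrating over a Duhamel time variable.

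Your second remedy, spreading the transition over $N$ distinct factors of the convolution as in \cite{GuaMisMou17}, can be made to work. However, it needs ingredients you only gesture at:
\begin{itemize}
\item A Stein--Weiss (analytic family) interpolation of \eqref{Eq/ Ultra} against \eqref{Eq/ H dissipative} along a consistent chain of spaces. For example, take $1/p_j=(N-j)/(2N)$ and $w_j=\mu^{-1/2}(\omega_\star\mu^{1/2})^{j/N}$, interpolating at step $j$ with the dissipativity in $L^{r_j}_{u_j}$ for suitable $r_j\in[2,\infty]$ and Gaussian weights $u_j\in\calW_{\mu^{-1/2}}$. This yields $\|\calU_{\sfH_\va[k]}(t,s)\|_{\scrB(L^{p_j}_{w_j},L^{p_{j+1}}_{w_{j+1}})}\lesssim\frakp_{\lambda'}(t-s)\max\{(t-s)^{-\vartheta/N},1\}$.
\item Dissipativity of $\calU_{\sfH_\va[k]}$ and boundedness of $\sfG_\va[k]$ on every $L^{p_j}_{w_j}$, with $c,M,R$ re-chosen for these additional weights.
\item The condition $n-2\ge\vartheta$ rather than $n-1\ge N$. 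Indeed, $N$ kernels of order $\vartheta/N$ convolve to something of order $(t-s)^{N-1-\vartheta}$, which is still singular when $N-1<\vartheta$.
\end{itemize}

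The paper needs none of this and never requires $\frakq$ to be integrable. It keeps only $L^2_{\mu^{-1/2}}$ and $L^\infty_{\omega_\star}$ and uses \eqref{Eq/ Ultra} once per term, exactly as in your Step 1. The difference is that the position of the transition depends on the integration variable. In $\int_s^t\calU_{\sfH_\va[k]}(t,\Theta)\sfG_\va[k](\Theta)\calU_{\sfH_\va[k]}(\Theta,s)\,\tnd\Theta$:
\begin{itemize}
\item when $\Theta\le(t+s)/2$, it applies \eqref{Eq/ Ultra} to $\calU_{\sfH_\va[k]}(t,\Theta)$ and estimates $\sfG_\va[k]\calU_{\sfH_\va[k]}(\Theta,s)$ in $\scrB(L^2_{\mu^{-1/2}})$;
\item when $\Theta\ge(t+s)/2$, it applies \eqref{Eq/ Ultra} to $\calU_{\sfH_\va[k]}(\Theta,s)$ and estimates $\calU_{\sfH_\va[k]}(t,\Theta)\sfG_\va[k]$ in $\scrB(L^\infty_{\omega_\star})$.
\end{itemize}
The singular factor then always has time argument at least $(t-s)/2$. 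Each half is therefore bounded by $\frakp_\iota(t-s)\,\frac{t-s}{2}\max\{((t-s)/2)^{-\vartheta},1\}$, so every convolution improves the singularity by a full power of $t-s$, whatever the size of $\vartheta$. Repeating this midpoint split on the outermost integral until $n-2>\vartheta$ gives \eqref{Eq/ n conv goal}, with the polynomial factors absorbed by passing from $\iota$ to $\iota''$. This midpoint split is the missing idea: with it, your original two-space routing works as it stands and no interpolation is needed.
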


\begin{proof}
Let us begin with an estimate for the case $n=3$. We write
\begin{equation}\label{Eq/ split}
\begin{split}
    &(\calU_{\sfH_\va[k]} \sfG_\va[k]) \star \calU_{\sfH_\va}[k]  \\
    &= \int_{s}^{\frac{t+s}{2}} \calU_{\sfH_\va[k]}(t,\Theta) \sfG_\va[k](\Theta) \calU_{\sfH_\va[k]}(\Theta,s) \,\tnd \Theta + \int_{\frac{t+s}{2}}^{t} \calU_{\sfH_\va[k]}(t,\Theta) \sfG_\va[k](\Theta) \calU_{\sfH_\va[k]}(\Theta,s) \,\tnd \Theta ,
\end{split}
\end{equation}
in order to isolate the possibly non-integrable factor arising in the ultracontractive estimate \eqref{Eq/ Ultra}. For the first integral here, we may estimate
\begin{align*}
    &\lt\| \int_{s}^{\frac{t+s}{2}} \calU_{\sfH_\va[k]}(t,\Theta) \sfG_\va[k](\Theta) \calU_{\sfH_\va[k]}(\Theta,s) \,\tnd \Theta \rt\|_{\scrB(L^2_{\mu^{-1/2}}, L^\infty_{\omega_\star})}   \\
    &\lesssim \int_s^{\frac{t+s}{2}} \|\calU_{\sfH_\va[k]}(t,\Theta)\|_{\scrB(L^2_{\mu^{-1/2}} , L^\infty_{\omega_\star} ) } \|\sfG_\va[k](\Theta) \|_{\scrB( L^2_{\mu^{-1/2}}  ) }  \|\calU_{\sfH_\va[k]} (\Theta, s) \|_{\scrB(L^2_{\mu^{-1/2}})} \, \tnd \Theta \\
    &\lesssim \int_s^{\frac{t+s}{2}} \frakq (t-\Theta) \frakp_{\iota}(\Theta-s) \,\tnd\Theta \\
    &= \frakp_{\iota}(t-s) \int_s^{\frac{t+s}{2}} \max\{ (t-\Theta)^{-\vartheta} ,1\} \,\tnd\Theta .
\end{align*}
If $|t-s|\le 2$ then
\begin{align*}
    \int_{s}^{\frac{t+s}{2}} \max \{ (t-\Theta)^{-\vartheta} , 1 \}  \, \tnd \Theta &= \int_{s}^{t-1} \frac{1}{(t-\Theta)^{\vartheta} } \,\tnd\Theta + 1 \\
    &= \frac{1}{\vartheta - 1} \lt( \frac{1}{(t-s)^{\vartheta - 1}} - 1 \rt) + 1 \\
    &\lesssim \frac{1}{(t-s)^{\vartheta - 1}} + 1,
\end{align*}
whereas if $|t-s|>2$ then
\begin{equation*}
    \int_{s}^{\frac{t+s}{2}} \max \{ (t-\Theta)^{-\vartheta} , 1 \}  \, \tnd \Theta = \int_s^{\frac{t+s}{2}} \,\tnd\Theta = \frac{t-s}{2},
\end{equation*}
and therefore we conclude to
\begin{align*}
    &\lt\| \int_{s}^{\frac{t+s}{2}} \calU_{\sfH_\va[k]}(t,\Theta) \sfG_\va[k](\Theta) \calU_{\sfH_\va[k]}(\Theta,s) \,\tnd \Theta \rt\|_{\scrB(L^2_{\mu^{-1/2}}, L^\infty_{\omega_\star})}   \\
    &\lesssim \frakp_{\iota}(t-s) \lt(1 + \max\{ (t-s)^{-(\vartheta - 1)} , 2 \}  \rt) \lesssim \frakp_{\iota}(t-s) \lt(1 + \max\{ (t-s)^{-(\vartheta - 1)} , 1 \}  \rt).
\end{align*}
For the second integral in \eqref{Eq/ split}, we reverse the order to write (using \eqref{Eq/ omg star})
\begin{align*}
    &\lt\| \int_{\frac{t+s}{2}}^t \calU_{\sfH_\va[k]}(t,\Theta) \sfG_\va[k](\Theta) \calU_{\sfH_\va[k]}(\Theta,s) \,\tnd \Theta \rt\|_{\scrB(L^2_{\mu^{-1/2}}, L^\infty_{\omega_\star})}   \\
    &\lesssim \int_{\frac{t+s}{2}}^t \|\calU_{\sfH_\va[k]}(t,\Theta)\|_{\scrB( L^\infty_{\omega_\star} )} \|\sfG_\va[k](\Theta)\|_{\scrB(L^\infty_{\omega_\star} )} \|\calU_{\sfH_\va[k]}(\Theta,s)\|_{\scrB(L^2_{\mu^{-1/2}}, L^\infty_{\omega_\star}) } \,\tnd\Theta  \\
    &\lesssim \int_{\frac{t+s}{2}}^t \frakp_{\iota}(t-\Theta) \frakq(\Theta-s) \,\tnd\Theta \\
    &= \frakp_{\iota}(t-s) \int_{\frac{t+s}{2}}^t \max\{(\Theta-s)^{-\vartheta} , 1 \}  \,\tnd\Theta, 
\end{align*}
and the same argument provides
\begin{align*}
    &\lt\| \int_{\frac{t+s}{2}}^t \calU_{\sfH_\va[k]}(t,\Theta) \sfG_\va[k](\Theta) \calU_{\sfH_\va[k]}(\Theta,s) \,\tnd \Theta \rt\|_{\scrB(L^2_{\mu^{-1/2}}, L^\infty_\omega)}  \\
    &\lesssim \frakp_{\iota}(t-s) \lt(1 + \max\{(t-s)^{-(\vartheta - 1)}, 1 \} \rt).
\end{align*}
In all we have shown that for the case $n=3$, there holds
\begin{equation}\label{Eq/ n=3}
    \lt\|(\calU_{\sfH_\va[k]} \sfG_\va[k]) \star \calU_{\sfH_\va[k]} \rt\|_{\scrB(L^2_{\mu^{-1/2}}, L^\infty_{\omega_\star})} \lesssim \frakp_{\iota}(t-s) \lt(1 + \max\{(t-s)^{-(\vartheta - 1)}, 1\} \rt).
\end{equation}

If $\vartheta - 1 > 0$, we repeat this argument for the case $n=4$. We write similarly
\begin{align*}
    &\|(\calU_{\sfH_\va[k]}\sfG_\va[k])^{\star 2} \star \calU_{\sfH_\va[k]}\|_{\scrB(L^2_{\mu^{-1/2}}, L^\infty_{\omega_\star})} \\
    &\lesssim \int_{s}^{\frac{t+s}{2}} \|(\calU_{\sfH_\va[k]}\sfG_\va[k]) \star \calU_{\sfH_\va[k]} (t,\Theta)\|_{\scrB(L^2_{\mu^{-1/2}} , L^\infty_{\omega_\star} ) } \|(\sfG_\va[k] \, \calU_{\sfH_\va[k]}) (\Theta,s) \|_{\scrB(L^2_{\mu^{-1/2}}) } \,\tnd\Theta \\
    &\quad + \int_{\frac{t+s}{2}}^t \|(\calU_{\sfH_\va[k]} \sfG_\va[k]) (t, \Theta)\|_{\scrB(L^\infty_{\omega_\star}, L^\infty_{\omega}) } \|(\calU_{\sfH_\va[k]}\sfG_\va[k]) \star \calU_{\sfH_\va[k]} (\Theta, s)\|_{\scrB(L^2_{\mu^{-1/2}}, L^\infty_{\omega_\star})} \,\tnd\Theta \\
    &\lesssim \int_{s}^{\frac{t+s}{2}} \frakp_{\iota}(t-\Theta) \lt(1 + \max\{(t-\Theta)^{-(\vartheta-1)}, 1\} \rt) \frakp_{\iota}(\Theta-s) \,\tnd\Theta \\
    &\quad + \int_{\frac{t+s}{2}}^t \frakp_{\iota}(t-\Theta) \frakp_{\iota}(\Theta-s) \lt(1 + \max\{(\Theta-s)^{-(\vartheta-1)} , 1\} \rt) \, \tnd\Theta \\
    &\lesssim \frakp_{\iota}(t-s)  \int_{s}^{\frac{t+s}{2}} \lt(1 + \max\{(t-\Theta)^{-(\vartheta - 1)}, 1\} \rt) \tnd\Theta \\
    &\lesssim \frakp_{\iota}(t-s) \lt(1 + \max\{(t-s)^{-(\vartheta - 2)}, 1 \} \rt),
\end{align*}
by using \eqref{Eq/ n=3}, \eqref{Eq/ mu}--\eqref{Eq/ omg star omg}, and \eqref{Eq/ H dissipative} with either $\omega_1 = \mu^{-1/2}$ or $\omega_1 = \omega_{\star}$. Iterating until $\vartheta - (n-2) < 0$, we deduce
\begin{align*}
    \|(\calU_{\sfH_\va[k]}\sfG_\va[k])^{\star(n-2)} \star \calU_{\sfH_\va[k]} \|_{\scrB(L^2_{\mu^{-1/2}}, L^\infty_{\omega_\star})} &\lesssim \frakp_{\iota}(t-s) \lt(1 + \max\{(t-s)^{-(\vartheta - (n-2))} , 1\} \rt) \\
    &\lesssim \frakp_{\iota''}(t-s),
\end{align*}
which proves \eqref{Eq/ n conv goal}.
\end{proof}

As a consequence of Lemma \ref{Lem/ n-2}, we may now prove \eqref{Eq/ real n conv}. Indeed, returning to \eqref{Eq/ nconv First}, we have
\begin{align*}
    &\lt\|(\calU_{\sfH_\va[k]} \sfG_\va[k])^{\star n} \rt\|_{\scrB(L^2_{\mu^{-1/2}}, L^\infty_\omega)} \\
    &\lesssim \int_s^t \int_s^{\Theta} \frakp_{\iota}(t-\Theta) \frakp_{\iota''}(\Theta-\Theta') \frakp_{\iota}(\Theta'-s) \,\tnd\Theta' \tnd\Theta \\
    &\lesssim \frakp_{\iota''}(t-s),
\end{align*}
thanks to \eqref{Eq/ H dissipative}--\eqref{Eq/ omg star omg} and Lemma \ref{Lem/ n-2}. At last, gathering \eqref{Eq/ projector bound}--\eqref{Eq/ Hypo again}--\eqref{Eq/ real n conv} together, we have obtained all the estimates required for the three terms in \eqref{Eq/ Three}, and namely we conclude to
\begin{equation*}
    I_5 \lesssim \int_s^t \int_{s}^{\Theta} \frakp_{\iota''}(t-\Theta) \frakp_{\iota}(\Theta-\Theta') \frakp_{\iota''}(\Theta'-s) \,\tnd\Theta' \tnd\Theta \lesssim (t-s)^2 \frakp_{\iota''}(t-s) \lesssim \frakp_{\iota'}(t-s).
\end{equation*}
We collect the estimates for all of the $I_i$, $1\le i \le 5$ to complete the proof.

\subsection{Back to the nonlinear problem} \label{Sec/ Back to Nonlin}

Our business with the linear problem is now done, and so we now aim to return to the nonlinear problem. We remind the reader that the operator $\leftperp{\Upsilon_\va}[k]$ is defined in \eqref{eq: nonlinear perp} as
\begin{align*}
    \leftperp{\Upsilon_\va}[k] h &= \frac{1}{d}\lt( E[k] - \bfA_\varsigma[k] \rt) \Delta_v h + (v\rho[k] - j[k]) \cdot \nabla_v h + d\rho[k] h \\
    &\quad - (\rho[h] j[k] - \rho[k] j[h]) \cdot v\mu \\
    &\quad - \lt( (E[k] - \bfA_\varsigma[k]) \rho[h] - \frac{1}{d} \rho[k] E[h] + \frac{1}{d}j[k]\cdot j[h] \rt) (|v|^2 - d)\mu.
\end{align*}

\begin{proposition}\label{Prop/ reg}
    We assume that $c\le c_\star$. There exists some $a_0>0$ small enough such that for every $h_0\in \calC$ satisfying $\|h_0\|_{L^\infty_{\omega}}\le a_0$, there exists a corresponding weak and renormalized solution $h^\va \in \calX_{\omega,c} \cap \calC$ to the nonlinear PDE
    \begin{equation}\label{Eq/ Nonlin/ Reg}
        \begin{cases}
            \p_t h^\va = -v\cdot \nabla_x h^\va + \sfC_{\rm lin} h^\va + \leftperp{\Upsilon_\va}[h^\va] h^\va &\text{in }\R_+\times \calO, \\
            h^\va |_{t=0} = h_0  &\text{on }\calO, \\
            \gamma_- h^\va(t,x,v) = \gamma_+ h^\va(t,x,\scrV_x) &\text{on }\R_+\times \Sigma_-.
        \end{cases}
    \end{equation}
    This one satisfies the decay estimate
    \begin{equation*}
        \|h^\va_t\|_{L^\infty_{\omega}(\calO)} \le C' e^{-\iota' t} \|h_0\|_{L^\infty_{\omega}}
    \end{equation*}
    for some constants $C',\iota'>0$ which are all independent of $\va$.
\end{proposition}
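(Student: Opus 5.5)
The plan is a Schauder fixed point for the map $\Psi : k \mapsto \calU_{\sfF_\va[k]}(t,0) h_0$, following the proof of Proposition \ref{S: P: Exist Nonlin}. The essential change is that the invariant set must carry the exponential decay. Fix $c \le c_\star$ and let $C_0$ and $\iota'$ be the constants of Proposition \ref{Prop/ decay}; both are independent of $\va$ and uniform over $k \in \calX_{\omega,c}$. Set
\begin{equation*}
    \calY := \Big\{ k \in \calX_{\omega,c} \;\Big|\; \|k_t\|_{L^\infty_\omega(\calO)} \le C_0 e^{-\iota' t} \|h_0\|_{L^\infty_\omega} \text{ for a.e. } t \ge 0, \ k_t \in \calC \Big\}.
\end{equation*}
This set is convex. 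It is also closed, hence compact, for the weak$-*$ topology of $L^\infty_\omega((0,\infty)\times\calO)$. Closedness of the pointwise-in-time bound follows by testing against $\mathbf{1}_{I}(t)\phi(x,v)$. Closedness of the conservation laws follows because $\la v \ra^2 \omega^{-1} \in L^1$, so the moment constraints integrated against time test functions pass to weak$-*$ limits.

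Next I check invariance. Choose $a_0$ with $C_0 a_0 \le c$. Take $k \in \calY$ and $h_0 \in \calC$ with $\|h_0\|_{L^\infty_\omega} \le a_0$. By Proposition \ref{P/ F WP}, well-posedness holds in $L^2_{\omega_1}$, using the embedding $L^\infty_\omega \hookrightarrow L^2_{\omega_1}$ for a slower weight $\omega_1$. The same proposition gives $\Psi(k)_t \in \calC$ for all $t$. Proposition \ref{Prop/ decay} then gives $\|\Psi(k)_t\|_{L^\infty_\omega} \le C_0 e^{-\iota' t}\|h_0\|_{L^\infty_\omega} \le c$. Hence $\Psi(\calY) \subset \calY$.

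There is one subtlety. Proposition \ref{Prop/ decay} is an $L^\infty_\omega$ statement, but the evolution system was built in $L^2$. I would justify the $L^\infty_\omega$ membership and the weak$-*$ time continuity by the duality argument of Proposition \ref{S: P: infty}. Concretely, I would approximate $h_0$ by data in $L^2_{\mu^{-1/2}} \cap \calC$, for example truncations corrected by the finitely many projections onto the conserved quantities. I would then pass to the limit using the uniform bound.

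The key step, and the one I expect to be the main obstacle, is weak$-*$ continuity of $\Psi$ on $\calY$. Take $k^n \weakto k$ in $\calY$ and set $h^n := \Psi(k^n)$. The macroscopic quantities $\rho[k^n], j[k^n], E[k^n]$ converge weakly$-*$ in $L^\infty$. The mollified coefficient $\bfA_\va[k^n]$ converges pointwise, hence in $L^p_{\rm loc}$, exactly as in Step 1 of Proposition \ref{S: P: Exist Nonlin}.

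The difficulty is the products $\rho[k^n]\,\rho[h^n]$, $j[k^n]\cdot j[h^n]$, and $(\rho[k^n] v - j[k^n])\cdot \nabla_v h^n$ that appear in $\sfP_\va[k]$ and $\leftperp{\Upsilon_\va}[k]$. Both factors converge only weakly in these products. To handle them I would use the energy estimate to get $\nabla_v h^n$ bounded in $L^2_{\rm loc}$. Applying the averaging lemma of \cite{Bou02} to truncations, as in Step 2 of Proposition \ref{S: P: Exist Nonlin}, gives $h^n \to \mathfrak{h}$ strongly in $L^p_{\rm loc}$. Combined with the uniform tail bound $|h^n| \le c\,\omega^{-1}$, the velocity moments $\rho[h^n], j[h^n], E[h^n]$ then converge strongly in $L^p_{\rm loc}(\Omega^T)$. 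Strong times weak$-*$ convergence lets every product pass to the limit. For $\rho[k^n] v\cdot\nabla_v h^n$, I would integrate by parts onto the test function and use the strong convergence of $h^n$.

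The boundary traces are recovered as in Step 3 of Proposition \ref{S: P: Exist Nonlin}, so that $\mathfrak{h}$ satisfies specular reflection. Uniqueness from Proposition \ref{prop: WP} then identifies $\mathfrak{h} = \Psi(k)$, and the whole sequence converges. All of this is done on each finite window $[0,T]$, which suffices because the weak$-*$ topology is tested against $L^1$ functions that can be approximated by compactly supported ones in time.

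Schauder's theorem then yields a fixed point $h^\va = \Psi(h^\va)$. This fixed point is a weak solution of \eqref{Eq/ Nonlin/ Reg}, lies in $\calC$, and obeys the decay bound with $C' = C_0$, which is independent of $\va$. The renormalized property follows from the linear theory of Proposition \ref{prop: WP}, since $h^\va$ solves the linear equation with coefficient $k = h^\va$.
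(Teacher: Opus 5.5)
Your proposal is correct and follows essentially the same route as the paper: a Schauder fixed point for $k\mapsto \calU_{\sfF_\va[k]}(t,0)h_0$ in the weak$-*$ topology, with invariance coming from Proposition \ref{Prop/ decay} once $a_0\le c/C'$, and continuity argued as in Proposition \ref{S: P: Exist Nonlin}. The differences are cosmetic. The paper works on all of $\calX_{\omega,c}$; your extra decay and $\calC$ constraints in $\calY$ are harmless but unnecessary, since the fixed point lies in the range of $\Psi$ and so inherits both properties automatically. Your explicit handling of the products $j[k^n]\rho[h^n]$, $j[k^n]\cdot j[h^n]$, $\rho[k^n]E[h^n]$ in $\sfP_\va[k^n]h^n$, via strong convergence of the moments of $h^n$, usefully fills in what the paper dismisses as ``completely the same''.
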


\begin{proof}
    For each $k\in \calX_{\omega,c}$ with $c\le c_\star$, we define the map $\Psi: \calX_{\omega,c} \to L^\infty_{\omega}$ via
    \begin{equation*}
        \Psi : k \mapsto \calU_{\sfF_\va[k]}(t,0) h_0,
    \end{equation*}
    so that $\Psi(k)$ is the solution to the linear problem
    \begin{equation*}
        \begin{cases}
            \p_t h = -v\cdot \nabla_x h + \sfC_{\rm lin} h + \leftperp{\Upsilon_\va}[k] h, \\
            h |_{t=0} = h_0, \\
            \gamma_- h(t,x,v) = \gamma_+ h(t,x,\scrV_x).
        \end{cases}
    \end{equation*}
    Since we can always embed $L^\infty_{\omega}\hookrightarrow L^2_{\wt{\omega}}$ for some $\wt{\omega}$ growing slowly enough, Proposition \ref{P/ F WP} tells us that $\calU_{\sfF_\va[k]}(t,0) h_0$ is the unique solution to this equation which corresponds to initial datum $h_0$. Also, according to Proposition \ref{Prop/ decay}, we know that
    \begin{equation}\label{Eq/ decay again}
        \| \calU_{\sfF_\va[k]}(t,0) h_0\|_{L^\infty_{\omega}} \le C' e^{-\iota'(t-s)} \|h_0\|_{L^\infty_{\omega}}
    \end{equation}
    for $C', \iota' >0$ independent of $\va$. Therefore, $\Psi$ is well-defined. Furthermore, if
    \begin{equation*}
        \|h_0\|_{L^\infty_{\omega}} \le a_0 := \frac{c}{C'},
    \end{equation*}
    then \eqref{Eq/ decay again} tells us that $\Psi$ leaves $\calX_{\omega,c}$ invariant. Hence, endowing $\calX_{\omega,c}$ with the $L^\infty_{\omega}-*$ topology, we just need to prove that $\Psi:\calX_{\omega,c}\to \calX_{\omega,c}$ is continuous in order to prove the existence of a fixed point of $\Psi$. The required arguments are completely the same as with Proposition \ref{S: P: Exist Nonlin} and thus skipped. We just point out that the convolutions in the definition of $\bfA_\va[\cdot]$ allow us to establish strong stability of this term, even if we only know that its argument converges weakly.
\end{proof}

\begin{proposition}\label{Prop: noreg}
    We assume that $h_0\in \calC$ and that $\|h_0\|_{L^\infty_\omega}\le a_0$, with $a_0$ chosen as in Proposition \ref{Prop/ reg}. Denote by $(h^\varsigma)_{\varsigma>0} \subset \calX_{\omega,c} \cap \calC$ a family of solutions to \eqref{Eq/ Nonlin/ Reg} (again as constructed in Proposition \ref{Prop/ reg}). Then as $\varsigma\searrow 0$ there is a weak$-*$ limit point $h\in \calX_{\omega,c} \cap \calC \cap L^\infty_t L^2_{x,v} \cap L^2_{t,x} H^1_v$ which is a weak and renormalized solution to the nonlinear problem 
    \begin{equation}\label{Eq/ Fully Nonlin}
        \begin{cases}
            \p_t h = -v\cdot \nabla_x h + \sfC_{\rm lin} h + \Upsilon(h), \\
            h|_{t=0} = h_0, \\
            \gamma_-h(t,x,v) = \gamma_+h(t,x,\scrV_x v).
        \end{cases}
    \end{equation}
    This one satisfies also the estimate
    \begin{equation*}
        \|h_t\|_{L^\infty_{\omega}(\calO)} \le C' e^{-\iota' t} \|h_0\|_{L^\infty_{\omega}(\calO)}.
    \end{equation*}
\end{proposition}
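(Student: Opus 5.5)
The plan is to mirror the proof of Proposition \ref{S: P: final} for the weakly inhomogeneous regime, now on $\R_+$ (working on each finite horizon $[0,T]$ and then taking a diagonal extraction in $T\in\bbN$).

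\textbf{Step 1: uniform bounds and weak-$*$ limit.} Since $(h^\va)\subset\calX_{\omega,c}$, there is a weak$-*$ limit $h\in\calX_{\omega,c}$ along a subsequence. The decay estimate of Proposition \ref{Prop/ reg} is uniform in $\va$. For a.e.\ $t$ and every test function $b\in L^1_{\omega^{-1}}(\calO)$, pairing with $\mathbf 1_{[t,t+\eta]}(s)b$ and using weak lower semicontinuity of the norm gives $\|h_t\|_{L^\infty_\omega}\le C'e^{-\iota' t}\|h_0\|_{L^\infty_\omega}$ for a.e.\ $t$; the bound extends to all $t$ once we know $h\in C([0,\infty);L^\infty_\omega-*)$. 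The conservation laws (C) pass to the limit: $(1,R(x)\cdot v,|v|^2)\omega^{-1}$ is integrable since $\omega\in\calW_{\mu^{-1/2}}$ gives $\la v\ra^2\omega^{-1}\in L^1$. Hence $h\in\calC$.

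Next, we derive the $L^2$ energy identity for $h^\va$ from the renormalized Green's formula. The boundary term vanishes by specular reflection, and the diffusion coefficient satisfies $\sigma_\va[h^\va]\ge 9/10$ by \eqref{M: E: Coeff}. The lower-order and projection terms are bounded, as in Step 1 of Proposition \ref{S: P: final}, by $C\|h^\va\|_{L^2}^2$ plus an absorbable multiple of $\|\nabla_v h^\va\|^2_{L^2}$. Hence $(h^\va)$ is bounded in $L^\infty_tL^2_{x,v}\cap L^2_{t,x}H^1_v$ on each $[0,T]$. We also get the local trace bound \eqref{S: E: trace loc} with the test function $(n_x\cdot v)\chi_R(v)$.

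\textbf{Step 2: compactness and passage to the limit.} We truncate with $\chi_{R,1}(x)\chi_{R,2}(v)$ and apply the averaging lemma of \cite{Bou02}. This yields $h^\va\to h$ in $L^2_{\rm loc}$ and a.e., hence in $L^p_{\rm loc}$ for $p<\infty$ by the uniform $L^\infty_\omega$ bound. The moment tails are controlled uniformly by $\omega^{-1}\la v\ra^2\in L^1$ for large $|v|$, so by Vitali $\rho,j,E[h^\va]\to\rho,j,E[h]$ in $L^p_{\rm loc}(\R_+\times\Omega)$. This is the key point for the mollified coefficient. The difference $\bfA_\va[h^\va]-\bfA[h]$ is controlled by $(\rho,j)[h^\va]-(\rho,j)[h]$ after convolution, plus the mollification error of the fixed functions $\rho[h],j[h]$. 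Both tend to zero in $L^p_{\rm loc}$, and the denominators stay $\ge 1/2$. Consequently $\sigma_\va[h^\va]\to\sigma[h]$, $\zeta,\eta$ converge, and the projection parts of $\leftperp{\Upsilon_\va}[h^\va]h^\va$ converge as well. Each of them is a product of a strongly converging macroscopic quantity with a fixed Maxwellian profile.

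We then pass to the limit in the Green's formula. The term $\sigma_\va[h^\va]\Delta_v h^\va$ is treated in divergence form, as a strongly converging coefficient times a weakly converging $\nabla_vh^\va$. The limit is $-v\cdot\nabla_xh+\sfC_{\rm lin}h+\leftperp{\Upsilon}[h]h$. Since moments are conserved exactly as in \eqref{eq: nonlinear perp}, this coincides with $\Upsilon(h)$, which already satisfies $\pi\Upsilon(h)=0$. The traces $(n_x\cdot v)\gamma h^\va$ converge weakly in $L^2_{\rm loc}$; uniqueness of traces identifies the limit with $\gamma h$, and specular reflection is preserved under the weak limit.

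\textbf{Step 3: renormalization.} We repeat Step 3 of Proposition \ref{S: P: final} verbatim. That argument mollifies with $\star_\ve\bfr_\ve*_v\bfr_\ve$, handles commutators via \cite{DiPLion89}, proves strong trace convergence by a Cauchy estimate, and takes the limit in $\beta(h_\ve)$. The extra Maxwellian source terms lie in $L^\infty_{\rm loc}$ and cause no trouble. Weak-$*$ continuity in time follows from the equation, which then upgrades the a.e.-in-$t$ decay bound to every $t$.

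\textbf{Main obstacle.} The main obstacle is the identification of $\bfA[h]$ in the diffusion coefficient, i.e.\ the strong convergence of the macroscopic moments on the unbounded time interval. I would handle it via the averaging lemma locally, combined with the uniform weighted tails.
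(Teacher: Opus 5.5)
Your proposal is correct and follows essentially the same route as the paper. The paper also reduces everything to a repetition of Proposition \ref{S: P: final}: hypoelliptic compactness, strong convergence of the macroscopic quantities and hence of $\bfA_\va[h^\va]$, weak convergence and identification of the traces, and the mollification argument for renormalization. Your differences are minor: the paper gets the decay bound via Banach--Alaoglu in the time-weighted space $\calY$ and recovers $\calC$ from the Green's formula for the limit, whereas you use weak-$*$ lower semicontinuity and pass the moment constraints directly through the weak-$*$ limit. Your explicit tail control for $\rho,j,E[h^\va]$ also makes rigorous a step the paper only asserts ("since $\rho,j,E$ are linear").
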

\begin{proof}
    The proof is a repetition of the one provided in Proposition \ref{S: P: final}, and we therefore only sketch it. We define the weighted space
    \begin{equation*}
        \calY := \left\{ h : \R_+\times \Omega\times \R^d\to \R \; \middle| \; \lt\| e^{\iota' t} h \omega \rt\|_{L^\infty(\R_+\times \Omega\times \R^d)} \le C' \|h_0\|_{L^\infty_\omega} \right\}.
    \end{equation*}
    Thanks to Proposition \ref{Prop/ reg}, we know that $(h^\va)$ is bounded in $\calY$, and thus the Banach--Alaoglu theorem provides a limit $\mathfrak{h}$ satisfying
    \begin{equation*}
    \begin{split}
        &e^{\iota' t} h \omega \weakto \mathfrak{h} \quad \text{in } L^\infty(\R_+\times\Omega\times\R^d)-*, \\
        &\|\frakh\|_{L^\infty(\R_+\times\Omega\times\R^d)} \le C' \|h_0\|_{L^\infty_{\omega}}.
    \end{split}
    \end{equation*}
    Defining now $h := e^{-\iota' t} \omega^{-1} \mathfrak{h}$, we obtain that (at least) $h^\va \weakto h$ in the sense of $\calD'(\R_+\times \ov{\calO})$ and $\|h_t\|_{L^\infty_{\omega}(\calO)} \le C' e^{-\iota' t} \|h_0\|_{L^\infty_{\omega}}$.
    
    From the equation \eqref{Eq/ Nonlin/ Reg}, we observe that it is enough to check whether we can pass to the limit of the term $\leftperp{\Upsilon_\va}[h^\va]h^\va$ in $\calD'((0,\infty)\times \calO)$. Indeed, the linear terms can be passed to the limit easily; after that, the renormalization properties, traces, and boundary condition can be recovered in the same way as in the proof of Proposition \ref{S: P: final}.
    
    Towards the end of analyzing the limit of $\leftperp{\Upsilon_\va}[h^\va]h^\va$, we write its explicit form here
    \begin{equation}\label{Eq/ Upsilon hh}
    \begin{split}
        \leftperp{\Upsilon_\va}[h^\va] h^\va &= \frac{1}{d}(E[h^\va] - \bfA_\va[h^\va]) \Delta_v h^\va + (v\rho[h] - j[h])\cdot \nabla_v h^\va + d\rho[h^\va] h^\va \\
        &\quad - \frac{1}{d} \lt( (E[h^\va] - \bfA_\va[h^\va]) \rho[h^\va] - \rho[h^\va] E[h^\va] +  |j[h^\va]|^2 \rt) (|v|^2 - d)\mu.
    \end{split}
    \end{equation}
    First, we remark that again using $L^\infty_{\omega}\hookrightarrow L^2$, and resorting once more to hypoelliptic estimates, we may assume that $h^\va \to h$ in $L^2_{\rm loc}(\R_+\times \calO)$ and a.e., as well as $\nabla_v h^\va \weakto \nabla_v h$ weakly in $L^2_{t,x,v}$ (see the proof of Proposition \ref{S: P: final}). Since $\rho[\cdot],j[\cdot],E[\cdot]$ are linear in their arguments, it follows that
    \begin{equation}\label{Eq/ Macs PW}
        \rho[h^\va] \to \rho[h], \quad j[h^\va]\to j[h], \quad E[h^\va]\to E[h] \quad \text{a.e. in }\R_+\times \Omega.
    \end{equation}
    On the other hand, since $|\rho[h^\va]|, |j[h^\va]|, |E[h^\va]| \lesssim \|h^\va\|_{L^\infty_{\omega}(\R_+\times \calO)} \lesssim 1$, we deduce by Vitali's theorem that the convergences in \eqref{Eq/ Macs PW} hold in $L^p_{\rm loc}(\R_+ \times \ov{\Omega})$ for every $p\in [1,\infty)$. We also know that
    \begin{equation*}
        0 \le \bfA_\va[h^\va] \le \frac{\|j[h^\va] *_t \bfs_\va \star_\va \bfr_\va \|_{L^\infty(\R_+\times \Omega)}^2 }{1 - \|\rho[k] *_t \bfs_\va \star_\va \bfr_\va \|_{L^\infty(\R_+\times \Omega)}} \le \frac{\|j[h^\va]\|_{L^\infty(\R_+\times\Omega)}^2}{1 - \|\rho[k]\|_{L^\infty(\R_+\times \Omega)}} \lesssim \|h^\va\|_{L^\infty_{\omega}}^2 \lesssim 1,
    \end{equation*}
    and therefore 
    \begin{equation*}
        \bfA_\va[h^\ve] \to \bfA[h] := \frac{|j[h]|^2}{1 + \rho[k]} \quad \text{a.e. and in }L^p_{\rm loc}(\R_+\times \ov{\Omega}), \quad \forall p\in [1,\infty).
    \end{equation*}
    In similar manner,
    \begin{equation*}
        |j[h^\va]|^2 \to |j[h]|^2 \quad \text{a.e. and in }L^p_{\rm loc}(\R_+\times \ov{\Omega}).
    \end{equation*}
    Therefore there is no problem with passing to the limit of $\leftperp{\Upsilon}_\va[h^\va] h^\va$ in the sense of $\calD'((0,\infty)\times \calO)$. We note in particular that the last line of \eqref{Eq/ Upsilon hh} converges to 
    \begin{equation*}
    \begin{split}
        &-\frac{1}{d}\lt( (E[h^\va] - \bfA_\va[h^\va]) \rho[h^\va] - \rho[h^\va] E[h^\va] +  |j[h^\va]|^2 \rt) (|v|^2 - d)\mu \\
        &\to - \frac{1}{d} \lt( \lt(E[h] - \frac{|j[h]|^2}{1+\rho[h]} \rt) \rho[h] - \rho[h] E[h] + |j[h]|^2 \rt) (|v|^2-d)\mu \\
        &= - \frac{1}{d} \frac{|j[h]|^2}{1+\rho[h]} (|v|^2 - d)\mu ,
    \end{split}
    \end{equation*}
    and so we have established that $h$ solves \eqref{Eq/ Fully Nonlin} in $\calD'((0,\infty)\times \calO)$ with $\Upsilon(h)$ as given in \eqref{Eq/ Upsilon Def}.
    
    Finally, one can check that $h\in \calC$ as follows: we note that, owing to the embedding $L^\infty_{\omega}\hookrightarrow L^1_{\la v \ra^2}$, we can admit for any $R\in \calR_{\Omega}$ the functions $1,R(x)\cdot v, |v|^2$ as test functions in the Green's formula for $h$ via a standard approximation procedure. For any of these test functions, we can deduce from their Green's formulae that
    \begin{equation*}
        t\mapsto \iint_{\calO} h_t (1, R(x)\cdot v, |v|^2) \,\dx\dv 
    \end{equation*}
    is absolutely continuous for every $R\in \calR_{\Omega}$. Thanks to $h\in L^\infty_{\omega}$ we may apply the Gauss--Ostrogradsky theorem freely to deduce, after some straightforward computations, that
    \begin{equation*}
        \ddt \iint_{\calO} h_t (1, R(x)\cdot v, |v|^2) \,\dx\dv = 0
    \end{equation*}
    and we therefore obtain that $h_t \in \calC$ because $h_0\in \calC$.
\end{proof}

\subsection{Proof of Theorem \ref{Thm2}} \label{Sec/ Thm2}

The proof here is a repetition of that of Theorem \ref{Thm1}. Writing $f := \mu + h$ where $h$ is as in Proposition \ref{Prop: noreg}, it is clear that (i) $f\in L^\infty_{\omega} \cap L^\infty_t L^2_{x,v} \cap L^2_{t,x}H^1_v$ (recalling that $L^\infty_{\omega}\hookrightarrow L^2$), (ii) $f$ satisfies the compatibility condition (C) (because $h\in \calC$), and (iii) $f$ is a solution to \eqref{eq: main} in $\calD'([0,t]\times\ov{\calO})$ for every $t > 0$. We can then check, by repeating the proof of Proposition \ref{S: P: final}, that $f$ is a renormalized solution to \eqref{eq: main}; finally, the decay \eqref{Eq/ hypodissipative} is an immediate consequence of Proposition \ref{Prop: noreg}.

\section{Acknowledgments}

The author thanks Young-Pil Choi for his advice and support. This work received funding from NRF grants no. 2022R1A2C1002820 and RS-2024-00406821. The author also acknowledges support from the Presidential Science Scholarship provided by the Korea Student Aid Foundation.

\appendix

\section{Formal computation of the linearized operator} \label{App: A: Formal}
Let $\scrF$ denote the nonlinear Fokker--Planck operator in \eqref{eq: main}. In this section, we compute the linearization of $\scrF$ based at the point $\mu$, and explain why $\sfC_{\rm lin}$ takes its form. In other words, we wish to find the following first order approximation of $\scrF$:
\[ \scrF[F] \approx \scrF [\mu] + D\scrF[\mu] (F-\mu) = D\scrF[\mu] (F-\mu) =: \sfC_{\rm lin}(F-\mu) . \]
In the above, we denoted by $D\scrF$ the Gateaux derivative, and we noted that $\scrF[\mu] = 0$.

Toward this end, we write
\begin{align*}
    G = \mu + \ve g,
\end{align*}
and aim to collect all terms that are formally up to first order in $\ve$. In this way, a straightforward computation allows us to check that
\begin{align*}
    \rho[G] &= 1 + \ve \rho[g], \\
    u[G] &= \frac{\ve j[g]}{1 + \ve \rho[g]}, \quad j[g] := \intr vf\,\dv.
\end{align*}
Using the expansion $\frac{1}{1+x} = 1 - x + x^2 - \cdots$, we find
\[ u[G] = \ve j[g] (1 - \ve \rho[g] + o(\ve)) = \ve j[g] + (\textnormal{$\ve$-higher order terms}). \]
Similarly, setting $E[g] := \int |v|^2 g \dv$, we can find
\begin{equation*}
    E[G] = d + \ve E[g],
\end{equation*}
so that
\[
\begin{split}
\theta[G] &:= \frac{1}{d\rho[G]} \Big( E[G] - \rho[G]|u[G]|^2 \Big) \\
&= \frac{1}{d(1+\ve\rho[g])} \lt( d + \ve E[g] - (1+\ve\rho[g]) \lt|\frac{\ve j[g]}{1+\ve \rho[g]}\rt|^2 \rt)  \\
&= \frac{1 + \ve E[g]/d}{1 + \ve \rho[g]} + o(\ve), 
\end{split}
\]
and again using the expansion of $\frac{1}{1+x}$, we get
\[ \theta[G] = 1 + \frac{\ve}{d}E[g] - \ve \rho[g] + (\textnormal{$\ve$-higher order terms}). \]
That is, up to first order in $\ve$:
\[ \theta[G] = 1 + \frac{\ve}{d}\intr (|v|^2-d)g\,\dv. \]

Therefore,
\begin{align*}
    \scrF[\mu + \ve g] &= \scrF[G] \\
    &= \rho[G] (\theta[G] \Delta_v G + \nabla_v\cdot ( (v - u[G]) G) ) \\
    &\approx (1 + \ve \rho[g]) \lt(  \lt(1 + \frac{\ve}{d} \intr (|v|^2-d)g \,\dv\rt) (\Delta_v \mu + \ve \Delta_v g) \rt) \\
    &\quad + (1+\ve \rho[g]) \lt( d (\mu + \ve g) + \lt(v-\ve j[g] \rt) \cdot \nabla_v(\mu + \ve g) \rt) \\
    &\approx  \underbrace{ \Delta_v \mu + d\mu + v\cdot \nabla_v \mu}_{=0}  \\
    &\quad + \ve \rho[g] (\underbrace{ \Delta_v \mu + d\mu + v\cdot \nabla_v \mu}_{=0} ) \\
    &\quad + \ve \lt[ (\Delta_v g + \nabla_v\cdot (vg))  - j[g] \cdot \nabla_v \mu + \lt(\frac{1}{d} \intr (|v|^2-d) g \,\dv\rt) \Delta_v\mu \rt] \\
    &\quad + (\textnormal{$\ve$-higher order terms}) \\
    &\approx \ve \lt[ \Delta_v g + \nabla_v\cdot (vg) + j[f] \cdot (v \mu) + \lt(\frac{1}{d}\intr (|w|^2-d)g\,\dw \rt) (|v|^2-d) \mu \rt].
\end{align*}
Hence, using that $\scrF[\mu]=0$:
\begin{align*}
&\lim_{\ve\to 0} \frac{\scrF[\mu + \ve g] - \scrF[\mu]}{\ve} \\
&=  \underbrace{\Delta_v g + \nabla_v\cdot (vg)}_{=\sfC_{\rm FP} g} + \underbrace{j[g] \cdot (v \mu) + \lt(\frac{1}{d}\intr (|w|^2-d)g\,\dw \rt) (|v|^2-d) \mu}_{=\sum_{i=1}^d \pi_i g + 2 \pi_{d+1} g } \\
&= \sfC_{\rm lin} g.
\end{align*}

\newpage
\bibliographystyle{siam}
\bibliography{weakinhoVPFP}

@article {Cho16,
    AUTHOR = {Choi, Young-Pil},
     TITLE = {Global classical solutions of the {V}lasov-{F}okker-{P}lanck
              equation with local alignment forces},
   JOURNAL = {Nonlinearity},
  FJOURNAL = {Nonlinearity},
    VOLUME = {29},
      YEAR = {2016},
    NUMBER = {7},
     PAGES = {1887--1916},
      ISSN = {0951-7715,1361-6544},
   MRCLASS = {35Q83 (35A09 35B40 35Q84)},
  MRNUMBER = {3521632},
MRREVIEWER = {Wolf-Patrick\ D\"{u}ll},
       DOI = {10.1088/0951-7715/29/7/1887},
       URL = {https://doi.org/10.1088/0951-7715/29/7/1887},
}

@article {BloLeD01,
    AUTHOR = {Blouza, Adel and Le Dret, Herv\'{e}},
     TITLE = {An up-to-the-boundary version of {F}riedrichs's lemma and
              applications to the linear {K}oiter shell model},
   JOURNAL = {SIAM J. Math. Anal.},
  FJOURNAL = {SIAM Journal on Mathematical Analysis},
    VOLUME = {33},
      YEAR = {2001},
    NUMBER = {4},
     PAGES = {877--895},
      ISSN = {0036-1410,1095-7154},
   MRCLASS = {35Q72 (35A15 35A25 74G40 74K25)},
  MRNUMBER = {1884727},
MRREVIEWER = {J\'{a}n\ Lov\'{\i}\v{s}ek},
       DOI = {10.1137/S0036141000380012},
       URL = {https://doi.org/10.1137/S0036141000380012},
}

@book {Lions61,
    AUTHOR = {Lions, J.-L.},
     TITLE = {\'{E}quations diff\'{e}rentielles op\'{e}rationnelles et
              probl\`emes aux limites},
    SERIES = {Die Grundlehren der mathematischen Wissenschaften, Band 111},
 PUBLISHER = {Springer-Verlag, Berlin-G\"{o}ttingen-Heidelberg},
      YEAR = {1961},
     PAGES = {ix+292},
   MRCLASS = {35.00 (34.95)},
  MRNUMBER = {153974},
MRREVIEWER = {S.\ Zaidman},
}

@article {LionsPer92,
    AUTHOR = {Lions, Pierre-Louis and Perthame, Beno\^{i}t},
     TITLE = {Lemmes de moments, de moyenne et de dispersion},
   JOURNAL = {C. R. Acad. Sci. Paris S\'{e}r. I Math.},
  FJOURNAL = {Comptes Rendus de l'Acad\'{e}mie des Sciences. S\'{e}rie I.
              Math\'{e}matique},
    VOLUME = {314},
      YEAR = {1992},
    NUMBER = {11},
     PAGES = {801--806},
      ISSN = {0764-4442},
   MRCLASS = {35Q55 (82C70)},
  MRNUMBER = {1166050},
MRREVIEWER = {Andrzej\ Palczewski},
}

@article {Zhu24,
    AUTHOR = {Zhu, Yuzhe},
     TITLE = {Regularity of kinetic {F}okker-{P}lanck equations in bounded
              domains},
   JOURNAL = {Ann. H. Lebesgue},
  FJOURNAL = {Annales Henri Lebesgue},
    VOLUME = {7},
      YEAR = {2024},
     PAGES = {1323--1366},
      ISSN = {2644-9463},
   MRCLASS = {35Q84 (35B65 35H10)},
  MRNUMBER = {4883871},
MRREVIEWER = {Felix\ X.-F.\ Ye},
       DOI = {10.5802/ahl.221},
       URL = {https://doi.org/10.5802/ahl.221},
}

@article {ImbMou21,
    AUTHOR = {Imbert, Cyril and Mouhot, Cl\'{e}ment},
     TITLE = {The {S}chauder estimate in kinetic theory with application to
              a toy nonlinear model},
   JOURNAL = {Ann. H. Lebesgue},
  FJOURNAL = {Annales Henri Lebesgue},
    VOLUME = {4},
      YEAR = {2021},
     PAGES = {369--405},
      ISSN = {2644-9463},
   MRCLASS = {35Q84 (35B65 82C40)},
  MRNUMBER = {4275241},
       DOI = {10.5802/ahl.75},
       URL = {https://doi.org/10.5802/ahl.75},
}

@article {GIMV19,
    AUTHOR = {Golse, Fran\c{c}ois and Imbert, Cyril and Mouhot, Cl\'{e}ment
              and Vasseur, Alexis F.},
     TITLE = {Harnack inequality for kinetic {F}okker-{P}lanck equations
              with rough coefficients and application to the {L}andau
              equation},
   JOURNAL = {Ann. Sc. Norm. Super. Pisa Cl. Sci. (5)},
  FJOURNAL = {Annali della Scuola Normale Superiore di Pisa. Classe di
              Scienze. Serie V},
    VOLUME = {19},
      YEAR = {2019},
    NUMBER = {1},
     PAGES = {253--295},
      ISSN = {0391-173X,2036-2145},
   MRCLASS = {35Q84 (35B45 35B65)},
  MRNUMBER = {3923847},
MRREVIEWER = {Barbara\ Brandolini},
}

@article {Hor67,
    AUTHOR = {H\"{o}rmander, Lars},
     TITLE = {Hypoelliptic second order differential equations},
   JOURNAL = {Acta Math.},
  FJOURNAL = {Acta Mathematica},
    VOLUME = {119},
      YEAR = {1967},
     PAGES = {147--171},
      ISSN = {0001-5962,1871-2509},
   MRCLASS = {35.48 (47.00)},
  MRNUMBER = {222474},
MRREVIEWER = {Joel\ Smoller},
       DOI = {10.1007/BF02392081},
       URL = {https://doi.org/10.1007/BF02392081},
}

@article {AddDolbLiTay21,
    AUTHOR = {Addala, Lanoir and Dolbeault, Jean and Li, Xingyu and Tayeb,
              M. Lazhar},
     TITLE = {{${\rm L}^2$}-hypocoercivity and large time asymptotics of
              the linearized {V}lasov-{P}oisson-{F}okker-{P}lanck system},
   JOURNAL = {J. Stat. Phys.},
  FJOURNAL = {Journal of Statistical Physics},
    VOLUME = {184},
      YEAR = {2021},
    NUMBER = {1},
     PAGES = {Paper No. 4, 34},
      ISSN = {0022-4715,1572-9613},
   MRCLASS = {82C40 (35H10 35P15 47G20 82D10)},
  MRNUMBER = {4277286},
MRREVIEWER = {Yuxi\ Zheng},
       DOI = {10.1007/s10955-021-02784-4},
       URL = {https://doi.org/10.1007/s10955-021-02784-4},
}

@article {DolLi18,
    AUTHOR = {Dolbeault, Jean and Li, Xingyu},
     TITLE = {{$\varphi$}-entropies: convexity, coercivity and
              hypocoercivity for {F}okker-{P}lanck and kinetic
              {F}okker-{P}lanck equations},
   JOURNAL = {Math. Models Methods Appl. Sci.},
  FJOURNAL = {Mathematical Models and Methods in Applied Sciences},
    VOLUME = {28},
      YEAR = {2018},
    NUMBER = {13},
     PAGES = {2637--2666},
      ISSN = {0218-2025,1793-6314},
   MRCLASS = {82C31 (35H10 35K65 35P15 35Q83 35Q84 76P05)},
  MRNUMBER = {3884261},
       DOI = {10.1142/S0218202518500574},
       URL = {https://doi.org/10.1142/S0218202518500574},
}

@book {Jun16,
    AUTHOR = {J\"{u}ngel, Ansgar},
     TITLE = {Entropy methods for diffusive partial differential equations},
    SERIES = {SpringerBriefs in Mathematics},
 PUBLISHER = {Springer, [Cham]},
      YEAR = {2016},
     PAGES = {viii+139},
      ISBN = {978-3-319-34218-4; 978-3-319-34219-1},
   MRCLASS = {35-02 (35K57 35Q84 35Q92)},
  MRNUMBER = {3497125},
MRREVIEWER = {Gaetano\ Siciliano},
       DOI = {10.1007/978-3-319-34219-1},
       URL = {https://doi.org/10.1007/978-3-319-34219-1},
}

@article {BriGuo16,
    AUTHOR = {Briant, Marc and Guo, Yan},
     TITLE = {Asymptotic stability of the {B}oltzmann equation with
              {M}axwell boundary conditions},
   JOURNAL = {J. Differential Equations},
  FJOURNAL = {Journal of Differential Equations},
    VOLUME = {261},
      YEAR = {2016},
    NUMBER = {12},
     PAGES = {7000--7079},
      ISSN = {0022-0396,1090-2732},
   MRCLASS = {35Q20 (35B35)},
  MRNUMBER = {3562318},
MRREVIEWER = {Bertrand\ Lods},
       DOI = {10.1016/j.jde.2016.09.014},
       URL = {https://doi.org/10.1016/j.jde.2016.09.014},
}

@article {Guo10,
    AUTHOR = {Guo, Yan},
     TITLE = {Decay and continuity of the {B}oltzmann equation in bounded
              domains},
   JOURNAL = {Arch. Ration. Mech. Anal.},
  FJOURNAL = {Archive for Rational Mechanics and Analysis},
    VOLUME = {197},
      YEAR = {2010},
    NUMBER = {3},
     PAGES = {713--809},
      ISSN = {0003-9527,1432-0673},
   MRCLASS = {35Q20 (35B40 35B65 76N15 76P05 82C40)},
  MRNUMBER = {2679358},
MRREVIEWER = {Mark\ Thompson},
       DOI = {10.1007/s00205-009-0285-y},
       URL = {https://doi.org/10.1007/s00205-009-0285-y},
}

@article {KimLee18B,
    AUTHOR = {Kim, Chanwoo and Lee, Donghyun},
     TITLE = {Decay of the {B}oltzmann equation with the specular boundary
              condition in non-convex cylindrical domains},
   JOURNAL = {Arch. Ration. Mech. Anal.},
  FJOURNAL = {Archive for Rational Mechanics and Analysis},
    VOLUME = {230},
      YEAR = {2018},
    NUMBER = {1},
     PAGES = {49--123},
      ISSN = {0003-9527,1432-0673},
   MRCLASS = {35Q20 (35B40 76P05 82C40)},
  MRNUMBER = {3840911},
MRREVIEWER = {Luisa\ Arlotti},
       DOI = {10.1007/s00205-018-1241-5},
       URL = {https://doi.org/10.1007/s00205-018-1241-5},
}

@article {KimLee18A,
    AUTHOR = {Kim, Chanwoo and Lee, Donghyun},
     TITLE = {The {B}oltzmann equation with specular boundary condition in
              convex domains},
   JOURNAL = {Comm. Pure Appl. Math.},
  FJOURNAL = {Communications on Pure and Applied Mathematics},
    VOLUME = {71},
      YEAR = {2018},
    NUMBER = {3},
     PAGES = {411--504},
      ISSN = {0010-3640,1097-0312},
   MRCLASS = {35Q20 (35B30 82C40 82D10)},
  MRNUMBER = {3762275},
MRREVIEWER = {Juhi\ Jang},
       DOI = {10.1002/cpa.21705},
       URL = {https://doi.org/10.1002/cpa.21705},
}

@article {StrGuo08,
    AUTHOR = {Strain, Robert M. and Guo, Yan},
     TITLE = {Exponential decay for soft potentials near {M}axwellian},
   JOURNAL = {Arch. Ration. Mech. Anal.},
  FJOURNAL = {Archive for Rational Mechanics and Analysis},
    VOLUME = {187},
      YEAR = {2008},
    NUMBER = {2},
     PAGES = {287--339},
      ISSN = {0003-9527,1432-0673},
   MRCLASS = {82B05 (35B45 35F25)},
  MRNUMBER = {2366140},
MRREVIEWER = {Simone\ Calogero},
       DOI = {10.1007/s00205-007-0067-3},
       URL = {https://doi.org/10.1007/s00205-007-0067-3},
}

@article {CarMis17,
    AUTHOR = {Carrapatoso, K. and Mischler, S.},
     TITLE = {Landau equation for very soft and {C}oulomb potentials near
              {M}axwellians},
   JOURNAL = {Ann. PDE},
  FJOURNAL = {Annals of PDE. Journal Dedicated to the Analysis of Problems
              from Physical Sciences},
    VOLUME = {3},
      YEAR = {2017},
    NUMBER = {1},
     PAGES = {Paper No. 1, 65},
      ISSN = {2524-5317,2199-2576},
   MRCLASS = {35F25 (35B35 35B40 35Q20 47D06 47H20)},
  MRNUMBER = {3625186},
MRREVIEWER = {Hidetoshi\ Tahara},
       DOI = {10.1007/s40818-017-0021-0},
       URL = {https://doi.org/10.1007/s40818-017-0021-0},
}

@article {DesVil05,
    AUTHOR = {Desvillettes, L. and Villani, C.},
     TITLE = {On the trend to global equilibrium for spatially inhomogeneous
              kinetic systems: the {B}oltzmann equation},
   JOURNAL = {Invent. Math.},
  FJOURNAL = {Inventiones Mathematicae},
    VOLUME = {159},
      YEAR = {2005},
    NUMBER = {2},
     PAGES = {245--316},
      ISSN = {0020-9910,1432-1297},
   MRCLASS = {82C40 (35B40 35F20)},
  MRNUMBER = {2116276},
MRREVIEWER = {Manuel\ Portilheiro},
       DOI = {10.1007/s00222-004-0389-9},
       URL = {https://doi.org/10.1007/s00222-004-0389-9},
}

@article {DolMouSch09,
    AUTHOR = {Dolbeault, Jean and Mouhot, Cl\'{e}ment and Schmeiser,
              Christian},
     TITLE = {Hypocoercivity for kinetic equations with linear relaxation
              terms},
   JOURNAL = {C. R. Math. Acad. Sci. Paris},
  FJOURNAL = {Comptes Rendus Math\'{e}matique. Acad\'{e}mie des Sciences.
              Paris},
    VOLUME = {347},
      YEAR = {2009},
    NUMBER = {9-10},
     PAGES = {511--516},
      ISSN = {1631-073X,1778-3569},
   MRCLASS = {35F20 (82C40)},
  MRNUMBER = {2576899},
       DOI = {10.1016/j.crma.2009.02.025},
       URL = {https://doi.org/10.1016/j.crma.2009.02.025},
}

@article {DolMouSch15,
    AUTHOR = {Dolbeault, Jean and Mouhot, Cl\'{e}ment and Schmeiser,
              Christian},
     TITLE = {Hypocoercivity for linear kinetic equations conserving mass},
   JOURNAL = {Trans. Amer. Math. Soc.},
  FJOURNAL = {Transactions of the American Mathematical Society},
    VOLUME = {367},
      YEAR = {2015},
    NUMBER = {6},
     PAGES = {3807--3828},
      ISSN = {0002-9947,1088-6850},
   MRCLASS = {35F10 (35B40 35H10 82C31)},
  MRNUMBER = {3324910},
MRREVIEWER = {Ingrid\ Alma\ Belti\c{t}\u{a}},
       DOI = {10.1090/S0002-9947-2015-06012-7},
       URL = {https://doi.org/10.1090/S0002-9947-2015-06012-7},
}

@article {GuoHwangJangOu20,
    AUTHOR = {Guo, Yan and Hwang, Hyung Ju and Jang, Jin Woo and Ouyang,
              Zhimeng},
     TITLE = {The {L}andau equation with the specular reflection boundary
              condition},
   JOURNAL = {Arch. Ration. Mech. Anal.},
  FJOURNAL = {Archive for Rational Mechanics and Analysis},
    VOLUME = {236},
      YEAR = {2020},
    NUMBER = {3},
     PAGES = {1389--1454},
      ISSN = {0003-9527,1432-0673},
   MRCLASS = {35Q82 (35Q20)},
  MRNUMBER = {4076068},
       DOI = {10.1007/s00205-020-01496-5},
       URL = {https://doi.org/10.1007/s00205-020-01496-5},
}

@article {KimGuoHwang20,
    AUTHOR = {Kim, Jinoh and Guo, Yan and Hwang, Hyung Ju},
     TITLE = {An {$L^2$} to {$L^\infty$} framework for the {L}andau
              equation},
   JOURNAL = {Peking Math. J.},
  FJOURNAL = {Peking Mathematical Journal},
    VOLUME = {3},
      YEAR = {2020},
    NUMBER = {2},
     PAGES = {131--202},
      ISSN = {2096-6075,2524-7182},
   MRCLASS = {35Qxx},
  MRNUMBER = {4171912},
       DOI = {10.1007/s42543-019-00018-x},
       URL = {https://doi.org/10.1007/s42543-019-00018-x},
}

@article {Guo02,
    AUTHOR = {Guo, Yan},
     TITLE = {The {L}andau equation in a periodic box},
   JOURNAL = {Comm. Math. Phys.},
  FJOURNAL = {Communications in Mathematical Physics},
    VOLUME = {231},
      YEAR = {2002},
    NUMBER = {3},
     PAGES = {391--434},
      ISSN = {0010-3616,1432-0916},
   MRCLASS = {82D10 (82C40)},
  MRNUMBER = {1946444},
MRREVIEWER = {C\'{e}dric\ Villani},
       DOI = {10.1007/s00220-002-0729-9},
       URL = {https://doi.org/10.1007/s00220-002-0729-9},
}

@article {LiaoYang21,
    AUTHOR = {Liao, Jie and Yang, Xiongfeng},
     TITLE = {Stability of global {M}axwellian for fully nonlinear
              {F}okker-{P}lanck equations},
   JOURNAL = {J. Stat. Phys.},
  FJOURNAL = {Journal of Statistical Physics},
    VOLUME = {185},
      YEAR = {2021},
    NUMBER = {3},
     PAGES = {Paper No. 23, 27},
      ISSN = {0022-4715,1572-9613},
   MRCLASS = {35H10 (76P99 82C21 82C31)},
  MRNUMBER = {4338696},
       DOI = {10.1007/s10955-021-02844-9},
       URL = {https://doi.org/10.1007/s10955-021-02844-9},
}

@article {Vil09,
    AUTHOR = {Villani, C\'{e}dric},
     TITLE = {Hypocoercivity},
   JOURNAL = {Mem. Amer. Math. Soc.},
  FJOURNAL = {Memoirs of the American Mathematical Society},
    VOLUME = {202},
      YEAR = {2009},
    NUMBER = {950},
     PAGES = {iv+141},
      ISSN = {0065-9266,1947-6221},
      ISBN = {978-0-8218-4498-4},
   MRCLASS = {35Q84 (35H10 76N10 76P05 82C70)},
  MRNUMBER = {2562709},
MRREVIEWER = {Andr\'{a}s\ Domokos},
       DOI = {10.1090/S0065-9266-09-00567-5},
       URL = {https://doi.org/10.1090/S0065-9266-09-00567-5},
}

@article {Kol34,
    AUTHOR = {Kolmogoroff, A.},
     TITLE = {Zuf\"{a}llige {B}ewegungen (zur {T}heorie der {B}rownschen
              {B}ewegung)},
   JOURNAL = {Ann. of Math. (2)},
  FJOURNAL = {Annals of Mathematics. Second Series},
    VOLUME = {35},
      YEAR = {1934},
    NUMBER = {1},
     PAGES = {116--117},
      ISSN = {0003-486X,1939-8980},
   MRCLASS = {DML},
  MRNUMBER = {1503147},
       DOI = {10.2307/1968123},
       URL = {https://doi.org/10.2307/1968123},
}

@article{AncReb22,
	author = {Anceschi, Francesca and Rebucci, Annalaura},
	date = {2023/06/01},
	doi = {10.1007/s41808-022-00191-8},
	id = {Anceschi2023},
	isbn = {2296-9039},
	journal = {Journal of Elliptic and Parabolic Equations},
	number = {1},
	pages = {63--92},
	title = {On the fundamental solution for degenerate {K}olmogorov equations with rough coefficients},
	url = {https://doi.org/10.1007/s41808-022-00191-8},
	volume = {9},
	year = {2023}}

@article {CarMis24KFP,
    AUTHOR = {Carrapatoso, K. and Mischler, S.},
     TITLE = {The kinetic {F}okker-{P}lanck equation in a domain:
              ultracontractivity, hypocoercivity, and long-time asymptotic
              behavior},
   JOURNAL = {Atti Accad. Naz. Lincei Rend. Lincei Mat. Appl.},
  FJOURNAL = {Atti della Accademia Nazionale dei Lincei. Rendiconti Lincei.
              Matematica e Applicazioni},
    VOLUME = {35},
      YEAR = {2024},
    NUMBER = {4},
     PAGES = {643--680},
      ISSN = {1120-6330,1720-0768},
   MRCLASS = {35Q84 (35B40 47D06)},
  MRNUMBER = {4929973},
MRREVIEWER = {Francesca\ Anceschi},
       DOI = {10.4171/rlm/1055},
       URL = {https://doi.org/10.4171/rlm/1055},
}

@article {GuaMisMou17,
    AUTHOR = {Gualdani, M. P. and Mischler, S. and Mouhot, C.},
     TITLE = {Factorization of non-symmetric operators and exponential
              {$H$}-theorem},
   JOURNAL = {M\'{e}m. Soc. Math. Fr. (N.S.)},
  FJOURNAL = {M\'{e}moires de la Soci\'{e}t\'{e} Math\'{e}matique de France.
              Nouvelle S\'{e}rie},
    NUMBER = {153},
      YEAR = {2017},
     PAGES = {137},
      ISSN = {0249-633X,2275-3230},
      ISBN = {978-2-85629-874-9},
   MRCLASS = {47D06 (34G10 35Q20 35Q84 82C31)},
  MRNUMBER = {3779780},
MRREVIEWER = {Rodica\ Luca},
       DOI = {10.24033/msmf.461},
       URL = {https://doi.org/10.24033/msmf.461},
}

@article {Bou02,
    AUTHOR = {Bouchut, F.},
     TITLE = {Hypoelliptic regularity in kinetic equations},
   JOURNAL = {J. Math. Pures Appl. (9)},
  FJOURNAL = {Journal de Math\'{e}matiques Pures et Appliqu\'{e}es.
              Neuvi\`eme S\'{e}rie},
    VOLUME = {81},
      YEAR = {2002},
    NUMBER = {11},
     PAGES = {1135--1159},
      ISSN = {0021-7824},
   MRCLASS = {82C40 (35B65 35H10)},
  MRNUMBER = {1949176},
MRREVIEWER = {Florent\ Berthelin},
       DOI = {10.1016/S0021-7824(02)01264-3},
       URL = {https://doi.org/10.1016/S0021-7824(02)01264-3},
}

@article {Mis00,
    AUTHOR = {Mischler, St\'{e}phane},
     TITLE = {On the trace problem for solutions of the {V}lasov equation},
   JOURNAL = {Comm. Partial Differential Equations},
  FJOURNAL = {Communications in Partial Differential Equations},
    VOLUME = {25},
      YEAR = {2000},
    NUMBER = {7-8},
     PAGES = {1415--1443},
      ISSN = {0360-5302,1532-4133},
   MRCLASS = {82D10 (35Q80)},
  MRNUMBER = {1765137},
MRREVIEWER = {Mohamed\ Boulanouar},
       DOI = {10.1080/03605300008821554},
       URL = {https://doi.org/10.1080/03605300008821554},
}

@article {DiPLion89,
    AUTHOR = {DiPerna, R. J. and Lions, P.-L.},
     TITLE = {Ordinary differential equations, transport theory and
              {S}obolev spaces},
   JOURNAL = {Invent. Math.},
  FJOURNAL = {Inventiones Mathematicae},
    VOLUME = {98},
      YEAR = {1989},
    NUMBER = {3},
     PAGES = {511--547},
      ISSN = {0020-9910,1432-1297},
   MRCLASS = {34A10 (34D20 35Q20 58D25 82A70)},
  MRNUMBER = {1022305},
MRREVIEWER = {B.\ G.\ Pachpatte},
       DOI = {10.1007/BF01393835},
       URL = {https://doi.org/10.1007/BF01393835},
}

@article {Car98,
    AUTHOR = {Carrillo, Jos\'{e} A.},
     TITLE = {Global weak solutions for the initial-boundary-value problems
              to the {V}lasov-{P}oisson-{F}okker-{P}lanck system},
   JOURNAL = {Math. Methods Appl. Sci.},
  FJOURNAL = {Mathematical Methods in the Applied Sciences},
    VOLUME = {21},
      YEAR = {1998},
    NUMBER = {10},
     PAGES = {907--938},
      ISSN = {0170-4214,1099-1476},
   MRCLASS = {35D10 (35Q99 76X05 82C31 82D10)},
  MRNUMBER = {1634851},
MRREVIEWER = {R.\ Glassey},
       DOI =
              {10.1002/(SICI)1099-1476(19980710)21:10<907::AID-MMA977>3.3.CO;2-N},
       URL =
              {https://doi.org/10.1002/(SICI)1099-1476(19980710)21:10<907::AID-MMA977>3.3.CO;2-N},
}

@misc{CarMis24,
      title={The {L}andau equation in a domain}, 
      author={Carrapatoso, K. and Mischler, S.},
      year={preprint, arxiv:2407.09031},
      eprint={2407.09031},
      archivePrefix={arXiv},
      primaryClass={math.AP},
      url={https://arxiv.org/abs/2407.09031}, 
}

@misc{ChoiSong25,
      title={Global weak solutions to nonlinear kinetic {F}okker--{P}lanck equations in bounded domains under physical initial data}, 
      author={Young-Pil Choi and Sihyun Song},
      year={preprint, arxiv:2510.06656},
      eprint={2510.06656},
      archivePrefix={arXiv},
      primaryClass={math.AP},
      url={https://arxiv.org/abs/2510.06656}, 
}

@article {MisMou16,
    AUTHOR = {Mischler, S. and Mouhot, C.},
     TITLE = {Exponential stability of slowly decaying solutions to the
              kinetic-{F}okker-{P}lanck equation},
   JOURNAL = {Arch. Ration. Mech. Anal.},
  FJOURNAL = {Archive for Rational Mechanics and Analysis},
    VOLUME = {221},
      YEAR = {2016},
    NUMBER = {2},
     PAGES = {677--723},
      ISSN = {0003-9527,1432-0673},
   MRCLASS = {35Q84 (35B35)},
  MRNUMBER = {3488535},
MRREVIEWER = {Arnaud\ F.\ Heibig},
       DOI = {10.1007/s00205-016-0972-4},
       URL = {https://doi.org/10.1007/s00205-016-0972-4},
}

@article {GuoLiu17,
    AUTHOR = {Guo, Yan and Liu, Shuangqian},
     TITLE = {The {B}oltzmann equation with weakly inhomogeneous data in
              bounded domain},
   JOURNAL = {J. Funct. Anal.},
  FJOURNAL = {Journal of Functional Analysis},
    VOLUME = {272},
      YEAR = {2017},
    NUMBER = {5},
     PAGES = {2038--2057},
      ISSN = {0022-1236,1096-0783},
   MRCLASS = {35Q20 (35B07 35B40)},
  MRNUMBER = {3596715},
MRREVIEWER = {Cecil\ Pompiliu\ Gr\"{u}nfeld},
       DOI = {10.1016/j.jfa.2016.08.017},
       URL = {https://doi.org/10.1016/j.jfa.2016.08.017},
}

@article {ArkEspPul87,
    AUTHOR = {Arkeryd, L. and Esposito, R. and Pulvirenti, M.},
     TITLE = {The {B}oltzmann equation for weakly inhomogeneous data},
   JOURNAL = {Comm. Math. Phys.},
  FJOURNAL = {Communications in Mathematical Physics},
    VOLUME = {111},
      YEAR = {1987},
    NUMBER = {3},
     PAGES = {393--407},
      ISSN = {0010-3616,1432-0916},
   MRCLASS = {82A40 (76P05 82A05)},
  MRNUMBER = {900501},
MRREVIEWER = {Carlo\ Cercignani},
       URL = {http://projecteuclid.org/euclid.cmp/1104159637},
}

@article {ChoiHwaYoo25,
    AUTHOR = {Choi, Young-Pil and Hwang, Byung-Hoon and Yoo, Yeongseok},
     TITLE = {Global existence of weak solutions to the nonlinear
              {V}lasov-{F}okker-{P}lanck equation},
   JOURNAL = {J. Differential Equations},
  FJOURNAL = {Journal of Differential Equations},
    VOLUME = {444},
      YEAR = {2025},
     PAGES = {Paper No. 113573, 53},
      ISSN = {0022-0396,1090-2732},
   MRCLASS = {35Q84 (35D30 35Q83)},
  MRNUMBER = {4924963},
MRREVIEWER = {Benny\ Avelin},
       DOI = {10.1016/j.jde.2025.113573},
       URL = {https://doi.org/10.1016/j.jde.2025.113573},
}

@article {CarGan04,
    AUTHOR = {Carlen, E. A. and Gangbo, W.},
     TITLE = {Solution of a model {B}oltzmann equation via steepest descent
              in the 2-{W}asserstein metric},
   JOURNAL = {Arch. Ration. Mech. Anal.},
  FJOURNAL = {Archive for Rational Mechanics and Analysis},
    VOLUME = {172},
      YEAR = {2004},
    NUMBER = {1},
     PAGES = {21--64},
      ISSN = {0003-9527,1432-0673},
   MRCLASS = {82C40 (35D05 35K15 82C31)},
  MRNUMBER = {2048566},
MRREVIEWER = {Laurent\ E.\ Gosse},
       DOI = {10.1007/s00205-003-0296-z},
       URL = {https://doi.org/10.1007/s00205-003-0296-z},
}

@article {SR02,
    AUTHOR = {Saint-Raymond, Laure},
     TITLE = {Discrete time {N}avier-{S}tokes limit for the {BGK}
              {B}oltzmann equation},
   JOURNAL = {Comm. Partial Differential Equations},
  FJOURNAL = {Communications in Partial Differential Equations},
    VOLUME = {27},
      YEAR = {2002},
    NUMBER = {1-2},
     PAGES = {149--184},
      ISSN = {0360-5302,1532-4133},
   MRCLASS = {35Q30 (45K05 76A02 76D05 76P05)},
  MRNUMBER = {1886958},
MRREVIEWER = {Giuseppe\ Toscani},
       DOI = {10.1081/PDE-120002785},
       URL = {https://doi.org/10.1081/PDE-120002785},
}

@book {Gre52,
    AUTHOR = {Green, Herbert S.},
     TITLE = {The molecular theory of fluids},
      NOTE = {Deformation and Flow: Monographs on the Rheological Behaviour
              of Natural and Synthetic Products},
 PUBLISHER = {North-Holland Publishing Co., Amsterdam; Interscience
              Publishers, Inc., New York},
      YEAR = {1952},
     PAGES = {viii+264},
   MRCLASS = {82.00},
  MRNUMBER = {134269},
MRREVIEWER = {R.\ Balescu},
}

@article{Kir46,
    author = {Kirkwood, John G.},
    title = {The Statistical Mechanical Theory of Transport Processes I. General Theory},
    journal = {The Journal of Chemical Physics},
    volume = {14},
    number = {3},
    pages = {180-201},
    year = {1946},
    month = {03},
    issn = {0021-9606},
    doi = {10.1063/1.1724117},
    url = {https://doi.org/10.1063/1.1724117},
    eprint = {https://pubs.aip.org/aip/jcp/article-pdf/14/3/180/18793903/180_1_online.pdf},
}

@article {BCMT23,
    AUTHOR = {Bernou, Armand and Carrapatoso, Kleber and Mischler,
              St\'{e}phane and Tristani, Isabelle},
     TITLE = {Hypocoercivity for kinetic linear equations in bounded domains
              with general {M}axwell boundary condition},
   JOURNAL = {Ann. Inst. H. Poincar\'{e} C Anal. Non Lin\'{e}aire},
  FJOURNAL = {Annales de l'Institut Henri Poincar\'{e} C. Analyse Non
              Lin\'{e}aire},
    VOLUME = {40},
      YEAR = {2023},
    NUMBER = {2},
     PAGES = {287--338},
      ISSN = {0294-1449,1873-1430},
   MRCLASS = {35Q20 (76P05 82C40)},
  MRNUMBER = {4581432},
       DOI = {10.4171/aihpc/44},
       URL = {https://doi.org/10.4171/aihpc/44},
}

@article {Mi10,
    AUTHOR = {Mischler, St\'{e}phane},
     TITLE = {Kinetic equations with {M}axwell boundary conditions},
   JOURNAL = {Ann. Sci. \'{E}c. Norm. Sup\'{e}r. (4)},
  FJOURNAL = {Annales Scientifiques de l'\'{E}cole Normale Sup\'{e}rieure.
              Quatri\`eme S\'{e}rie},
    VOLUME = {43},
      YEAR = {2010},
    NUMBER = {5},
     PAGES = {719--760},
      ISSN = {0012-9593,1873-2151},
   MRCLASS = {35F30 (35B35)},
  MRNUMBER = {2721875},
       DOI = {10.24033/asens.2132},
       URL = {https://doi.org/10.24033/asens.2132},
}

\end{document}